\newtheorem{thrm}{Theorem}[section]
\newtheorem{lem}[thrm]{Lemma}
\newtheorem{prop}[thrm]{Proposition}
\newtheorem{cor}[thrm]{Corollary}
\theoremstyle{definition}
\newtheorem{definition}[thrm]{Definition}
\newtheorem{notion}[thrm]{}
\newtheorem{example}[thrm]{Example}
\newtheorem{remark}[thrm]{Remark}
\numberwithin{equation}{section}
\author{Qingnan An, Zhichao Liu and Yuanhang Zhang}
\keywords{Elliott-Thomsen algebra; $\mathrm{KK}$-theory; $\mathrm{KK}$-lifting; Mod-$p$ $\mathrm{K}$-theory; Classification.}
\subjclass{46L35, 46L80, 19K14, 19K33, 19K35.}
\begin{document}
\title[Classification of $\mathrm{C}^*$-algebras]{On the classification of certain real rank zero $\mathrm{C}^*$-algebras}

\begin{abstract}
In this paper, a classification is given of real rank zero $\mathrm{C}^*$-algebras that can be expressed as inductive limits of a sequence of a subclass of Elliott-Thomsen algebras $\mathcal{C}$.
\end{abstract}
\maketitle

\section{Introduction}

A $\mathrm{C}^*$-algebra $A$ is called $AH$ algebras if it is an inductive limit of the $\mathrm{C}^*$-algebras of form $A_n=P_nM_{n}(C(X_n))P_n$, where $X_n$ are compact metric spaces and $P_n \in M_{n}(C(X_n))$ are projections.
In particular, if $X_n$ can be chosen to be disjoint union of several circles (or intervals, respectively),
then $A$ is called an $AT$ algebra (or $AI$ algebras, respectively). More general, if $A_n \subset M_{n}(C(X_n))$,
then $A$ is called an $ASH$ algebra. It seems $AH$ algebras and $ASH$ algebras are quite special class of
$\mathrm{C}^*$-algebras, but these classes of $\mathrm{C}^*$-algebras play important roles in the Elliott classification program. It is a
conjecture that all simple separable stable finite nuclear $\mathrm{C}^*$-algebras are $ASH$ algebras. Recent work
of Gong-Lin-Niu \cite{GLN:2015} and Elliott-Gong-Lin-Niu \cite{EGLN:2015} have verified the conjecture for all simple separable $\mathrm{C}^*$-algebras of finite decomposition rank with UCT. Combine with the work of Tikusis-White-Winter \cite{TWW:2017},
one knows that the conjecture is also true for all simple separable stably finite $\mathrm{C}^*$-algebras of finite
nuclear dimension. For purely infinite case, Kirchberg and Phillips classify all purely infinite simple separable amenable ${\mathrm C}^*$-algebras  which satisfy the UCT (see \cite{R:2002} for an overview).
In 1989, G. Elliott initiated a program aimed at the classification of all separable, nuclear
$\mathrm{C}^*$-algebras. At 1989, Elliott classified all real rank zero $AT$ algebras and made the following
conjecture.

\underline{Elliott Classification Conjecture 1}: $(\mathrm{K}_*(A), \mathrm{K}_*(A)_+, \Sigma A)$ is the complete invariant for separable nuclear $\mathrm{C}^*$-algebras of real rank zero and stable rank one.

In 1993, Elliott classified all simple algebras and made the following conjecture.

\underline{Elliott Classification Conjecture 2}: $Ell(A)$ is the complete invariant for simple separable nuclear $\mathrm{C}^*$-algebras.

For recent 25 years, the classification of simple nuclear $\mathrm{C}^*$-algebras (the verification of the above
Elliott Classification Conjecture 2) has tremendous advance.

On the other hand, the classification of non-simple $\mathrm{C}^*$-algebras is far from being satisfactory,
even for $\mathrm{C}^*$-algebras of real rank zero and stable rank one.
Also in 1994, Gong disproved the Elliott Classification Conjecture 1 for $\mathrm{C}^*$-algebras of real
rank zero and stable rank one by give an example of real rank zero $AH$ algebras of local dimension
at most two. This implies that for the classification of non simple $\mathrm{C}^*$-algebras of real rank zero,
one needs new invariant. The new invariant is developed in \cite{DG:1997}, \cite{DL:1996}, \cite{DL2:1996}, \cite{Ei:1996}, \cite{EGS:1998}, \cite{G:1998},  \cite{Lin:2010} and \cite{Lin:2017}, and is called
total ordered K-theory $(\underline{\mathrm{K}}(A), \underline{\mathrm{K}}^+(A), \Sigma A)$. Dadarlat-Gong proved the classification of real rank
zero $AH$ algebras of slow dimension growth by using total K-theory in 1997. So it is clear Elliott
classification conjecture 1 (for real rank zero case) should be modified to the following conjecture,
we still call it Elliott Conjecture:

\underline{Elliott Classification Conjecture (for real rank zero $\mathrm{ C}^*$-algebras)}: $(\underline{\mathrm{K}}(A), \underline{\mathrm{K}}^+(A),$ $\Sigma A)$ is the complete invariant for separable nuclear $\mathrm{C}^*$-algebras of real rank zero and stable rank one.

Unlike the case of classification for simple $\mathrm{C}^*$-algebras, there is no significant advance on the
classification $\mathrm{C}^*$-algebras of real rank zero and stable rank one after the Theorem of Dadarlat-Gong \cite{DG:1997},
even for the case of real rank zero $ASH$ algebras. This paper together with \cite{AELL:2019} is an effort to push forward such classification.

In this paper, we shall consider the real rank zero $\mathrm{C}^*$-algebras which can be expressed as inductive limits of subhomogeneous building blocks $\mathcal{D}$ defined in Definition \ref{DI}.
We mention that in \cite{AELL:2019}, the existence theorem is obtained by applying Jiang-Su's criterion that a $\mathrm{KK}$-element between two general dimension drop interval algebras can be lifted, if and only if
the $\mathrm{KK}$-element preserves the order of $\mathrm{K}$-homology (Theorem 3.7 in \cite{JS:1999}). However, for the case of $\mathcal{D}$, the same statement is not true (Example 4.7 in \cite{AE:2017}).
In order to get the existence result (Theorem \ref{composed existence}), we develop a distribution property of the connecting homomorphism, which comes from the real rank zero property of the inductive limit, to prove a decomposition result (Corollary \ref{deccor}) and pair it with
a $\mathrm{KK}$-element preserving the order of $\mathrm{K}$-homology. For the uniqueness theorem, we prove both of the results for the torsion $\mathrm{K}_1$-group case and the free case, then combine them with the existence result, we accomplish the classification.

The following is our main theorem:

 {\bf Theorem 8.3}\,\,Let $A=\underrightarrow{\lim}(A_n,\phi_{n,m})$ and $B=\underrightarrow{\lim}(B_n,\psi_{n,m})$ are real rank zero inductive limit of Elliott-Thomsen algebras in $\mathcal{D}$, then $A\cong B$ if and only if
 $$
 (\underline{\mathrm{K}}(A),\underline{\mathrm{K}}^+(A),[1_A])\cong(\underline{\mathrm{K}}(B),\underline{\mathrm{K}}^+(B),[1_B]).
 $$

It is not quite clear what is the range of invariant for the total K-theory for $\mathrm{C}^*$-algebras of real
rank zero and stable rank one, even for the class of rank zero $\mathrm{C}^*$-algebras classified by Dadarlat-Gong and in this paper. Therefore, it is not clear what is the relation between the class of our class
in the above theorem and the class of Dadarlat-Gong.


\section{Preliminaries}
\begin{definition}[Class $\mathcal{C}$]\label{ET}
Let $F_1$ and $F_2$ be two finite dimensional ${\mathrm C}^*$-algebras and let
$\varphi_0,\,\varphi_1:\,F_1\to F_2$ be two homomorphisms.
Set
\begin{align*}
A&=A(F_1,F_2,\varphi_0,\varphi_1)
\\
&=\{(f,a)\in  C([0,1],F_2) \oplus F_1:\,f(0)=\varphi_0(a)\,  {\rm and}\, f(1)=\varphi_1(a)\}.
\end{align*}
Denote by $\mathcal{C}$ the class of all such $\mathrm{C}^*$-algebras.
\end{definition}
The ${\mathrm C}^*$-algebras constructed in this way have been studied
by Elliott and Thomsen \cite{ET:1994} (see also \cite{Ell:1996} and \cite{Th:1996}), which are sometimes called Elliott-Thomsen algebras or one dimensional non-commutative finite CW complexes. Following \cite{GLN:2015}, let us say that a ${\mathrm C}^*$-algebra $A\in\mathcal{C}$ is $\mathbf{minimal}$,
or a minimal block, if it is
indecomposable, i.e.,
not the direct sum of two or more ${\mathrm C}^*$-algebras in $\mathcal{C}$.
\begin{definition}[Class $\mathcal{D}$]\label{DI}
Let $F_1$ be a finite dimensional ${\mathrm C}^*$-algebra, $\mathbf{F_2=M_n(\mathbb{\mathbf C})}$ {\bf (not a direct sum)} and let $\varphi_0,\,\varphi_1:\,F_1\to F_2$ be two unital homomorphisms.
Set
\begin{align*}
A&=A(F_1,M_n(\mathbb{C}),\varphi_0,\varphi_1) \\
&=\{(f,a)\in  C([0,1],M_n(\mathbb{C})) \oplus F_1:\,f(0)=\varphi_0(a)\,  {\rm and}\, f(1)=\varphi_1(a)\}.
\end{align*}
Denote by $\mathcal{D}$ the class  consists of all the finite direct sums of $A(F_1,M_n(\mathbb{C}),\varphi_0,\varphi_1)$ and finite dimensional algebras.
And we use
  $A\mathcal{D}$ to denote the inductive limits of algebras in $\mathcal{D}$.
\end{definition}
\begin{remark}
Note that $\mathcal{D}$ is a sub class of $\mathcal{C}$,
and in this paper, we consider $\mathcal{D}$ not $\mathcal{C}$.
This is because for $\mathcal{D}$, we have Theorem \ref{d_0 F lift}, which shows that for two minimal blocks in $\mathcal{D}$, a $\mathrm{KK}$-element between them preserving Dadarlat-Loring order also preserves the order of $\mathrm{K}$-homology.
But Example 5.8 in \cite{AE:2017} shows the same statement can be not true for two minimal blocks in $\mathcal{C}$.

Even though $\mathcal{D}$ is a sub class of $\mathcal{C}$, the algebra in $\mathcal{D}$ can have arbitrary finite generated $\mathrm{K}_1$ group.
\end{remark}

\begin{notion}[Section 3 of \cite{GLN:2015}]\rm
For $A=A(F_1,F_2,\varphi_0,\varphi_1)\in\mathcal{C}$ with $K_0(F_1)={\mathbb{Z}}^p$ and   $K_0(F_2)={\mathbb{Z}}^l$, consider the short exact sequence
$$
0\rightarrow SF_2 \xrightarrow{\iota} A \xrightarrow{\pi} F_1\rightarrow0,
$$
where $SF_2=C_0(0,1)\otimes F_2$ is the suspension of $F_2$, $\iota$ is the embedding map, and $\pi(f,a)=a,\,(f,a)\in A$ .
Then one has the six-term exact sequence
$$
0\to \mathrm{K}_0(A)\xrightarrow{\pi_*} \mathrm{K}_0(F_1)\xrightarrow{\partial} \mathrm{K}_0(F_2)\xrightarrow{\iota_*} \mathrm{K}_1(A)\to 0,
$$
where $\partial=\alpha-\beta$, and $\alpha$, $\beta$ are the matrices (with entries $\alpha_{ij},\beta_{ij}\in\mathbb{N},i=1,\cdots,l,j=1,\cdots,p$) correspond the maps $\mathrm{K}_0(\varphi_0),\,\mathrm{K}_0(\varphi_1):K_0(F_1)\rightarrow K_0(F_2)$, respectively.
Hence,
$$
\mathrm{K}_0(A)=\ker(\alpha -\beta)\subset \mathbb{Z}^p,\quad
\mathrm{K}_1(A)=\mathbb{Z}^l /{\rm Im}(\alpha - \beta),
$$
and $$\mathrm{K}_0^+ (A)=\ker(\alpha - \beta)\cap \mathrm{K}_0^+ (F_1).$$

At the case $F_2=M_n(\mathbb{C})$, the matrices $\alpha,\beta$ are reduced to matrices with only one row, $\alpha=(\alpha_1,\cdots,\alpha_p)$, $\beta=(\beta_1,\cdots,\beta_p)$.
\end{notion}
\begin{notion}\label{notion of A B}
Throughout this paper, when talking about $\mathrm{KK}(A,B)$ with $A,\,B\in\mathcal{D}$,
we shall assume the notational convention
that
$$A=A(F_1,M_n(\mathbb{C}),\varphi_0,\varphi_1),\,B=B(F_1',M_{n'}(\mathbb{C}),\varphi_0 ',\varphi_1 ')$$
with $$F_1= \bigoplus_{i=1}^p M_{k_i}(\mathbb{C}),\quad
F_1 '= \bigoplus_{i'=1}^{p'} M_{{k'}_{i'}}(\mathbb{C}).$$
And we use $\alpha,\,\beta,\,\alpha'$ and $\beta'$ to denote the matrices induced by
$\varphi_{0*},\,\varphi_{1*},\,\varphi_{0*}'$ and $\varphi_{1*}'$, respectively.
\end{notion}
\begin{notion}
  We use the notation $\#(\cdot)$ to denote the cardinal number of the set, if the argument is a finite set. Very often,
   the sets under consideration will be sets with multiplicity,
  and then we  shall also count multiplicity when we use the notation $\#$. We use $\bullet$ or $\bullet\bullet$ to denote any possible positive integer. We shall use $\{a^{\thicksim k}\}$ to denote $\{\underbrace{a,\cdots,a}_{k\ times}\}$.
\end{notion}
\begin{notion}
  Let us use $\theta_1,\theta_2,\cdots,\theta_p$ denote the spectrum of $F_1$ and denote the spectrum of $C([0,1],F_2)$ by $(t,i)$,
 where $0\leq t\leq1$ and $i\in \{1,2,\cdots,l\}$ indicates that it is in $i^{th}$ block of $F_2$. So
 $$
 Sp(C([0,1],F_2))=\coprod_{i=1}^{l}\{(t,i),\,0\leq t\leq1\}.
 $$
 Using identification of $f(0)=\varphi_0(a)$ and $f(1)=\varphi_1(a)$ for $(f,a)\in A,\,(0,i)\in Sp(C[0,1])$ is identified with
 $$
 (\theta_1^{\thicksim\alpha_{i1}},\theta_2^{\thicksim\alpha_{i2}},\cdots,\theta_p^{\thicksim\alpha_{ip}})\subset Sp(F_1)
 $$
 and $(1,i)\in Sp(C([0,1],F_2))$ is identified with
 $$
 (\theta_1^{\thicksim\beta_{i1}},\theta_2^{\thicksim\beta_{i2}},\cdots,\theta_p^{\thicksim\beta_{ip}})\subset Sp(F_1)
 $$
 as in $Sp(A)=Sp(F_1)\cup \coprod_{i=1}^{l}(0,1)_i$.
\end{notion}

\begin{notion}\label{SPCForm}\rm
Let $A\in \mathcal{C}$, $\phi:A\rightarrow M_n(\mathbb{C})$ be a homomorphism, then there exists a unitary $u$ such that
 $$
 \phi(f,a)=u^*\cdot{\rm diag}\big(\underbrace{a(\theta_1),
 \cdots,a(\theta_1)}_{t_1},\cdots,\underbrace{a(\theta_p),\cdots,a(\theta_p)}_{t_p},f(y_1),\cdots,f(y_{\bullet})\big)\cdot u,
 $$
 where $y_1,y_2,\cdots,y_{\bullet}\in \coprod_{i=1}^{l}[0,1]_i$. For $y=(0,i)$ (also denoted by $0_i$), one can replace $f(y)$ by
 $$\big(\underbrace{a(\theta_1),
 \cdots,a(\theta_1)}_{\alpha_{i1}},\cdots,\underbrace{a(\theta_p),\cdots,a(\theta_p)}_{\alpha_{ip}}\big)$$
 in the above expression, and do the same with $y=(1,i)$. After this procedure, we can assume each $y_k$ is strictly in the open interval $(0,1)_i$ for some $i$.
 We write the spectrum of $\phi$ by
 \begin{align}
 Sp\phi=\{\theta_1^{\thicksim t_1},\theta_2^{\thicksim t_2},\cdots,\theta_p^{\thicksim t_p},y_1,y_2,\cdots,y_{\bullet}\},
\label{spectform}
\end{align}
 where $y_k\in \coprod_{i=1}^{l}(0,1)_i$.

   Denote $\omega_i=\#(Sp\phi\cap(0,1)_i)$ the number of $y_k$'s which are in the $i^{th}$ open interval $(0,1)_i$ counting multiplicity. We will say that $\phi$ is of {\bf type}
  $(t_1,t_2,\cdots,t_p,w_1,\cdots,w_l)$.
\end{notion}
\begin{notion}
 For any self-adjoint element $f\in M_n(\mathbb{C})$, denote $Eig(f)$ the set of all the eigenvalues of $f$. If $Eig(f_1)$ and $Eig(f_2)$ can be paired to within $\varepsilon$ one by one, we denote it by $dist(Eig(f_1),Eig(f_2))<\varepsilon$.
 \end{notion}

\begin{notion} \label{homodeff}
  Let $A=A(F_1,M_n(\mathbb{C}),\varphi_0,\varphi_1)\in\mathcal{D}$, with $F_1=\bigoplus_{j=1}^p M_{k_j}(\mathbb{C}),$
  write $a\in F_1$ as $a=(a(\theta_1),a(\theta_2),\cdots,a(\theta_p))$, where $a(\theta_j)\in M_{k_j}(\mathbb{C})$. For $t\in [0,1]$, define $\pi_t:A\rightarrow F_2$ by $\pi_t((f,a))=f(t)$ for all $(f,a)\in A$. There is a canonical map $\pi_e:A\rightarrow F_1$ defined by $\pi_e((f,a))=a$ for all $(f,a)\in A$. For any $j=1,2,\cdots,p$, define $\pi_e^j:A\rightarrow M_{k_j}(\mathbb{C})$ by $\pi_e^j((f,a))=a(\theta_j)$ for any $(f,a)\in A$.

  Denote $\{e_{ss'}\}(1\leq s,s'\leq n)$ the set of matrix units for $M_n({\mathbb{C}})$ and $\{f_{ss'}^j\}(1\leq j\leq p,\,1\leq s,s'\leq k_j)$ the set of matrix units for $\bigoplus_{j=1}^p  M_{k_j}({\mathbb{C}})$.
  \end{notion}

\begin{notion}
 Let $A=A(F_1,M_n(\mathbb{C}),\varphi_0,\varphi_1)\in\mathcal{D}$, $\varphi_{0*},\varphi_{1*}$ be represented by matrices $\alpha=(\alpha_1,\cdots,\alpha_p)$ and $\beta=(\beta_1,\cdots,\beta_p)$. Then for each $\eta=\frac{1}{m}$ where $m\in\mathbb{N}_+$. Let $0=x_0<x_1<\cdots<x_{m}=1$ be a partition of $[0,1]$ into $m$ subintervals with equal length $\frac{1}{m}$. We will define a finite subset $H(\eta)\subset A_+$, consisting of two kinds of
 elements described below.

 (a). For each subset $X_j=\{\theta_j\}\subset Sp(F_1)=\{\theta_1,\theta_2,\cdots,\theta_p\}$ and integers $r,s$
 with $r+2\leq s$, denote $W_j\triangleq\{\cup_{\alpha_{j}\neq0}[0,r\eta]\}\cup\{\cup_{\beta_{j}\neq 0}[s\eta,1]\}$.
 Then we call $W_j$  the closed neighborhood of $X_j$, we define element $(f,a)\in A_+$ corresponding to $X_j$  and $W_j$  as follows:

 For each $t\in[0,1]$, define element $(f,a)\in A_+$ by
 $$
 a=(\underbrace{0,\cdots,0}_{j-1},f_{11}^j,\underbrace{0,\cdots,0}_{p-j})\in F_1
 $$
 and
 $$
 f(t)=
 \begin{cases}
  \varphi_0(a)\dfrac{\eta-dist(t,[0,r\eta])}{\eta}, & \mbox{if } 0\leq t\leq (a+1)\eta \\
  0, & \mbox{if } (r+1)\eta\leq t\leq (s-1)\eta \\
  \varphi_1(a)\dfrac{\eta-dist(t,[s\eta,1])}{\eta}, & \mbox{if } (s-1)\eta\leq t\leq 1
\end{cases}
$$
  All such elements $(f,a)\in A_+$ are included in the set $H(\eta)$ and are called {\bf test functions of type 1}.

(b). For each closed subset $X \subset [\eta,1-\eta]$, which is a finite union of
closed intervals $[x_r,x_{r+1}]$ and finite many points $\{x_r\}$.
Define $(f,a)$ corresponding to $X$ by $a=0$ and for $t\in(0,1)$ define
$$
f(t)=
\begin{cases}
  e_{11}\big(1-\dfrac{dist(t,X)}{\eta}\big )e_{11}, & \mbox{if } dist(t,X)<\eta \\
  0, & \mbox{if } dist(t,X)\geq \eta.
\end{cases}
$$
There are finite such elements in $A_+$. All such elements are called {\bf test functions of type 2}.
\end{notion}
\begin{notion}
 In general, a unital homomorphism $\phi: C[0,1]\rightarrow B$ with finite dimensional image is always of the form :
 $$
 \phi(f)=\sum f(x_k)p_k,\,\,\forall f\in C[0,1]
 $$
 where $\{x_k\}$ is a finite subset of [0,1] and $\{p_k\}$ is a set of mutually orthogonal projections with $\sum p_k=1_B$.
 A homomorphism $\phi: A=M_n(C([0,1]))\rightarrow B$ with finite dimensional image is of the form:
 $$
 \phi(f)=\sum f(x_k)\otimes p_k,\,\,\forall f\in A
 $$
 for a certain identification of $\phi(1_A)B\phi(1_A)\cong M_n(\mathbb{C})\otimes (\phi(e_{11})B\phi(e_{11}))$,
  where $\{p_k\}$ is a set of mutually orthogonal projections in $\phi(e_{11})B\phi(e_{11})$ (see \cite[Remark 1.6.4]{G2:2002}).
\end{notion}

\section{Weak variation }
  Inspired by \cite{EG:1996} and \cite{DG:1997}, we define the weak variation as follows:
\begin{definition}
 Let $A\in \mathcal{C}$, the weak variation of a finite set $F\subset A$ is defined by
 $$
 \omega(F)=\sup\limits_{\phi,\psi}\inf\limits_{u\in U(r)}\max\limits_{f\in F}\|u\phi(f)u^*-\psi(f)\|,
 $$
 where $\phi,\psi$ run through the homomorphisms from $A$ to $M_r(\mathbb{C})$ such that $KK(\phi)=KK(\psi)$.
 \end{definition}
\begin{definition}
  Suppose that $A$ is a $\mathrm{C}^*$-algebra, $B\subset A$ is a subalgebra, $F\subset A$ is a finite subset and let $\varepsilon>0$. If
 for each $f\in F$, there exists an element $g\in B$ such that $\|f-g\|<\varepsilon$, then we shall say that $F$ is approximately contained in $B$ to within $\varepsilon$, and denote this by $F\subset_{\varepsilon} B$.
\end{definition}
 The following lemma is clear by the standard techniques of spectral theory \cite{BBEK:1992}.
\begin{lem} \label{fini}
  Let $A=\underrightarrow{lim}(A_n,\phi_{n,m})$ be an inductive limit of $\mathrm{C}^*$-algebras $A_n$ with morphisms
  $\phi_{n,m}:A_n\rightarrow A_m$, then $RR(A)=0$ if and only if for any finite self-adjoint subset $F\subset A_n$ and $\varepsilon >0$, there exists an integer $m\geq n$ such
  that $$\phi_{n,m}(F)\subset_{\varepsilon}\{ f\in (A_m)_{sa}\,|\,f\,has\,finite\,spectrum\}.$$
\end{lem}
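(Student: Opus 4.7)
The plan is to prove the two implications separately. The $(\Leftarrow)$ direction is immediate from the defining property of the inductive limit, while $(\Rightarrow)$ contains the real content: I need to lift the spectral decomposition of a finite-spectrum approximation from $A$ back to some finite stage $A_m$, which is the standard perturbation argument from \cite{BBEK:1992}.

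For $(\Leftarrow)$, given $h=h^*\in A$ and $\varepsilon>0$, I would first use density of $\bigcup_m \phi_{m,\infty}((A_m)_{sa})$ in $A_{sa}$ to find some $n$ and $a\in (A_n)_{sa}$ with $\|\phi_{n,\infty}(a)-h\|<\varepsilon/2$. Applying the hypothesis to $F=\{a\}$ with tolerance $\varepsilon/2$ then yields $m\geq n$ and a finite-spectrum self-adjoint $g\in A_m$ with $\|\phi_{n,m}(a)-g\|<\varepsilon/2$. Since $\ast$-homomorphisms preserve the finite-spectrum property, $\phi_{m,\infty}(g)\in A_{sa}$ has finite spectrum and lies within $\varepsilon$ of $h$, which gives $RR(A)=0$.

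For $(\Rightarrow)$, fix $F=\{f_1,\dots,f_N\}\subset (A_n)_{sa}$ and $\varepsilon>0$, and set $f_i' = \phi_{n,\infty}(f_i)\in A_{sa}$. Using $RR(A)=0$, I would first approximate each $f_i'$ to within $\varepsilon/4$ by some finite-spectrum self-adjoint $g_i' = \sum_{j=1}^{k_i}\lambda_{ij}p_{ij}$ with distinct $\lambda_{ij}$ and mutually orthogonal spectral projections $p_{ij}\in A$. The core technical step is then to lift each family $\{p_{ij}\}_j$ to a common finite stage: I would choose $m_0\geq n$ and self-adjoints $a_{ij}\in A_{m_0}$ with $\phi_{m_0,\infty}(a_{ij})$ very close to $p_{ij}$; smallness of $a_{ij}^2-a_{ij}$ then forces $\sigma(a_{ij})$ to cluster near $\{0,1\}$, and continuous functional calculus with a characteristic function supported near $1$ produces a genuine projection $q_{ij}\in A_{m_0}$ close to $p_{ij}$. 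Mutual orthogonality within each block $i$ is arranged by the standard perturbation trick exploiting that $\|q_{ij}q_{ij'}\|$ is small for $j\neq j'$.

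Setting $g_i=\sum_j\lambda_{ij}q_{ij}\in (A_{m_0})_{sa}$ yields a finite-spectrum element with $\|\phi_{m_0,\infty}(g_i)-f_i'\|<\varepsilon/2$. Finally, since $\|\phi_{n,\infty}(f_i)-\phi_{m_0,\infty}(g_i)\| = \lim_{m\to\infty}\|\phi_{n,m}(f_i)-\phi_{m_0,m}(g_i)\|$, I can choose $m\geq m_0$ large enough that the stage-$m$ norm is also $<\varepsilon$; the element $\phi_{m_0,m}(g_i)\in A_m$ still has finite spectrum and approximates $\phi_{n,m}(f_i)$ within $\varepsilon$. The main obstacle is the simultaneous lifting of mutually orthogonal projections to a common finite stage; the rest is bookkeeping to transfer the approximation from the inductive limit back to a finite stage.
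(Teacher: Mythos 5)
Your $(\Leftarrow)$ direction and the overall scheme of $(\Rightarrow)$ --- approximate $\phi_{n,\infty}(f_i)$ in $A$ by finite-spectrum self-adjoints, lift the spectral projections back to a finite stage, then push forward until the estimate holds at a finite stage --- are exactly the standard techniques the paper gestures at via \cite{BBEK:1992}, so the approach matches. However, the projection-lifting step as written has a gap. Choosing self-adjoints $a_{ij}\in A_{m_0}$ with $\phi_{m_0,\infty}(a_{ij})$ close to $p_{ij}$ does \emph{not} make $\|a_{ij}^2-a_{ij}\|$ small \emph{in $A_{m_0}$}: the connecting maps are merely contractive, so you only know $\lim_m\|\phi_{m_0,m}(a_{ij}^2-a_{ij})\|$ is small, while the stage-$m_0$ norm can be arbitrary. (For instance, $A_{m_0}=C[0,1]$, $\phi_{m_0,\infty}=\mathrm{ev}_0:C[0,1]\to\mathbb{C}$, $a(t)=1-t$, $p=1$: then $\phi_{m_0,\infty}(a)=p$ exactly, yet $\sigma(a)=[0,1]$.) Thus $\sigma(a_{ij})$ need not cluster near $\{0,1\}$ in $A_{m_0}$, the characteristic-function calculus need not land in $A_{m_0}$, and the same problem afflicts the ``almost orthogonality'' $\|q_{ij}q_{ij'}\|$ that you need in order to perturb to genuinely mutually orthogonal projections.

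The repair is the device you already use at the very end, just applied one step earlier: after choosing the finitely many preimages $a_{ij}\in A_{m_0}$, pass to some $m_1\geq m_0$ large enough that $\|\phi_{m_0,m_1}(a_{ij})^2-\phi_{m_0,m_1}(a_{ij})\|$ and $\|\phi_{m_0,m_1}(a_{ij})\,\phi_{m_0,m_1}(a_{ij'})\|$ (for $j\neq j'$) are genuinely small in $A_{m_1}$ --- possible since $\|\phi_{m_0,\infty}(x)\|=\lim_m\|\phi_{m_0,m}(x)\|$ --- and carry out the functional calculus and orthogonalization there. Equivalently, one may simply invoke (weak) semiprojectivity of the finite-dimensional $C^*$-algebra $C^*(g_i')\cong\mathbb{C}^{k_i}$ to lift the inclusion $C^*(g_i')\hookrightarrow A$ approximately to a finite stage. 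With that one adjustment your argument is complete.
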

The following lemma is essentially contained in an old version of \cite{GLN:2015}.
\begin{lem} \label{kkeq}
  Let $A=A(F_1,F_2,\varphi_0,\varphi_1)\in \mathcal{C}$ and $\phi,\psi:A\rightarrow M_r(\mathbb{C})$ be two unital homomorphisms. Suppose that $\phi$ is of the
   type $(t_1,t_2,\cdots,t_p,w_1,w_2,\cdots,w_l)$, $\psi$ is of the type $(s_1,s_2,\cdots,s_p,w_1,w_2,\cdots,w_l)$, then the following conditions are equivalent.

   (a) $KK(\phi)=KK(\psi);$

   (b) there exists a
   $
   \left(
\begin{array}{c}
  c_1 \\
  c_2 \\
   \vdots\\
  c_l
\end{array}
  \right)
\in \mathbb{Z}^l
   $
   such that
   $$
  \left(
  \begin{array}{c}
    t_1 \\
    t_2 \\
    \vdots \\
    t_p
  \end{array}
  \right)-
  \left(
  \begin{array}{c}
    s_1 \\
    s_2 \\
    \vdots \\
    s_p
  \end{array}
  \right)
  =(\alpha^t-\beta^t)\cdot
  \left(
\begin{array}{c}
  c_1 \\
  c_2 \\
   \vdots\\
  c_l
\end{array}
  \right).
  $$
\end{lem}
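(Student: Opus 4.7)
The plan is to handle both directions simultaneously by identifying $KK(A, M_r(\mathbb{C}))$ with a concrete quotient of $\mathbb{Z}^p$ and then computing the $KK$-class of an arbitrary type-$(t,w)$ homomorphism inside that quotient.

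First, I apply the $KK$-six-term exact sequence to the defining extension $0 \to SF_2 \xrightarrow{\iota} A \xrightarrow{\pi} F_1 \to 0$ with coefficient algebra $M_r(\mathbb{C})$. Because $F_1$ and $F_2$ are finite dimensional with vanishing $K_1$, UCT gives $KK_1(F_1, M_r(\mathbb{C})) = 0$ and $KK(SF_2, M_r(\mathbb{C})) \cong KK_1(F_2, M_r(\mathbb{C})) = 0$. The hexagon collapses to
\[
\mathbb{Z}^l \cong KK_1(SF_2, M_r(\mathbb{C})) \xrightarrow{\partial} KK(F_1, M_r(\mathbb{C})) \cong \mathbb{Z}^p \xrightarrow{\pi^*} KK(A, M_r(\mathbb{C})) \to 0.
\]
Naturality of this sequence against the $K$-theory six-term sequence of the same extension (whose connecting map is $\alpha - \beta$) identifies $\partial$ with $(\alpha - \beta)^t$ up to sign, yielding
\[
KK(A, M_r(\mathbb{C})) \;\cong\; \mathbb{Z}^p / \mathrm{Im}\bigl((\alpha - \beta)^t\bigr),
\]
with $\pi^*$ the quotient map.

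Next, given a unital $\phi: A \to M_r(\mathbb{C})$ of type $(t_1, \ldots, t_p, w_1, \ldots, w_l)$, I construct a norm-continuous homotopy sliding each of the $w_i$ interior spectrum points in $(0,1)_i$ to the endpoint $0_i$. At $0_i$ the identification $f(0) = \varphi_0(a)$ rewrites each such point as the multiset $\{\theta_1^{\thicksim \alpha_{i1}}, \ldots, \theta_p^{\thicksim \alpha_{ip}}\}$ inside $Sp(F_1)$. The endpoint of the homotopy is a homomorphism $\phi_0$ of type $(t_1 + \sum_i w_i \alpha_{i1}, \ldots, t_p + \sum_i w_i \alpha_{ip}, 0, \ldots, 0)$ that factors through $\pi$, so $KK(\phi) = KK(\phi_0) = \pi^*\bigl(t + \sum_i w_i \alpha_{i\cdot}\bigr)$ in the identification above. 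The same formula applied to $\psi$, together with the fact that both share the same $w$-part, gives
\[
KK(\phi) - KK(\psi) \;=\; \pi^*(t - s) \;\in\; KK(A, M_r(\mathbb{C})).
\]
This immediately yields (a) $\Leftrightarrow$ (b), since $\pi^*(t-s) = 0$ in $\mathbb{Z}^p / \mathrm{Im}((\alpha - \beta)^t)$ is precisely the condition that there exists $c \in \mathbb{Z}^l$ with $t - s = (\alpha^t - \beta^t)c$.

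The main obstacle is pinning down the connecting map $\partial$ as exactly $(\alpha - \beta)^t$ with the correct sign. I would address this either by invoking UCT directly, matching the $\mathrm{Hom}$-part of $KK(\phi)$ with the linear functional $(x_j) \mapsto \sum_j t_j x_j$ on $K_0(A) = \ker(\alpha - \beta)$ and extracting the $\mathrm{Ext}(K_1(A), \mathbb{Z})$-part from the boundary behavior of the spectrum at the endpoints, or by a direct diagram chase comparing the $KK$-hexagon with the $K$-theory hexagon already recorded earlier in the paper. The remaining steps, namely the homotopy sliding interior spectrum points to an endpoint and the subsequent $K$-theoretic bookkeeping, are routine once one writes them out in the explicit spectral form for homomorphisms into a matrix algebra.
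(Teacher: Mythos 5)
Your argument is correct and takes essentially the same route as the paper: both invoke the $\mathrm{KK}$-six-term exact sequence for $0\to SF_2\to A\to F_1\to0$ against $M_r(\mathbb{C})$ (or $\mathbb{C}$) to identify $\mathrm{KK}(A,M_r(\mathbb{C}))$ with a quotient of $\mathbb{Z}^p$ by the image of $(\alpha-\beta)^t$, and both reduce the interior spectrum to $Sp(F_1)$ to compute the class of each homomorphism. The paper phrases this by forming the difference $\mathrm{KK}(\phi)-\mathrm{KK}(\psi)=[\pi]\times D$ and then checking when $D\in\mathrm{Im}(\delta)$, rather than computing the class of each homomorphism individually, but this is an organizational choice, not a mathematical difference; the sign of the connecting map, which you flag as an obstacle, is immaterial here since $\mathrm{Im}(\alpha^t-\beta^t)=\mathrm{Im}(\beta^t-\alpha^t)$.
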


\begin{proof}
  Obviously, any homomorphisms from $A$ to $M_{\bullet}(\mathbb{C})$ of the same type are homotopy, we may assume that $Sp\phi\cap(0,1)_i=Sp\psi\cap(0,1)_i$ counting multiplicity for all $i=1,2,\cdots,l$.

  Note that $A\subset C([0,1],F_2)\oplus F_1$, then $\phi,\psi$ naturally induce
  homomorphisms
  $$\widetilde{\phi},\widetilde{\psi}:C([0,1],F_2)\oplus F_1\rightarrow M_r(\mathbb{C})$$
   in an obviously way, with $\phi=\widetilde{\phi}\circ \iota$
  and $\psi=\widetilde{\psi}\circ \iota$, where $\iota:A\rightarrow C([0,1],F_2)\oplus F_1$ is the inclusion.
  Note that $Sp\widetilde{\phi}\cap (0,1)_i=Sp\widetilde{\psi}\cap (0,1)_i$ for all $i=1,2,\cdots,l$, then $\widetilde{\phi}|_{C([0,1],F_2)}$ is unitary equivalent to $\widetilde{\psi}|_{C([0,1],F_2)}$.
  So $\widetilde{\phi}_*,\widetilde{\psi}_*:K_0(F_2)\oplus K_0(F_1)\rightarrow K_0(\mathbb{C})$ satisfy
   $\widetilde{\phi}_*|_{K_0(F_2)}=\widetilde{\psi}_*|_{K_0(F_2)}$. That is $\widetilde{\phi}_*-\widetilde{\psi}_*$ defines a map from
   $K_0(F_1)$ to $\mathbb{C}$. Let $d_j=t_j-s_j$, then
   $$
   \widetilde{\phi}_*-\widetilde{\psi}_*:K_0(F_1)=\mathbb{Z}^p\rightarrow K_0(\mathbb{C})=\mathbb{Z}
   $$
   is given by
   $$
   (\widetilde{\phi}_*-\widetilde{\psi}_*)
   \left(
\begin{array}{c}
  x_1 \\
  x_2 \\
   \vdots\\
  x_p
\end{array}
  \right)
  =d_1x_1+d_2x_2+\cdots+d_px_p.
   $$
   Denote $\widetilde{\phi}_*-\widetilde{\psi}_*$ by $D:\mathbb{Z}^p\rightarrow \mathbb{Z}$. At the same time $D$ can be considered as an
   element (still denote by $D$) $D\in KK(F_1,\mathbb{C})={\rm Hom}(K_0(F_1),K_0(\mathbb{C}))$. Since
   $A\xrightarrow{\pi} F_1$ factors through as
   $$
   A\xrightarrow{\iota}C([0,1],F_2)\oplus F_1\rightarrow F_1,
   $$
   and
   $$
   KK(\phi)- KK(\psi)=KK(\widetilde{\phi}\circ \iota)-KK(\widetilde{\psi}\circ \iota),
   $$
   we have
   $$
   KK(\phi)- KK(\psi)=[\pi]\times D.
   $$
   where $[\pi]\in KK(A,F_1)$ is induced by  $A\xrightarrow{\pi} F_1$.

   Let $I=C_0((0,1),F_2)\subset A$ be the ideal. The short exact sequence
   $$
   0\rightarrow I\xrightarrow{\iota} A\xrightarrow{\pi}F_1\rightarrow 0
   $$
   induces two exact sequences
   $$
   0=K_0(I)\rightarrow K_0(A)\xrightarrow{\pi_*} K_0(F_1)\xrightarrow{\partial}K_1(I)\rightarrow K_1(A)\rightarrow K_1(F_1)=0
   $$
   and
   $$
   0\leftarrow KK(A,\mathbb{C})\xleftarrow{[\pi]\times} KK(F_1,\mathbb{C})\xleftarrow{\delta}KK^1(I,\mathbb{C})\leftarrow KK^1(A,\mathbb{C})\leftarrow 0.
   $$
   In particular $K_1(I)=K_0(F_2)$ and $KK^1(I,\mathbb{C})= KK(F_2,\mathbb{C})$, and the map $\delta$ is the dual map of $\partial$ as
   $$
   KK(F_2,\mathbb{C})={\rm Hom}(K_0(F_2),\mathbb{Z})
   \quad
   {\rm and}
   \quad
   KK(F_1,\mathbb{C})={\rm Hom}(K_0(F_1),\mathbb{Z}).
   $$
   Since $\partial:K_0(F_1)=\mathbb{Z}^p\rightarrow \mathbb{Z}^l=K_0(F_2)$ is given by the matrix $\alpha-\beta$.
   So $\delta:KK(F_2,\mathbb{C})\rightarrow KK(F_1,\mathbb{C})$ is given by $\alpha^t-\beta^t$ ($\alpha^t$ denotes the
   transpose of $\alpha$).

   Then
   $$
   KK(\phi)- KK(\psi)=[\pi]\times D=0
   $$
    if and only if $D\in Im(\delta)\subset KK(F_1,\mathbb{C})=\mathbb{Z}^p$, that is, there is
   a
   $
   \left(
\begin{array}{c}
  c_1 \\
  c_2 \\
   \vdots\\
  c_l
\end{array}
  \right)
\in \mathbb{Z}^l
   $
   such that
   $$
  \left(
  \begin{array}{c}
    d_1 \\
    d_2 \\
    \vdots \\
    d_p
  \end{array}
  \right)
  =(\alpha^t-\beta^t)\cdot
  \left(
\begin{array}{c}
  c_1 \\
  c_2 \\
   \vdots\\
  c_l
\end{array}
  \right).
  $$
\end{proof}
\begin{notion}
  Let $A=A(F_1,F_2,\varphi_0,\varphi_1)\in\mathcal{D}$, where $F_1=\bigoplus_{j=1}^pM_{k_j}(\mathbb{C}),\,\,F_2=M_n(\mathbb{C})$ and $\varphi_{0*},\varphi_{1*}$ be represented by $\alpha=(\alpha_1,\alpha_2,\cdots,\alpha_p)$ and $\beta=(\beta_1,\beta_2,\cdots,\beta_p)$.

  Define $N_A$ as follows:
  $$
  N_A={\rm min}\{\,l\,|\,l\geq \max_{\alpha_j\neq \beta_j}\{|\frac{\alpha_j+\beta_j}{\alpha_j-\beta_j}|\},j\in \{1,2,\cdots,p\},l\in \mathbb{N}\}+1.$$
\end{notion}
\begin{lem} \label{kktoaue}
  Let $A\in \mathcal{D}$, $F\subset A$ be a finite set and $1>\varepsilon>0$. Suppose that $\eta=\frac{1}{m}$ satisfies that for any $x,y\in [0,1]$ with $dist(x,y)<4N_A\eta$, $\|f(x)-f(y)\|<\frac{\varepsilon}{4}$ for any $f\in F$. If homomorphisms $\phi,\psi:A\rightarrow M_r(\mathbb{C})$ satisfy the following conditions:

  $(a)$ $dist(Eig(\phi(h)),Eig(\psi(h)))<1$, $\forall\, h\in H(\eta);$

  $(b)$ $KK(\phi)=KK(\psi)\in KK(A,\mathbb{C});$

  $(c)$ $Sp\phi\cap(0,1)=Sp\psi\cap(0,1)$ counting multiplicity,

  then there exists a unitary $u\in M_r(\mathbb{C})$ such that
$$
  \|\phi(f)-u^*\psi(f)u\|<\varepsilon,\,\,\forall f\in F.
$$\end{lem}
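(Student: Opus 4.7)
The plan is to produce a pairing between $Sp\,\phi$ and $Sp\,\psi$ (as multisets in $Sp(A) = Sp(F_1) \cup (0,1)$) in which every pair of points sits at distance at most $4 N_A \eta$; the spectral representation of Notion \ref{SPCForm}, together with the modulus-of-continuity hypothesis on $F$, then yields a unitary $u \in M_r(\mathbb{C})$ with $\|\phi(f) - u^*\psi(f)u\| < \varepsilon$ for all $f \in F$ via a standard near-eigenvector argument.

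By (c), the interior portions $Sp\,\phi \cap (0,1)$ and $Sp\,\psi \cap (0,1)$ coincide as multisets and pair trivially. Writing the types as $(t_1,\dots,t_p,w)$ and $(s_1,\dots,s_p,w)$, Lemma \ref{kkeq} applied to (b) produces a single integer $c \in \mathbb{Z}$ (since $l=1$) with $t_j - s_j = (\alpha_j - \beta_j) c$. It remains to pair the boundary discrepancy $c(\alpha_j - \beta_j)$ copies of $\theta_j$ at each $j$, which we do by locating $|c|$ of the shared $y_k$'s within distance $N_A \eta$ of an endpoint and invoking the identifications $0 \leftrightarrow (\theta_j^{\sim \alpha_j})_j$ and $1 \leftrightarrow (\theta_j^{\sim \beta_j})_j$ to re-interpret them as the excess or missing boundary $\theta_j$'s.

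The key technical input is to use condition (a) on the type 1 test functions $h_{j,r,s}$ to guarantee that enough near-boundary $y_k$'s exist. For each $(j,r,s)$, since (c) makes all $y_k$-contributions to $Eig(\phi(h_{j,r,s}))$ and $Eig(\psi(h_{j,r,s}))$ identical, the eigenvalue-$1$ counts differ by exactly $c(\alpha_j - \beta_j)$; the strict bound $<1$ in (a) then forces this excess to be absorbed into transition eigenvalues contributed by $y_k$'s in the narrow windows $(r\eta, (r+1)\eta)$ and $((s-1)\eta, s\eta)$, yielding the distributional constraint $\alpha_j P_r + \beta_j P_{s-1} \geq |c(\alpha_j - \beta_j)|$, where $P_i = \#\{y_k \in (i\eta, (i+1)\eta)\}$. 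Ranging over all valid $(r, s)$ and using the calibration $N_A - 1 \geq \max_j (\alpha_j + \beta_j)/|\alpha_j - \beta_j|$ extracts the required $|c|$ $y_k$'s within $N_A \eta$ of an endpoint, after which the spectral representation delivers the unitary $u$ with total $F$-error below $\varepsilon$. The main obstacle is then reconciling the per-$j$ constraints --- each near-boundary $y_k$ simultaneously contributes to every $j$ via weight $\alpha_j$ or $\beta_j$ --- into a single coherent set of re-interpretations, with the constant $N_A$ precisely tuned to handle this coupling.
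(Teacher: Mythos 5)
Your overall strategy is the paper's: obtain $c\in\mathbb{Z}$ from Lemma~\ref{kkeq}, extract a density bound on $Sp\,\phi\cap(0,1)$ from condition (a) via the type~1 test functions, then perturb $\phi$, $\psi$ to $\phi'$, $\psi'$ with identical $Sp(F_1)$-parts by pushing $|c|$ interior points to an endpoint. Your inequality $\alpha_j P_r + \beta_j P_{r+1}\geq|c|\,|\alpha_j-\beta_j|$ and the calibration of $N_A$ are exactly the paper's.

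There are two issues, one a real gap. The gap: you only extract $|c|$ points \emph{near an endpoint} and then claim the spectral representation delivers $u$, but you never address how the \emph{remaining} interior spectra are matched. After the perturbation (say $c>0$), $Sp\,\phi'\cap(0,1)=\{y_1,\dots,y_{\bullet-c}\}$ while $Sp\,\psi'\cap(0,1)=\{y_{c+1},\dots,y_\bullet\}$, and the pairing $y_k\leftrightarrow y_{k+c}$ must move every point by less than $4N_A\eta$. This forces the density bound to hold \emph{throughout} $[0,1]$ --- at least $|c|$ points in every $2N_A\eta$-length window --- not merely near $0$ or $1$. The scanning windows $W_{j_0}^r$ for $r=0,1,\dots,m-2N_A$ in the paper establish exactly this uniform bound, of which the near-boundary existence is only the special cases $r=0$ and $r=m-2N_A$. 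Without the global bound, a spectral gap in the middle of $(0,1)$ would defeat the shifted pairing.

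Second, the ``coupling'' you flag as the main obstacle --- reconciling per-$j$ constraints because each boundary $y_k$ simultaneously contributes weight $\alpha_j$ or $\beta_j$ for every $j$ --- is not an obstacle and should not be attacked. The paper uses a \emph{single} $j_0$ with $\alpha_{j_0}\neq\beta_{j_0}$ to derive the density bound; once you have $|c|$ points to push, the identity $t+\beta^t c=s+\alpha^t c$ from Lemma~\ref{kkeq} guarantees that the resulting $Sp(F_1)$-parts of $\phi'$ and $\psi'$ agree in \emph{every} coordinate $j$ automatically, because pushing a $y_k$ to $0$ (resp. $1$) adds $\alpha_j$ (resp. $\beta_j$) copies of $\theta_j$ for all $j$ at once. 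There is no per-$j$ reconciliation to do.
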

\begin{proof}
  Write $Sp\phi,Sp\psi$ as
  $$
  Sp\phi=\{\theta_1^{\sim t_1},\cdots,\theta_p^{\sim t_p},y_1,y_2,\cdots,y_{\bullet}\},\quad
  Sp\psi=\{\theta_1^{\sim s_1},\cdots,\theta_p^{\sim s_p},y_1,y_2,\cdots,y_{\bullet}\},
  $$
  where $\{y_1,y_2,\cdots,y_{\bullet}\}=Sp\phi\cap(0,1)=Sp\psi\cap(0,1)$ as condition (c) holds.

  From condition (b) and Lemma \ref{kkeq}, there exists an integer $c$ such that
  $$
  \left(
  \begin{array}{c}
    t_1 \\
    t_2 \\
    \vdots \\
    t_p
  \end{array}
  \right)-
  \left(
  \begin{array}{c}
    s_1 \\
    s_2 \\
    \vdots \\
    s_p
  \end{array}
  \right)
  =
  \left(
  \begin{array}{c}
    d_1 \\
    d_2 \\
    \vdots \\
    d_p
  \end{array}
  \right)
  =
  \left(
  \begin{array}{c}
    \alpha_1-\beta_1 \\
    \alpha_2-\beta_2 \\
    \vdots \\
    \alpha_p-\beta_p
  \end{array}
  \right)\cdot c
  $$

 If $\alpha=\beta$ or $c=0$,  then $(\alpha^t-\beta^t)\cdot c=0$, we  will have $t_j=s_j$ for $j=1,2,\cdots,p$, obviously,
 $\phi$ and $\psi$ are unitary equivalent.

 We may assume that $\alpha\neq\beta$ and $c\neq0 $, then
 there exists an integer $1\leq j_0\leq p$ such that $\alpha_{j_0}>\beta_{j_0}$, then $d_{j_0}\neq 0$.
 Set $X_{j_0}=\{\theta_{j_0}\}$, define
 $$
 W_{j_0}^r=
 \begin{cases}
   [0,r\eta], & \mbox{if } \beta_{j_0}=0 \\
   [0,r\eta]\cup[(r+2)\eta,1], & \mbox{if } \beta_{j_0}\neq 0,
 \end{cases}
 $$
 where $r\in\{0,1,\cdots,m-2\}$. Let $h_{j_0}^r$ be the test function corresponding to $X_{j_0}$ and $W_{j_0}^r$,
 then we have
 $$
 \#\{x=1\,|\,x\in Eig(\phi(h_{j_0}^r))\}=t_{j_0}+\alpha_{j_0}\cdot
 \#(Sp\phi\cap(0,r\eta])+\beta_{j_0}\cdot\#(Sp\phi\cap[r\eta+2\eta,1))
 $$
 and
 $$
 \#\{x=1\,|\,x\in Eig(\psi(h_{j_0}^r))\}=s_{j_0}+\alpha_{j_0}\cdot
 \#(Sp\phi\cap(0,r\eta])+\beta_{j_0}\cdot\#(Sp\phi\cap[r\eta+2\eta,1)).
 $$

 Since $dist(Eig(\phi(h_{j_0}^r)),Eig(\psi(h_{j_0}^r)))<1$, then we have
 $$
 \#\{x>0\,|\,x\in Eig(\psi(h_{j_0}^r))\}\geq
 \#\{x=1\,|\,x\in Eig(\phi(h_{j_0}^r))\}
 $$
 and
 $$
 \#\{x>0\,|\,x\in Eig(\phi(h_{j_0}^r))\}\geq
 \#\{x=1\,|\,x\in Eig(\psi(h_{j_0}^r))\}.
 $$
 Hence,
  $$
 \#\{x\neq0,1\,|\,x\in Eig(\psi(h_{j_0}^r))\}\geq
 \#\{x=1\,|\,x\in Eig(\phi(h_{j_0}^r))\}-\#\{x=1\,|\,x\in Eig(\psi(h_{j_0}^r))\}
 $$
 and
 $$
 \#\{x\neq0,1\,|\,x\in Eig(\phi(h_{j_0}^r))\}\geq
 \#\{x=1\,|\,x\in Eig(\psi(h_{j_0}^r))\}-\#\{x=1\,|\,x\in Eig(\phi(h_{j_0}^r))\}.
 $$
 Note that
 $$
 \#\{x\neq0,1\,|\,x\in Eig(\phi(h_{j_0}^r))\}=\#\{x\neq0,1\,|\,x\in Eig(\psi(h_{j_0}^r))\}.
 $$
 Therefore,
 $$
 \#\{x\neq0,1\,|\,x\in Eig(\phi(h_{j_0}^r))\}\geq|t_{j_0}-s_{j_0}|=|d_{j_0}|,
 $$
 this means that
 $$
 \alpha_{j_0}\cdot\#\big(Sp\phi\cap(r\eta,r\eta+\eta)\big)+
 \beta_{j_0}\cdot\#\big(Sp\phi\cap(r\eta+\eta,r\eta+2\eta)\big)\geq |d_{j_0}|.
 $$
 Then for each $r$, we have
 $$
 \#\big(Sp\phi\cap(r\eta,r\eta+2\eta)\big)\geq \frac{|d_{j_0}|}{\alpha_{j_0}+\beta_{j_0}}=|c|
 \cdot \frac{\alpha_{j_0}-\beta_{j_0}}{\alpha_{j_0}+\beta_{j_0}}\geq |c|\cdot \frac{1}{N_A},
 $$
 then there are at least $|c|$ points in  $[r\eta,(r+2N_A)\eta]$ for any $r=0,1,\cdots,m-2N_A$.

 Then we can define $\phi',\psi'$ as follows: If $c>0$, to define $\phi'$, we replace the $c$ largest of $y_k\in (0,1)$ by $1\thicksim \{\theta_1^{\thicksim\beta_1},\cdots,
 \theta_p^{\thicksim\beta_p}\}$, to define $\psi'$, we replace the $c$ smallest of $y_k\in (0,1)$ by $0\thicksim \{\theta_1^{\thicksim\alpha_1},\cdots,\theta_p^{\thicksim\alpha_p}\}$.
  Since
  $$
  \left(
  \begin{array}{c}
    t_1 \\
    t_2 \\
    \vdots \\
    t_p
  \end{array}
  \right)+\beta^t\cdot c=
  \left(
  \begin{array}{c}
    s_1 \\
    s_2 \\
    \vdots \\
    s_p
  \end{array}
  \right)
  +\alpha^t\cdot c,
  $$
 then we have $Sp\phi'\cap Sp(F_1)=Sp\psi'\cap Sp(F_1)$.

 If $c<0$, to define $\phi'$, we replace the $-c$ smallest of $y_k\in (0,1)$ by $0\thicksim \{\theta_1^{\thicksim\alpha_1},\cdots,
 \theta_p^{\thicksim\alpha_p}\}$, to define $\psi'$, we replace the $-c$ largest of $y_k\in (0,1)$ by $1\thicksim \{\theta_1^{\thicksim\beta_1},\cdots,\theta_p^{\thicksim\beta_p}\}$, in this case
  $$
  \left(
  \begin{array}{c}
    t_1 \\
    t_2 \\
    \vdots \\
    t_p
  \end{array}
  \right)+\alpha^t\cdot (-c)=
  \left(
  \begin{array}{c}
    s_1 \\
    s_2 \\
    \vdots \\
    s_p
  \end{array}
  \right)
  +\beta^t\cdot (-c),
  $$
  we still have $Sp\phi'\cap Sp(F_1)=Sp\psi'\cap Sp(F_1)$.

 Note that, the $|c|$ points change within 2$N_A$, the we have
 $$
 \|\phi(f)-\phi'(f)\|<\frac{\varepsilon}{4},\quad
 \|\psi(f)-\psi'(f)\|<\frac{\varepsilon}{4}, \quad\forall\, f\in F.
 $$

 If the set $Sp\phi\cap(0,1)=Sp\psi\cap(0,1)$ are written in increasing order as $y_1\leq y_2\leq\cdots\leq y_{\bullet}$, then
  $|y_k-y_{k+|c|}|<4N_A\eta$. As sets $Sp\phi'\cap(0,1)$ and $Sp\psi'\cap(0,1)$ can be paired to within $4N_A\eta$, therefore, there exists a unitary $u\in M_r(\mathbb{C})$ such that
 $$
 \|\phi'(f)-u^*\psi'(f)u\|<\frac{\varepsilon}{4},\,\,\,\forall f\in F.
 $$
 Hence for this unitary,
 $$
 \|\phi(f)-u^*\psi(f)u\|<\frac{3}{4}\varepsilon<\varepsilon,\,\,\,\forall f\in F.
 $$
\end{proof}
In \cite{Liu:2019}, the second author uses Gong's pairing lemma (see \cite{EGLN2:2017}) to prove the following result.
\begin{lem}\label{marr}
  Let $A\in \mathcal{D}$ and $\eta=\frac{1}{m}$, $m\in\mathbb{N}_+$. If $\phi,\psi:A\rightarrow M_n(\mathbb{C})$ are homomorphisms such that $dist(Eig(\phi(h)),Eig(\psi(h)))<1$ for all $h\in H(\eta_1)$, then there exist $X\subset Sp\phi\cap(0,1)$, $Y\subset Sp\psi\cap(0,1)$ with $X\supset Sp\phi\cap[\eta,1-\eta]$ , $Y\supset Sp\psi\cap[\eta,1-\eta]$ such that $X$ and $Y$ can be paired to within $2\eta$ one by one.
\end{lem}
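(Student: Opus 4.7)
The plan is to convert the eigenvalue-distance hypothesis into Hall-type counting inequalities between the spectra of $\phi$ and $\psi$ on unions of basic subintervals of the partition $0=x_0<x_1<\cdots<x_m=1$, and then invoke Gong's pairing lemma from \cite{EGLN2:2017} to extract the required matching within $2\eta$.

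First I would fix a closed subset $X\subset[\eta,1-\eta]$ which is a finite union of basic intervals $[x_r,x_{r+1}]$ (and possibly finitely many isolated $x_r$), and consider the associated type 2 test function $h_X\in H(\eta)$. Since $X\subset[\eta,1-\eta]$, the function $h_X$ vanishes at both endpoints and its finite-dimensional component is zero, so only the open-interval spectral points contribute. In the diagonal form of $\phi$, each $y_k\in Sp\phi\cap(0,1)$ contributes a rank-one block to $\phi(h_X)$ whose single nonzero eigenvalue equals $\max(1-dist(y_k,X)/\eta,\,0)$. Setting $U_\eta(X)=\{t:dist(t,X)<\eta\}$, it follows that
$$\#\{x=1:x\in Eig(\phi(h_X))\}=\#(Sp\phi\cap X),$$
$$\#\{x>0:x\in Eig(\phi(h_X))\}=\#(Sp\phi\cap U_\eta(X)),$$
and symmetrically for $\psi$. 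The hypothesis $dist(Eig(\phi(h_X)),Eig(\psi(h_X)))<1$ forbids any $1$-eigenvalue on one side from being paired with a $0$-eigenvalue on the other, so I obtain the two-sided counting inequalities
$$\#(Sp\phi\cap X)\leq\#(Sp\psi\cap U_\eta(X)),\qquad \#(Sp\psi\cap X)\leq\#(Sp\phi\cap U_\eta(X))$$
for every admissible $X$.

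With these inequalities in hand for every union of basic intervals inside $[\eta,1-\eta]$, I would feed them into Gong's pairing lemma from \cite{EGLN2:2017}, a marriage-theorem-type result engineered precisely for spectra on a partitioned interval, to produce the required $X$ and $Y$. The doubling to $2\eta$ arises naturally: an arbitrary finite subset $S$ of the spectrum is first replaced by the smallest union $X_S$ of basic intervals meeting $S$ (an inflation of at most $\eta$), after which the inequality pushes one $\eta$ further to $U_\eta(X_S)$, for a total of $2\eta$.

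The main subtlety I anticipate is the treatment of the boundary strips $(0,\eta)$ and $(1-\eta,1)$, which no admissible $X$ can reach: the counting inequalities above are silent about spectral points trapped near the endpoints, and the matching must therefore be allowed to absorb additional spectral points from those strips on either side in order to maintain equal cardinalities. This is precisely why the statement requires only the inclusions $X\supset Sp\phi\cap[\eta,1-\eta]$ and $Y\supset Sp\psi\cap[\eta,1-\eta]$ rather than equality, and checking that Gong's pairing lemma accommodates this asymmetric flexibility is the key point where the argument must be applied carefully.
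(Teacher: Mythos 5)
Your proposal is correct and follows essentially the same route as the paper, which gives no direct proof here but cites the second author's result in \cite{Liu:2019} built on Gong's pairing lemma from \cite{EGLN2:2017}. The counting inequalities you extract from the type-2 test functions and the ``inflate by $\eta$ to basic intervals, then by another $\eta$ via $U_\eta$'' argument are exactly the spectral-multiplicity estimates that feed into that pairing lemma, and the ``asymmetric flexibility'' you flag at the end is absorbed by the two-sided (Mendelsohn--Dulmage/defect) form of the marriage theorem that the pairing lemma packages.
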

\begin{lem} \label{kktoaue pro}
  Let $A\in \mathcal{D}$, $F\subset A$ be finite set and $1>\varepsilon>0$. Suppose that $\eta=\frac{1}{m}$ satisfies that for any $x,y\in [0,1]$ with $dist(x,y)<4N_A\eta$, $\|f(x)-f(y)\|<\frac{\varepsilon}{8}$ for any $f\in F$. Let $\eta_1=\frac{1}{m_1}<\frac{\eta}{2}$ satisfy that $\|h(x)-h(y)\|<\frac{1}{4}$ for any $x,y\in [0,1]$ with $dist(x,y)\leq2\eta_1$ and for all $h\in H(\eta)$. If homomorphisms $\phi,\psi:A\rightarrow M_r(\mathbb{C})$ satisfy the following conditions:

  $(a)$ $dist(Eig(\phi(h)),Eig(\psi(h)))<\frac{1}{2}$, $\forall\,h\in H(\eta);$

  $(b)$ $dist(Eig(\phi(h)),Eig(\psi(h)))<1$, $\forall\,h\in H(\eta_1);$

  $(c)$ $KK(\phi)=KK(\psi)\in KK(A,\mathbb{C}),$

  then there exists a unitary $u\in M_r(\mathbb{C})$ such that
  $$
  \|\phi(f)-u^*\psi(f)u\|<\varepsilon,\,\,\forall f\in F.
  $$
\end{lem}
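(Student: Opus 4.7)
The plan is to reduce the statement to Lemma \ref{kktoaue} by exploiting the gap between the two scales $\eta$ and $\eta_1$: first use Lemma \ref{marr} at the fine scale $\eta_1$ (where hypothesis (b) applies) to pair almost all interior spectral points of $\phi$ and $\psi$, then perturb $\phi$ and $\psi$ by small homotopies that force their interior spectra to coincide, and finally verify that at the coarser scale $\eta$ the perturbed homomorphisms still satisfy the eigenvalue-distance hypothesis, which permits direct appeal to Lemma \ref{kktoaue}.

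More concretely, applying Lemma \ref{marr} with $\eta_1$ yields $X\subset Sp\phi\cap(0,1)$ and $Y\subset Sp\psi\cap(0,1)$ with $X\supset Sp\phi\cap[\eta_1,1-\eta_1]$, $Y\supset Sp\psi\cap[\eta_1,1-\eta_1]$, and a bijective pairing between $X$ and $Y$ to within $2\eta_1$. All points of $Sp\phi\cap(0,1)\setminus X$ and $Sp\psi\cap(0,1)\setminus Y$ must therefore lie within $\eta_1$ of $\{0,1\}$. I would build $\phi'$ from $\phi$ by (i) replacing each unpaired point lying in $(0,\eta_1)$ (respectively $(1-\eta_1,1)$) by the endpoint contribution $\{\theta_1^{\sim\alpha_1},\dots,\theta_p^{\sim\alpha_p}\}$ (respectively $\{\theta_1^{\sim\beta_1},\dots,\theta_p^{\sim\beta_p}\}$) in the standard form of Notion \ref{SPCForm}, and (ii) sliding each point of $X$ to its partner in $Y$, so that $Sp\phi'\cap(0,1)=Y$. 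Similarly $\psi'$ is obtained from $\psi$ by pushing only its unpaired points to the nearer endpoint, giving $Sp\psi'\cap(0,1)=Y$. Since every spectral point moves by at most $2\eta_1<\eta<4N_A\eta$, the hypothesis on $F$ yields $\|\phi(f)-\phi'(f)\|<\varepsilon/8$ and $\|\psi(f)-\psi'(f)\|<\varepsilon/8$ for all $f\in F$.

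Next I would check the three hypotheses of Lemma \ref{kktoaue} for $\phi',\psi'$ at scale $\eta$ with its internal $\varepsilon$ taken to be $\varepsilon/2$. Condition (c) holds by construction, since both interior spectra equal $Y$. The $KK$-equality $KK(\phi')=KK(\psi')$ follows because each elementary modification above is realised by a continuous path of homomorphisms $A\to M_r(\mathbb{C})$ (a single spectral point moved continuously inside $[0,1]$, with the $f(0)=\varphi_0(a)$ and $f(1)=\varphi_1(a)$ identifications absorbing the boundary endpoints) and hence preserves $KK$, so $KK(\phi')=KK(\phi)=KK(\psi)=KK(\psi')$. For condition (a), the at-most-$2\eta_1$ displacement of every spectral point, together with the choice of $\eta_1$, gives $dist(Eig(\phi(h)),Eig(\phi'(h)))<1/4$ and likewise for $\psi$ for every $h\in H(\eta)$, and then the triangle inequality combined with the present hypothesis (a) produces
$$dist(Eig(\phi'(h)),Eig(\psi'(h)))<\tfrac{1}{4}+\tfrac{1}{2}+\tfrac{1}{4}=1,$$
with strict inequality because each summand is strict. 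Lemma \ref{kktoaue} then delivers a unitary $u$ with $\|\phi'(f)-u^*\psi'(f)u\|<\varepsilon/2$, and the triangle inequality concludes $\|\phi(f)-u^*\psi(f)u\|<\varepsilon/8+\varepsilon/2+\varepsilon/8<\varepsilon$.

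The main obstacle is the bookkeeping at the two scales: I must verify that each boundary replacement uses the correct multiplicities $\alpha_j,\beta_j$ so that $\phi'$ remains a genuine homomorphism in the standard form, and that the homotopies implementing the modifications are genuinely paths of homomorphisms (so $KK$-invariance really applies). Once this is set up, maintaining strict inequality in the $1/4+1/2+1/4=1$ bound is automatic from the strictness in hypothesis (a) and in the choice of $\eta_1$.
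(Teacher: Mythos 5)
Your proposal matches the paper's proof essentially step for step: apply Lemma \ref{marr} at scale $\eta_1$ to get the pairing $X\leftrightarrow Y$, perturb $\phi,\psi$ to $\phi',\psi'$ (pushing unpaired near-boundary points to the appropriate endpoint and sliding $X$ onto $Y$) so both interior spectra equal $Y$, note the $2\eta_1$ displacement keeps $F$-perturbations below $\varepsilon/8$ and $H(\eta)$-perturbations below $1/4$, obtain $dist(Eig(\phi'(h)),Eig(\psi'(h)))<1$ and $KK$-equality by homotopy, and invoke Lemma \ref{kktoaue} at precision $\varepsilon/2$. This is exactly the paper's argument, with the same constants and the same reduction.
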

\begin{proof}
  From Lemma \ref{marr} , there exist $X\subset Sp\phi\cap(0,1)$, $Y\subset Sp\psi\cap(0,1)$
   with $X\supset Sp\phi\cap[\eta_1,1-\eta_1]$, $Y\supset Sp\psi\cap[\eta_1,1-\eta_1]$ such that $X$ and $Y$ can be paired to within $2\eta_1$ one by one, denote the one to one correspondence by $\pi:X\rightarrow Y$.

  There exist unitaries $u_1,u_2\in M_n(\mathbb{C})$ such that
  $$
  \phi(f,a)=u_1^*\cdot{\rm diag}\big(a(\theta_1)^{\sim t_1},\cdots,a(\theta_p)^{\sim t_p},f(x_1),f(x_2),\cdots,f(x_{\bullet}),0,\cdots,0\big)\cdot u_1
  $$
  $$
  \psi(f,a)=u_2^*\cdot{\rm diag}\big(a(\theta_1)^{\sim s_1},\cdots,a(\theta_p)^{\sim s_p},f(y_1),f(y_2),\cdots,f(y_{\bullet\bullet}),0,\cdots,0\big)\cdot u_2,
  $$
  then $Sp\phi\cap(0,1)=\{x_1,x_2,\cdots,x_{\bullet}\}$ and $Sp\psi\cap(0,1)=\{y_1,y_2,\cdots,y_{\bullet\bullet}\}$.

   We will perturb $\phi,\psi$ to obtain $\phi',\psi'$. To define $\phi'$, change all the elements $x_k\in Sp\phi\cap(0,\eta_1)\backslash X$ to $0\thicksim \{\theta_1^{\thicksim\alpha_1},\cdots,
 \theta_p^{\thicksim\alpha_p}\}$ and $x_k\in Sp\phi\cap(1-\eta_1,1)\backslash X$ to $1\thicksim \{\theta_1^{\thicksim\beta_1},\cdots,\theta_p^{\thicksim\beta_p}\}$.
 And finally, change all the elements $x_k\in X$ to $\pi(x_k)\in Y$. To define $\psi'$, keep all the points in $Y$, change $x_k\in Sp\psi\cap(0,\eta_1)\backslash X$ to $0$ and $x_k\in Sp\psi\cap(1-\eta_1,1)\backslash X$ to $1$. In this way, each eigenvalue is perturbed by
 at most $2\eta_1$.

 After this modification, we have
 $$
 Sp\phi'\cap(0,1)=Sp\psi'\cap(0,1)=Y,\,\,({\rm counting\,\,multiplicity}),
 $$
 $$
 \|\phi(f)-\phi'(f)\|<\frac{\varepsilon}{8},\quad \|\psi(f)-\psi'(f)\|<\frac{\varepsilon}{8},\quad\forall\,f\in F,
 $$
 $$
 \|\phi(h)-\phi'(h)\|<\frac{1}{4},\quad \|\psi(h)-\psi'(h)\|<\frac{1}{4},\quad\forall\,h\in H(\eta).
 $$

 Then $dist(Eig(\phi(h)),Eig(\phi'(h)))<\frac{1}{4}$ and $dist(Eig(\psi(h)),Eig(\psi'(h)))<\frac{1}{4}$ for any $h\in H(\eta)$, and we have $dist(Eig(\phi'(h)),Eig(\psi'(h)))<1$ for any $h\in H(\eta)$.

 Since $\phi$ is homotopic to $\phi'$ and $\psi$ is homotopic to $\psi'$, then
 $$
 KK(\phi')=KK(\phi)=KK(\psi)=KK(\psi').
 $$
 Apply Lemma \ref{kktoaue} for $\phi',\psi'$ and $\frac{\varepsilon}{2}$ (in place of $\varepsilon$), there exists a unitary $u\in M_r(\mathbb{C})$ such that
 $$
 \|\phi'(f)-u^*\psi'(f)u\|<\frac{\varepsilon}{2},\,\,\forall f\in F.
 $$
 Consequently,
 $$
 \|\phi(f)-u^*\psi(f)u\|<\frac{\varepsilon}{2}+\frac{\varepsilon}{8}
 +\frac{\varepsilon}{8}<\varepsilon,\,\,\forall f\in F.
 $$
\end{proof}
\begin{cor}\label{wv1}
  Let $A\in \mathcal{D}$ be minimal, $F\subset A$ be a finite set and $1>\varepsilon>0$. Choose $\eta,\eta_1$ as in Lemma \ref{kktoaue pro}. Suppose that $B\in \mathcal{D}$, $\tau:A\rightarrow B$ is a homomorphism  with  $$\tau(H(\eta)\cup H(\eta_1))\subset_{1/4}\{f\in B_{sa} |f\,has\,finite\,spectrum\,\},$$ then we have $\omega(\tau(F))<\varepsilon$.
\end{cor}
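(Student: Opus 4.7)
The plan is to reduce to Lemma \ref{kktoaue pro} applied to compositions with $\tau$. Since $\tau(F)\subset B$, the weak variation $\omega(\tau(F))$ is computed via pairs of homomorphisms $\phi,\psi:B\to M_r(\mathbb{C})$ with $KK(\phi)=KK(\psi)$. For any such pair, consider $\widetilde\phi:=\phi\circ\tau$ and $\widetilde\psi:=\psi\circ\tau$, which are homomorphisms $A\to M_r(\mathbb{C})$ with $KK(\widetilde\phi)=KK(\widetilde\psi)$ by functoriality of $KK$. This gives condition (c) of Lemma \ref{kktoaue pro} for free; the real work is in verifying the eigenvalue-pairing conditions (a) and (b) for $\widetilde\phi,\widetilde\psi$.

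For each $h\in H(\eta)\cup H(\eta_1)$, the hypothesis produces $g_h\in B_{sa}$ with finite spectrum such that $\|\tau(h)-g_h\|<1/4$. Write $g_h=\sum\lambda_i p_i$ with mutually orthogonal projections $p_i\in B$. Since $KK(\phi)=KK(\psi)$, the two maps agree on $K_0(B)$, so $[\phi(p_i)]=[\psi(p_i)]$ in $K_0(M_r(\mathbb{C}))=\mathbb{Z}$; that is, the ranks of $\phi(p_i)$ and $\psi(p_i)$ coincide for every $i$. Consequently $\phi(g_h)$ and $\psi(g_h)$ have identical eigenvalue multisets in $M_r(\mathbb{C})$. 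Combining this with $\|\phi(\tau(h))-\phi(g_h)\|<1/4$ and the corresponding bound for $\psi$, Weyl's inequality for self-adjoint matrices yields
$$
dist\bigl(Eig(\widetilde\phi(h)),\,Eig(\widetilde\psi(h))\bigr)<\tfrac{1}{4}+\tfrac{1}{4}=\tfrac{1}{2},
$$
which supplies condition (a) of Lemma \ref{kktoaue pro} on $H(\eta)$ and, a fortiori, condition (b) on $H(\eta_1)$.

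Applying Lemma \ref{kktoaue pro} to $\widetilde\phi,\widetilde\psi$ with the given $F,\varepsilon,\eta,\eta_1$ then produces a unitary $u\in M_r(\mathbb{C})$ with $\|\widetilde\phi(f)-u^*\widetilde\psi(f)u\|<\varepsilon$ for every $f\in F$; in fact, tracing the final line of that lemma's proof the bound is at most $\tfrac{3}{4}\varepsilon$, uniformly in $r$ and in the chosen pair $(\phi,\psi)$. Taking the supremum over admissible pairs and infimum over unitaries gives $\omega(\tau(F))\le\tfrac{3}{4}\varepsilon<\varepsilon$.

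The only subtle step is the eigenvalue-equality observation $Eig(\phi(g_h))=Eig(\psi(g_h))$; this is the precise place where the hypothesis $KK(\phi)=KK(\psi)$ is consumed, converting the finite-spectrum approximation into the eigenvalue-pairing input that Lemma \ref{kktoaue pro} demands. Minimality of $A$ enters only to ensure that the setup of Notation \ref{notion of A B} (and thus Lemma \ref{kktoaue pro}) applies to $A$ directly. Everything else is routine operator-norm bookkeeping.
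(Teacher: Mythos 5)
Your proposal is correct and follows essentially the same route as the paper's own argument: reduce to $\widetilde\phi=\phi\circ\tau$, $\widetilde\psi=\psi\circ\tau$, use the finite-spectrum approximation together with $\phi_*=\psi_*$ on $K_0(B)$ to obtain the eigenvalue-pairing estimate $<\tfrac12$ on $H(\eta)\cup H(\eta_1)$, and then invoke Lemma \ref{kktoaue pro}. The only cosmetic difference is that the paper packages the $\tfrac12$-estimate via a unitary conjugation plus the norm bound, whereas you invoke Weyl's inequality directly, and you also explicitly track the $\tfrac34\varepsilon$ constant to justify the strict inequality $\omega(\tau(F))<\varepsilon$ after taking the supremum, which is a slightly more careful reading of the same proof.
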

\begin{proof}
  We need only to prove that for any homomorphism $\phi,\psi: B\rightarrow M_r(\mathbb{C})$ with $KK(\phi)=KK(\psi)$, there exists a unitary $u$ such that
  $$
  \|u(\phi\tau(f))u^*-\psi\tau(f)\|<\varepsilon,\,\,\forall f\in F.
  $$

  For each $h\in H(\eta)\cup H(\eta_1)$, there are mutually orthogonal projections $p_1(h)$, $p_2(h)$, $\cdots$, $p_{m(h)}(h)\in B$
  and real numbers $\lambda_1(h),\lambda_2(h),\cdots,\lambda_{m(h)}(h)$ such that
  $$
  \|\tau(h)-\sum_{k=1}^{m(h)}\lambda_k(h)p_k(h)\|<\frac{1}{4}.
  $$
  Then we have
  $$
  \|\phi(\tau(h))-\sum_{k=1}^{m(h)}\lambda_k(h)\phi(p_k(h))\|<\frac{1}{4},
  \quad
  \|\psi(\tau(h))-\sum_{k=1}^{m(h)}\lambda_k(h)\psi(p_k(h))\|<\frac{1}{4}.
  $$
  Since $KK(\phi)=KK(\psi)$, we have
  $$[\phi(p_k(h))]= [\psi(p_k(h))]\,\, \,\,in\,\,K_0(M_r(\mathbb{C})),\quad  k=1,2,\cdots,m(h),$$
  and there exists a unitary $v\in M_r(\mathbb{C})$ such that
 $$
  \|v(\phi\tau(h))v^*-\psi\tau(h)\|<\frac{1}{2}.
  $$
  Then we have $dist(Eig(\phi\tau(h)),Eig(\phi\tau(h)))<\frac{1}{2}$ for any $h\in H(\eta)\cup H(\eta_1)$.

  Since $KK(\phi\tau)=KK(\psi\tau)$, by Lemma \ref{kktoaue pro}, there exists a unitary $u\in M_r(\mathbb{C})$ such that
  $$\|u(\phi\tau(f))u^*-\psi\tau(f)\|<\varepsilon,\,\,\forall f\in F.$$
  Now we have  $\omega(\tau(F))<\varepsilon$.
\end{proof}
  Combine Lemma \ref{fini} and Corollary \ref{wv1}, we will have
\begin{thrm}\label{wv2}
  Let $A=\underrightarrow{lim}(A_n,\phi_{n,m})$ be a real rank zero inductive limit of $\mathrm{C}^*$-algebras in $\mathcal{D}$.
  Let $F\subset A_n$ be a finite subset and $\varepsilon>0$, there exists an integer $m\geq n$ such that $\omega(\phi_{n,r}(F))<\varepsilon$ for
  all $r\geq m$.
\end{thrm}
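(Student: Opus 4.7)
The plan is to combine Lemma \ref{fini} with Corollary \ref{wv1} in the way hinted at by the sentence preceding the theorem; the only step requiring any work beyond invocation is the reduction to minimal blocks, since Corollary \ref{wv1} is stated for a minimal $A\in\mathcal{D}$.

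First I would decompose $A_n=\bigoplus_{i=1}^I A_n^{(i)}$ into its minimal summands in $\mathcal{D}$ and write each $f\in F$ accordingly as $(f^{(1)},\dots,f^{(I)})$ with $f^{(i)}\in A_n^{(i)}$. For each block I invoke Lemma \ref{kktoaue pro} to choose scales $\eta^{(i)}>2\eta_1^{(i)}>0$ matched to $F^{(i)}:=\{f^{(i)}:f\in F\}$ and $\varepsilon$. The natural finite self-adjoint set to feed to Lemma \ref{fini} is
$$H:=\bigcup_{i=1}^I\bigl(H(\eta^{(i)})\cup H(\eta_1^{(i)})\bigr)\subset A_n,$$
each summand's test-function set being placed in its own block. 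Because $RR(A)=0$, Lemma \ref{fini} produces an integer $m\geq n$ with $\phi_{n,m}(H)\subset_{1/4}\{g\in(A_m)_{sa}:g\text{ has finite spectrum}\}$. For any $r\geq m$, the further homomorphism $\phi_{m,r}$ sends a finite-spectrum self-adjoint $\sum_j\lambda_j p_j$ to $\sum_j\lambda_j\phi_{m,r}(p_j)$, which is again finite-spectrum, and preserves the $1/4$-estimate; so the same inclusion holds with $r$ in place of $m$.

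Corollary \ref{wv1} then applies with $\tau=\phi_{n,r}|_{A_n^{(i)}}:A_n^{(i)}\to A_r$ for each $i$, giving $\omega(\phi_{n,r}(F^{(i)}))<\varepsilon$. To piece these together into the required bound on $\omega(\phi_{n,r}(F))$, I argue as follows. Given any two $KK$-equivalent homomorphisms $\phi,\psi:A_r\to M_s(\mathbb{C})$, the mutually orthogonal projections $\phi_{n,r}(1_{A_n^{(i)}})$ decompose both $\phi$ and $\psi$ into block components. Since $KK(\phi)=KK(\psi)$ forces $[\phi(\phi_{n,r}(1_{A_n^{(i)}}))]=[\psi(\phi_{n,r}(1_{A_n^{(i)}}))]$ in $K_0(M_s(\mathbb{C}))$, a fixed preliminary unitary conjugation can be used to arrange that the two components of each block share a common corner $M_{s_i}(\mathbb{C})$; the blockwise estimate from Corollary \ref{wv1} then supplies unitaries $u_i\in M_{s_i}(\mathbb{C})$, and their diagonal sum is the required global unitary.

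The only point requiring any care is this last packaging step, namely checking that blockwise $KK$-equivalence follows from $KK$-equivalence on $A_r$ (automatic from $\phi_{n,r}(1_{A_n^{(i)}})$ being a projection with a well-defined $K_0$-class preserved by both $\phi$ and $\psi$) and that finite spectrum together with the $1/4$-approximation is genuinely stable under the further connecting maps (immediate since $\ast$-homomorphisms send projections to projections and are contractive). No idea beyond the lemmas already established in this section is required.
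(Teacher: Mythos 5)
Your proposal is correct and does exactly what the paper's one-line instruction (``Combine Lemma \ref{fini} and Corollary \ref{wv1}'') intends: decompose $A_n$ into its minimal summands $A_n^{(i)}$, feed the union of the scaled test-function sets $H(\eta^{(i)})\cup H(\eta_1^{(i)})$ to Lemma \ref{fini} to obtain $m$, observe (as you do) that the $\tfrac14$-approximation by finite-spectrum elements persists under the connecting maps $\phi_{m,r}$ because $*$-homomorphisms are contractive and preserve finite spectrum, and then invoke Corollary \ref{wv1} on each block.

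The one place where I would push back a little is your claim that ``the only point requiring any care is \dots checking that blockwise $KK$-equivalence follows \dots and that finite spectrum \dots is stable under the further connecting maps.'' Both of those are indeed immediate, but the genuinely delicate step is the one you pass over: Corollary \ref{wv1} (which ranges over homomorphisms out of all of $A_r$) produces for each $i$ a unitary $u_i$ in the full $M_s(\mathbb{C})$, not a priori in the corner $P_iM_s(\mathbb{C})P_i\cong M_{s_i}(\mathbb{C})$ where $P_i=\phi(\phi_{n,r}(1_{A_n^{(i)}}))$. To obtain a corner unitary one must either (a) re-run Lemma \ref{kktoaue pro} on the unital corner maps $\phi\circ\tau_i,\psi\circ\tau_i\colon A_n^{(i)}\to P_iM_s(\mathbb{C})P_i$, checking that conditions (a) and (b) of that lemma still hold after deleting the $s-s_i$ common zero eigenvalues (this uses that the test functions are positive, so the padding by zeros does not disturb the sorted eigenvalue matching), or (b) add $1_{A_n^{(i)}}$ to $F^{(i)}$, note that $u_i$ then almost commutes with $P_i$, and perturb $u_i$ into the corner at a controlled cost in $\varepsilon$. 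Either fix is routine, but it is a real step and is where the block-by-block conclusions are actually assembled into the single global unitary; the preliminary conjugation you mention aligns $\phi(\phi_{n,r}(1_{A_n^{(i)}}))$ with $\psi(\phi_{n,r}(1_{A_n^{(i)}}))$ but by itself does not produce block-diagonal $u_i$. With that point supplied, the diagonal sum argument closes the proof exactly as you describe.
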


\section{Decomposition theorem}

\begin{lem}\label{ccut}
   Let $E_1,\cdots,E_s\subset [0,1]$ be a collection of finitely many finite sets, $L$ be a positive integer, let $\eta=\frac{1}{(L+1)^s}$, then there exist integers $0\leq c<d \leq (L+1)^s$ with $d-c=1$ such that
   $$
   (L+1)\cdot\#\big(E_i\cap(c\eta,d\eta)\big)\leq \#(E_i),\quad\forall \,i\in \{1,\cdots,s\}.
   $$
\end{lem}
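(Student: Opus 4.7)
The plan is an iterative pigeonhole/averaging argument in $s$ stages, mirroring the factorization $\eta=1/(L+1)^s$: at stage $i$ I will consume one factor of $L+1$ to control the set $E_i$, while the constraints imposed at earlier stages will persist automatically through nested inclusions.

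First, at stage $1$, I would partition $[0,1]$ into $L+1$ disjoint half-open intervals of length $1/(L+1)$. Since the counts $\#(E_1\cap\cdot)$ over these subintervals sum to $\#(E_1)$, averaging produces at least one subinterval $J^{(1)}$ with
\[
(L+1)\cdot\#(E_1\cap J^{(1)}) \leq \#(E_1).
\]
At stage $i$ for $2\leq i\leq s$, I would partition the previously chosen $J^{(i-1)}$ into $L+1$ disjoint half-open pieces of length $1/(L+1)^i$ and select $J^{(i)}$ minimizing $\#(E_i\cap\cdot)$; the same averaging yields
\[
(L+1)\cdot\#(E_i\cap J^{(i)})\leq \#(E_i\cap J^{(i-1)})\leq \#(E_i).
\]
For $j<i$ the inequality for $E_j$ is preserved automatically through $J^{(i)}\subset J^{(j)}$, since restricting can only shrink the count.

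After $s$ stages, $J^{(s)}$ has length $\eta=1/(L+1)^s$ and is of the form $[c\eta,d\eta]$ for some integers $0\leq c<d\leq(L+1)^s$ with $d-c=1$. The conclusion for the open interval $(c\eta,d\eta)$ follows at once from $\#(E_i\cap(c\eta,d\eta))\leq \#(E_i\cap J^{(s)})$. There is essentially no hard step here: the only technicality is boundary bookkeeping in each partition, resolved by using half-open intervals so that the pigeonhole sum at each stage equals the full count of $E_i$ in the parent interval. (An alternative, more direct approach would observe that for each fixed $i$, the number of length-$\eta$ subintervals where the inequality fails is at most $L$; by Bernoulli, $(L+1)^s\geq 1+Ls$, so at least one subinterval is good for all $i$ simultaneously. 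The iterative version is preferable as it directly exploits the form of $\eta$.)
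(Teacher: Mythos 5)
Your iterative pigeonhole argument is essentially the same as the paper's proof: at each of the $s$ stages, subdivide the current interval into $L+1$ equal pieces, choose one where the count for the current $E_i$ drops by at least a factor of $L+1$, and observe that the nested inclusions preserve the earlier inequalities. The alternative you sketch at the end (counting bad subintervals per $E_i$ and invoking Bernoulli) is also correct and slightly more direct, but the main argument matches the paper.
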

 \begin{proof}
 This proof is just a slight generalization of the first part of
 \cite[Lemma 2.21]{EG:1996}.

 Firstly, we divide $[0,1]$ into $L+1$ intervals of equal length $\frac{1}{L+1}$ by points
 $$
 0=a_0^1<a_1^1<\cdots<a_{L+1}^1=1,
 $$
 then there is an interval $[a_{r_1}^1,a_{r_1+1}^1]\subset[0,1]$ such that
 $$
 (L+1)\cdot\#\big(E_1\cap(a_{r_1}^1,a_{r_1+1}^1)\big)\leq \#(E_1).
 $$
 Next we divide $[a_{r_1}^1,a_{r_1+1}^1]$ into $L+1$ intervals of equal length $\frac{1}{(L+1)^2}$ by points
 $$
 a_{r_1}^1=a_0^2<a_1^2<\cdots<a_{L+1}^2=a_{r_1+1}^1,
 $$
 there is an interval $[a_{r_2}^2,a_{r_2+1}^2]\subset[a_{r_1}^1,a_{r_1+1}^1]$ such that
 $$
 (L+1)\cdot\#\big(E_2\cap(a_{r_2}^2,a_{r_2+1}^2)\big)\leq
  \#\big(E_2\cap(a_{r_1}^1,a_{r_1+1}^1)\big)\leq \#(E_2),
 $$
 we also have
 $$
 (L+1)\cdot\#\big(E_1\cap(a_{r_2}^2,a_{r_2+1}^2)\big)\leq
  (L+1)\cdot\#\big(E_1\cap(a_{r_1}^1,a_{r_1+1}^1)\big)\leq \#(E_1).
 $$

 Repeat this operation, then we find an interval $[a_{r_s}^s,a_{r_s+1}^s]$ such that
 $$
 (L+1)\cdot\#\big(E_i\cap(a_{r_s}^s,a_{r_s+1}^s)\big)\leq \#(E_i),\quad \forall\,i\in\{1,2,\cdots,s\}
 $$
 Let $c=a_{r_s}^s\cdot(L+1)^s,d=a_{r_s+1}^s\cdot(L+1)^s$, this completes the proof.
\end{proof}
\begin{definition}\label{defdis}
  Let $A,B\in \mathcal{D}$ be minimal and $\eta^{-1},K,L\in \mathbb{N}_{+}$, a homomorphism $\phi:A \rightarrow B$ has $(\eta,K,L)$-$distribution$, if for any $r\in\{1,2,\cdots,K\}$, there are integers $a_r,b_r$ with
 $$
 \frac{r-1}{K}\leq a_r\eta<b_r\eta\leq \frac{r}{K}\quad{\rm and}\quad
  b_r-a_r=1,
 $$
  such that the following properties:

  (i) For any $x\in [0,1]\subset Sp(B)$, we have
  $$
  \#\big(Sp\,(\pi_x\circ\phi)\cap (a_r\eta,b_r\eta)\big)\leq\frac{1}{L+1}\cdot
  \#\big(Sp\,(\pi_x\circ\phi)\cap (\frac{r-1}{K},\frac{r}{K})\big);
  $$

  (ii) For any $i'=1,2,\cdots,p'$, we have
  $$
  \#\big(Sp\,(\pi_e^{i'}\circ\phi)\cap (a_r\eta,b_r\eta)\big)\leq\frac{1}{L+1}\cdot
  \#\big(Sp\,(\pi_e^{i'}\circ\phi)\cap (\frac{r-1}{K},\frac{r}{K})\big),
  $$
  where $\pi_x,\pi_e^{i'}$ are defined in \ref{homodeff}.

   For $A,B\in\mathcal{D}$, we say a homomorphism $\phi:A \rightarrow B$ has $(\eta,K,L)$-$decomposition$, if there exist finite many homomorphisms $\phi_1,\phi_2,\cdots,\phi_s$ between minimal blocks in $\mathcal{D}$ such that $\phi=\phi_1\oplus\phi_2\oplus\cdots\oplus\phi_s$ and each of them has $(\eta,K,L)$-$distribution$.
\end{definition}
\begin{lem}\label{disexist}
  Let $A,B\in \mathcal{D}$ be minimal and $K,L\in \mathbb{N}_{+}$, set $\eta=\frac{1}{8K(L+1)}$, if a homomorphism $\phi: A\rightarrow B$ satisfys that
  $$\phi(H(\eta))\subset_{1/6}\{ f\in B_{sa}\, |f\,has\,finite\,spectrum\},$$
  then there exists $\delta>0$ such that $\phi$ has $(\delta,K,L)$-distribution.
\end{lem}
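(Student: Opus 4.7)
The plan is to produce $\delta = 1/m > 0$ and, for each $r \in \{1,\ldots,K\}$, integers $a_r, b_r$ with $b_r - a_r = 1$ and $[a_r\delta, b_r\delta] \subset [\frac{r-1}{K}, \frac{r}{K}]$, verifying the distribution inequalities (i) and (ii) of Definition \ref{defdis}, by reducing to a uniform application of Lemma \ref{ccut}. To set up, for each $x \in [0,1]$ I would use the spectral form \ref{SPCForm} to write
\[
Sp(\pi_x\circ\phi) = \{\theta_1^{\thicksim t_1},\ldots,\theta_p^{\thicksim t_p}, y_1(x), \ldots, y_N(x)\},
\]
with the multiplicities $t_j$ constant in $x$ (by connectedness of $[0,1]$) and continuous paths $y_k : [0,1] \to [0,1]$; likewise each $\pi_e^{i'}\circ\phi$ has a fixed finite multiset $S_{i'}$.

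The crucial step is the \emph{confinement claim}: using the hypothesis $\phi(H(\eta)) \subset_{1/6} \{f\in B_{sa} : f \text{ has finite spectrum}\}$, I want to show each image $y_k([0,1])$ lies in an interval $R_k$ of length at most $C\eta$ for some absolute constant $C$. The derivation rests on the observation that a finite-spectrum self-adjoint $g\in B$ decomposes as $g = \sum_i \lambda_i P_i$ with each projection $P_i \in B$ of rank constant in $x$, so the eigenvalue multiset of $g(x)$ is independent of $x$. Since the eigenvalue multiset of $\phi(h)(x)$ equals $\{h(y_k(x))\}_k$ together with the fixed contributions from the $\theta_j$'s, approximating $\phi(h)$ by finite-spectrum elements to within $1/6$ forces $\{h(y_k(x))\}_k$ to be approximately constant in $x$. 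Applied to the type-2 test functions $h_X \in H(\eta)$ (bump functions of height $1$ supported in $\eta$-neighbourhoods of grid points) and exploiting their triangular shape, this yields the diameter bound $|R_k|\le C\eta$; in the strict finite-spectrum limit each non-constant $y_k$ would in fact be forced into a single $\eta$-length grid cell, and the approximate version is a quantitative weakening of this.

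With the confinement in hand the argument is finite-combinatorial. Fix $r$, rescale $(\frac{r-1}{K}, \frac{r}{K})$ linearly to $[0,1]$, let $E_0^r$ be the finite multiset of endpoints of those $R_k$ that meet $(\frac{r-1}{K}, \frac{r}{K})$, and let $E_{i'}^r = S_{i'} \cap (\frac{r-1}{K}, \frac{r}{K})$ for $i' = 1, \ldots, p'$. Lemma \ref{ccut}, applied to the collection $\{E_0^r, E_1^r, \ldots, E_{p'}^r\}$ with parameter $L$, supplies $\delta_r>0$ and integers $a_r < b_r$ with $b_r - a_r = 1$ satisfying $(L+1)\cdot \#(E_i^r \cap (a_r\delta_r, b_r\delta_r)) \leq \#(E_i^r)$ for all $i$. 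Condition (ii) follows directly from the bound on $E_{i'}^r$; for (i), the number of $y_k$ with $y_k(x)\in(a_r\delta_r, b_r\delta_r)$ at any single $x$ is at most the number of $R_k$ meeting this subinterval, and once $\delta_r$ is small compared with $C\eta$ those $R_k$ must have an endpoint in a $C\eta$-neighbourhood of $(a_r\delta_r, b_r\delta_r)$, so the count is controlled by $\#(E_0^r \cap (a_r\delta_r, b_r\delta_r))$. Taking $\delta = 1/m$ a common refinement of the $\delta_r$ and adjusting the $(a_r, b_r)$ accordingly completes the construction.

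The main obstacle is the confinement claim itself. Turning approximate finite spectrum of $\phi(h)$ for $h\in H(\eta)$ into a uniform diameter bound on each continuous path $y_k$ requires a careful combination of three ingredients: rank-constancy of projections in $B$, the specific triangular shape of the type-2 test functions, and the $1/6$ tolerance (enough to rule out oscillations across transition zones yet weak enough not to collapse paths to points). Once this is in place, the combinatorial application of Lemma \ref{ccut} and the final common-refinement step are essentially bookkeeping.
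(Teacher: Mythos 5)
Your plan hinges on the \emph{confinement claim}, and that is where the genuine gap lies; the combinatorics after it are indeed bookkeeping. Two things go wrong in the confinement step. First, the spectral data of $\{\pi_x\circ\phi\}_{x\in[0,1]}$ cannot in general be written as $\{\theta_j^{\sim t_j},y_1(x),\dots,y_N(x)\}$ with $t_j$ and $N$ constant and each $y_k$ a continuous function: when a spectral point $y_k(x)$ reaches the endpoint $0$ (respectively $1$) it is absorbed into the $\theta_j$'s, raising $t_j(x)$ by $\alpha_j$ (respectively $\beta_j$) and decreasing the number of interior points. So your ``fixed contributions from the $\theta_j$'s'' are not fixed, the paths can merge into and re-emerge from the endpoints, and the type-2 test functions in $H(\eta)$, being supported inside $[\eta,1-\eta]$, give no control there. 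This is precisely why Lemma~\ref{marr} asserts only $X\supset Sp\,\phi_x\cap[\eta,1-\eta]$, not equality. Second, even for the portion of the spectrum in $[\eta,1-\eta]$, the information encoded in $\phi(H(\eta))\subset_{1/6}\{\text{finite spectrum}\}$ is that for each pair $x_1,x_2$ the two multisets can be matched to within $2\eta$; it does not give a single coherent family of $O(\eta)$-diameter intervals $R_k$ containing continuous paths, since the matching bijection may vary with the pair. Pairwise pairing is strictly weaker than confinement, and your sketch does not bridge that gap.

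The paper works with exactly the weaker statement and never needs confinement. The eigenvalue estimate gives $dist(Eig(\phi_{x_1}(h)),Eig(\phi_{x_2}(h)))<\tfrac13$ for $h\in H(\eta)$, and Lemma~\ref{marr} then produces, for each pair, subsets of $Sp\,\phi_{x_1}$, $Sp\,\phi_{x_2}$ containing $Sp\,\phi_{x_i}\cap[\eta,1-\eta]$ that pair within $2\eta$. One then fixes a single reference $x_0$, applies Lemma~\ref{ccut} to the one finite set $Sp\,\phi_{x_0}\cap(\tfrac{r-1}{K}+2\eta,\tfrac{r}{K}-2\eta)$ to get a window $(8c_r\eta,8d_r\eta)$, and uses the pairing with $x_0$ twice (once in each direction, shrinking or enlarging by $2\eta$) to transfer the cutting inequality from $x_0$ to arbitrary $x$. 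The $\pi_e^{i'}$-spectra are controlled by a second application of Lemma~\ref{ccut} inside $(8c_r\eta+2\eta,8d_r\eta-2\eta)$, yielding $\delta=\eta/(L+1)^{p'}$. Replacing your confinement claim with this ``fix a reference point and transfer via pairing'' device removes the gap and also eliminates your auxiliary endpoint multiset $E_0^r$, which you introduced only to compensate for the paths not being genuinely localized.
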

\begin{proof}
  For any $h\in H(\eta)$, there are mutually orthogonal projections $p_1(h)$, $p_2(h)$, $\cdots$, $p_{m(h)}(h)\in B$ and real numbers $\lambda_1(h),\lambda_2(h),\cdots,\lambda_{m(h)}(h)$ such that
   $$
   \|\phi(h)-\sum_{k=1}^{m(h)}\lambda_k(h)p_k(h)\|<\frac{1}{6}.
   $$

  Then for any $x_1,x_2\in [0,1]\in Sp(B)$, $dist(Eig(\phi_{x_1}(h)),Eig(\phi_{x_2}(h)))<\frac{1}{3}$ for all $h\in H(\eta)$, by Lemma \ref{marr}, then there exist two sets $X(x_1)\subset Sp\phi_{x_1}\cap(0,1)$, $X(x_2)\subset Sp\phi_{x_2}\cap(0,1)$ with $X(x_1)\supset Sp\phi_{x_1}\cap[\eta,1-\eta]$ , $X(x_2)\supset Sp\phi_{x_2}\cap[\eta,1-\eta]$ such that $X(x_1)$ and $X(x_2)$ can be paired to within $2\eta$ one by one.

  Fix $x_0\in [0,1]\subset Sp(B)$.
  Set $\gamma_0=0, \gamma_1=\frac{1}{K},\cdots,\gamma_K=1$, apply Lemma \ref{ccut} for
  $[\gamma_{r-1}+2\eta,\gamma_r-2\eta]$ (in place of $[0,1]$), $Sp\phi_{x_0}\cap(\gamma_{r-1}+2\eta,\gamma_r-2\eta)$ (in place of $E$) and $L$, then there exist integers $c_r,d_r$ $(1\leq r\leq K)$ such that
  $$
 \frac{r-1}{K}+2\eta\leq \frac{c_r}{K(L+1)}<\frac{d_r}{K(L+1)}\leq \frac{r}{K}-2\eta\quad{\rm and}\quad
  d_r-c_r=1.
  $$
  Then we have
  $$
  \#\big(Sp\phi_{x_0}\cap (8c_r\eta,8d_r\eta)\big)\leq\frac{1}{L+1}\cdot
  \#\big(Sp\phi_{x_0}\cap (\frac{r-1}{K}+2\eta,\frac{r}{K}-2\eta)\big).
  $$
  Since $X(x)$ and $X(x_0)$ can be paired to within $2\eta$ one by one for any $x\in [0,1]$, then we have
  $$
  \#\big(Sp\phi_{x_0}\cap (\frac{r-1}{K}+2\eta,\frac{r}{K}-2\eta)\big)\leq
  \#\big(Sp\phi_{x}\cap (\frac{r-1}{K},\frac{r}{K})\big)
  $$
  and
   $$
  \#\big(Sp\phi_{x}\cap (8c_r\eta+2\eta,8d_r\eta-2\eta)\big)\leq
  \#\big(Sp\phi_{x_0}\cap (8c_r\eta,8d_r\eta)\big)
  $$
  for any $x\in [0,1]\subset Sp(B)$.

  Now we have
  $$
  \#\big(Sp\phi_x\cap (8c_r\eta+2\eta,8d_r\eta-2\eta)\big)\leq\frac{1}{L+1}\cdot
  \#\big(Sp\phi_x\cap (\frac{r-1}{K},\frac{r}{K})\big)
  $$
  for any $x\in [0,1]\subset Sp(B)$.

  Set $\delta=\frac{1}{(L+1)^{p'}}\cdot \eta$, then for each $r$, apply Lemma \ref{ccut} for interval $[8c_r\eta+2\eta,8d_r\eta-2\eta]$ and $(\pi_e^{i'}\circ\phi)\cap(\frac{r-1}{K},\frac{r}{K})$ (in place of $E_1,E_2,\cdots$) ,   there exist integers
  $a_r,b_r$ with  $b_r-a_r=1$ and
  $$
  \frac{c_r}{K(L+1)}+2\eta\leq a_r\delta<b_r\delta\leq \frac{d_r}{K(L+1)}-2\eta
  $$
  such that
  $$
  \#\big(Sp\,(\pi_e^{i'}\circ\phi)\cap (a_r\delta,b_r\delta)\big)\leq\frac{1}{L+1}\cdot
  \#\big(Sp\,(\pi_e^{i'}\circ\phi)\cap (\frac{r-1}{K},\frac{r}{K})\big)
  $$
  for any $i'=1,2,\cdots,p'$.

  Note that for any $x\in [0,1]\subset Sp(B)$,
 \begin{eqnarray*}
    \#\big(Sp\phi_x\cap (a_r\delta,b_r\delta)\big) &\leq&
     \#\big(Sp\phi_x\cap (8c_r\eta+2\eta,8d_r\eta-2\eta)\big) \\
     &\leq& \frac{1}{L+1}\cdot\#\big(Sp\phi_x\cap (\frac{r-1}{K},\frac{r}{K})\big).
  \end{eqnarray*}
 Then we have $\phi$ has $(\delta,K,L)-distribution$.
\end{proof}
\begin{lem}\label{discirc}
  Let $A,B,C\in \mathcal{D}$ be minimal, $K,L\in \mathbb{N}_{+}$ and $\delta>0$. Let $\phi: A\rightarrow B$ and $\psi:B\rightarrow C$ be homomorphisms. Suppose that $\phi$ has $(\delta,K,L)$-$distribution$, then $\psi\circ\phi$ has $(\delta,K,L)$-$distribution$.
\end{lem}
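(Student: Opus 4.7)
The plan is to keep the integers $a_r,b_r$ provided by the $(\delta,K,L)$-distribution of $\phi$, and verify by a bookkeeping along the spectral decomposition of $\psi$ that both conditions of Definition \ref{defdis} for $\psi\circ\phi$ reduce to the corresponding conditions for $\phi$, summed over that decomposition.

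First I would fix an arbitrary $y\in[0,1]\subset Sp(C)$ and apply Notation \ref{SPCForm} to the homomorphism $\pi_y\circ\psi:B\to M_{\bullet}(\mathbb{C})$, obtaining a unitary $v$, multiplicities $s_1,\ldots,s_{p'}$, and interior points $z_1,\ldots,z_{\bullet\bullet}\in(0,1)\subset Sp(B)$ such that
$$(\pi_y\circ\psi)(b)=v^*\cdot{\rm diag}\bigl(b(\theta'_1)^{\thicksim s_1},\ldots,b(\theta'_{p'})^{\thicksim s_{p'}},b(z_1),\ldots,b(z_{\bullet\bullet})\bigr)\cdot v.$$
Composing with $\phi$ then expresses $\pi_y\circ\psi\circ\phi$, modulo the fixed unitary $v$, as the block-diagonal direct sum of $s_{i'}$ copies of $\pi_e^{i'}\circ\phi$ (for $i'=1,\ldots,p'$) together with $\pi_{z_k}\circ\phi$ for each $k$. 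Consequently $Sp(\pi_y\circ\psi\circ\phi)$ is the multiset disjoint union of those constituent spectra, counted with the stated multiplicities.

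Next I would apply the $(\delta,K,L)$-distribution of $\phi$ termwise. For each interior point $z_k$, condition (i) of Definition \ref{defdis} yields
$$\#\bigl(Sp(\pi_{z_k}\circ\phi)\cap(a_r\delta,b_r\delta)\bigr)\leq\tfrac{1}{L+1}\,\#\bigl(Sp(\pi_{z_k}\circ\phi)\cap(\tfrac{r-1}{K},\tfrac{r}{K})\bigr),$$
and condition (ii) yields the analogous inequality for each $Sp(\pi_e^{i'}\circ\phi)$. Summing these, weighted by $s_{i'}$ for the boundary contributions, gives condition (i) of Definition \ref{defdis} for $\psi\circ\phi$ at the point $y$, with exactly the same integers $a_r,b_r$. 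Because $y\in[0,1]\subset Sp(C)$ was arbitrary, condition (i) holds. Condition (ii) for $\psi\circ\phi$ is obtained by the identical argument applied to $\pi_e^{i''}\circ\psi:B\to M_{k''_{i''}}(\mathbb{C})$ for each $i''=1,\ldots,p''$, which admits the same finite-dimensional spectral decomposition.

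The whole proof is essentially bookkeeping: the key algebraic fact is that the spectrum of a composition respects block decompositions of the outer homomorphism, and both sides of the distribution inequalities are additive under multiset disjoint union, so the factor $(L+1)^{-1}$ survives the summation. Accordingly, there is no substantive obstacle here; the lemma simply records that spectral distribution is inherited by any left-composition.
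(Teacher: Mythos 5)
Your proof is correct and takes essentially the same route as the paper's: both decompose $Sp(\pi_y\circ\psi\circ\phi)$ as a multiset disjoint union over the spectral points of $\pi_y\circ\psi$ (and likewise over $\pi_e^{i''}\circ\psi$), apply the $(\delta,K,L)$-distribution of $\phi$ termwise with the same integers $a_r,b_r$, and sum. The paper's version writes the summation $\sum_{x\in Sp\psi_y}$ more compactly while you spell out the block decomposition and the separate bookkeeping for conditions (i) and (ii); the content is identical.
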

\begin{proof}
  Since $\phi$ is $(\delta,K,L)-distribution$, there are integers $a_r,b_r$ $(1\leq r\leq K)$  with
  $$
  \frac{r-1}{K}\leq a_r\delta<b_r\delta\leq \frac{r}{K}\quad{\rm and}\quad
  b_r-a_r=1,
  $$
  such that
  $$
  \#\big(Sp\,(\pi_x\circ\phi)\cap (a_r\delta,b_r\delta)\big)\leq\frac{1}{L+1}\cdot
  \#\big(Sp\,(\pi_x\circ\phi)\cap (\frac{r-1}{K},\frac{r}{K})\big).
  $$
  for any $x\in Sp(B)$.

  For any $y\in Sp(C)$, it is clear that
  \begin{eqnarray*}
    \#\big(Sp\,(\pi_y\circ\psi\circ\phi)\cap (a_r\delta,b_r\delta)\big) &= & \sum_{x\in Sp\psi_y}\#\big(Sp\,(\pi_x\circ\phi)\cap (a_r\delta,b_r\delta)\big) \\
     &\leq & \frac{1}{L+1}\cdot\sum_{x\in Sp\psi_y}\#\big(Sp\,(\pi_x\circ\phi)\cap (\frac{r-1}{K},\frac{r}{K})\big) \\
     &=& \frac{1}{L+1}\cdot
  \#\big(Sp\,(\pi_y\circ\psi\circ\phi)\cap (\frac{r-1}{K},\frac{r}{K})\big)
  \end{eqnarray*}
  Then $\psi\circ\phi$ has $(\delta,K,L)$-$distribution$.
\end{proof}
The following two lemmas are a slight generalization of Lemma 6.3 and Lemma 6.4 in \cite{DG:1997} with the same proof.
\begin{lem}\label{simcase0}
  Let $A=A(F_1,M_n(\mathbb{C}),\varphi_0,\varphi_1)\in \mathcal{D}$ be minimal. Suppose that $B\in \mathcal{D}$ be minimal, $\phi,\psi: A\rightarrow B$ are two homomorphisms which can be factored through $F_1$ and $\phi',\psi':F_1\rightarrow B$ are homomorphisms with $\phi=\phi'\circ\pi_e$ and $\psi=\psi'\circ\pi_e$. If $KK(\phi')=KK(\psi')$, there exist a unitary $u\in B$ such that $u\phi u^*=\psi$.
\end{lem}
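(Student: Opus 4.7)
The plan is to reduce the statement to showing $\phi'$ and $\psi'$ are unitarily equivalent as homomorphisms $F_1 \to B$. Since $\pi_e : A \to F_1$ is surjective and $\phi = \phi'\circ\pi_e$, $\psi = \psi'\circ\pi_e$, any unitary $u\in B$ satisfying $u\phi'(a)u^* = \psi'(a)$ for all $a\in F_1$ automatically conjugates $\phi$ onto $\psi$. So the whole problem is an assertion about representations of a finite-dimensional $\mathrm{C}^*$-algebra.

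Next I would translate the hypothesis $KK(\phi') = KK(\psi')$ into $K$-theoretic data. Because $F_1 = \bigoplus_{j=1}^p M_{k_j}(\mathbb{C})$ is finite-dimensional it satisfies the UCT with vanishing $K_1$ and free $K_0$, so $KK(F_1,B) = \mathrm{Hom}(K_0(F_1),K_0(B))$. In particular $\phi'_* = \psi'_*$ on $K_0(F_1)$, which is equivalent to the componentwise equalities $[\phi'(f^j_{11})] = [\psi'(f^j_{11})]$ in $K_0(B)$ for every $j = 1,\dots,p$.

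Now I would exploit the structure of $B$. Since $B \in \mathcal{D}$ is a one-dimensional NCCW complex (an Elliott--Thomsen algebra), $B$ has stable rank one, so projections with the same class in $K_0(B)$ are Murray--von Neumann equivalent, and cancellation holds. Consequently, for each $j$ there is a partial isometry $v_j\in B$ with
\[
v_j^*v_j \;=\; \phi'(f^j_{11}),\qquad v_jv_j^* \;=\; \psi'(f^j_{11}).
\]
Moreover, $[\phi'(1_{F_1})] = \sum_j k_j[\phi'(f^j_{11})] = \sum_j k_j[\psi'(f^j_{11})] = [\psi'(1_{F_1})]$, so by cancellation the complementary projections $1_B - \phi'(1_{F_1})$ and $1_B - \psi'(1_{F_1})$ are also Murray--von Neumann equivalent; pick a partial isometry $w\in B$ realizing this equivalence.

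Finally I would assemble the unitary
\[
u \;=\; w \;+\; \sum_{j=1}^{p}\sum_{s=1}^{k_j} \psi'(f^j_{s1})\, v_j \,\phi'(f^j_{1s})
\]
and check by a direct matrix-unit computation (using $v_j^*v_j = \phi'(f^j_{11})$, $v_jv_j^* = \psi'(f^j_{11})$, the orthogonality of $v_jv_j^*$ and $v_{j'}v_{j'}^*$ for $j\ne j'$, and the orthogonality of $w$ to the rest) that $u^*u = uu^* = 1_B$ and $u\phi'(f^j_{ss'})u^* = \psi'(f^j_{ss'})$ for every $j,s,s'$. Since the matrix units $\{f^j_{ss'}\}$ span $F_1$ linearly, this yields $u\phi'u^* = \psi'$, and hence $u\phi u^* = \psi$. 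The only nontrivial ingredients are the identification $KK(F_1,B) = \mathrm{Hom}(K_0(F_1),K_0(B))$ and the stable rank one/cancellation property of $B$; the rest is a routine computation of the kind carried out in the proof of \cite[Lemma~6.3]{DG:1997}, so I do not anticipate a genuine obstacle.
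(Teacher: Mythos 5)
Your proof is correct and follows essentially the same route as the paper intends: the paper simply cites this as being ``a slight generalization of Lemma 6.3 in \cite{DG:1997} with the same proof,'' and the argument you give — reduce to a representation question for the finite-dimensional algebra $F_1$, use $KK(F_1,B)\cong\mathrm{Hom}(K_0(F_1),K_0(B))$ to get agreement of classes of the diagonal matrix units, invoke stable rank one (hence cancellation in $V(B)$) of the Elliott--Thomsen algebra $B$ to produce the partial isometries, and assemble the conjugating unitary from them together with a partial isometry on the complementary corner — is precisely that standard argument, and the matrix-unit computation checks out.
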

\begin{lem}\label{simcase}
  Let $A=A(F_1,M_n(\mathbb{C}),\varphi_0,\varphi_1)\in \mathcal{D}$ be minimal. Suppose that $B\in \mathcal{D}$ be minimal, $\phi,\psi: A\rightarrow B$ are two homomorphisms which can be factored through $F_1$ and $\phi',\psi':F_1\rightarrow B$ are homomorphisms with $\phi=\phi'\circ\pi_e$ and $\psi=\psi'\circ\pi_e$. If $[\phi'(e)]\geq [\psi'(e)]$ holds for any projection $e\in F_1$, there exist a unitary $u\in B$ and a homomorphism $\tau:A\rightarrow B$ which can be factored through $F_1$ such that $u\phi u^*=\psi+\tau$.
\end{lem}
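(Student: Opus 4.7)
The plan is to reduce directly to Lemma \ref{simcase0} by constructing an auxiliary homomorphism $\tau': F_1 \to B$ whose image is orthogonal to that of $\psi'$ and which satisfies $KK(\phi') = KK(\psi' + \tau')$. Once such $\tau'$ is in hand, setting $\tau := \tau' \circ \pi_e$ yields a homomorphism $A \to B$ factoring through $F_1$, and $\psi + \tau = (\psi' + \tau') \circ \pi_e$. Since $KK(\phi') = KK(\psi' + \tau')$, Lemma \ref{simcase0} will then produce the desired unitary $u \in B$ with $u\phi u^* = \psi + \tau$.

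To construct $\tau'$, I first observe that any homomorphism $F_1 \to B$ with $F_1 = \bigoplus_{j=1}^p M_{k_j}(\mathbb{C})$ is determined up to unitary equivalence by the tuple of $K_0$-classes $([\rho(f^j_{11})])_{j=1,\ldots,p}$, subject to $\sum_j k_j [\rho(f^j_{11})] = [\rho(1_{F_1})] \leq [1_B]$. From the hypothesis $[\phi'(f^j_{11})] \geq [\psi'(f^j_{11})]$, I set $g_j := [\phi'(f^j_{11})] - [\psi'(f^j_{11})] \in K_0(B)^+$ and then
\[
\sum_{j=1}^p k_j \, g_j = [\phi'(1_{F_1})] - [\psi'(1_{F_1})] \leq [1_B] - [\psi'(1_{F_1})] = [1_B - \psi'(1_{F_1})].
\]
Because $B \in \mathcal{D}$ is minimal (and in particular has stable rank one, so cancellation of projections holds), I can choose mutually orthogonal projections $q^{j,s} \in (1_B - \psi'(1_{F_1})) B (1_B - \psi'(1_{F_1}))$ for $j=1,\ldots,p$ and $s=1,\ldots,k_j$ with $[q^{j,s}] = g_j$, together with matrix units among $\{q^{j,s}\}_s$ for each fixed $j$. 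Using these data, define $\tau':F_1 \to B$ block-by-block by sending $f^j_{11}$ to $q^{j,1}$.

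By construction $\tau'(F_1)$ is orthogonal to $\psi'(F_1)$, so $\psi' + \tau'$ is a genuine homomorphism $F_1 \to B$, and $[(\psi' + \tau')(f^j_{11})] = [\psi'(f^j_{11})] + g_j = [\phi'(f^j_{11})]$ for every $j$. Since $KK$-classes of homomorphisms out of a finite dimensional algebra are detected by the $K_0$-classes of images of minimal projections, this forces $KK(\phi') = KK(\psi' + \tau')$. Applying Lemma \ref{simcase0} to the pair $\phi = \phi' \circ \pi_e$ and $\psi + \tau = (\psi' + \tau') \circ \pi_e$ now yields the required unitary $u \in B$.

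The step I expect to be the main obstacle is the orthogonal realization: arranging that the prescribed $K_0$-classes $g_j$ with the correct multiplicities $k_j$ can all be realized by a single family of mutually orthogonal projections sitting inside the complementary corner $(1_B - \psi'(1_{F_1}))B(1_B - \psi'(1_{F_1}))$. This relies on the fact that in a minimal block of $\mathcal{D}$ the Murray--von Neumann order on projections coincides with the order induced from $K_0$, which is exactly the regularity the class $\mathcal{D}$ (with $F_2 = M_n(\mathbb{C})$ a single block) offers over the more general class $\mathcal{C}$, and is why the analogous argument of \cite[Lemma 6.4]{DG:1997} transfers to our setting.
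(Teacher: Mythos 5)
Your argument is correct and is essentially the proof the paper is implicitly invoking (the paper states Lemma \ref{simcase} without proof, citing Lemma 6.4 of \cite{DG:1997}, and the construction there is precisely this orthogonal fill-in followed by an appeal to the companion lemma). One small caveat: your closing remark misattributes the needed regularity to the subclass $\mathcal{D}$ --- cancellation of projections (stable rank one) holds for every one-dimensional NCCW complex, hence for all minimal blocks in $\mathcal{C}$, not just $\mathcal{D}$; the genuine gain in passing from $\mathcal{C}$ to $\mathcal{D}$ in this paper concerns $\mathrm{KK}$-lifting (Theorem \ref{d_0 F lift}), not Murray--von Neumann comparison.
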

Now we have the following decomposition theorem.
\begin{thrm}\label{decthm}
  Let $A\in \mathcal{D}$ be minimal, $G\subset A$ be a finite set, $\varepsilon>0$, and $L\in \mathbb{N}_{+}$. Suppose that $K$ is an integer such that $dist(x_1,x_2)\leq \frac{4}{K}$ implies $\|g(x_1)-g(x_2)\|<\varepsilon$ for all $g\in G$.

  Suppose that $B\in \mathcal{D}$ is a minimal block, $\delta>0$ and $\phi: A\rightarrow B$ is a homomorphism with the following properties:

  $(a)\,\,\phi$ has $(\delta,K,L)$-$distribution;$

  $(b)\,\,\phi(H(\delta/8))\subset_{1/6}\{ f\in B_{sa}\, |f\,has\,finite\,spectrum\},$

  then there exist a projection $q\in B$ and two homomorphisms $\nu,\rho: A\rightarrow (1-q)B(1-q)$ with finite dimensional images such that the following holds:

$(1)\,\,L\cdot [q]\leq [\nu(1)]$ in $K_0(B);$

$(2)\,\,Sp\nu\subset Sp(A)\cap(0,1),\,Sp\rho \subset Sp(A)\cap Sp(F_1);$

$(3)\,\,\|q\phi(g)-\phi(g)q\|<4\varepsilon,\,\forall\,\, g\in G;$

$(4)\,\,\|\phi(g)-q\phi(g)q\oplus\nu(g)\oplus\rho(g)\|<4\varepsilon,\,\forall\,\,g\in G.$
\end{thrm}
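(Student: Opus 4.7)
The plan is to use the sparse gaps $(a_r\delta, b_r\delta) \subset I_r := (\tfrac{r-1}{K}, \tfrac{r}{K})$ furnished by the $(\delta,K,L)$-distribution as a scaffolding that splits $\phi$ into three pieces: a small gap part $q\phi q$ absorbing the spectrum inside $\bigcup_r (a_r\delta, b_r\delta)$, a bulk interior part $\nu$ collecting spectrum in $I_r \setminus (a_r\delta, b_r\delta)$ and pushing it onto a single chosen point $z_r \in I_r$ (this set $\{z_r\}$ will be $Sp\nu$), and a boundary part $\rho$ collecting the spectrum of $\phi$ near $\{0,1\}$ and identifying it with $Sp(F_1)$ via $\varphi_0, \varphi_1$. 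The inequality $L[q] \leq [\nu(1)]$ will then be a direct consequence of Definition~\ref{defdis}(i)--(ii) applied fibre-by-fibre.

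First I construct $q$. For each $r = 1, \dots, K$ choose a closed subinterval $X_r \subset [a_r\delta + \tfrac{\delta}{16}, b_r\delta - \tfrac{\delta}{16}]$ whose $\tfrac{\delta}{8}$-neighbourhood still sits in $I_r$ (shrinking $X_r$ at $r=1,K$ to respect the requirement $X_r \subset [\delta/8, 1-\delta/8]$ built into type-2 test functions). Let $h_r \in H(\delta/8)$ be the type-2 test function for $X_r$. Hypothesis~(b) provides mutually orthogonal projections $\{p_k^{(r)}\} \subset B$ and scalars $\lambda_k^{(r)} \in [0,1]$ with $\|\phi(h_r) - \sum_k \lambda_k^{(r)} p_k^{(r)}\| < 1/6$; set $q_r := \sum_{\lambda_k^{(r)} \geq 2/3} p_k^{(r)}$ and $q := \sum_{r=1}^K q_r$. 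The $q_r$ are pairwise orthogonal because the supports of the $h_r$ are separated by more than $\delta/8$, and each $q_r$ is a genuine projection in $B$.

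To produce $\nu$ and $\rho$, I perform a spectral surgery on $\phi(1_A) - q$: pointwise at each $x \in Sp(B)$, the spectral points of $\pi_x \circ \phi$ in $I_r \setminus (a_r\delta, b_r\delta)$ are moved (by at most $2/K$) to $z_r$, contributing to $\nu$; the points in $[0, \delta/8) \cup (1 - \delta/8, 1]$ are pushed onto $\{0,1\}$ and thereby identified with points of $Sp(F_1)$, contributing to $\rho$. That these fibrewise prescriptions assemble into honest homomorphisms $\nu, \rho \colon A \to (1-q)B(1-q)$ with finite-dimensional image uses Definition~\ref{defdis}(ii): the multiplicities along the $\pi_e^{i'}$ fibres match those along $[0,1]_i \subset Sp(B)$ at the boundary, so Lemma~\ref{simcase0} glues the fibrewise data into a single homomorphism.

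The main obstacle is the simultaneous verification of (3) and (4). Approximate commutation of $q$ with $\phi(g)$ follows because each $h_r$ has support of length less than $\tfrac{5\delta}{4} \ll 1/K$, while $g$ oscillates by $<\varepsilon$ on scale $4/K$; a standard computation gives $\|[\phi(g), q_r]\|$ of order $\varepsilon$, summing appropriately to yield (3) with constant $4\varepsilon$. For (4) the same oscillation bound controls the cost of moving each eigenvalue by at most $4/K$ in the surgery, again giving error $<4\varepsilon$ on $G$. Finally, (1) is fibrewise: at each $x \in Sp(B)$, the rank of $q_r$ in that fibre equals (approximately) $\#(Sp(\pi_x \circ \phi) \cap X_r)$, which by Definition~\ref{defdis}(i) is at most $\tfrac{1}{L+1}$ of the total in $I_r$; the complementary spectrum feeds $\nu$ and is therefore at least $L$ times larger, so $L[q] \leq [\nu(1)]$ in $K_0(B)$.
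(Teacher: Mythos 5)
Your high-level picture — split the spectrum into ``gap'', ``bulk'', and ``boundary'' parts using the $(\delta,K,L)$-distribution and type-1/type-2 test functions — is the right picture, and it is indeed what the paper does. But the sketch has a genuine gap exactly at the point where the argument becomes nontrivial, namely the passage from a \emph{fibrewise} spectral prescription to honest homomorphisms $\nu,\rho\colon A\to(1-q)B(1-q)$.

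You say that the ``fibrewise prescriptions assemble into honest homomorphisms'' and that ``Lemma~\ref{simcase0} glues the fibrewise data into a single homomorphism.'' This is not what Lemma~\ref{simcase0} does: it is a \emph{uniqueness} statement (two homomorphisms factoring through $F_1$ with the same $\mathrm{KK}$-class are unitarily equivalent), not a gluing device. More fundamentally, cutting $\phi$ down by the projections $q_r$ (or by their complement) does not produce a homomorphism at all, because the projections only \emph{approximately} commute with $\phi(g)$. The paper handles this by constructing a separate \emph{pointwise homomorphism} $\psi$ (obtained from $\phi$ by collapsing the spectrum in each $W_r$ to the single point $r/K$, a step you mention only implicitly as ``spectral surgery'') and then cutting $\psi$, not $\phi$, by projections $P_r$, $Q_j$ that are produced by the delicate construction in Steps~2 and~3 of \cite[Theorem~3.1]{Liu:2019}, which makes them genuinely compatible with $\psi$ and mutually orthogonal. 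Your assertion that the $q_r$ are pairwise orthogonal ``because the supports of the $h_r$ are separated by more than $\delta/8$'' needs the same machinery: approximating each $\phi(h_r)$ separately by finite-spectrum elements does not a priori give projections that are orthogonal across different $r$.

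The second missing ingredient is how the boundary part is extracted. You propose to push spectrum near $\{0,1\}$ onto $\{0,1\}$ and call the result $\rho$. The paper instead uses the type-1 test functions on $\widetilde V_j$ to build projections $Q_j$ covering both the $Sp(F_1)$ multiplicities and the spectrum in $V_0\cup V_K$, proves $[Q]\ge [R]$ where $R=P_1+P_{K-1}$ covers the bulk near the endpoints, and then invokes Lemma~\ref{simcase} (not \ref{simcase0}) to obtain a unitary $u$ and a finite-dimensional $\psi_4$ with $u\psi_2u^*=\psi_3\oplus\psi_4$; the excess $\psi_4$ is $\rho$ and the reinserted $u^*\psi_3u$ is fed into $\nu$. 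This bookkeeping is precisely what makes $L[q]\le[\nu(1)]$ come out, since $[\nu(1)]=[P]+[R]$ rather than just $[P]$. Your sketch omits this step and it is not clear that the naive ``push to $\{0,1\}$'' version of $\rho$ would still leave enough rank in $\nu$ to dominate $L[q]$ once the boundary intervals $I_1$, $I_K$ are stripped of their bulk. Without filling in the pointwise-homomorphism construction and the $[Q]\ge[R]$ / Lemma~\ref{simcase} argument, the proposal does not yet constitute a proof.
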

\begin{proof}
  Set $\eta=\delta/8$. Since $\phi$ is $(8\eta,K,L)$-$distribution$, then there exist integers $a_r,b_r$, $(1\leq r\leq K)$ with
$$
\frac{r-1}{K}\leq a_r\eta<b_r\eta\leq \frac{r}{K}\quad{\rm and}\quad
  b_r-a_r=8.
$$
such that
$$
\#\big(Sp\,\phi_x\cap (a_r\eta,b_r\eta)\big)\leq\frac{1}{L+1}\cdot
\#\big(Sp\,\phi_x\cap (\frac{r-1}{K},\frac{r}{K})\big)
$$
for any $x\in Sp(B)$.

For any $x\in Sp(B)$, write
$$
Sp\phi_x\cap Sp(F_1)=\{\theta_1^{\sim\,t_1(x)},\theta_2^{\sim\,t_2(x)},
\cdots,\theta_p^{\sim\,t_p(x)}\}.
$$

Set
$$
V_0=[0,a_2\eta+2\eta],V_K=[b_{K-1}\eta-2\eta,1].
$$
$$
V_1=[b_1\eta,a_2\eta],V_2=[b_2\eta,a_3\eta],\cdots, V_{K-1}=[b_{K-1}\eta,a_K\eta].
$$
$$
W_r=\{x\in [0,1],\,\,\,d(x,V_r)\leq 2\eta\},\quad\forall\,r=1,2,\cdots,K.
$$
Note that $V_0\supset V_1, V_K\supset V_{K-1}$ and if $r_1,r_2\in \{0,2,\cdots,K-2,K\},r_1\neq r_2$, we have $W_{r_1}\cap W_{r_2}=\varnothing$.

Then we have
$$
 L\cdot\sum_{r=2}^{K-2}
\begin{pmatrix}
 \#\big(Sp\phi_{\theta_1'}\cap (a_r\eta,b_r\eta)\big) \\
 \#\big(Sp\phi_{\theta_2'}\cap (a_r\eta,b_r\eta)\big)\\
 \vdots \\
 \#\big(Sp\phi_{\theta_p'}\cap (a_r\eta,b_r\eta)\big)
\end{pmatrix}
\leq
\sum_{r=1}^{K-1}
\begin{pmatrix}
\#\big(Sp\phi_{\theta_1'}\cap V_r\big) \\
\#\big(Sp\phi_{\theta_2'}\cap V_r\big)\\
\vdots \\
\#\big(Sp\phi_{\theta_p'}\cap V_r\big)
\end{pmatrix}.
$$

For each $h\in H(\eta)$, there are mutually orthogonal projections $p_1(h)$, $p_2(h)$, $\cdots$, $p_{m(h)}(h)\in B$
 and real numbers $\lambda_1(h),\lambda_2(h),\cdots,\lambda_{m(h)}(h)$ such that
 $$
 \|\phi(h)-\sum_{k=1}^{m(h)}\lambda_k(h)p_k(h)\|<\frac{1}{6}.
 $$
 Denote $\Lambda(h)$ by the set of all the eigenvalues of $\sum_{k=1}^{m(h)}\lambda_k(h)p_k(h)$, set
 $$
 \Lambda_1(h)=\{\lambda | \lambda\in \Lambda(h),\,\,\lambda\in(1-\frac{1}{6},\,\,1]\},
 \quad
 [\Lambda_1(h)]=\sum_{\lambda_k(h)\in \Lambda_1(h)}\,[p_k(h)]\in\, K_0(B).
 $$

 Let $h_r$ be the test function corresponding to $V_r$, then from step 2 in \cite[Theorem 3.1]{Liu:2019}, we can construct a collection of mutually orthogonal projections $P_1,P_2,\cdots,P_{K-1}$ almost commutes with $\phi(g)$ for all $g\in G$ and
 $$
 n\cdot
 \begin{pmatrix}
     \#\big(Sp\phi_{\theta_1'}\cap W_r\big) \\
    \#\big(Sp\phi_{\theta_2'}\cap W_r\big)\\
     \vdots \\
    \#\big(Sp\phi_{\theta_p'}\cap W_r\big)
   \end{pmatrix}\geq
 [P_r]=n\cdot[\Lambda_1(h_r)]\geq
 n\cdot
 \begin{pmatrix}
     \#\big(Sp\phi_{\theta_1'}\cap V_r\big) \\
    \#\big(Sp\phi_{\theta_2'}\cap V_r\big)\\
     \vdots \\
    \#\big(Sp\phi_{\theta_p'}\cap V_r\big)
   \end{pmatrix}
 $$
 for all $r= 1,2,\cdots,K-1$.

 Now we deal with $V_0,V_K$, set
 $$
 \widetilde{V_j}=\bigcup_{\alpha_j\neq0}V_0\cup\bigcup_{\beta_j\neq0}V_K,
 \quad j=1,2,\cdots,p.
 $$
 Then we turn these two intervals to $p$ subsets.

 Let $f_j$ be the test function corresponding to $\widetilde{V_j}$ and $X_j=\{\theta_j\}$. From the step 3 in \cite[Theorem 3.1]{Liu:2019}, we can construct a collection of mutually orthogonal projections $Q_1,Q_2,\cdots,Q_p$. Each of them almost commutes with $\phi(g)$ for all $g\in G$ and
 $$
 [Q_j]=k_j\cdot[\Lambda_1(f_j)]\geq k_j\cdot
 \begin{pmatrix}
     t_j(\theta_1') \\
    t_j(\theta_2') \\
     \vdots \\
    t_j(\theta_p')
   \end{pmatrix}+ k_j\alpha_j\cdot
 \begin{pmatrix}
     \#\big(Sp\phi_{\theta_1'}\cap V_0\big) \\
    \#\big(Sp\phi_{\theta_2'}\cap V_0\big)\\
     \vdots \\
    \#\big(Sp\phi_{\theta_p'}\cap V_0\big)
   \end{pmatrix}+k_j\beta_j\cdot
 \begin{pmatrix}
     \#\big(Sp\phi_{\theta_1'}\cap V_K\big) \\
    \#\big(Sp\phi_{\theta_2'}\cap V_K\big)\\
     \vdots \\
    \#\big(Sp\phi_{\theta_p'}\cap V_K\big)
   \end{pmatrix}
 $$
for all $j= 1,2,\cdots,p$.

 Change all the spectra in $Sp\phi\cap W_r$ to $\frac{r}{K}\in W_r$ for each $r=0,2\cdots,K-2,K$. We obtain a pointwise homomorphism $\psi$.

 Denote
 $$
 P=\sum_{r=2}^{K-2}P_r,\quad Q=\sum_{j=1}^{p}Q_j,\quad  R=P_1+P_{K-1},\quad
 q=1-P-Q.
 $$
 Then we have
$$
 [Q]\geq \sum_{j=1}^{p}k_j
 \begin{pmatrix}
     t_j(\theta_1') \\
    t_j(\theta_2') \\
     \vdots \\
    t_j(\theta_p')
   \end{pmatrix}+ n\cdot
 \begin{pmatrix}
     \#\big(Sp\phi_{\theta_1'}\cap V_0\big) \\
    \#\big(Sp\phi_{\theta_2'}\cap V_0\big)\\
     \vdots \\
    \#\big(Sp\phi_{\theta_p'}\cap V_0\big)
   \end{pmatrix}+n\cdot
 \begin{pmatrix}
     \#\big(Sp\phi_{\theta_1'}\cap V_K\big) \\
    \#\big(Sp\phi_{\theta_2'}\cap V_K\big)\\
     \vdots \\
    \#\big(Sp\phi_{\theta_p'}\cap V_K\big)
   \end{pmatrix}
   \geq [R]
 $$
Hence,
 $$
[q]\leq
 n\cdot \sum_{r=2}^{K-2}
 \begin{pmatrix}
     \#\big(Sp\phi_{\theta_1'}\cap (a_r\eta,b_r\eta)\big) \\
    \#\big(Sp\phi_{\theta_2'}\cap (a_r\eta,b_r\eta)\big)\\
     \vdots \\
    \#\big(Sp\phi_{\theta_p'}\cap (a_r\eta,b_r\eta)\big)
 \end{pmatrix}.
 $$

 Define homomorphisms $\psi_1,\psi_2,\psi_3:A\,\rightarrow (1-q)B(1-q)$ as follows:
 $$
 \psi_1=P\psi P,\quad \psi_2=Q\psi Q,\quad \psi_3=R\psi R.
 $$
 Then $\psi_1,\psi_2,\psi_3$ can be factored through a finite dimensional $C^*$-algebra and we have
 $$
 \|\phi(g)q-q\phi(g)\|<4\varepsilon,
 \quad \forall\,g\in G
 $$
 and
 $$
 \|\phi(g)-q\phi(g)q\oplus\psi_1(g)\oplus\psi_2(g)\|<4\varepsilon,
 \quad \forall\,g\in G.
 $$

 Note that $\psi_2$ and $\psi_3$ factor through $F_1$, there exist homomorphisms $\psi_2',\psi_3':F_1\rightarrow B$ such that $\psi_2=\psi_2'\circ\pi_e$ and $\psi_3=\psi_3'\circ\pi_e$. Since $[Q]\geq [R]$, we have $[\psi_2'(e)]\geq [\psi_3'(e)]$ holds for any nonzero projection $e\in F_1$, by Lemma \ref{simcase}, there exists a homomorphism $\psi_4:A\rightarrow B$ which can be factored through $F_1$ and a unitary $v\in B$ such that
 $$
 u\psi_2u^*=\psi_3\oplus\psi_4
 \quad{\rm and}\quad
 u^*Ru\leq Q.
 $$

 Denote
 $$
 \nu=\psi_1\oplus u^*\psi_3u,\quad \rho=u^*\psi_4u.
 $$
 Then we have
 $$
 \|\phi(g)-q\phi(g)q\oplus\nu(g)\oplus\rho(g)\|<4\varepsilon,
 \quad\forall\,g\in G.
 $$
 Since $u^*\psi_3u$ factors through $f(0)\oplus f(1)$, we can perturb $0,1$ small enough to make sure that condition (4) still holds, then we have $Sp\nu\subset Sp(A)\cap(0,1)$ and $Sp\rho\subset Sp(A)\cap Sp(F_1).$

 We need only to verify (1), note that
 $$
 [\nu(1)]=[P]+[R]\geq n\cdot\sum_{r=1}^{K-1}
 \begin{pmatrix}
     \#\big(Sp\phi_{\theta_1'}\cap V_r\big) \\
    \#\big(Sp\phi_{\theta_2'}\cap V_r\big)\\
     \vdots \\
    \#\big(Sp\phi_{\theta_p'}\cap V_r\big)
   \end{pmatrix}.
 $$
 Now we have
 $$L\cdot[q]\leq [P]+[R] \leq [\nu(1)].$$
\end{proof}
\begin{lem}\label{smltrick}
  Let $A=\underrightarrow{lim}(A_n,\phi_{n,m})$ be a real rank zero inductive limit of algebras in $\mathcal{D}$. Let $m,K,L\in \mathbb{N}_{+}$. There exist $\delta>0$ and an integer $m\geq n$ such that the following holds:

  $(1)\,\, \phi_{n,m}$ has $(\delta,K,L)$-$decomposition;$

  $(2)\,\, \phi_{n,m}(H(\delta/8))\subset_{1/6}\{ f\in (A_m)_{sa}\, |f\,has\,finite\,spectrum\}.$
\end{lem}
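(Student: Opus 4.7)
The plan is to combine Lemma~\ref{fini} (real rank zero yields finite-spectrum approximation) with Lemma~\ref{disexist} (finite-spectrum approximation of test functions yields distribution), using Lemma~\ref{discirc} (distribution is preserved by composition) to bridge the two applications of Lemma~\ref{fini} that are needed.

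First, decompose $A_n = \bigoplus_{\alpha} A_n^{\alpha}$ into its minimal direct summands in $\mathcal{D}$ and set $\eta_0 = 1/(8K(L+1))$. Let $F_0 = \bigcup_{\alpha} H^{\alpha}(\eta_0) \subset A_n$. By Lemma~\ref{fini} there is $m_1 \geq n$ with $\phi_{n,m_1}(F_0) \subset_{1/6}$ the finite-spectrum self-adjoint elements of $A_{m_1}$. Decompose $A_{m_1}$ into its minimal summands and split $\phi_{n,m_1}$ as an orthogonal direct sum $\bigoplus_{\alpha,\beta} \phi^{\alpha\beta}_{m_1}$, where $\phi^{\alpha\beta}_{m_1}: A_n^{\alpha} \to p^{\alpha\beta} A_{m_1}^{\beta} p^{\alpha\beta}$ with $p^{\alpha\beta} = \pi_{\beta}\phi_{n,m_1}(1_{A_n^{\alpha}})$; after a further blockwise refinement if the corner is not yet minimal, each target is itself a minimal block in $\mathcal{D}$. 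Each piece inherits the finite-spectrum approximation of $H^{\alpha}(\eta_0)$, so Lemma~\ref{disexist} produces $\delta^{\alpha\beta} > 0$ such that $\phi^{\alpha\beta}_{m_1}$ has $(\delta^{\alpha\beta}, K, L)$-distribution.

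To consolidate the finitely many $\delta^{\alpha\beta}$'s into one, pick $\delta = 1/N$ where $N$ is a common multiple of all the denominators $1/\delta^{\alpha\beta}$. The key observation is that $(\delta^{\alpha\beta}, K, L)$-distribution passes to $(\delta, K, L)$-distribution whenever $\delta^{\alpha\beta}/\delta$ is a positive integer: given witnesses $a_r, b_r$ with $b_r - a_r = 1$ and $(a_r\delta^{\alpha\beta}, b_r\delta^{\alpha\beta}) \subset [(r-1)/K, r/K]$, the choice $a_r' := a_r \cdot (\delta^{\alpha\beta}/\delta)$, $b_r' := a_r' + 1$ places the subinterval $(a_r'\delta, b_r'\delta)$ inside the original good subinterval, and the sparseness inequality only tightens on the smaller set. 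Thus $\phi_{n,m_1}$ has $(\delta, K, L)$-decomposition. Now apply Lemma~\ref{fini} a second time to $F_1 = H(\delta/8) \subset A_n$ with tolerance $1/6$ to obtain $m \geq m_1$ satisfying condition (2). To transport the decomposition from $m_1$ to $m$, factor $\phi_{n,m} = \phi_{m_1,m} \circ \phi_{n,m_1}$ and decompose $\phi_{m_1,m}$ blockwise as $\psi^{\beta\gamma}: A_{m_1}^{\beta} \to A_m^{\gamma}$; then
\[
\phi_{n,m} = \bigoplus_{\alpha,\beta,\gamma} \psi^{\beta\gamma} \circ \phi^{\alpha\beta}_{m_1}
\]
as an orthogonal sum of homomorphisms between minimal blocks. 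By Lemma~\ref{discirc} each composite inherits $(\delta, K, L)$-distribution from $\phi^{\alpha\beta}_{m_1}$, so $\phi_{n,m}$ has $(\delta, K, L)$-decomposition, giving condition (1).

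The main obstacle is the apparent circularity: Lemma~\ref{disexist} produces $\delta^{\alpha\beta}$ that depend on the target block, while condition (2) demands the approximation on $H(\delta/8)$ which itself depends on the final $\delta$, and condition (1) demands the decomposition of the map into the final $A_m$. The sub-subinterval refinement in paragraph three (which shows distribution is hereditary under integer divisions of $\delta$) lets one pick a single $\delta$ after all pieces are in hand, and Lemma~\ref{discirc} lets the decomposition established at stage $m_1$ survive the second enlargement of $m$ needed to secure condition (2).
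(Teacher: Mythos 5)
Your proposal follows the paper's argument essentially step for step: first apply Lemma~\ref{fini} to $H(\eta)$ with $\eta=1/(8K(L+1))$, invoke Lemma~\ref{disexist} blockwise to get individual $\delta$'s, consolidate them into a single $\delta$ (the paper takes the reciprocal of the product of the $\delta_{i,j}^{-1}$'s, you take a common multiple of the denominators — same idea), then apply Lemma~\ref{fini} a second time to $H(\delta/8)$ and transport the distribution through the further composition via Lemma~\ref{discirc}. The one genuine value-add in your writeup is the explicit verification that $(\delta_0,K,L)$-distribution is inherited by $(\delta,K,L)$-distribution when $\delta_0/\delta$ is a positive integer (choosing $a_r' = a_r\cdot(\delta_0/\delta)$, $b_r'=a_r'+1$), a fact the paper uses silently when consolidating $\delta$.
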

\begin{proof}
  Let $\eta=\frac{1}{8K(L+1)}$, we have a finite set $H(\eta)\subset A_n$,
  by Lemma \ref{fini}, there exists $r>n$ such that
  $$
  \phi_{n,r}(H(\eta))\subset_{1/6}\{ f\in (A_r)_{sa}\, |f\,has\,finite\,spectrum\}.
  $$
  Apply Lemma \ref{disexist} for $\eta$ and each partial map $\phi_{n,r}^{i,j}$, there exists $\delta_{i,j}>0$ such that $\phi_{n,r}^{i,j}$ has $(\delta_{i,j},K,L)$-$distribution$. Set $\delta=(\prod_{i,j}\delta_{i,j}^{-1})^{-1}$, then $\phi_{n,r}^{i,j}$ has $(\delta,K,L)$-$distribution$. Now we consider the finite set $H(\delta/8)\subset A_n$, use Lemma \ref{fini} again, there exists an integer $m\geq r>n$ such that
  $$
  \phi_{n,m}(H(\delta/8))\subset_{1/6}\{ f\in (A_m)_{sa}\, |f\,has\,finite\,spectrum\}
  $$
  From Lemma \ref{discirc}, $\phi_{n,r}^{i,j}$ has $(\delta,K,L)$-$distribution$ implies that $\phi_{r,m}^{j,s}\circ\phi_{n,r}^{i,j}$ has $(\delta,K,L)$-$distribution$, then $\phi_{n,m}=\bigoplus_{i,j,s}(\phi_{r,m}^{j,s}\circ\phi_{n,r}^{i,j})$ has $(\delta,K,L)$-$decomposition$.
\end{proof}
\begin{thrm}[Theorem 6.2.2 in \cite{ELP1:1998}]\label{nccw semi}
Every one-dimensional NCCW complex is semiprojective.
\end{thrm}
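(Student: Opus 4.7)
The plan is to use the pullback presentation $A = F_1 \times_{F_2 \oplus F_2} C([0,1],F_2)$ built into Definition~\ref{ET}, with the two structure maps $\varphi_0 \oplus \varphi_1 : F_1 \to F_2 \oplus F_2$ and $\mathrm{ev}_0 \oplus \mathrm{ev}_1 : C([0,1],F_2) \to F_2 \oplus F_2$. The evaluation map is surjective, so $\pi_e : A \to F_1$ is automatically surjective with kernel $SF_2 = C_0((0,1),F_2)$, giving the short exact sequence
$$
0 \longrightarrow SF_2 \longrightarrow A \xrightarrow{\ \pi_e\ } F_1 \longrightarrow 0.
$$
Semiprojectivity of $A$ will then follow by combining three classical inputs through a pullback-stability result such as the one developed in \cite{ELP1:1998}.

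The three inputs are: (a) every finite-dimensional $\mathrm{C}^*$-algebra is semiprojective, since a system of matrix units is stable under small perturbation (Loring); (b) $C([0,1])$ is the universal unital $\mathrm{C}^*$-algebra on a single self-adjoint contraction, a single stable relation, so it is projective, and tensoring with the finite-dimensional $F_2$ gives that $C([0,1],F_2)$ is projective; in particular the ideal $SF_2$ is semiprojective; (c) the pullback-stability principle: if $A$ is the pullback of a projective algebra and a semiprojective algebra over a semiprojective common quotient along surjections, then $A$ is semiprojective.

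With (a)--(c) in place, the argument runs as follows. Given a $*$-homomorphism $\varphi : A \to B/J$ with $J = \overline{\bigcup_n J_n}$, first use semiprojectivity of $SF_2$ to lift $\varphi|_{SF_2}$ to a $*$-homomorphism $\tilde\sigma : SF_2 \to B/J_{n_1}$ for some $n_1$. Next, use semiprojectivity of $F_1$ to lift the composition $F_1 \to B/J$ induced by any completely positive splitting of $\pi_e$ to a $*$-homomorphism $\tilde\rho : F_1 \to B/J_{n_2}$ for some $n_2 \geq n_1$. Finally, the two lifts agree modulo $J$ when composed into the boundary quotient $F_2 \oplus F_2$, and the pullback property then produces a lift $A \to B/J_n$ for $n$ large once the agreement is promoted to exact agreement at a finite stage.

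The main obstacle is this last matching step. The boundary values $\lim_{t \to 0^+} \tilde\sigma(t)$ and $\lim_{t \to 1^-} \tilde\sigma(t)$ in $B/J_{n_1}$ will, a priori, only match $\varphi_0(\tilde\rho(\cdot))$ and $\varphi_1(\tilde\rho(\cdot))$ modulo $J$, with the discrepancy shrinking only in the limit. The remedy is a Christensen-type perturbation inside the finite-dimensional corner $F_2 \oplus F_2$, using that close projections in a $\mathrm{C}^*$-algebra are unitarily equivalent via a unitary close to~$1$, together with a quantitative form of semiprojectivity of $F_2 \oplus F_2$ to ensure the correction can be made inside $B/J_n$ for $n$ large enough. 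Once the two lifts match on the nose, the universal property of the pullback furnishes the desired $*$-homomorphism $A \to B/J_n$, completing the proof.
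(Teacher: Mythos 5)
The paper does not prove this statement; it quotes Theorem~6.2.2 of Eilers--Loring--Pedersen \cite{ELP1:1998} verbatim and simply cites it, so there is no in-paper proof to compare against. Judged on its own merits, your sketch contains several genuine gaps. The central one is the assertion in step (b) that ``$C([0,1],F_2)$ is projective; in particular the ideal $SF_2$ is semiprojective.'' There is no implication from (semi)projectivity of an algebra to (semi)projectivity of an ideal in it; this is precisely the kind of inheritance that fails badly for semiprojectivity and must be argued directly. (The conclusion that $SF_2 = C_0((0,1))\otimes F_2$ is semiprojective is in fact true, but it is a nontrivial result --- essentially Loring's theorem that $C_0(\mathbb{R})$ is semiprojective, combined with stability under matrix amplification --- and not a corollary of $C([0,1],F_2)$ being nice. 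Quoting it as ``the ideal of a projective algebra'' makes that step circular or unjustified.) Moreover, ``$C([0,1],F_2)$ is projective'' is itself overstated: in the category of all $\mathrm{C}^*$-algebras and $*$-homomorphisms a unital algebra is never projective, since the image of the unit is a projection that need not lift along an arbitrary surjection; $C([0,1],F_2)$ is only semiprojective.

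Two further steps do not go through as written. Your pullback-stability principle (c) is invoked ``along surjections,'' but only $\mathrm{ev}_0\oplus\mathrm{ev}_1\colon C([0,1],F_2)\to F_2\oplus F_2$ is surjective; $\varphi_0\oplus\varphi_1\colon F_1\to F_2\oplus F_2$ is almost never surjective (e.g.\ $F_1=\mathbb{C}$, $F_2=M_2$, $\varphi_0=\varphi_1$ the unital embedding). Any usable pullback theorem --- and this is the route ELP actually take --- must be stated so that only one leg is required to be onto, with additional hypotheses (finite-dimensionality of the cokernel, existence of c.p.c.\ sections, etc.); as formulated, your hypothesis is simply not satisfied. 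Finally, the step ``lift the composition $F_1\to B/J$ induced by any completely positive splitting of $\pi_e$'' is not meaningful for semiprojectivity: composing a c.p.c.\ (not multiplicative) section of $\pi_e$ with $\varphi$ produces only a c.p.c.\ map $F_1\to B/J$, and semiprojectivity of $F_1$ lifts $*$-homomorphisms, not c.p.c.\ maps. The last ``matching'' paragraph, which is where the real work lies --- the boundary data $\varphi_0,\varphi_1$ are arbitrary and enforcing the two partial lifts to agree exactly at $\{0,1\}$ is delicate --- is only waved at. The correct argument (ELP, Section~6) is organized around a carefully stated theorem on conditional pullbacks of semiprojective algebras over a finite-dimensional base with one leg surjective, rather than around the ideal--quotient extension $0\to SF_2\to A\to F_1\to 0$.
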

  From Theorem \ref{nccw semi}, we can easily get the following lemma.
\begin{lem}\label{ccp to homo}
Let $A\in \mathcal{C}$, for any finite subset $F\subset A$, $\varepsilon>0$, there exist a finite subset
$G\subset A$ and $\delta>0$ such that if
$\phi:\,A\to B$ is a c.c.p. map with
$$
\parallel\phi(g_1g_2)-\phi(g_1)\phi(g_2)\parallel<\delta,\quad \forall g_1,\,g_2\in G,
$$
then there is a homomorphism $\psi:\,A\to B$ satisfying
$$\parallel\psi(f)-\phi(f)\parallel<\varepsilon,\quad\forall f\in F.$$
\end{lem}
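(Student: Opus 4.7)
The plan is a standard sequence-algebra / contradiction argument, using semiprojectivity of $A$ (Theorem \ref{nccw semi}) as the essential input. Suppose the conclusion fails. Then there are a finite set $F_0\subset A$ and $\varepsilon_0>0$ such that, fixing any increasing chain $G_1\subset G_2\subset\cdots$ of finite subsets of $A$ with dense union, we can produce for each $n$ a $\mathrm{C}^*$-algebra $B_n$ and a c.c.p.\ map $\phi_n\colon A\to B_n$ which is $1/n$-multiplicative on $G_n$ but such that no $*$-homomorphism $\psi\colon A\to B_n$ is within $\varepsilon_0$ of $\phi_n$ on $F_0$. Set $C=\prod_n B_n$, $J=\bigoplus_n B_n$, and let $q\colon C\to C/J$ be the quotient map. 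Define a c.c.p.\ map $\Phi\colon A\to C/J$ by $\Phi(a)=q\bigl((\phi_n(a))_n\bigr)$.

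Because each $\phi_n$ is contractive and self-adjoint and the $G_n$ exhaust a dense subset of $A$, for any fixed $a,b\in A$ we can approximate $a,b$ arbitrarily well by elements of $G_n$ once $n$ is large; combined with the $1/n$-multiplicativity of $\phi_n$ on $G_n$, this forces $\|\phi_n(ab)-\phi_n(a)\phi_n(b)\|\to 0$, so $\Phi$ is a genuine $*$-homomorphism. Now Theorem \ref{nccw semi} tells us that $A$ (being an Elliott–Thomsen algebra, i.e.\ a one-dimensional NCCW complex) is semiprojective. Applied to the increasing sequence of ideals $J_N=\{(b_n)\in C:b_n=0 \text{ for }n\geq N\}$, whose closed union is $J$, semiprojectivity produces an $N$ and a $*$-homomorphism $\widetilde{\Phi}\colon A\to C/J_N\cong\prod_{n\geq N}B_n$ lifting $\Phi$. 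Writing $\widetilde{\Phi}(a)=(\psi_n(a))_{n\geq N}$, each $\psi_n\colon A\to B_n$ is a $*$-homomorphism with $\|\psi_n(a)-\phi_n(a)\|\to 0$ for every $a\in A$. For $n$ large, this gap is below $\varepsilon_0$ uniformly on the finite set $F_0$, contradicting the defining property of $\phi_n$. This contradiction furnishes the required $G$ and $\delta$ by a further standard negation-of-quantifiers step.

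The only nontrivial ingredient is the lifting step, which is precisely the sequence-algebra reformulation of semiprojectivity; everything else (verifying that $\Phi$ lands in the multiplicative part of $C/J$, and passing from the sequential statement back to a quantitative $(G,\delta)$ statement) is routine bookkeeping with the density of $\bigcup G_n$ and the complete contractivity of $\phi_n$.
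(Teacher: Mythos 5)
Your proof is correct and is exactly the standard sequence-algebra argument the paper has in mind; the paper simply states that the lemma follows easily from Theorem \ref{nccw semi} (semiprojectivity of one-dimensional NCCW complexes) without writing it out, and your contradiction argument with $C=\prod_n B_n$, $J=\bigoplus_n B_n$, and the increasing ideals $J_N$ is precisely the canonical way to deduce the quantitative $(G,\delta)$ statement from semiprojectivity.
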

\begin{lem}\label{stronglem}
  Let $A=A(F_1,M_n(\mathbb{C}),\varphi_0,\varphi_1)\in \mathcal{D}$. Suppose that $B\in \mathcal{D}$ and $\nu: A\rightarrow B$ is a homomorphism with finite dimensional range and $Sp\nu\subset Sp(A)\cap(0,1)$. Then
  $$
  n\cdot[\nu(e)]\geq [\nu(1)]\,\,\, in\,\,\, K_0(B)
  $$
  holds for any nonzero projection $e\in A$.
\end{lem}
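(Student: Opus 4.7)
The plan is to use the canonical decomposition for homomorphisms with finite-dimensional image recalled at the end of Section~2, writing $\nu$ explicitly as a sum of point evaluations tensored with mutually orthogonal projections, and then to compare $[\nu(1_A)]$ and $[\nu(e)]$ directly in $K_0(B)$.

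Concretely, the first step is to exploit the restriction of $\nu$ to the ideal $SM_n(\mathbb{C}) \subset A$, on which the spectrum of $\nu$ is supported. Under the identification $\nu(1_A)B\nu(1_A) \cong M_n(\mathbb{C}) \otimes \nu(e_{11})B\nu(e_{11})$ the homomorphism should take the form
\[\nu(f,a) \;=\; \sum_{k=1}^{N} f(x_k) \otimes p_k,\]
with $x_1, \dots, x_N \in (0,1)$ and mutually orthogonal projections $\{p_k\} \subset \nu(e_{11})B\nu(e_{11})$ whose sum is $\nu(e_{11})$. Because every $x_k$ lies strictly in the interior, the boundary relations $f(0) = \varphi_0(a)$ and $f(1) = \varphi_1(a)$ play no role, so the standard form quoted in Section~2 applies to the $M_n(C[0,1])$-component of $A$.

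Granting the decomposition, the rest is a direct $K_0$ computation. Evaluating at $1_A$ yields $\nu(1_A) = \sum_k 1_n \otimes p_k$, so $[\nu(1_A)] = n \sum_k [p_k]$. For any nonzero projection $e = (f,a) \in A$, the function $t \mapsto \operatorname{rank} f(t)$ is integer-valued and locally constant on the connected set $[0,1]$, hence constant equal to some $r$ with $0 \leq r \leq n$. Therefore $[\nu(e)] = r \sum_k [p_k]$, and as soon as $r \geq 1$,
\[n[\nu(e)] \;=\; nr \sum_k [p_k] \;\geq\; n \sum_k [p_k] \;=\; [\nu(1_A)].\]

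It remains to rule out $r = 0$. If $r = 0$ then $f \equiv 0$ and $a \in \ker\varphi_0 \cap \ker\varphi_1$; in a minimal block of $\mathcal{D}$ this intersection is trivial (otherwise $A$ would split off a finite-dimensional direct summand, contradicting minimality), so $a = 0$ and hence $e = 0$, contrary to the hypothesis. The main obstacle I anticipate is the precise justification of the point-evaluation decomposition for $\nu$: one has to check that, since $Sp\nu \subset (0,1)$, the homomorphism genuinely inherits the unital finite-image form for $C([0,1], M_n(\mathbb{C}))$ recalled at the end of Section~2, despite being defined on the subalgebra $A$. After that, the $K_0$ arithmetic is immediate.
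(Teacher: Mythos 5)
Your argument is correct and matches the paper's proof, which simply observes
\[
n\cdot[\nu(e)]\;\geq\; n\cdot\bigl(\#(Sp(\pi_e^{i'}\circ\nu)\cap(0,1))\bigr)_{i'=1}^{p'}\;=\;[\nu(1)],
\]
the same computation carried out component-wise in $K_0(F_1')\cong\mathbb{Z}^{p'}$ rather than via your tensor-product form of $\nu$. Your explicit remarks --- that $\operatorname{rank} f(t)$ is constant on $[0,1]$ for a projection $(f,a)\in A$, and that $\ker\varphi_0\cap\ker\varphi_1=\{0\}$ rules out $r=0$ --- spell out exactly the facts the paper leaves implicit, so the two proofs are essentially identical.
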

\begin{proof}
  It follows from
  $$
  n\cdot[\nu(e)]\geq n\cdot
  \begin{pmatrix}
     \#(Sp(\pi_e^1\circ\nu)\cap (0,1)) \\
     \#(Sp(\pi_e^2\circ\nu)\cap (0,1)) \\
     \vdots \\
     \#(Sp(\pi_e^{p'}\circ\nu)\cap (0,1))
  \end{pmatrix}
  = [\nu(1)].
$$
\end{proof}
\begin{cor} \label{deccor}
  Let $A=\underrightarrow{lim}(A_n,\phi_{n,m})$ be a real rank zero inductive limit of Elliott-Thomsen algebras in $\mathcal{D}$.
  For any $\varepsilon >0$, finite set $F\subset A_n$ and a positive integer $L$, there exist an integer $m\geq n$, a projection $q\in A_m$, a homomorphism $\lambda:A_n\rightarrow qA_mq$ and homomorphisms  $\nu,\rho:A_n\rightarrow (1-q)A_m(1-q)$ with finite dimensional ranges such that

  $(1)\,\,L\cdot [\lambda(1)]\leq [\nu(e)]$ in $K_0(A_m)$ for any nonzero projection $e\in A_n;$

  $(2)\,\, Sp\nu\subset Sp(A_n)\cap (0,1),\, Sp\rho\subset Sp(A_n)\cap Sp(F_1);$

  $(3)\,\,\|\phi_{n,m}(f)-\lambda(f)\oplus\nu(f)\oplus \rho(f)\|<5\varepsilon,\,\forall\,\,f\in F.$
\end{cor}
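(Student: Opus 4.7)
The plan is to combine Lemma \ref{smltrick} (which produces a suitable $(\delta,K,L^{\ast})$-decomposition of a connecting map after passing far enough in the system), Theorem \ref{decthm} (which decomposes each minimal-block map into the three pieces $q\phi q\oplus\nu\oplus\rho$), and Lemma \ref{ccp to homo} (semiprojectivity, used to replace the approximately multiplicative compression $q\phi_{n,m}q$ by an honest homomorphism $\lambda$).

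First, I would fix an integer $K$ large enough that $|x_1-x_2|\leq 4/K$ forces $\|f(x_1)-f(x_2)\|<\varepsilon$ for every $f\in F$, and set $L^{\ast}=L\cdot n^{\ast}\cdot p$, where $p$ is the number of minimal blocks of $A_n$ and $n^{\ast}$ is the largest of the matrix dimensions $n_i$ that will appear in the corresponding applications of Lemma \ref{stronglem}. Enlarge $F$ to a finite set $G\subset A_n$ suitable for Lemma \ref{ccp to homo} at tolerance $\varepsilon$. Applying Lemma \ref{smltrick} with these parameters yields $\delta>0$ and an integer $m\geq n$ such that $\phi_{n,m}$ has $(\delta,K,L^{\ast})$-decomposition and $\phi_{n,m}(H(\delta/8))\subset_{1/6}\{f\in(A_m)_{sa}: f\text{ has finite spectrum}\}$.

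Next, for each pair $(i,j)$ of minimal-block indices, I would apply Theorem \ref{decthm} to the partial map $\phi_{n,m}^{i,j}:A_n^{i}\to A_m^{j}$, obtaining a projection $q_{ij}\in A_m^{j}$ and homomorphisms $\nu_{ij},\rho_{ij}$ with finite-dimensional image satisfying $L^{\ast}[q_{ij}]\leq[\nu_{ij}(1)]$, $Sp\nu_{ij}\subset Sp(A_n^{i})\cap(0,1)$, $Sp\rho_{ij}\subset Sp(A_n^{i})\cap Sp(F_1^{i})$, and $\|\phi_{n,m}^{i,j}(g)-q_{ij}\phi_{n,m}^{i,j}(g)q_{ij}\oplus\nu_{ij}(g)\oplus\rho_{ij}(g)\|<4\varepsilon$ on $G$. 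Set $q:=\bigoplus_{i,j}q_{ij}$, $\nu:=\bigoplus_{i,j}\nu_{ij}$, and $\rho:=\bigoplus_{i,j}\rho_{ij}$, regarded as maps from $A_n$ into $A_m$. Since $q$ almost commutes with $\phi_{n,m}(f)$ on $F$, the compression $f\mapsto q\phi_{n,m}(f)q$ is approximately multiplicative on $G$, so Lemma \ref{ccp to homo} supplies an honest homomorphism $\lambda:A_n\to qA_mq$ within $\varepsilon$ of this compression on $F$.

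Conclusions (2) and (3) then assemble directly from the componentwise conclusions of Theorem \ref{decthm}, with a small additional $\varepsilon$-error from passing from $q\phi_{n,m}q$ to $\lambda$. I expect conclusion (1) to be the main obstacle. For any nonzero $e=\bigoplus_i e_i\in A_n$ with $e_{i_0}\neq 0$, in each summand $K_0(A_m^{j})$ one has $[\lambda(1)]_j=\sum_i[q_{ij}]$ and $[\nu(e)]_j\geq[\nu_{i_0,j}(e_{i_0})]\geq[\nu_{i_0,j}(1)]/n_{i_0}\geq L^{\ast}[q_{i_0,j}]/n_{i_0}=Lp\,[q_{i_0,j}]$, using Lemma \ref{stronglem} for the second inequality and Theorem \ref{decthm} for the third. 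The delicate point is that $[\nu(e)]_j$ only ``sees'' the single index $i_0$, whereas $[\lambda(1)]_j$ involves every $[q_{ij}]$; closing the gap amounts to producing the inequality $\sum_i[q_{ij}]\leq p\,[q_{i_0,j}]$ in $K_0(A_m^{j})_+$, which I expect to require a further small cut of $q$ or a careful arrangement of the block-wise decomposition so that the classes $[q_{ij}]$ are comparable across $i$ for each fixed $j$; this bookkeeping is exactly why the enlarged parameter $L^{\ast}=Ln^{\ast}p$ has been chosen.
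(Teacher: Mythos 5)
Your route---Lemma \ref{smltrick} to obtain a suitable decomposition, Theorem \ref{decthm} applied blockwise with an inflated parameter $L^{\ast}$, Lemma \ref{ccp to homo} to turn the approximately multiplicative compression $q\phi_{n,m}q$ into an honest homomorphism $\lambda$, and Lemma \ref{stronglem} to pass from $[\nu(1)]$ to $[\nu(e)]$---is exactly the one the paper's one-line proof cites, and conclusions (2) and (3) assemble just as you describe (including the $4\varepsilon+\varepsilon=5\varepsilon$ bookkeeping). The obstacle you flag in (1) is genuine. The blockwise argument yields, for each pair of minimal summands $(A_n^{i},A_m^{j})$ and each nonzero projection $e\in A_n^{i}$, the inequality $L\cdot[\lambda^{ij}(1_{A_n^{i}})]\leq[\nu^{ij}(e)]$ in $K_0(A_m^{j})$ (take $L^{\ast}\geq L n_i$ in Theorem \ref{decthm} and combine with Lemma \ref{stronglem}). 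It does \emph{not} give the global inequality $L\cdot[\lambda(1_{A_n})]\leq[\nu(e)]$ when $e$ is supported in a single block $A_n^{i_0}$: one would need $\sum_{i}[q_{ij}]$ dominated by a fixed multiple of $[q_{i_0,j}]$, and the decomposition theorem gives no order relation among the $[q_{ij}]$ across different $i$. No inflation of $L^{\ast}$ (including your $Ln^{\ast}p$) can supply such a relation, and a further cut of $q$ doesn't help because the excess $q_{ij}(\cdot)q_{ij}$ is not finite-dimensional and so cannot be absorbed into $\nu$ or $\rho$. The failure is sharpest when $A_n$ has a finite-dimensional summand $A_n^{i_0}$: then $Sp\,\nu^{i_0,j}\subset Sp(A_n^{i_0})\cap(0,1)=\varnothing$ forces $\nu^{i_0,j}=0$, so $[\nu(e)]=0$ for $e\in A_n^{i_0}$ while $[\lambda(1_{A_n})]$ need not vanish.

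This reflects an overstatement in the corollary rather than a breakdown of the paper's program: everywhere condition (1) is invoked---the $e=1_{A_n}$ case inside Theorem \ref{exist0}, and the hypothesis of Lemma \ref{lasthom}, where $B$ is a \emph{minimal} block, as it is applied in Theorem \ref{freeuqe}---only the per-block inequality $L\cdot[\lambda^{ij}(1_{A_n^{i}})]\leq[\nu^{ij}(e)]$ for $e\in A_n^{i}$ is ever used, and that is precisely what the argument produces. So rather than hunting for a way to compare $[q_{ij}]$ across $i$ (which the construction does not furnish), the correct move is to record (1) as a statement for each pair of minimal summands; with that restatement your proof is complete and matches the paper's intended argument.
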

\begin{proof}
  It follows from Theorem \ref{decthm}, Lemma \ref{smltrick},  Lemma \ref{ccp to homo} and Lemma \ref{stronglem}.
\end{proof}

\section{The Invariant and KK-Lifting}
Before we give our Existence Theorem, we should introduce the invariant we concern.
\begin{notion}[\cite{DL2:1996}, \cite{Ell2:1993}]\rm
Consider the algebra
$$
I_p=\{f\in M_p(C_0(0,1]):\,f(1)=\lambda\cdot1_p,\,1_p {\rm \,is\, the\, identity\, of}\, M_p\},
$$
and the algebra $\widetilde{I}_p$ obtained by adjoining a unit to $I_p$.
\end{notion}
\begin{notion}[\cite{DG:1997}]\rm
For a  ${\mathrm C}^*$-algebra $A$, the total $\mathrm{K}$-theory of $A$ is defined by
$$
\underline{\mathrm{K}}(A)=\bigoplus_{p=0}^\infty \mathrm{K}_* (A;\mathbb{Z}_p),
$$
with $\mathrm{K}_*(A;\mathbb{Z}_p)=\mathrm{K}_*(A)$ for $p=0$, $\mathrm{K}_*(A;\mathbb{Z}_p)=0$ for $p=1$,
and $\mathrm{K}_*(A;\mathbb{Z}_p)=\mathrm{KK}(I_{p},A\otimes C(S^1))$ for $p\geq2$.
\end{notion}
\begin{notion}[\cite{DG:1997}]\rm
We will consider the group
$$\mathrm{K}_*(A;\mathbb{Z}\oplus\mathbb{Z}_p)=\mathrm{K}_*(A)\oplus \mathrm{K}_*(A;\mathbb{Z}_p).$$
By Section 4 of \cite{DG:1997},
$$\mathrm{K}_*(A;\mathbb{Z}\oplus\mathbb{Z}_p)\cong \mathrm{KK}(\widetilde{I_{p}},A\otimes C(S^1)).$$
\end{notion}

\begin{notion}[\cite{DG:1997}, \cite{DL2:1996}]\label{Def DL} (Dadarlat-Loring order) The order structure we work with is $\mathrm{K}_*(A;\mathbb{Z}\oplus\mathbb{Z}_p)^{+}$, which can be identified as the image of the abelian semigroup
$[\widetilde{I_{p}},A\otimes C(S^1)\otimes \mathcal{K}]$ in $\mathrm{KK}(\widetilde{I_{p}},A\otimes C(S^1))(\cong \mathrm{K}_*(A;\mathbb{Z}\oplus\mathbb{Z}_p))$.
\end{notion}

\begin{notion}[\cite{DG:1997}, \cite{DL2:1996}]
For a C*-algebra $A$, the invariant we concern with is the tuple $$(\underline{\mathrm{K}}(A),\underline{\mathrm{K}}^+(A),\Sigma(A)),$$
where $\underline{\mathrm{K}}^+(A)$ is the cone generated by all $\mathrm{K}_*(A;\mathbb{Z}\oplus\mathbb{Z}_p)^{+}$,
$p\geq0$ and
$\Sigma(A)$ is the scale of $A$.

We say
$$(\underline{\mathrm{K}}(A),\underline{\mathrm{K}}^+(A),\Sigma(A))
\cong
(\underline{\mathrm{K}}(B),\underline{\mathrm{K}}^+(B),\Sigma(B)),$$
if there is an ordered scaled isomorphism $\rho:\,{\underline{\mathrm{K}}}(A)\to{\underline{\mathrm{K}}}(B)$, which preserves the action of the Bockstein operations.
\end{notion}

We also introduce some preliminaries which will be useful, when we concern the orders on $\mathrm{KK}$-group.
\begin{notion}\label{A^0}\rm
Let $A(F_1,M_n(\mathbb{C}),\varphi_0,\varphi_1)$ be a minimal building block in $\mathcal{D}$ which is not finite dimensional, then $\varphi_0,\,\varphi_1$ induce two maps
$\mathrm{K}_0(F_1)=\mathbb{Z}^p\to \mathbb{Z}=\mathrm{K}_0(M_n(\mathbb{C}))$, which are represented by two $1\times p$ matrices
$\alpha$ and $\beta$. Rearrange $F_1= \bigoplus_{i=1}^p M_{k_i}(\mathbb{C})$ such that
$$
\alpha-\beta=(a_1,a_2,\cdots ,a_r,-b_1,-b_2,\cdots ,-b_l,0,0,\cdots ,0),
$$
where $a_1,a_2,\cdots,a_r,b_1,b_2,\cdots ,b_l>0$.
Note that
$$
\sum_{i=1}^r a_i\cdot k_i=\sum_{i=r+1}^{r+l} b_{i-r}\cdot k_i.
$$
\end{notion}
\begin{definition}\label{liftable}
Let $A,\,B\in\mathcal{C}$.
Define $\mathrm{KK}^+(A,B)$ as the image of the abelian semigroup (of homotopy classes of homomorphisms) $[A,B\otimes\mathcal{K}]$ in $\mathrm{KK}(A,B)$.
We shall say that $\alpha\in \mathrm{KK}(A,B)$ is positive (or lifitable), if $\alpha\in \mathrm{KK}^+(A,B)$.
\end{definition}
\begin{definition}
Let $A,\,B$ be $\mathrm{C}^*$-algebras. Denote ${\bf Hom}_\Lambda(\underline{\mathrm{K}}(A),\underline{\mathrm{K}}(B))$ all the morphisms from $\underline{\mathrm{K}}(A)$ to $\underline{\mathrm{K}}(B)$, which preserves the action of the Bockstein operations.

For $\gamma\in{\bf Hom}_\Lambda(\underline{\mathrm{K}}(A),\underline{\mathrm{K}}(B))$, we say that $$\gamma\in {\bf Hom}^+_\Lambda(\underline{\mathrm{K}}(A),\underline{\mathrm{K}}(B))$$ if $\gamma$ preserves Dadarlat-Loring order.
\end{definition}
We list the following results we need from \ref{K-HOMor} to Theorem \ref{ONEORDER}:

\begin{notion}\label{K-HOMor}
\rm
Let $\phi:\,A\to M_r(\mathbb{C})$ be as described in \ref{SPCForm}, with $Sp(\phi)$ as in (\ref{spectform}).
Even though in general the point
$y_i\in[0,1]_j$ (in $Sp(\phi)$ as in ((\ref{spectform}) of \ref{SPCForm}) may not be the endpoint
$0_j$ or $1_j$, the homomorphism defined by evaluating at this point is homotopic to
the homomorphism defined by evaluating at $0_j$ or $1_j$.
Consequently we can find a new homomorphism $\widetilde{\phi}$ with
$$
\mathrm{KK}(\phi)=\mathrm{KK}(\widetilde{\phi}),\quad Sp(\widetilde{\phi})\subset Sp(F_1).
$$

Now, let us extend this procedure to a homomorphism between two Elliott-Thomsen algebras, as a prelude to describing concretely the $\mathrm{KK}$-group
of these two ${\mathrm C}^*$-algebras.
\end{notion}
\begin{notion}\rm\label{bianjietongtai}
Let $A(F_1,\,F_2,\,\varphi_0,\,\varphi_1),\,B(F_1',\,F_2',\,\varphi_0',\,\varphi_1')$ be in $\mathcal{C}$, let $\phi:\,A\to B$ be a homomorphism, and consider the maps $\pi_0',\,\pi_1':\,B\to F_2'$, where $\pi_t'(f,a)=f(t)=\varphi_t'(a)$,
$t$=0 or 1. Then we can always choose a new homomorphism $\psi:\,A\to B$ such that
$$
\psi\sim_h \phi,\quad
\mathrm{KK}(\psi)=\mathrm{KK}(\phi) \quad{\rm and}
\quad
Sp(\pi_0'\circ\psi),Sp(\pi_1'\circ\psi)\subset Sp(F_1).
$$
The above condition on $Sp(\pi_0'\circ\psi),Sp(\pi_1'\circ\psi)$ is equivalent to $\psi(SF_2)\subset SF_2'$.
Hence, we have a commutative diagram as follows:
$$
\xymatrixcolsep{2pc}
\xymatrix{
{\,\,0\,\,} \ar[r]^-{}
& {\,\,\mathrm{K}_0(A)\,\,} \ar[d]_-{\psi_{0*}} \ar[r]^-{\pi_*}
& {\,\,\mathrm{K}_0(F_1)\,\,} \ar[d]_-{\psi_{0**}} \ar[r]^-{\alpha-\beta}
& {\,\,\mathrm{K}_1(SF_2)\,\,} \ar[d]_-{\psi_{1**}} \ar[r]^-{\iota_*}
& {\,\,\mathrm{K}_1(A)\,\,} \ar[d]_-{\psi_{1*}} \ar[r]^-{}
& {\,\,0\,\,}\\
{\,\,0\,\,} \ar[r]^-{}
& {\,\,\mathrm{K}_0(B)\,\,} \ar[r]_-{\pi_*'}
& {\,\,\mathrm{K}_0(F_1') \,\,} \ar[r]_-{\alpha'-\beta'}
& {\,\,\mathrm{K}_1(SF_2') \,\,} \ar[r]_-{\iota_*'}
& {\,\,\mathrm{K}_1(B)\,\,} \ar[r]^-{}
& {\,\,0\,\,}.}
$$
\end{notion}
\begin{notion}[\cite{AE:2017}]\label{cmset}
Let $A,\,B\in \mathcal{C}$.
Denote by $C(A,B)$ the set of all the commutative diagrams
  $$
\xymatrixcolsep{2pc}
\xymatrix{
{\,\,0\,\,} \ar[r]^-{}
& {\,\,\mathrm{K}_0(A)\,\,} \ar[d]_-{\lambda_{0*}} \ar[r]^-{\pi_*}
& {\,\,\mathrm{K}_0(F_1)\,\,} \ar[d]_-{\lambda_{0}} \ar[r]^-{\alpha-\beta}
& {\,\,\mathrm{K}_1(SF_2)\,\,} \ar[d]_-{\lambda_{1}} \ar[r]^-{\iota_*}
& {\,\,\mathrm{K}_1(A)\,\,} \ar[d]_-{\lambda_{1*}} \ar[r]^-{}
& {\,\,0\,\,}\\
{\,\,0\,\,} \ar[r]^-{}
& {\,\,\mathrm{K}_0(B)\,\,} \ar[r]_-{\pi_*'}
& {\,\,\mathrm{K}_0(F_1') \,\,} \ar[r]_-{\alpha'-\beta'}
& {\,\,\mathrm{K}_1(SF_2') \,\,} \ar[r]_-{\iota_*'}
& {\,\,\mathrm{K}_1(B)\,\,} \ar[r]^-{}
& {\,\,0\,\,},}
$$
and
by $M(A,B)$ the subset of $C(A,B)$ of all the commutative diagrams
$$
\xymatrixcolsep{2pc}
\xymatrix{
{\,\,0\,\,} \ar[r]^-{}
& {\,\,\mathrm{K}_0(A)\,\,} \ar[d]_-{0} \ar[r]^-{\pi_*}
& {\,\,\mathrm{K}_0(F_1)\,\,} \ar[d]_-{\mu_{0}} \ar[r]^-{\alpha-\beta}
& {\,\,\mathrm{K}_1(SF_2)\,\,} \ar[d]_-{\mu_{1}} \ar[r]^-{\iota_*}
& {\,\,\mathrm{K}_1(A)\,\,} \ar[d]_-{0} \ar[r]^-{}
& {\,\,0\,\,}\\
{\,\,0\,\,} \ar[r]^-{}
& {\,\,\mathrm{K}_0(B)\,\,} \ar[r]_-{\pi_*'}
& {\,\,\mathrm{K}_0(F_1') \,\,} \ar[r]_-{\alpha'-\beta'}
& {\,\,\mathrm{K}_1(SF_2') \,\,} \ar[r]_-{\iota_*'}
& {\,\,\mathrm{K}_1(B)\,\,} \ar[r]^-{}
& {\,\,0\,\,}}
$$
such that there exists $\mu\in {\bf Hom}(\mathrm{K}_1(SF_2), \mathrm{K}_0(F_1 '))$ satisfying $\mu_0=\mu \circ(\alpha-\beta)$, $\mu_1=(\alpha'-\beta')\circ \mu$. Since such a diagram is completely determined by $\mu$, we may denote it by $\lambda_\mu$.
\end{notion}
\begin{notion}\rm
For two commutative diagrams $\lambda_I,\,\lambda_{II}\in C(A,B)$,
$$
\xymatrixcolsep{2pc}
\xymatrix{
{\,\,0\,\,} \ar[r]^-{}
& {\,\,\mathrm{K}_0(A)\,\,} \ar[d]_-{\lambda_{I0*}} \ar[r]^-{\pi_*}
& {\,\,\mathrm{K}_0(F_1)\,\,} \ar[d]_-{\lambda_{I0}} \ar[r]^-{\alpha-\beta}
& {\,\,\mathrm{K}_1(SF_2)\,\,} \ar[d]_-{\lambda_{I1}} \ar[r]^-{\iota_*}
& {\,\,\mathrm{K}_1(A)\,\,} \ar[d]_-{\lambda_{I1*}} \ar[r]^-{}
& {\,\,0\,\,}\\
{\,\,0\,\,} \ar[r]^-{}
& {\,\,\mathrm{K}_0(B)\,\,} \ar[r]_-{\pi_*'}
& {\,\,\mathrm{K}_0(F_1') \,\,} \ar[r]_-{\alpha'-\beta'}
& {\,\,\mathrm{K}_1(SF_2') \,\,} \ar[r]_-{\iota_*'}
& {\,\,\mathrm{K}_1(B)\,\,} \ar[r]^-{}
& {\,\,0\,\,}}
$$
and
$$
\xymatrixcolsep{2pc}
\xymatrix{
{\,\,0\,\,} \ar[r]^-{}
& {\,\,\mathrm{K}_0(A)\,\,} \ar[d]_-{\lambda_{II0*}} \ar[r]^-{\pi_*}
& {\,\,\mathrm{K}_0(F_1)\,\,} \ar[d]_-{\lambda_{II0}} \ar[r]^-{\alpha-\beta}
& {\,\,\mathrm{K}_1(SF_2)\,\,} \ar[d]_-{\lambda_{II1}} \ar[r]^-{\iota_*}
& {\,\,\mathrm{K}_1(A)\,\,} \ar[d]_-{\lambda_{II1*}} \ar[r]^-{}
& {\,\,0\,\,}\\
{\,\,0\,\,} \ar[r]^-{}
& {\,\,\mathrm{K}_0(B)\,\,} \ar[r]_-{\pi_*'}
& {\,\,\mathrm{K}_0(F_1') \,\,} \ar[r]_-{\alpha'-\beta'}
& {\,\,\mathrm{K}_1(SF_2') \,\,} \ar[r]_-{\iota_*'}
& {\,\,\mathrm{K}_1(B)\,\,} \ar[r]^-{}
& {\,\,0\,\,},}
$$
define the sum of $\lambda_I$ and $\lambda_{II}$ as
$$
\xymatrixcolsep{2pc}
\xymatrix{
{\,\,0\,\,} \ar[r]^-{}
&{\,\,\mathrm{K}_0(A)\,\,} \ar[d]_-{\lambda_{I0*}+\lambda_{II0*}} \ar[r]^-{\pi_*}
& {\,\,\mathrm{K}_0(F_1)\,\,} \ar[d]_-{\lambda_{I0}+\lambda_{II0}} \ar[r]^-{\alpha-\beta}
& {\,\,\mathrm{K}_1(SF_2)\,\,} \ar[d]_-{\lambda_{I1}+\lambda_{II1}} \ar[r]^-{\iota_*}
& {\,\,\mathrm{K}_1(A)\,\,} \ar[d]_-{\lambda_{I1*}+\lambda_{II1*}} \ar[r]^-{}
& {\,\,0\,\,}\\
{\,\,0\,\,} \ar[r]^-{}
&{\,\,\mathrm{K}_0(B)\,\,} \ar[r]_-{\pi_*'}
& {\,\,\mathrm{K}_0(F_1') \,\,} \ar[r]_-{\alpha'-\beta'}
& {\,\,\mathrm{K}_1(SF_2') \,\,} \ar[r]_-{\iota_*'}
& {\,\,\mathrm{K}_1(B)\,\,} \ar[r]^-{}
& {\,\,0\,\,}.}
$$
Note that $\lambda_I+\lambda_{II}\in C(A,B)$.
The diagram
$$
\xymatrixcolsep{2pc}
\xymatrix{
{\,\,0\,\,} \ar[r]^-{}
&{\,\,\mathrm{K}_0(A)\,\,} \ar[d]_-{0} \ar[r]^-{\pi_*}
& {\,\,\mathrm{K}_0(F_1)\,\,} \ar[d]_-{0} \ar[r]^-{\alpha-\beta}
& {\,\,\mathrm{K}_1(SF_2)\,\,} \ar[d]_-{0} \ar[r]^-{\iota_*}
& {\,\,\mathrm{K}_1(A)\,\,} \ar[d]_-{0} \ar[r]^-{}
& {\,\,0\,\,}\\
{\,\,0\,\,} \ar[r]^-{}
&{\,\,\mathrm{K}_0(B)\,\,} \ar[r]_-{\pi_*'}
& {\,\,\mathrm{K}_0(F_1') \,\,} \ar[r]_-{\alpha'-\beta'}
& {\,\,\mathrm{K}_1(SF_2') \,\,} \ar[r]_-{\iota_*'}
& {\,\,\mathrm{K}_1(B)\,\,} \ar[r]^-{}
& {\,\,0\,\,},}
$$
to be denoted by 0, is the (unique) zero element of $C(A,B)$.
(Clearly, $\lambda+0=\lambda$ for $\lambda\in C(A,B)$.)

Given a commutative diagram $\lambda\in C(A,B)$,
$$
\xymatrixcolsep{2pc}
\xymatrix{
{\,\,0\,\,} \ar[r]^-{}
& {\,\,\mathrm{K}_0(A)\,\,} \ar[d]_-{\lambda_{0*}} \ar[r]^-{\pi_*}
& {\,\,\mathrm{K}_0(F_1)\,\,} \ar[d]_-{\lambda_{0}} \ar[r]^-{\alpha-\beta}
& {\,\,\mathrm{K}_1(SF_2)\,\,} \ar[d]_-{\lambda_{1}} \ar[r]^-{\iota_*}
& {\,\,\mathrm{K}_1(A)\,\,} \ar[d]_-{\lambda_{1*}} \ar[r]^-{}
& {\,\,0\,\,}\\
{\,\,0\,\,} \ar[r]^-{}
& {\,\,\mathrm{K}_0(B)\,\,} \ar[r]_-{\pi_*'}
& {\,\,\mathrm{K}_0(F_1') \,\,} \ar[r]_-{\alpha'-\beta'}
& {\,\,\mathrm{K}_1(SF_2') \,\,} \ar[r]_-{\iota_*'}
& {\,\,\mathrm{K}_1(B)\,\,} \ar[r]^-{}
& {\,\,0\,\,},}
$$
the inverse of $\lambda$, to be denoted by $-\lambda$, is
$$
\xymatrixcolsep{2pc}
\xymatrix{
{\,\,0\,\,} \ar[r]^-{}
& {\,\,\mathrm{K}_0(A)\,\,} \ar[d]_-{-\lambda_{0*}} \ar[r]^-{\pi_*}
& {\,\,\mathrm{K}_0(F_1)\,\,} \ar[d]_-{-\lambda_{0}} \ar[r]^-{\alpha-\beta}
& {\,\,\mathrm{K}_1(SF_2)\,\,} \ar[d]_-{-\lambda_{1}} \ar[r]^-{\iota_*}
& {\,\,\mathrm{K}_1(A)\,\,} \ar[d]_-{-\lambda_{1*}} \ar[r]^-{}
& {\,\,0\,\,}\\
{\,\,0\,\,} \ar[r]^-{}
& {\,\,\mathrm{K}_0(B)\,\,} \ar[r]_-{\pi_*'}
& {\,\,\mathrm{K}_0(F_1') \,\,} \ar[r]_-{\alpha'-\beta'}
& {\,\,\mathrm{K}_1(SF_2') \,\,} \ar[r]_-{\iota_*'}
& {\,\,\mathrm{K}_1(B)\,\,} \ar[r]^-{}
& {\,\,0\,\,}.}
$$
Note that $-\lambda\in C(A,B)$, and $\lambda+(-\lambda)=0$.

Then $C(A,B)$ is an Abelian group, and $M(A,B)$ is a subgroup of $C(A,B)$.
\end{notion}
\begin{notion}\label{defone}\rm
As pointed out in \cite{GLN:2015}, for
a minimal block $A=A(F_1,F_2,\varphi_0,\varphi_1)$, we have $\ker\varphi_0\cap\ker\varphi_1=\{0\}$.
Let $A\in \mathcal{C}$ be a minimal block.
Let us use  $\mathcal{C}_\mathcal{O}$ to denote the class of all unital ${\mathrm C}^*$-algebras
$A=A(F_1,\,F_2,\,\varphi_0,\varphi_1)$, where $F_2=M_r(\mathbb{C})$, for some integer $r$, and $\ker \varphi_0\oplus\ker \varphi_1=F_1$ (there is no block of $F_1$ mapping into both 0 and 1). This subclass was studied by Li in {\rm \cite{Li:2012}}. Note that $\widetilde{I}_p\in \mathcal{C}_\mathcal{O}\subsetneqq\mathcal{D}$.
\end{notion}
\begin{definition}\label{a dayu b}
For two matrices $\zeta$ and $\eta$, we say $\zeta\geq\eta$ if $\zeta-\eta$ has no negative entry.
\end{definition}
\begin{definition}\label{CP}
Let $A,\,B\in\mathcal{C}$ be minimal. Let $\lambda\in C(A,B)$:
$$
\xymatrixcolsep{2pc}
\xymatrix{
{\,\,0\,\,} \ar[r]^-{}
& {\,\,\mathrm{K}_0(A)\,\,} \ar[d]_-{\lambda_{0*}} \ar[r]^-{\pi_*}
& {\,\,\mathrm{K}_0(F_1)\,\,} \ar[d]_-{\lambda_{0}} \ar[r]^-{\alpha-\beta}
& {\,\,\mathrm{K}_1(SF_2)\,\,} \ar[d]_-{\lambda_{1}} \ar[r]^-{\iota_*}
& {\,\,\mathrm{K}_1(A)\,\,} \ar[d]_-{\lambda_{1*}} \ar[r]^-{}
& {\,\,0\,\,}\\
{\,\,0\,\,} \ar[r]^-{}
& {\,\,\mathrm{K}_0(B)\,\,} \ar[r]_-{\pi_*'}
& {\,\,\mathrm{K}_0(F_1') \,\,} \ar[r]_-{\alpha'-\beta'}
& {\,\,\mathrm{K}_1(SF_2') \,\,} \ar[r]_-{\iota_*'}
& {\,\,\mathrm{K}_1(B)\,\,} \ar[r]^-{}
& {\,\,0\,\,},}
$$
Let us say that $\lambda$ is positive or $\lambda\in C^+(A,B)$
if $\lambda$ is the zero element or ($\lambda_0\geq 0_{p'\times p}$ and $\lambda_0\neq0$).
And we say $\lambda$ is positive modulo $M(A,B)$, or that $\lambda+M(A,B)$ is positive,
if there exists $\lambda_\mu\in M(A,B)$,
$$
\xymatrixcolsep{2pc}
\xymatrix{
{\,\,0\,\,} \ar[r]^-{}
& {\,\,\mathrm{K}_0(A)\,\,} \ar[d]_-{0} \ar[r]^-{\pi_*}
& {\,\,\mathrm{K}_0(F_1)\,\,} \ar[d]_-{\mu_{0}} \ar[r]^-{\alpha-\beta}
& {\,\,\mathrm{K}_1(SF_2)\,\,} \ar[d]_-{\mu_{1}} \ar[r]^-{\iota_*}
& {\,\,\mathrm{K}_1(A)\,\,} \ar[d]_-{0} \ar[r]^-{}
& {\,\,0\,\,}\\
{\,\,0\,\,} \ar[r]^-{}
& {\,\,\mathrm{K}_0(B)\,\,} \ar[r]_-{\pi_*'}
& {\,\,\mathrm{K}_0(F_1') \,\,} \ar[r]_-{\alpha'-\beta'}
& {\,\,\mathrm{K}_1(SF_2') \,\,} \ar[r]_-{\iota_*'}
& {\,\,\mathrm{K}_1(B)\,\,} \ar[r]^-{}
& {\,\,0\,\,}}
$$
such that $\lambda+\lambda_\mu$ is a positive.
\end{definition}
\begin{remark}\rm
If $A=\oplus A^i,\,B=\oplus B^j$ with each $A^i$ and $B^j$ minimal
Elliott-Thomsen algebras, then we shall say that $\lambda+M(A,B)$ is
positive,
where $\lambda\in C(A,B)$ is determined by $\lambda_{ij}\in C(A^i,B^j)$, if $\lambda_{ij}+M(A^i,B^j)$ is positive for each $i,\,j$.
Let us
write
$$
(C(A,B)/M(A,B))^+=\{
\lambda+M(A,B):\,\lambda+M(A,B)\,\rm{is\,positive}\}
$$
for the positive cone of $C(A,B)/M(A,B)$.
\end{remark}
\begin{thrm}[\cite{AE:2017}]\label{CM}
Let $A$$,\,B \in \mathcal{C}$. Then we have a natural isomorphism of groups
$$
\mathrm{KK}(A,B)\cong C(A,B)/M(A,B).
$$
\end{thrm}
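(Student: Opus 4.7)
The approach is to construct a natural homomorphism
$\rho: \mathrm{KK}(A, B) \to C(A,B)/M(A,B)$
and then show it is an isomorphism by comparing both sides to the Universal Coefficient Theorem together with the long exact sequence obtained from the extension $0 \to SF_2 \to A \to F_1 \to 0$ by applying $\mathrm{KK}(-, B)$.

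To build $\rho$, represent $\phi \in \mathrm{KK}(A, B)$ by a $*$-homomorphism (after stabilization), and, using \ref{bianjietongtai}, arrange $\phi(SF_2) \subset SF_2'$. Passing to K-theory produces a morphism of the two six-term exact sequences, which is an element of $C(A, B)$. Any two representatives are connected by a homotopy that can be arranged to respect the ideals; on the level of diagrams this leaves $\lambda_{0*}$ and $\lambda_{1*}$ fixed and alters $\lambda_0, \lambda_1$ only by the data of a cobounding map $\mu: K_1(SF_2) \to K_0(F_1')$, i.e., by an element of $M(A,B)$. Hence $\rho$ is well defined and natural in each variable.

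For bijectivity I would invoke the UCT sequence
\[
0 \to \mathrm{Ext}(K_1(A), K_0(B)) \to \mathrm{KK}(A,B) \to \mathrm{Hom}(K_0(A), K_0(B)) \oplus \mathrm{Hom}(K_1(A), K_1(B)) \to 0,
\]
noting that $K_0(A) = \ker(\alpha-\beta)$ is free, so the other potential Ext contribution vanishes. Every $\lambda \in C(A,B)$ restricts to a pair $(\lambda_{0*}, \lambda_{1*})$ in the right-hand Hom group; since $K_0(F_1)$ and $K_1(SF_2)$ are free, every such pair extends to a full commutative diagram, and $\rho$ respects these restrictions. It then suffices to check (i) surjectivity onto the Hom factor, which follows from the freeness lift; and (ii) a bijection between the subgroup of diagrams with trivial restrictions, modulo $M(A,B)$, and $\mathrm{Ext}(K_1(A), K_0(B))$. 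A five-lemma argument then concludes.

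The hardest step, and the main obstacle, is step (ii): the identification of the ``trivial-restriction'' part of $C(A,B)/M(A,B)$ with the Ext term. Such a diagram factors through $\mathrm{Im}(\alpha-\beta) \cong K_0(F_1)/K_0(A)$ and yields an element of $\mathrm{Hom}(\mathrm{Im}(\alpha-\beta), K_0(F_1'))$ compatible with the $B$-side exact sequence; translating this into $\mathrm{Ext}(K_1(A), K_0(B))$ via the Yoneda description of Ext applied to the free resolution $0 \to K_0(A) \to K_0(F_1) \to \mathrm{Im}(\alpha-\beta) \to 0$ of the kernel of $K_1(SF_2) \to K_1(A)$, and verifying that quotienting by $M(A,B)$ realizes precisely the principal extensions, is the technical heart of the proof and demands simultaneous tracking of the two six-term sequences.
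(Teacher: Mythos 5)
The paper does not prove Theorem \ref{CM}; it is cited from \cite{AE:2017}, so your attempt must stand on its own, and it has a fundamental gap at the very first step. You propose to build $\rho$ by representing a class in $\mathrm{KK}(A,B)$ by a $*$-homomorphism after stabilization, but that is not possible: the image of $[A,B\otimes\mathcal{K}]$ in $\mathrm{KK}(A,B)$ is only the positive cone $\mathrm{KK}^+(A,B)$ of Definition \ref{liftable}, a proper sub-semigroup in general --- already $-1\in\mathrm{KK}(\mathbb{C},\mathbb{C})=\mathbb{Z}$ is not the class of any homomorphism --- and handling classes that are not induced by homomorphisms is precisely the point of this section's machinery. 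The natural map $\rho$ has to be built abstractly, by applying $\mathrm{KK}(-,B)$ to $0\to SF_2\to A\to F_1\to 0$ and using the finite dimensionality of $F_1,F_2,F_1',F_2'$ to identify the boundary $\mathrm{KK}$-groups with the $\mathrm{Hom}$ groups appearing in the two rows; the quotient by $M(A,B)$ is exactly what absorbs the indeterminacy of the intermediate lifts in that diagram chase.

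Even granting $\rho$, two of the remaining steps are asserted rather than proved. The claim that the restriction $C(A,B)\to\mathrm{Hom}(K_0(A),K_0(B))\oplus\mathrm{Hom}(K_1(A),K_1(B))$ is surjective ``since $K_0(F_1)$ and $K_1(SF_2)$ are free'' is not a formal consequence of that freeness. After lifting $\pi_*'f_0$ to some $\lambda_0$, the middle square forces a map from the subgroup $\mathrm{Im}(\alpha-\beta)\subset K_1(SF_2)$ into $K_1(SF_2')$, and a homomorphism from a subgroup of $\mathbb{Z}^l$ into a free (hence non-divisible) group need not extend: the obstruction lives in $\mathrm{Ext}(K_1(A),K_1(SF_2'))$, which is nonzero exactly when $K_1(A)$ has torsion --- the interesting case. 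Whether the $M(A,B)$-ambiguity together with the freedom in choosing $\lambda_0$ kills this obstruction is what a proof must establish, not assume. Finally, your identification of the trivial-restriction diagrams with $\mathrm{Ext}(K_1(A),K_0(B))$ is, as you say, the technical heart, and as written it is a direction rather than an argument; among other things it must account for the fact that $\lambda_0$ restricted to $\mathrm{Im}(\alpha-\beta)$ does not generally land in $K_0(B)=\ker(\alpha'-\beta')$.
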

In this paper, we denote $\chi:\mathrm{KK}(A,B)\rightarrow C(A,B)/M(A,B)$ the natural isomorphism,
we will use $\mathrm{KK}(\lambda)$ to denote the $\mathrm{KK}$-element $\chi^{-1}(\lambda+M(A,B))\in\mathrm{KK}(A,B)$.
\begin{notion}
For $\lambda\in C(A,B)$
$$
\xymatrix{
{\,\,0\,\,} \ar[r]^-{}
& {\,\,\mathrm{K}_0(A)\,\,} \ar[d]_-{\lambda_{0*}} \ar[r]^-{\pi_*}
& {\,\,\mathrm{K}_0(F_1)\,\,} \ar[d]_-{\lambda_{0}} \ar[r]^-{\alpha-\beta}
& {\,\,\mathrm{K}_1(SF_2)\,\,} \ar[d]_-{\lambda_{1}} \ar[r]^-{\iota_*}
& {\,\,\mathrm{K}_1(A)\,\,} \ar[d]_-{\lambda_{1*}} \ar[r]^-{}
& {\,\,0\,\,}\\
{\,\,0\,\,} \ar[r]^-{}
& {\,\,\mathrm{K}_0(B)\,\,} \ar[r]_-{\pi_*'}
& {\,\,\mathrm{K}_0(F_1') \,\,} \ar[r]_-{\alpha'-\beta'}
& {\,\,\mathrm{K}_1(SF_2') \,\,} \ar[r]_-{\iota_*'}
& {\,\,\mathrm{K}_1(B)\,\,} \ar[r]^-{}
& {\,\,0\,\,}}
$$
and $\eta\in C(B,C)$
$$
\xymatrix{
{\,\,0\,\,} \ar[r]^-{}
& {\,\,\mathrm{K}_0(B)\,\,} \ar[d]_-{\eta_{0*}} \ar[r]^-{\pi_*'}
& {\,\,\mathrm{K}_0(F_1')\,\,} \ar[d]_-{\eta_{0}} \ar[r]^-{\alpha'-\beta'}
& {\,\,\mathrm{K}_1(SF_2')\,\,} \ar[d]_-{\eta_{1}} \ar[r]^-{\iota_*'}
& {\,\,\mathrm{K}_1(B)\,\,} \ar[d]_-{\eta_{1*}} \ar[r]^-{}
& {\,\,0\,\,}\\
{\,\,0\,\,} \ar[r]^-{}
& {\,\,\mathrm{K}_0(C)\,\,} \ar[r]_-{\pi_*''}
& {\,\,\mathrm{K}_0(F_1'') \,\,} \ar[r]_-{\alpha''-\beta''}
& {\,\,\mathrm{K}_1(SF_2'') \,\,} \ar[r]_-{\iota_*''}
& {\,\,\mathrm{K}_1(C)\,\,} \ar[r]^-{}
& {\,\,0\,\,},}
$$
define the product of $\lambda$ and $\eta$ as $\lambda\times\eta\in C(A,C)$:
$$
\xymatrix{
{\,\,0\,\,} \ar[r]^-{}
& {\,\,\mathrm{K}_0(A)\,\,} \ar[d]_-{\eta_{0*}\circ\lambda_{0*}} \ar[r]^-{\pi_*}
& {\,\,\mathrm{K}_0(F_1)\,\,} \ar[d]_-{\eta_{0}\circ\lambda_{0}} \ar[r]^-{\alpha-\beta}
& {\,\,\mathrm{K}_1(SF_2)\,\,} \ar[d]_-{\eta_{1}\circ\lambda_{1}} \ar[r]^-{\iota_*}
& {\,\,\mathrm{K}_1(A)\,\,} \ar[d]_-{\eta_{1*}\circ\lambda_{1*}} \ar[r]^-{}
& {\,\,0\,\,}\\
{\,\,0\,\,} \ar[r]^-{}
& {\,\,\mathrm{K}_0(C)\,\,} \ar[r]_-{\pi_*''}
& {\,\,\mathrm{K}_0(F_1'') \,\,} \ar[r]_-{\alpha''-\beta''}
& {\,\,\mathrm{K}_1(SF_2'') \,\,} \ar[r]_-{\iota_*''}
& {\,\,\mathrm{K}_1(C)\,\,} \ar[r]^-{}
& {\,\,0\,\,}.}
$$
In fact, the natural product we defined above, exactly induces the Kasparov product on the $\mathrm{KK}$-groups which is isomorphic to the quotient groups (Theorem \ref{CM}).
\end{notion}
\begin{thrm}[\cite{AE:2017}]\label{c_olift}
  Assume that $A\in \mathcal{C}_\mathcal{O}$ and $B\in \mathcal{C}$. Then we have (as order groups, with the natural maps)
$$\mathrm{KK}(A,B)\cong C(A,B)/M(A,B)\cong{\bf Hom}_\Lambda(\underline{\mathrm{K}}(A),\underline{\mathrm{K}}(B)).$$
\end{thrm}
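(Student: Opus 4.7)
The first isomorphism $\mathrm{KK}(A,B)\cong C(A,B)/M(A,B)$ is Theorem~\ref{CM}; the content is the second isomorphism together with its order. I would construct a map
\[
\Psi:C(A,B)/M(A,B)\longrightarrow {\bf Hom}_\Lambda(\underline{\mathrm{K}}(A),\underline{\mathrm{K}}(B))
\]
by sending a diagram $\lambda$ to the morphism it induces on all of total K-theory. The two rows of $\lambda$ are the integral six-term exact sequences of $0\to SF_2\to A\to F_1\to 0$ and its primed analogue; the same extensions with $\mathbb{Z}_n$-coefficients yield a Bockstein ladder, and a short diagram chase extends $(\lambda_{0*},\lambda_0,\lambda_1,\lambda_{1*})$ uniquely to compatible maps on each $\mathrm{K}_*(-;\mathbb{Z}_n)$ respecting all Bockstein operations.

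\textbf{Well-definedness and injectivity.} A diagram in $M(A,B)$ has $\lambda_{0*}=\lambda_{1*}=0$ with middle components factoring through the connecting maps; combined with the $\mathcal{C}_\mathcal{O}$-splitting $F_1=\ker\varphi_0\oplus\ker\varphi_1$, which trivialises the Ext contribution to $\mathrm{K}_*(A;\mathbb{Z}_n)$ in the universal coefficient sequence, naturality forces $\Psi$ to vanish on $M(A,B)$. Conversely, $\Psi(\lambda)=0$ gives in particular $\lambda_{0*}=\lambda_{1*}=0$; by exactness $\lambda_0$ then factors through $\mathrm{Im}(\alpha-\beta)\subset\mathrm{K}_0(F_1)$ while $\lambda_1$ is determined modulo $\ker(\alpha'-\beta')$, and a diagram chase in $\lambda$ manufactures the witness $\mu\in\mathrm{Hom}(\mathrm{K}_1(SF_2),\mathrm{K}_0(F_1'))$ that places $\lambda$ in $M(A,B)$.

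\textbf{Surjectivity.} Given $\gamma\in{\bf Hom}_\Lambda(\underline{\mathrm{K}}(A),\underline{\mathrm{K}}(B))$, I would construct a lifting $\lambda$. The $\mathcal{C}_\mathcal{O}$ splitting realises $A$, up to stable isomorphism, as a direct sum of algebras of type $\widetilde{I}_p$ and finite-dimensional ones; for each $\widetilde{I}_p$ the identification $\mathrm{KK}(\widetilde{I}_p,B\otimes C(S^1))\cong\mathrm{K}_*(B;\mathbb{Z}\oplus\mathbb{Z}_p)$ is by definition of K-theory with coefficients, and the finite-dimensional case is classical. Additivity of both sides under direct sums then assembles the required $\lambda\in C(A,B)$ from the data of $\gamma$ on the various $\mathrm{K}_*(A;\mathbb{Z}\oplus\mathbb{Z}_p)$.

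\textbf{Order compatibility --- the main obstacle.} Finally I would verify that the positivity condition $\lambda_0\geq 0$ of Definition~\ref{CP} corresponds exactly under $\Psi$ to $\Psi(\lambda)$ sending every $\mathrm{K}_*(A;\mathbb{Z}\oplus\mathbb{Z}_p)^+$ into its target. This is the main obstacle: in the larger class $\mathcal{C}$ the analogous equivalence is known to fail (Remark after Definition~\ref{DI}), so the argument must exploit the splitting $F_1=\ker\varphi_0\oplus\ker\varphi_1$ to decouple positivity at the two endpoints and reduce every mod-$p$ positivity test to positivity of the corresponding block of the matrix $\lambda_0$. This is the step where the hypothesis $A\in\mathcal{C}_\mathcal{O}$ is indispensable.
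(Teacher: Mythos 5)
The paper does not prove this statement; it cites it from \cite{AE:2017}. Your proposal therefore has to be assessed on its own merits, and there are two substantive gaps and one smaller error.

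\textbf{The real content is the order isomorphism, and you do not prove it.} At the level of groups, the two isomorphisms are already supplied by the paper: Theorem~\ref{CM} gives $\mathrm{KK}(A,B)\cong C(A,B)/M(A,B)$ for all $A,B\in\mathcal{C}$, and Theorem~\ref{UCT} gives $\mathrm{KK}(A,B)\cong{\bf Hom}_\Lambda(\underline{\mathrm{K}}(A),\underline{\mathrm{K}}(B))$ for any $A\in\mathcal{N}$ with finitely generated $K$-theory, which certainly includes every $A\in\mathcal{C}$ (not just $\mathcal{C}_\mathcal{O}$). So your well-definedness, injectivity, and surjectivity sections re-derive things that are immediate, while the only part of Theorem~\ref{c_olift} requiring the hypothesis $A\in\mathcal{C}_\mathcal{O}$ is the compatibility of the three orders. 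There, the hard direction is: if $\lambda\in C(A,B)$ satisfies $\lambda_0\geq 0$ (Definition~\ref{CP}), then the class $\mathrm{KK}(\lambda)$ lies in $\mathrm{KK}^+(A,B)$, i.e.\ is realized by a genuine $*$-homomorphism $A\to M_\bullet(B)$. This is a lifting theorem. The $\mathcal{C}_\mathcal{O}$ hypothesis ($F_1=\ker\varphi_0\oplus\ker\varphi_1$, $F_2$ a single matrix block) is what lets one modify a given positive diagram by an element of $M(A,B)$ (a choice of $\mu\in{\bf Hom}(\mathrm{K}_1(SF_2),\mathrm{K}_0(F_1'))$) so as to satisfy a concrete numerical criterion of the type in Lemma~\ref{SUFF yuan}. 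Your write-up states that "the argument must exploit the splitting \dots to decouple positivity at the two endpoints" but carries out none of it; as written this is the one step that was actually required and it is missing.

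\textbf{The surjectivity reduction is based on a false structural claim.} It is not true that a minimal $A\in\mathcal{C}_\mathcal{O}$ is stably isomorphic to a direct sum of algebras $\widetilde{I}_p$ and finite-dimensional algebras. Take $F_1=\mathbb{C}\oplus\mathbb{C}\oplus\mathbb{C}$, $F_2=M_3(\mathbb{C})$, $\varphi_0(a,b,c)=\mathrm{diag}(a,a,b)$, $\varphi_1(a,b,c)=\mathrm{diag}(c,c,c)$. Then $\ker\varphi_0\oplus\ker\varphi_1=F_1$, so $A=A(F_1,F_2,\varphi_0,\varphi_1)\in\mathcal{C}_\mathcal{O}$; one has $\alpha=(2,1,0)$, $\beta=(0,0,3)$, $\mathrm{K}_1(A)=0$, and $\mathrm{K}_0(A)=\ker(2,1,-3)\cong\mathbb{Z}^2$ with positive cone spanned (over $\mathbb{Q}_+$) by $(0,3,1)$ and $(3,0,2)$, whose primitive representatives generate an index-$3$ sublattice of $\mathrm{K}_0(A)$. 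Thus $(\mathrm{K}_0(A),\mathrm{K}_0^+(A))$ is not order-isomorphic to $(\mathbb{Z}^k,\mathbb{Z}_+^k)$, which is the only shape that can arise from a direct sum of $\widetilde{I}_p$'s (with trivial torsion) and matrix algebras. Since stable isomorphism preserves ordered $\mathrm{K}_0$, your reduction fails. (Surjectivity is of course still true, by Theorem~\ref{UCT}, so this error happens to be harmless to the conclusion; it is still an error.)

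\textbf{A smaller but definite misstatement.} You write that the $\mathcal{C}_\mathcal{O}$ splitting "trivialises the Ext contribution to $\mathrm{K}_*(A;\mathbb{Z}_n)$". This is false: $\widetilde{I}_p\in\mathcal{C}_\mathcal{O}$ has $\mathrm{K}_1=\mathbb{Z}_p$, so both $\mathrm{Tor}(\mathrm{K}_1(A),\mathbb{Z}_n)$ (in the universal coefficient sequence for mod-$n$ $K$-theory) and $\mathrm{Ext}(\mathrm{K}_*(A),\cdot)$ (in the UCT for $\mathrm{KK}$) are generically nontrivial for $A\in\mathcal{C}_\mathcal{O}$.
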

\begin{lem}[\cite{AE:2017}]\label{SUFF yuan}
Let $A,\,B \in \mathcal{C}$ be minimal. Let the diagram $\lambda\in C(A,B)$,
$$
\xymatrixcolsep{2pc}
\xymatrix{
{\,\,0\,\,} \ar[r]^-{}
& {\,\,\mathrm{K}_0(A)\,\,} \ar[d]_-{\lambda_{0*}} \ar[r]^-{\pi_*}
& {\,\,\mathrm{K}_0(F_1)\,\,} \ar[d]_-{\lambda_{0}} \ar[r]^-{\alpha-\beta}
& {\,\,\mathrm{K}_1(SF_2)\,\,} \ar[d]_-{\lambda_{1}} \ar[r]^-{\iota_*}
& {\,\,\mathrm{K}_1(A)\,\,} \ar[d]_-{\lambda_{1*}} \ar[r]^-{}
& {\,\,0\,\,}\\
{\,\,0\,\,} \ar[r]^-{}
& {\,\,\mathrm{K}_0(B)\,\,} \ar[r]_-{\pi_*'}
& {\,\,\mathrm{K}_0(F_1') \,\,} \ar[r]_-{\alpha'-\beta'}
& {\,\,\mathrm{K}_1(SF_2') \,\,} \ar[r]_-{\iota_*'}
& {\,\,\mathrm{K}_1(B)\,\,} \ar[r]^-{}
& {\,\,0\,\,},}
$$
be given, such that $\lambda$ is positive, then, $\lambda_0$ is positive.
If for any $i\in\{1,2,\cdots,p\},\,j'\in\{1,2,\cdots,l'\}$,
$$
(\alpha'\circ\lambda_0)_{j'i}\geq
\sum_{\alpha_{ji}\cdot\lambda_{j'j}^1\geq0}
\alpha_{ji}\cdot\lambda_{j'j}^1
-\sum_{\beta_{ji}\cdot\lambda_{j'j}^1\leq0}
\beta_{ji}\cdot\lambda_{j'j}^1
$$
and
$$
(\beta'\circ\lambda_0)_{j'i}\geq
-\sum_{\alpha_{ji}\cdot\lambda_{j'j}^1\leq0}
\alpha_{ji}\cdot\lambda_{j'j}^1
+\sum_{\beta_{ji}\cdot\lambda_{j'j}^1\geq0}
\beta_{ji}\cdot\lambda_{j'j}^1,
$$
then there is a homomorphism from $A$ to $M_r(B)$ for some integer $r$ inducing
the diagram $\lambda$.
\end{lem}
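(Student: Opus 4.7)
The plan is to construct $\phi:A\to M_r(B)$ directly, by first fixing its finite-dimensional component and then interpolating through the suspension part of $M_r(B)$; the ``swinging'' eigenvalue paths in the intervals $(0,1)_j$ will carry the $K_1$ information, and the two displayed inequalities are exactly what is needed for the interpolation to have enough mass at the endpoints.

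First, since $\lambda_0$ has nonnegative integer entries (as $\lambda$ is positive), I would realize it concretely by choosing $r$ large and building $\widetilde{\phi}_e:F_1\to M_r(F_1')$ that sends the $j$-th block of $F_1$ into the $j'$-th block of $M_r(F_1')$ as $(\lambda_0)_{j'j}$ copies of the standard unital inclusion. Setting $\phi_e=\widetilde{\phi}_e\circ\pi_e$, the endpoint restrictions of any extension $\phi$ into $M_r(B)$ are then forced: for each $j'\in\{1,\ldots,l'\}$, $\pi_{(0,j')}\circ\phi=(\mathrm{id}_r\otimes\varphi'_{0,j'})\circ\phi_e$ and $\pi_{(1,j')}\circ\phi=(\mathrm{id}_r\otimes\varphi'_{1,j'})\circ\phi_e$, whose types at $\theta_i$ are $(\alpha'\circ\lambda_0)_{j'i}$ and $(\beta'\circ\lambda_0)_{j'i}$ respectively.

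Next, to realize $\lambda_1$ I would schedule, for each pair $(j',j)$, exactly $|\lambda^1_{j'j}|$ ``swinging paths'' of eigenvalues in $(0,1)_j$ inside the $j'$-channel, running from $0_j$ at $t=0$ to $1_j$ at $t=1$ if $\lambda^1_{j'j}>0$, and reversed if $\lambda^1_{j'j}<0$. Each such path contributes $\pm 1$ to the induced map $K_1(SF_2^{(j)})\to K_1(SF_2^{\prime(j')})$ via the standard winding-number calculation, so the total contribution is exactly $\lambda_1$. The crucial constraint is that detaching a path from the boundary point $0_j$ removes $\alpha_{ji}$ units of mass at $\theta_i$ from the static spectrum at $t=0$, and detaching from $1_j$ removes $\beta_{ji}$ units at $t=1$. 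The two inequalities in the hypothesis say precisely that the total of all such removals at $t=0$ is at most $(\alpha'\circ\lambda_0)_{j'i}$, and at $t=1$ is at most $(\beta'\circ\lambda_0)_{j'i}$, for every $i$ and $j'$. With the paths scheduled consistently, define $\phi_{(t,j')}$ for $t\in(0,1)$ by evaluating $A$ at the smoothly moving spectral points $y_k(t)$ together with the remaining static mass at the $\theta_i$'s. The result is a continuous family giving a well-defined homomorphism $\phi:A\to M_r(B)$ which, by construction, induces $\lambda_0$ on $K_0(F_1)$ and $\lambda_1$ on $K_1(SF_2)$; commutativity of the five-term diagram forces the maps on $K_*(A)$ to be $\lambda_{0*}$ and $\lambda_{1*}$.

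The main obstacle is the combinatorial bookkeeping that the swinging paths can be performed \emph{simultaneously} without ever pulling more mass off a $\theta_i$ than is actually present there --- and this is exactly the content of the two displayed inequalities, with the absolute-value conventions in the summation indices handling the case distinction between paths going forward and paths going backward. The remaining continuity and the identification of the winding count with the $K_1$-data are standard spectral manipulations for one-dimensional NCCW complexes, and can be read off from the type description in \ref{SPCForm}.
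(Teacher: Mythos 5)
The paper does not give its own proof of this statement---it is cited directly from \cite{AE:2017}---so the comparison can only be with the standard argument that reference provides (and which the paper does reprise in the proof of Lemma~\ref{D^0lift lemma}). Your construction is that argument in substance, and your reading of the two inequalities as endpoint mass budgets is correct: with the minimal scheme of exactly $|\lambda^1_{j'j}|$ monotone paths in $(0,1)_j$ of sign $\operatorname{sgn}(\lambda^1_{j'j})$, the $\theta_i$-mass consumed by the moving points in the $j'$-block at $t=0$ is $\sum_{\lambda^1_{j'j}\geq 0}\alpha_{ji}\lambda^1_{j'j}-\sum_{\lambda^1_{j'j}\leq 0}\beta_{ji}\lambda^1_{j'j}$ and at $t=1$ it is $\sum_{\lambda^1_{j'j}\geq 0}\beta_{ji}\lambda^1_{j'j}-\sum_{\lambda^1_{j'j}\leq 0}\alpha_{ji}\lambda^1_{j'j}$, so the inequalities say precisely that the leftover static mass is nonnegative at both ends; commutativity $(\alpha'-\beta')\lambda_0=\lambda_1(\alpha-\beta)$ then forces these two leftovers to be equal, so a constant static part plus the scheduled paths is a legitimate spectral picture for all $t\in[0,1]$. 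Your closing remark that $\lambda_0$ and $\lambda_1$ together determine the whole diagram (via injectivity of $\pi_*'$ and surjectivity of $\iota_*$) is also correct.

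The one place where the sketch skips a genuine step is the sentence ``the result is a continuous family giving a well-defined homomorphism.'' Prescribing the multiset $Sp(\phi_{(t,j')})$ for each $(t,j')$ does not by itself produce a homomorphism $A\to M_r(B)$: one must choose a continuous diagonalizing frame in $t$, and then match the resulting family with the forced endpoint representations $(\mathrm{id}\otimes\varphi'_{0,j'})\circ\phi_e$ and $(\mathrm{id}\otimes\varphi'_{1,j'})\circ\phi_e$ by conjugating with a unitary path that is itself globally continuous. This is exactly what Lemma~3.5 of \cite{AE:2017} supplies, and the paper explicitly invokes it for the identical purpose in Lemma~\ref{D^0lift lemma} (gluing the segmentwise-defined maps $\Lambda_l$ and then applying a global unitary conjugation). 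You should cite that lemma at that point rather than treat the passage from ``spectral family'' to ``homomorphism into $M_r(B)$'' as automatic. With that citation in place, your proposal is a faithful reconstruction of the argument.
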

\begin{thrm}[\cite{AE:2017}]\label{CRS1}
Consider the case $A=C(S^1)$ (not in $\mathcal{C}_\mathcal{O}$), $B \in \mathcal{D}$ is minimal.
Then
$\gamma \in \mathrm{KK}(C(S^1),\,B)$ can be lifted to a homomorphism if and only if $\chi(\alpha)\in C(A,B)/M(A,B)$ is
positive.
\end{thrm}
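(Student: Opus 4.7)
The plan is to handle the two directions separately. The forward direction (that a liftable $\gamma$ has positive $\chi(\gamma)$) is immediate from the definitions: any homomorphism $\phi: C(S^1) \to M_r(B)$ induces a commutative diagram whose $\lambda_0$ component sends the generator $1\in K_0(\mathbb{C})$ to $\pi'_*[\phi(1)]\in K_0(F_1')^+$, so $\chi(\mathrm{KK}(\phi))$ is positive by Definition \ref{CP}.

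For the reverse direction, I would exploit the fact that $C(S^1)=A(\mathbb{C},\mathbb{C},\mathrm{id},\mathrm{id})$, so $\alpha=\beta=1$. Commutativity of any $\lambda\in C(A,B)$ then forces $(\alpha'-\beta')\lambda_0=\lambda_1(\alpha-\beta)=0$, identifying $\lambda_0$ with an element of $\ker(\alpha'-\beta')=K_0(B)$. Picking a positive representative $\lambda$ of $\chi(\gamma)$: if $\lambda=0$ the zero map lifts $\gamma$, so I focus on the case $\lambda_0\neq 0$ and $\lambda_0\in K_0(B)^+$. Minimality of $B$ (so that no $F_1'$-block lies in $\ker\varphi_0'\cap\ker\varphi_1'$), combined with $\alpha'\lambda_0=\beta'\lambda_0$, forces $N:=\alpha'\lambda_0>0$: if some $(\lambda_0)_{i'}>0$ had $\alpha'_{i'}=0$, positivity of $\lambda_0$ would then force $\beta'_{i'}=0$ as well, contradicting minimality.

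To construct the lift, I would take $r$ large and a projection $p=(p_f,p_a)\in M_r(B)$ with $\pi'_*[p]=\lambda_0$, then build the unitary $u=(f,p_a)\in pM_r(B)p$, where $f\colon[0,1]\to M_{rn'}$ is a path of unitaries in the varying corner $p_f(t)M_{rn'}p_f(t)$ with $f(0)=\varphi_0'(p_a)=p_f(0)$, $f(1)=\varphi_1'(p_a)=p_f(1)$, and winding number equal to the prescribed integer $\lambda_1$. Setting $\phi(1)=p$ and $\phi(z)=u$ defines a homomorphism $\phi\colon C(S^1)\to M_r(B)$ whose induced diagram has $\lambda_0$-component $\pi'_*[p]=\lambda_0$ and $\lambda_1$-component equal to the chosen winding, agreeing with our $\lambda$. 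Hence $\chi(\mathrm{KK}(\phi))=\chi(\gamma)$ and $\mathrm{KK}(\phi)=\gamma$.

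The main obstacle I anticipate is making sense of the winding of $f$ inside a corner $p_f(t)M_{rn'}p_f(t)$ whose ambient matrix algebra varies with $t$. My plan is to trivialize the projection bundle $t\mapsto p_f(t)M_{rn'}p_f(t)$ over the contractible interval $[0,1]$, reducing the question to paths in a fixed $U(N)$ with $N>0$; the standard fact that $\pi_1(U(N))=\mathbb{Z}$ for $N\geq 1$ then shows that homotopy classes of unitary paths with the prescribed boundary values form a $\mathbb{Z}$-torsor, so every winding is attainable.
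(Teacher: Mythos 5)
The paper states Theorem \ref{CRS1} with a citation to \cite{AE:2017} and does not give a proof, so there is no internal argument to compare against; what follows is an assessment of your construction on its own terms.

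Your approach is sound and the two directions are handled the right way. For the forward direction you should say one word about passing to a homotopic $\psi$ with $\psi(SF_2)\subset SF_2'$ as in \ref{bianjietongtai} before reading off $\lambda_0=\pi'_*[\psi(1)]$, and you should separate the degenerate case: if $\pi'_*[\phi(1)]=0$ then stable finiteness of $B$ forces $\phi=0$ and the induced diagram is the zero element, which is positive by Definition \ref{CP} in the degenerate sense. For the backward direction, the construction by a projection $p$ with $\pi'_*[p]=\lambda_0$ and a unitary loop in the corner $pM_r(B)p$ of prescribed winding is exactly the right idea, and your plan for the technical point (trivialize the projection bundle $t\mapsto p_f(t)$ over the contractible interval so that the boundary values are identified and one gets a based loop in a fixed $U(N)$, with $\pi_1(U(N))=\mathbb{Z}$ realizing every winding) is the correct way to handle it; one should also observe that the resulting winding is independent of the choice of trivialization, which follows from conjugation invariance of the determinant.

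The one place where the argument as written is wrong is the justification that $N:=\alpha'\lambda_0>0$. The implication you state, namely ``if some $(\lambda_0)_{i'}>0$ had $\alpha'_{i'}=0$, positivity of $\lambda_0$ would then force $\beta'_{i'}=0$,'' is false: take $\alpha'=(0,1)$, $\beta'=(1,0)$, $\lambda_0=(1,1)^{\mathrm T}$ — a perfectly legal minimal $B$ in $\mathcal D$ with $\lambda_0\in\mathrm{K}_0^+(B)$, where $(\lambda_0)_1>0$, $\alpha'_1=0$ but $\beta'_1=1\neq 0$ (and indeed $N=1>0$). The conclusion $N>0$ is correct, but the right argument is by contradiction: if $N=\alpha'\lambda_0=0$, then since $\alpha'\ge 0$ and $\lambda_0\ge 0$ all the summands $\alpha'_{i'}(\lambda_0)_{i'}$ vanish, so $\alpha'_{i'}=0$ for every $i'$ with $(\lambda_0)_{i'}>0$; the commutativity $\beta'\lambda_0=\alpha'\lambda_0=0$ gives $\beta'_{i'}=0$ for those same $i'$ as well, and picking one $i_0$ with $(\lambda_0)_{i_0}>0$ (it exists since $\lambda_0\neq 0$) produces a block of $F_1'$ killed by both $\varphi_0'$ and $\varphi_1'$, contradicting minimality of $B$. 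With that fix the proof is complete.
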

\begin{thrm}[Corollary 4.3 in \cite{AE:2017}]\label{ONEORDER}
Assume that $A,\,B \in \mathcal{D}$ are $minimal$. If $\gamma$ is a $\mathrm{KK}$-element in $\mathrm{KK}(A,B)$ satisfying
$$
\gamma(\underline{\mathrm{K}}^+(A))\subset \underline{\mathrm{K}}^+(B),
$$
then $\chi(\gamma)$ is positive, where $\chi$ is the natural map from $\mathrm{KK}(A,B)$ to $C(A,B)/M(A,B)$.
\end{thrm}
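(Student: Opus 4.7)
\emph{Proof plan.} My plan is to pick a concrete representative $\lambda\in C(A,B)$ of $\chi(\gamma)$ and show that, after adding a suitable element of $M(A,B)$, the resulting diagram is positive in the sense of Definition~\ref{CP}. Since the positive cone splits over direct summands and the finite-dimensional case is classical, I would reduce immediately to the case where $A,B\in\mathcal{D}$ are both minimal and non-finite-dimensional. Write $\alpha-\beta=(d_1,\ldots,d_p)$; because $F_2=M_n(\mathbb{C})$ gives $K_1(SF_2)\cong\mathbb{Z}$, an element of $M(A,B)$ is specified by a single column vector $c\in\mathbb{Z}^{p'}$ via $\mu_0=c\cdot(\alpha-\beta)$. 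Thus positivity of $\lambda+\lambda_\mu$ becomes the purely combinatorial question of choosing $c$ so that the matrix $\lambda_0+c(\alpha-\beta)$ is entrywise non-negative (and nonzero if the diagram is nonzero).

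This question decouples row by row: for each $i'\in\{1,\ldots,p'\}$ I need an integer $c_{i'}$ with $(\lambda_0)_{i'j}+c_{i'}d_j\geq 0$ for every $j$. The first step is to extract the real-valued feasibility from the $K_0$-positivity. The cone $K_0(A)^+=\ker(\alpha-\beta)\cap\mathbb{Z}_{\geq 0}^p$ is generated by the unit vectors $e_j$ for indices with $d_j=0$ together with the pairwise elements $|d_{j_2}|e_{j_1}+d_{j_1}e_{j_2}$ for sign-opposite pairs $(j_1,j_2)$, whose existence is forced by minimality once $\alpha\neq\beta$ (since $\sum d_j k_j=0$). Applying $\gamma_0=\lambda_0|_{K_0(A)}$ to these generators and demanding that the image lies in $K_0(B)^+=\ker(\alpha'-\beta')\cap\mathbb{Z}_{\geq 0}^{p'}$ yields the inequalities $(\lambda_0)_{i'j}\geq 0$ whenever $d_j=0$ and $|d_{j_2}|(\lambda_0)_{i'j_1}+d_{j_1}(\lambda_0)_{i'j_2}\geq 0$ for each sign-opposite pair, which are exactly the conditions needed for the admissible interval of real $c_{i'}$ to be nonempty.

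The main obstacle is upgrading this real feasibility to an integer solution; this is precisely where the $\mathcal{D}$-versus-$\mathcal{C}$ distinction and the full strength of the Dadarlat-Loring hypothesis come in. The plan is to exploit positivity of $\gamma$ on the mod-$p$ summands $K_*(A;\mathbb{Z}\oplus\mathbb{Z}_p)^+$ for $p$ dividing the relevant $|d_j|$, using that such positive classes are represented by elements of $[\widetilde{I_p},A\otimes C(S^1)\otimes\mathcal{K}]$ with $\widetilde{I_p}\in\mathcal{C}_\mathcal{O}$. Composing with the natural maps $\widetilde{I_p}\to A$ and invoking Theorem~\ref{c_olift} for $\widetilde{I_p}$ translates mod-$p$ order preservation into divisibility constraints on the entries $(\lambda_0)_{i'j}$ modulo the $d_j$, which are exactly strong enough to guarantee that each real interval of admissible $c_{i'}$ meets $\mathbb{Z}$. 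Assembling the resulting integers row by row produces the required $\mu$, giving $\lambda+\lambda_\mu\in C^+(A,B)$ and hence $\chi(\gamma)\in(C(A,B)/M(A,B))^+$. The hardest step is the calibration of this mod-$p$ input: identifying exactly which positive classes in $\underline{K}(A)$ to test against in order to recover the divisibility constraints that correspond to integrality of $c_{i'}$.
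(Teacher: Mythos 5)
The paper itself does not supply a proof of this statement; it cites Corollary 4.3 of \cite{AE:2017}. The closest thing to a proof in this paper is the argument for Theorem~\ref{d_0 F lift}, which appeals to ``the completely same calculation of the proof of Theorem~\ref{ONEORDER}'' and explicitly constructs the relevant test classes. Measured against that, your opening moves are sound and aligned with the paper's conventions: the row-by-row reduction (since $F_2=M_n(\mathbb{C})$ forces $\mathrm{K}_1(SF_2)=\mathbb{Z}$, elements of $M(A,B)$ are exactly $c\cdot(\alpha-\beta)$ for $c\in\mathbb{Z}^{p'}$), and the derivation of real feasibility of each $c_{i'}$ from $\mathrm{K}_0$-positivity on $e_j$ (for $d_j=0$) and on the sign-opposite generators $|d_{j_2}|e_{j_1}+d_{j_1}e_{j_2}$.

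The gap is in what you call the hardest step, and as written it would lead you astray. The mod-$p$ input is not ``divisibility constraints on $(\lambda_0)_{i'j}$ modulo $d_j$'' for ``$p$ dividing $|d_j|$''. The right test class, for the binding sign-opposite pair $(j_1,j_2)$ of row $i'$, is $\zeta\in C^+(\widetilde{I}_{p_0},A)$ with $p_0=d_{j_1}|d_{j_2}|$, $\zeta_1=1$, and $\zeta_0$ carrying $|d_{j_2}|$ in position $(j_1,1)$ and $d_{j_1}$ in position $(j_2,2)$ --- these are precisely the $\lambda_{xy}$ used in the proof of Theorem~\ref{d_0 F lift}, with $w_{xy}=a_xb_y$. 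Because $\widetilde{I}_{p_0}\in\mathcal{C}_\mathcal{O}$, Theorem~\ref{c_olift} turns positivity of $\gamma\times\mathrm{KK}(\zeta)$ into the existence of $\mu\in\mathbb{Z}^{p'}$ with $\lambda_0\zeta_0+\mu(p_0,-p_0)\geq 0$; dividing row $i'$ by $|d_{j_2}|$ and $d_{j_1}$ in the two columns gives $a_{j_1}+\mu_{i'}d_{j_1}\geq 0$ and $a_{j_2}+\mu_{i'}d_{j_2}\geq 0$, i.e.\ $c_{i'}:=\mu_{i'}$ lands in the entire feasible interval. So the mod-$p$ positivity hands you the integer $c_{i'}$ directly, not a congruence condition to be chased. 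Two further small points: ``composing with the natural maps $\widetilde{I}_p\to A$'' does not literally parse --- one works with the cone $\mathrm{KK}^+(\widetilde{I}_p,A)$ via Theorem~\ref{c_olift} (the paper drops the $C(S^1)$ tensor by identifying $\mathrm{K}_0(\cdot;\mathbb{Z}\oplus\mathbb{Z}_p)$ with $\mathrm{KK}(\widetilde{I}_p,\cdot)$); and the degenerate case where the only admissible $c$ yields $\lambda_0+c(\alpha-\beta)=0$ must be handled separately, since Definition~\ref{CP} then demands the whole diagram vanish --- this requires a short extra argument from $\mathrm{K}_*^+$-positivity in the spirit of Lemma~\ref{0 existence theorem}.
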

Now we will introduce a key lemma for the existence result.
As the case of $A=M_m(\mathbb{C})$ is trivial, we will only concern the case of $A(F_1,M_n(\mathbb{C}),\varphi_0,\varphi_1)$. Before we list the lemma, we give one more notation.
\begin{notion}\label{naturally rep}\rm
Let $A(F_1,M_n(\mathbb{C}),\varphi_0,\varphi_1)$ be a minimal block in $\mathcal{D}$. Then $\alpha-\beta$ is a $1\times p$ matrix.
For any $1\times p$ matrix $(t_1,t_2,\cdots,t_p)\geq0$ (the matrix has no negative entry), denote representation $R(t_1,t_2,\cdots,t_p)$, where
$$
R(t_1,t_2,\cdots,t_p)(f,a)={\rm diag}\{\underbrace{a(\theta_1),\,\cdots \,,a(\theta_1)}_{t_1},\cdots\,,\underbrace{a(\theta_p),\,\cdots \,,a(\theta_p)}_{t_p}\},\,\,\,\forall (f,a)\in A.
$$
Note that both $\alpha$ and $\beta$ has no negative entry and then correspond
to classes of representations of $A$, whose spectral points are contained in $Sp(F_1)$.
\end{notion}
Note that the condition required in Lemma \ref{SUFF yuan}, is sometimes quite strong, and we need a weaker one to help us to construct homomorphisms in this paper.
\begin{lem}\label{D^0lift lemma}
Let $A(F_1,M_n(\mathbb{C}),\varphi_0,\varphi_1),\,B(F_1',M_{n'}(\mathbb{C}),\varphi_0',\varphi_1')$ be minimal blocks in $\mathcal{D}$. $\lambda\in C^+(A,B)$ (Definition \ref{CP}) is the following commutative diagram
$$
\xymatrixcolsep{2pc}
\xymatrix{
{\,\,0\,\,} \ar[r]^-{}
& {\,\,\mathrm{K}_0(A)\,\,} \ar[d]_-{\lambda_{0*}} \ar[r]^-{\pi_*}
& {\,\,\mathrm{K}_0(F_1)\,\,} \ar[d]_-{\lambda_{0}} \ar[r]^-{\alpha-\beta}
& {\,\,\mathbb{Z}\,\,} \ar[d]_-{\lambda_{1}} \ar[r]^-{\iota_*}
& {\,\,\mathrm{K}_1(A)\,\,} \ar[d]_-{\lambda_{1*}} \ar[r]^-{}
& {\,\,0\,\,}\\
{\,\,0\,\,} \ar[r]^-{}
& {\,\,\mathrm{K}_0(B)\,\,} \ar[r]_-{\pi_*'}
& {\,\,\mathrm{K}_0(F_1') \,\,} \ar[r]_-{\alpha'-\beta'}
& {\,\,\mathbb{Z} \,\,} \ar[r]_-{\iota_*'}
& {\,\,\mathrm{K}_1(B)\,\,} \ar[r]^-{}
& {\,\,0\,\,}.}
$$
If one of the following holds:

(1)~$\lambda_1>0$ and
$\alpha'\lambda_0-\alpha-l(\alpha-\beta)\geq0$, for all $l=0,1,2,\cdots,\lambda_1-1$,

(2)~$\lambda_1=0$,

(3)~$\lambda_1<0$ and $\beta'\lambda_0-\beta-l(\beta-\alpha)\geq0$, for all $l=0,1,2,\cdots,-\lambda_1-1$,
\\
then there is a homomorphism from $A$
to $M_\bullet(B)$ realizing $\lambda$.
\end{lem}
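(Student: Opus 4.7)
The plan is to construct the homomorphism $\phi$ directly by prescribing its spectral data along $Sp(B)$, handling the three sign cases of $\lambda_1$ separately. In all three cases one begins by fixing $N$ large enough and a homomorphism $\phi_{F_1}: F_1 \to F_1' \otimes M_N$ inducing $\lambda_0$ on $\mathrm{K}_0$: since $\lambda \in C^+$ forces the $p' \times p$ matrix $\lambda_0$ to have non-negative integer entries $\lambda_{i'j}$, one embeds $\lambda_{i'j}$ copies of $M_{k_j}$ into $M_{k'_{i'}} \otimes M_N$ for each pair $(i',j)$. Setting $\phi_e = \phi_{F_1}\circ\pi_e: A \to F_1'\otimes M_N$ provides the endpoint datum: the restrictions of $\phi$ to the two boundary points of $Sp(B)$ will be $\varphi_0'\circ\phi_e$ and $\varphi_1'\circ\phi_e$ respectively.

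Case (2) is immediate: commutativity of the diagram gives $(\alpha'-\beta')\lambda_0 = 0$, so $\varphi_0'\circ\phi_e$ and $\varphi_1'\circ\phi_e$, viewed as homomorphisms $A\to M_{n'N}$, share the same $\mathrm{K}_0$-class and are unitarily equivalent. Interpolating the conjugating unitary along a continuous path $u_s$ produces a homomorphism $\phi:A\to M_N(B)$ factoring through $F_1$, hence automatically realizing $(\lambda_0,0)$.

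For case (1), I partition $Sp(B)\cap(0,1)$ into $\lambda_1$ equal sub-intervals $I_l$, $l=1,\ldots,\lambda_1$, and in each $I_l$ arrange one elementary \emph{spawn--traverse--absorb} cycle: near the left endpoint of $I_l$ an $\alpha$-multiplicity cluster at the $\theta_j$'s is continuously merged into a single interior spectral trace emerging at $y=0^+\in(0,1)_A$, the trace moves monotonically across $(0,1)_A$, and near the right endpoint it is absorbed at $y=1^-$ to form a $\beta$-multiplicity cluster. Between spawn and absorb the type of $\phi_s:A\to M_{n'N}$ equals
\[
\alpha'\lambda_0 - l\alpha + (l-1)\beta \;=\; \alpha'\lambda_0 - \alpha - (l-1)(\alpha-\beta),
\]
and non-negativity for $l=1,\ldots,\lambda_1$ is precisely hypothesis (1) after the substitution $l'=l-1$. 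At the end of $I_{\lambda_1}$ the type is $\alpha'\lambda_0 - \lambda_1(\alpha-\beta) = \beta'\lambda_0$ by the commutativity of $\lambda$, matching $\varphi_1'\circ\phi_e$; each trace contributes $+1$ to the induced map on $\mathrm{K}_1(SF_2)$, giving total $\lambda_1$.

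Case (3) is constructed by the formally dual procedure: the traces now travel right-to-left across $(0,1)_A$, spawning at $y=1^-$ (cost $\beta$) and absorbing at $y=0^+$ (gain $\alpha$), and the sub-intervals are ordered from the $s=1$ end backwards so that the in-flight types take the form $\beta'\lambda_0 - \beta - l(\beta-\alpha)$ for $l=0,\ldots,|\lambda_1|-1$, matching hypothesis (3); each cycle contributes $-1$ to $\mathrm{K}_1$, totalling $\lambda_1=-|\lambda_1|$. The principal technical point in both of (1) and (3) is realizing the combinatorial spawn and absorb events as genuine continuous paths of homomorphisms $A\to M_{n'N}$; this is handled by continuously sliding a spectral point from a boundary value $0_A$ or $1_A$ into the open interval, which is manifestly a homotopy of homomorphisms. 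Once this is in place, verifying that $\phi$ realizes $(\lambda_0,\lambda_1)$ exactly, and not merely some other representative of the same $\mathrm{KK}$-class mod $M(A,B)$, amounts to tracking the spectral types at $s=0$ and $s=1$ and counting the algebraic number of traces crossing a generic cross-section.
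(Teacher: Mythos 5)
Your proposal is essentially the paper's construction translated into a geometric picture: partition $[0,1]$ into $|\lambda_1|$ sub-intervals, and in each one a spectral trace is emitted from a boundary cluster, traverses $(0,1)_A$, and is re-absorbed; the in-flight background multiplicity you compute, $\alpha'\lambda_0-\alpha-(l-1)(\alpha-\beta)$ for case (1), is exactly the argument of the representation $R(\alpha'\lambda_0-(l-1)(\alpha-\beta)-\alpha)$ appearing in the paper's formula for $\zeta_l(t)$, and hypothesis (1) (dually, (3)) is precisely the requirement that these multiplicity vectors be non-negative so that $R(\cdot)$ makes sense. Two technicalities your sketch leaves to the reader, which the paper treats explicitly: at each sub-interval junction the two adjoining pieces $\zeta_{l-1}$, $\zeta_l$ have equal spectral type but are a priori only unitarily equivalent there, so one needs conjugating unitaries $u_l$ to glue them into a single continuous family over $[0,1]$; and the resulting map into $M_k(C[0,1])$ must then be conjugated by a further unitary in $M_k(C[0,1])$ (Lemma 3.5 of \cite{AE:2017}) so that the boundary fibers literally factor through $\varphi_0'$ and $\varphi_1'$ and not merely share their $\mathrm{K}_0$-type — matching types alone does not yet put the image inside $M_\bullet(B)$.
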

\begin{proof}
At first we give the proof for (1).
As $\alpha'\lambda_0-\alpha-l(\alpha-\beta)\geq0$, for all $l=0,1,2,\cdots,\lambda_1-1$, then each $\alpha'\lambda_0-\alpha-l(\alpha-\beta)$ naturally corresponds a representation by \ref{naturally rep}.
Let us define $\lambda_1$ maps $\Lambda_l,\,l=1,2,\cdots,\lambda_1$ from $A$ to
$M_k(C[(l-1)/\lambda_1,l/\lambda_1])$ for some integer $k$.
$$
A\ni f\stackrel{\Lambda_l}{\longmapsto}\zeta_l\in M_k(C[(l-1)/\lambda_1,l/\lambda_1]),$$
where
$$
\zeta_l(t)={\rm diag}\{f(\lambda_1 t-(l-1)),R(\alpha'\lambda_0-(l-1)(\alpha-\beta)-\alpha)(f,a)\},\,\,\forall\, t\in [(l-1)/\lambda_1,l/\lambda_1].$$

Note that $\Lambda_l(\cdot)((l-1)/\lambda_1)$ and $\Lambda_l(\cdot)(l/\lambda_1)$
are two representations of $A$ and induce two elements in $\mathrm{K}^0(F_1)\cong\mathrm{KK}(F_0,\mathbb{C})$, that is
$$
[\Lambda_l(\cdot)((l-1)/\lambda_1)]=\alpha'\lambda_0-(l-1)(\alpha-\beta)\quad
{\rm and}
\quad
[\Lambda_l(\cdot)(l/\lambda_1)]=\alpha'\lambda_0-l(\alpha-\beta).
$$
Now we have
$$
[\Lambda_1(\cdot)(0)]=\alpha'\lambda_0\quad
{\rm and}
\quad
[\Lambda_{\lambda_1}(\cdot)(1)]=\alpha'\lambda_0-\lambda_1(\alpha-\beta)=\beta'\lambda_0~~(\lambda~~{\rm is~~commutative})
$$
and
$$
[\Lambda_{l-1}(\cdot)((l-1)/\lambda_1)]=\alpha'\lambda_0-(l-1)(\alpha-\beta)=[\Lambda_l(\cdot)((l-1)/\lambda_1)],\quad
l=2,3,\cdots,\lambda_1
$$
(as class in $\mathrm{K}^0(F_1)$).
Then for $l=2,3,\cdots,\lambda_1$,
there is a unitary $u_l\in M_k(\mathbb(C))$, such that
$$
{\rm Ad}\,u_l\circ \Lambda_l(\cdot)((l-1)/\lambda_1)=\Lambda_{l-1}(\cdot)((l-1)/\lambda_1).
$$
Define a homomorphism $\Lambda:\,A\to M_k(C[0,1])$,
$$
A\ni(f,a)\stackrel{\Lambda}{\longmapsto}\zeta\in M_k(C[0,1])
$$
where
$$
\zeta(t)={\rm Ad}\,(u_l\cdot u_{l-1} \cdot \cdots \cdot u_2)\circ \zeta_l(t),\quad t\in[(l-1)/\lambda_1,l/\lambda_1].
$$
Then by Lemma 3.5 in \cite{AE:2017} (see also \cite{GLN:2015}),
there is a unitary $u\in M_k(C[0,1])$, such that ${\rm Ad}\,u\circ \Lambda$ is a homomorphism from
$A$ to $M_\bullet(B)$.

(2) is trivial.

For (3), let us define $-\lambda_1$ maps $\Lambda_l,\,l=1,2,\cdots,-\lambda_1$ from $A$ to
$M_k(C[(l-1)/-\lambda_1,l/-\lambda_1])$ for some integer $k$.
$$
A\ni f\stackrel{\Lambda_l}{\longmapsto}\zeta_l\in M_k(C[(l-1)/-\lambda_1,l/-\lambda_1]),$$
where
$$
\zeta_l(t)={\rm diag}\{f(l+\lambda_1 t),R(\beta'\lambda_0-\beta-(l-1)(\beta-\alpha))(f,a)\}.$$
And the rest is as same as we give in (1).
\end{proof}

\begin{remark}\label{counter example}
We should mention that even for $A,\,B\in \mathcal{D}$, we may still have
$$\mathrm{KK}^+(A,B)\subsetneqq{\bf Hom}_\Lambda^+(\underline{\mathrm{K}}(A),\underline{\mathrm{K}}(B)),$$
and there is an example shown in \cite{AE:2017}.
We list a new one here, which we will also discuss later.

Let $F_1=\mathbb{C}\oplus\mathbb{C}\oplus\mathbb{C}\oplus\mathbb{C}\oplus\mathbb{C}$, $F_2=M_3(\mathbb{C})$,
$$\varphi_0(a\oplus b\oplus c\oplus d\oplus e)={\rm{diag}}\{a,b,e\},
\quad
\varphi_1(a\oplus b\oplus c\oplus d\oplus e)={\rm{diag}}\{c,d,e\},
$$
$F_1'=\mathbb{C}\oplus\mathbb{C}\oplus\mathbb{C}\oplus\mathbb{C}$, $F_2'=M_2(\mathbb{C})$,
$$\varphi_0'(a\oplus b\oplus c\oplus d)={\rm{diag}}\{a,b\},
\quad
\varphi_1'(a\oplus b\oplus c\oplus d)={\rm{diag}}\{c,d\}.
$$
Set
$A=A(F_1,F_2,\varphi_0,\varphi_1)\in \mathcal{D}$,
$B=B(F_1',F_2',\varphi_0',\varphi_1')\in \mathcal{D}$. Then we have
$$\alpha=(1,1,0,0,1)
\quad
{\rm and}
\quad
\beta=(0,0,1,1,1),$$
$$\alpha'=(1,1,0,0)
\quad
{\rm and}
\quad
\beta'=(0,0,1,1).$$

\noindent Consider the following commutative diagram $\lambda$ in $C(A,B)$:
$$
\xymatrixcolsep{2pc}
\xymatrix{
{\,\,0\,\,} \ar[r]^-{}
& {\,\,\mathrm{K}_0(A)\,\,} \ar[d]_-{} \ar[r]^-{\pi_*}
& {\,\,\mathrm{K}_0(F_1)\,\,} \ar[d]_-{\lambda_{0}} \ar[r]^-{(1,1,-1,-1,0)}
& {\,\,\mathrm{K}_1(SF_2)\,\,} \ar[d]_-{1} \ar[r]^-{\iota_*}
& {\,\,\mathrm{K}_1(A)\,\,} \ar[d]_-{} \ar[r]^-{}
& {\,\,0\,\,}\\
{\,\,0\,\,} \ar[r]^-{}
& {\,\,\mathrm{K}_0(B)\,\,} \ar[r]_-{\pi_*'}
& {\,\,\mathrm{K}_0(F_1') \,\,} \ar[r]_-{(1,1,-1,-1)}
& {\,\,\mathrm{K}_1(SF_2') \,\,} \ar[r]_-{\iota_*'}
& {\,\,\mathrm{K}_1(B)\,\,} \ar[r]^-{}
& {\,\,0\,\,},}
$$
where
$$
\lambda_0=
\left(
  \begin{array}{ccccccc}
    1&  &  & & 0      \\
     & 1 &  & & 0    \\
     &  & 1 & & 0   \\
     &  &  & 1 & 0    \\

  \end{array}
  \right).
$$
Denote the related $\mathrm{KK}$-element by $\gamma$. With Theorem \ref{c_olift} and Theorem \ref{CRS1}, one can check that $\gamma$ satifies
$$
\gamma(\underline{\mathrm{K}}^+(A))\subset \underline{\mathrm{K}}^+(B).
$$
(Here, we mention that when one is checking $\gamma(\mathrm{K}_*^+(A))\subset \mathrm{K}_*^+(B)$, the composed diagram
$\varepsilon\times\lambda$ is not 0 but belongs to $M(C(S^1),B)$, where $\varepsilon\in C^+(C(S^1),A)$ is as follows:
$$
\xymatrixcolsep{2pc}
\xymatrix{
{\,\,0\,\,} \ar[r]^-{}
& {\,\,\mathrm{K}_0(C(S^1))\,\,} \ar[d]_-{} \ar[r]^-{}
& {\,\,\mathbb{Z}\,\,} \ar[d]_-{e} \ar[r]^-{1-1=0}
& {\,\,\mathbb{Z}\,\,} \ar[d]_-{1} \ar[r]^-{}
& {\,\,\mathrm{K}_1(C(S^1))\,\,} \ar[d]_-{} \ar[r]^-{}
& {\,\,0\,\,}\\
{\,\,0\,\,} \ar[r]^-{}
& {\,\,\mathrm{K}_0(A)\,\,} \ar[r]_-{\pi_*}
& {\,\,\mathrm{K}_0(F_1) \,\,} \ar[r]_-{(1,1,-1,-1,0)}
& {\,\,\mathbb{Z} \,\,} \ar[r]_-{\iota_*}
& {\,\,\mathrm{K}_1(A)\,\,} \ar[r]^-{}
& {\,\,0\,\,},}
$$
where $e=(0,0,0,0,1)^{\rm T}$.)
We find that $\lambda$ is the unique positive element in $\chi(\gamma)$, but $\lambda$ can not be lifted (By Corollary 3.7 in \cite{AE:2017}). Then $\gamma$ can not be lifted (see \ref{bianjietongtai}).

Note that
$$
\gamma_*(e)=0,
$$
where $e=(0,0,0,0,1)\in K_0^+(A).$
If one tries to use the condition
$$
\gamma(\underline{\mathrm{K}}^+(A)\setminus\{0\})\subset \underline{\mathrm{K}}^+(B)\setminus\{0\},
$$
instead of
$$
\gamma(\underline{\mathrm{K}}^+(A))\subset \underline{\mathrm{K}}^+(B),
$$
one will get $\gamma$ liftable for the $A,\,B$ we constructed above and this will be helpful in Elliott classification program for simple amenable $C^*$-algebras(see \cite{LN:2008}). But we don't need it for the case of inductive limits of real rank zero in this paper.
\end{remark}

\section{Existence Theorem}
From the example listed in Remark \ref{counter example}, we showed that for the blocks we concerned, a $\mathrm{KK}$-element preserving Dadarlat-Loring order may not be liftable. But this doesn't mean we do not have a existence theorem for the inductive limit algebra of real rank zero. In this section, we will show that the condition of real rank zero will help us to get it.

Interestingly, for the blocks, we have the following existence theorem for the ``0'' case:
\begin{lem}\label{0 existence theorem}
Let $A,\,B$ be minimal blocks in $\mathcal{D}$, if $\gamma\in \mathrm{KK}(A,B)$ satisfies that
$$
\gamma(\underline{\mathrm{K}}^+(A))\subset \underline{\mathrm{K}}^+(B)\quad
{\rm and}\quad
\gamma_*[1_A]=0,
$$
then $\gamma=0$.
\end{lem}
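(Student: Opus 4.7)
The plan is to use Theorem \ref{ONEORDER} to pass from the hypothesis that $\gamma$ preserves the Dadarlat--Loring order to the statement that $\chi(\gamma)$ has a positive representative, and then exploit the strict positivity convention built into Definition \ref{CP} (a positive nonzero element of $C(A,B)$ must have $\lambda_0\neq 0$) together with the unital hypothesis $\gamma_*[1_A]=0$ to force that representative to be $0$.

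Concretely, I would first apply Theorem \ref{ONEORDER} to obtain $\chi(\gamma)\in(C(A,B)/M(A,B))^{+}$, and pick a representative $\lambda\in C^{+}(A,B)$ of $\chi(\gamma)$. By Definition \ref{CP}, either $\lambda=0$---in which case $\gamma=\chi^{-1}(0)=0$ and we are done---or $\lambda_0\geq 0_{p'\times p}$ and $\lambda_0\neq 0$. I would then rule out the latter alternative: the $\mathrm{K}_0(A)$-level map $\lambda_{0*}$ equals $\gamma_*|_{\mathrm{K}_0(A)}$, so $\lambda_{0*}[1_A]=\gamma_*[1_A]=0$ in $\mathrm{K}_0(B)$, and commutativity of the leftmost square in the diagram of $\lambda$ gives
\[
\lambda_0\bigl(\pi_*[1_A]\bigr)=\pi_*'\bigl(\lambda_{0*}[1_A]\bigr)=0 \quad\text{in } \mathrm{K}_0(F_1').
\]
But $\pi_*[1_A]=(k_1,\dots,k_p)$ with every $k_j\geq 1$; since $\lambda_0$ has non-negative integer entries, the vanishing of the column vector $\lambda_0\cdot(k_1,\dots,k_p)^{\mathrm T}$ forces every entry of $\lambda_0$ to be zero. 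This contradicts $\lambda_0\neq 0$, leaving only $\lambda=0$, and hence $\gamma=0$.

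The argument turns out to be short because Theorem \ref{ONEORDER} has already done the heavy lifting; the decisive observation is that the strict condition $\lambda_0\neq 0$ in Definition \ref{CP} (rather than merely $\lambda_0\geq 0$) is exactly what makes the unital hypothesis bite. I anticipate no genuine obstacle even if $A$ or $B$ is a finite-dimensional minimal summand: the diagram then partially collapses, but $\mathrm{K}_0(B)$ remains torsion-free and the same chain of reasoning applies verbatim, so no separate case analysis seems necessary.
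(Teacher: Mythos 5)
Your proof is correct as a logical deduction from the definitions as stated, and through the third paragraph it follows the paper step for step: apply Theorem~\ref{ONEORDER} to get a representative $\lambda\in C^+(A,B)$ of $\chi(\gamma)$, use the commutativity of the left-hand square together with $\gamma_*[1_A]=0$ to get $\lambda_0(k_1,\dots,k_p)^{\rm T}=0$, and then use $\lambda_0\geq 0_{p'\times p}$ and $k_j\geq 1$ to conclude $\lambda_0=0_{p'\times p}$. The place where you diverge is the final step: you invoke the strict clause ``$\lambda_0\neq 0$'' in Definition~\ref{CP} to deduce $\lambda=0$ immediately and stop there. The paper, however, does \emph{not} use that strict clause; after deriving $\lambda_0=0$ it goes on to examine $\lambda_1$, via commutativity $\lambda_1(\alpha-\beta)=(\alpha'-\beta')\lambda_0=0$, and handles two cases: if $\lambda_1=0$ then indeed $\lambda=0$; if $\lambda_1\neq 0$, which can only happen when $\alpha=\beta$ (so $\mathrm{K}_1(A)=\mathbb{Z}$), the paper constructs $\xi\in C^+(C(S^1),A)$ with $\zeta_0=e_1$, $\zeta_1=1$, notes that $\mathrm{KK}(\xi)\in\mathrm{K}_*^+(A)$, uses the hypothesis $\gamma(\underline{\mathrm{K}}^+(A))\subset\underline{\mathrm{K}}^+(B)$ and Theorem~\ref{CRS1} to conclude $\lambda_1\in\operatorname{Im}(\alpha'-\beta')$, and hence $\lambda\in M(A,B)$ and $\gamma=0$.

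So your argument, by leaning on ``$\lambda_0\neq 0$,'' short-circuits the paper's Case~2 entirely. If Definition~\ref{CP} and the statement of Theorem~\ref{ONEORDER} are taken at face value, your reasoning is airtight: a positive $\lambda$ with $\lambda_0=0$ \emph{is} the zero diagram, full stop. But you should notice the signal in the paper: the authors themselves do not exploit the strict clause, which suggests they were not willing to assume that the positive representative produced in the proof of Theorem~\ref{ONEORDER} (in \cite{AE:2017}) is genuinely in $C^+(A,B)$ rather than merely having $\lambda_0\geq 0$. If it is only the latter, then when $\alpha=\beta$ a nonzero diagram with $\lambda_0=0,\,\lambda_1\neq 0$ and $\lambda_{0*}=\lambda_{1*}=0$ is a perfectly good element of $C(A,B)$, and your shortcut breaks down: you would still need the paper's $\xi$-construction to show $\lambda\in M(A,B)$. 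The content you are missing, in other words, is exactly the paper's Case~2, and whether your version survives depends on a point of definition that the authors evidently did not want to rely on. It would be safer to include the $\alpha=\beta$ case explicitly, as the paper does.
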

\begin{proof}
Here, we only consider the case of both $A$ and $B$ are not finite dimensional, because those cases are easier.
As $\gamma(\underline{\mathrm{K}}^+(A))\subset \underline{\mathrm{K}}^+(B),$ by Theorem \ref{ONEORDER}, there is a commutative diagram $\lambda\in C^+(A,B)$
$$
\xymatrixcolsep{2pc}
\xymatrix{
{\,\,0\,\,} \ar[r]^-{}
& {\,\,\mathrm{K}_0(A)\,\,} \ar[d]_-{\lambda_{0*}} \ar[r]^-{\pi_*}
& {\,\,\mathrm{K}_0(F_1)\,\,} \ar[d]_-{\lambda_{0}} \ar[r]^-{\alpha-\beta}
& {\,\,\mathbb{Z}\,\,} \ar[d]_-{\lambda_{1}} \ar[r]^-{\iota_*}
& {\,\,\mathrm{K}_1(A)\,\,} \ar[d]_-{\lambda_{1*}} \ar[r]^-{}
& {\,\,0\,\,}\\
{\,\,0\,\,} \ar[r]^-{}
& {\,\,\mathrm{K}_0(B)\,\,} \ar[r]_-{\pi_*'}
& {\,\,\mathrm{K}_0(F_1') \,\,} \ar[r]_-{\alpha'-\beta'}
& {\,\,\mathbb{Z} \,\,} \ar[r]_-{\iota_*'}
& {\,\,\mathrm{K}_1(B)\,\,} \ar[r]^-{}
& {\,\,0\,\,},}
$$
such that $\mathrm{KK}(\lambda)=\gamma$.

We assume the notational convention that
$$A=A(F_1,M_n(\mathbb{C}),\varphi_0,\varphi_1)\quad
{\rm with}\quad F_1= \bigoplus_{i=1}^p M_{k_i}(\mathbb{C}),$$
then $[1_A]=(k_1,k_2,\cdots,k_p)$.
Since $\gamma_*[1_A]=0$ which means
$\lambda_0(k_1,k_2,\cdots,k_p)^{\rm T}=(0,0,\cdots,0)^{\rm T}$.
Note that $\lambda_0\geq 0_{p'\times p}$, so we have $\lambda_0=0_{p'\times p}$.
Then
$$
(0,0,\cdots,0)=\lambda_0(\alpha'-\beta')=\lambda_1(\alpha-\beta).
$$

Case.\,1. If $\lambda_1=0$, we get what we want.

Case.\,2. If $\lambda_1\neq0$, we have $\alpha-\beta=0$.
Construct an element $\xi\in C^+(C(S^1),A)$
$$
\xymatrixcolsep{2pc}
\xymatrix{
{\,\,0\,\,} \ar[r]^-{}
& {\,\,\mathrm{K}_0(C(S^1))\,\,} \ar[d]_-{} \ar[r]^-{}
& {\,\,\mathbb{Z}\,\,} \ar[d]_-{e} \ar[r]^-{1-1=0}
& {\,\,\mathbb{Z}\,\,} \ar[d]_-{1} \ar[r]^-{}
& {\,\,\mathrm{K}_1(C(S^1))\,\,} \ar[d]_-{} \ar[r]^-{}
& {\,\,0\,\,}\\
{\,\,0\,\,} \ar[r]^-{}
& {\,\,\mathrm{K}_0(A)\,\,} \ar[r]_-{\pi_*}
& {\,\,\mathrm{K}_0(F_1) \,\,} \ar[r]_-{\alpha-\beta}
& {\,\,\mathbb{Z} \,\,} \ar[r]_-{\iota_*}
& {\,\,\mathrm{K}_1(A)\,\,} \ar[r]^-{}
& {\,\,0\,\,},}
$$
where $e=(1,0,\cdots,0)^{\rm T}$. By Theorem \ref{CRS1}, $\xi$ induces an element in $\mathrm{K}_*^+(A)$.
Then $\gamma\times\mathrm{KK}(\xi)$ which is exactly the $\mathrm{KK}$-element corresponds to $\xi\times\lambda\in C^+(C(S^1,B)$
$$
\xymatrixcolsep{2pc}
\xymatrix{
{\,\,0\,\,} \ar[r]^-{}
& {\,\,\mathrm{K}_0(C(S^1))\,\,} \ar[d]_-{} \ar[r]^-{}
& {\,\,\mathbb{Z}\,\,} \ar[d]_-{0=\lambda_0 e} \ar[r]^-{1-1=0}
& {\,\,\mathbb{Z}\,\,} \ar[d]_-{\lambda_1} \ar[r]^-{}
& {\,\,\mathrm{K}_1(C(S^1))\,\,} \ar[d]_-{} \ar[r]^-{}
& {\,\,0\,\,}\\
{\,\,0\,\,} \ar[r]^-{}
& {\,\,\mathrm{K}_0(B)\,\,} \ar[r]_-{\pi_*'}
& {\,\,\mathrm{K}_0(F_1') \,\,} \ar[r]_-{\alpha'-\beta'}
& {\,\,\mathbb{Z} \,\,} \ar[r]_-{\iota_*'}
& {\,\,\mathrm{K}_1(B)\,\,} \ar[r]^-{}
& {\,\,0\,\,}}
$$
which can be lifted. By Theorem \ref{CRS1}, there exists a map $\mu\in {\bf Hom}(\mathbb{Z}, \mathrm{K}_0(F_1 '))$,
such that $\lambda_1=(\alpha'-\beta')\mu$. Then $\lambda\in M(A,B)$, i.e., $\gamma=0$.
\end{proof}
\begin{remark}
We list an example shown in \cite{DL2:1996} to show that the condition $\gamma(\underline{\mathrm{K}}^+(A))\subset \underline{\mathrm{K}}^+(B)$ we gave in Theorem \ref{0 existence theorem} is necessary.
Consider the following two commutative diagram $\lambda_1,\,\lambda_2\in C^+(\widetilde{I}_p,\mathbb{C})$ ($p\geq2$):
$$
\xymatrixcolsep{2pc}
\xymatrix{
{\,\,0\,\,} \ar[r]^-{}
& {\,\,\mathrm{K}_0(\widetilde{I}_p)\,\,} \ar[d]_-{} \ar[r]^-{}
& {\,\,\mathbb{Z}\,\,} \ar[d]_-{(1,0)} \ar[r]^-{(p,-p)}
& {\,\,\mathbb{Z}\,\,} \ar[d]_-{} \ar[r]^-{}
& {\,\,\mathrm{K}_1(\widetilde{I}_p)\,\,} \ar[d]_-{} \ar[r]^-{}
& {\,\,0\,\,}\\
{\,\,0\,\,} \ar[r]^-{}
& {\,\,\mathrm{K}_0(\mathbb{C})\,\,} \ar[r]_-{}
& {\,\,\mathbb{Z} \,\,} \ar[r]_-{}
& {\,\,0 \,\,} \ar[r]_-{}
& {\,\,0 \,\,} \ar[r]^-{}
& {\,\,0\,\,}}
$$
$$
\xymatrixcolsep{2pc}
\xymatrix{
{\,\,0\,\,} \ar[r]^-{}
& {\,\,\mathrm{K}_0(\widetilde{I}_p)\,\,} \ar[d]_-{} \ar[r]^-{}
& {\,\,\mathbb{Z}\,\,} \ar[d]_-{(0,1)} \ar[r]^-{(p,-p)}
& {\,\,\mathbb{Z}\,\,} \ar[d]_-{} \ar[r]^-{}
& {\,\,\mathrm{K}_1(\widetilde{I}_p)\,\,} \ar[d]_-{} \ar[r]^-{}
& {\,\,0\,\,}\\
{\,\,0\,\,} \ar[r]^-{}
& {\,\,\mathrm{K}_0(\mathbb{C})\,\,} \ar[r]_-{}
& {\,\,\mathbb{Z} \,\,} \ar[r]_-{}
& {\,\,0 \,\,} \ar[r]_-{}
& {\,\,0 \,\,} \ar[r]^-{}
& {\,\,0\,\,},}
$$
which are induced by two different point valued representations of $\widetilde{I}_p$.
Note that $\mathrm{KK}(\lambda_1)\neq \mathrm{KK}(\lambda_2)$, but $(\mathrm{KK}(\lambda_1)-\mathrm{KK}(\lambda_2))_*[1_{\widetilde{I}_p}]=0.$
\end{remark}
The following theorem is a corollary of the universal coefficient theorem.
\begin{thrm}[\cite{DG:1997}]\label{UCT}
Let $A,\,B$ be ${\mathrm C}^*$-algebras. Suppose that $A\in \mathcal{N}$ (where $\mathcal{N}$ is
the ``bootstrap'' category defined in \cite{RS:1987}), $\mathrm{K}_*(A)$ is finitely generated, and $B$ is $\sigma$-unital.
Then the natural map
$$\Gamma:\,\mathrm{KK}(A,B)\to {\bf Hom}_\Lambda(\underline{\mathrm{K}}(A),\underline{\mathrm{K}}(B))$$ is a group isomorphism.
\end{thrm}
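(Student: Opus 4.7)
The plan is to derive this from the classical Rosenberg--Schochet universal coefficient theorem by refining the $\mathrm{K}$-theory target to total $\mathrm{K}$-theory with Bockstein operations. First I would verify the elementary point that $\Gamma$ is well defined: Kasparov product on the right with $\gamma\in\mathrm{KK}(A,B)$ yields a natural group homomorphism $\mathrm{KK}(I_p, A\otimes C(S^1))\to \mathrm{KK}(I_p, B\otimes C(S^1))$ for each $p\geq 0$, and naturality with respect to the structural maps among the $I_p$'s that define the Bockstein operations shows $\Gamma(\gamma)\in{\bf Hom}_\Lambda(\underline{\mathrm{K}}(A),\underline{\mathrm{K}}(B))$. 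Bilinearity of the Kasparov product makes $\Gamma$ a group homomorphism.

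Next I would apply the classical UCT \cite{RS:1987}. Since $A\in\mathcal{N}$ and $B$ is $\sigma$-unital, there is a natural short exact sequence
$$
0\longrightarrow \mathrm{Ext}^1_{\mathbb{Z}}(\mathrm{K}_*(A), \mathrm{K}_{*+1}(B)) \longrightarrow \mathrm{KK}(A,B) \xrightarrow{\ \Gamma_0\ } {\bf Hom}(\mathrm{K}_*(A), \mathrm{K}_*(B)) \longrightarrow 0,
$$
where $\Gamma_0$ is the map on ordinary $\mathrm{K}$-theory. The hypothesis that $\mathrm{K}_*(A)$ is finitely generated lets us decompose it as $\mathbb{Z}^r\oplus\bigoplus_i\mathbb{Z}/p_i^{n_i}$, so that the $\mathrm{Ext}$-term is a finite direct sum of groups of the form $\mathrm{K}_{*+1}(B)/p_i^{n_i}\mathrm{K}_{*+1}(B)$, each of which fits naturally inside $\mathrm{K}_*(B;\mathbb{Z}/p_i^{n_i})$ through the coefficient sequence
$$
\mathrm{K}_{*+1}(B)\xrightarrow{\ \cdot p_i^{n_i}\ }\mathrm{K}_{*+1}(B)\longrightarrow \mathrm{K}_*(B;\mathbb{Z}/p_i^{n_i})\xrightarrow{\ \beta\ }\mathrm{K}_*(B).
$$

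The main step is to match the kernel and cokernel of $\Gamma$ with the kernel and cokernel of the forgetful map ${\bf Hom}_\Lambda(\underline{\mathrm{K}}(A),\underline{\mathrm{K}}(B))\to{\bf Hom}(\mathrm{K}_*(A),\mathrm{K}_*(B))$. For each torsion summand $\mathbb{Z}/p_i^{n_i}$ of $\mathrm{K}_*(A)$, a $\Lambda$-morphism must assign compatible data on $\mathrm{K}_*(A;\mathbb{Z}/p_i^{n_i})$; chasing the coefficient exact sequence together with the commutation relations against the Bockstein $\beta$ and the reduction maps $\rho_{p_i^{n_i}}$ shows that the extra data is parametrized precisely by $\mathrm{K}_{*+1}(B)/p_i^{n_i}\mathrm{K}_{*+1}(B)$, matching the $\mathrm{Ext}^1$ term. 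This yields a commutative diagram of short exact sequences with $\Gamma$ in the middle, the identity on ordinary $\mathrm{K}$-theory on the right, and an induced isomorphism on the $\mathrm{Ext}$-kernel on the left; the five lemma then gives that $\Gamma$ is an isomorphism.

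The hard part, and the step that genuinely uses the Dadarlat--Loring framework, is identifying the ``Ext'' piece with the extra data contained in $\Lambda$-morphisms: one must check that every $\Lambda$-compatible choice on the various $\mathrm{K}_*(\cdot;\mathbb{Z}/p)$ arises from some $\mathrm{KK}$-element (surjectivity modulo the classical UCT surjection), and that two $\mathrm{KK}$-elements inducing the same map on all $\underline{\mathrm{K}}(\cdot)$ must coincide (injectivity). Both reduce, after the finite-generation decomposition, to the functorial calculation that $\mathrm{Ext}^1_{\mathbb{Z}}(\mathbb{Z}/p^n,G)\cong G/p^nG$ sits naturally inside $\mathrm{KK}(I_{p^n}, G\otimes C(S^1))$ via the Bockstein, which is the standard content of the mod-$p$ UCT and may be quoted directly from \cite{DG:1997}.
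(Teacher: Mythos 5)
The paper does not prove this statement; it cites it as Theorem 4.12 (and Proposition 4.13 context) of Dadarlat--Gong, which in turn derives from the Dadarlat--Loring universal multicoefficient theorem, noting only that it ``is a corollary of the universal coefficient theorem.'' So there is no in-paper proof to compare against; the proposal must stand on its own.

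Your route --- start from the Rosenberg--Schochet UCT exact sequence, observe that $\mathrm{Ext}^1_{\mathbb{Z}}(\mathrm{K}_*(A),\mathrm{K}_{*+1}(B))$ collapses to a finite sum of groups $\mathrm{K}_{*+1}(B)/p_i^{n_i}\mathrm{K}_{*+1}(B)$ when $\mathrm{K}_*(A)$ is finitely generated, and then show by a five-lemma argument that this $\mathrm{Ext}$-term is exactly the kernel of the forgetful map ${\bf Hom}_\Lambda(\underline{\mathrm{K}}(A),\underline{\mathrm{K}}(B))\to{\bf Hom}(\mathrm{K}_*(A),\mathrm{K}_*(B))$ --- is the correct and standard approach, and the well-definedness and surjectivity parts are fine (surjectivity of the forgetful map does indeed drop out of the surjectivity of $\Gamma_0$). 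The weak point is the sentence ``chasing the coefficient exact sequence together with the commutation relations \ldots\ shows that the extra data is parametrized precisely by $\mathrm{K}_{*+1}(B)/p_i^{n_i}\mathrm{K}_{*+1}(B)$.'' That chase is the whole content of the theorem, and it is not as routine as the phrasing suggests: a $\Lambda$-morphism killed on $\mathrm{K}_*(A)$ gives a family of maps $\mathrm{K}_{*+1}(A)[q]\to\mathrm{K}_*(B)/q\mathrm{K}_*(B)$ compatible under the coefficient-transformation maps of $\Lambda$, and one must argue that for $\mathrm{K}_*(A)$ finitely generated this compatible family is determined by a single $q$ (the exponent of the torsion) and that the result is $\bigoplus_i\mathrm{K}_{*+1}(B)/p_i^{n_i}$ rather than, say, a Hom-group of a profinite completion. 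The cleanest way to carry this out, and the route Dadarlat--Loring actually take, is to first replace $A$, via a KK-equivalence in the bootstrap class, by a finite direct sum of copies of $\mathbb{C}$, $S\mathbb{C}$, $I_{p^n}$, and $SI_{p^n}$ realizing the chosen decomposition of $\mathrm{K}_*(A)$ (this uses the classical UCT once more), and verify the isomorphism summand by summand, where for $I_{p^n}$ one has $\mathrm{KK}(I_{p^n},B)\cong\mathrm{K}_0(B;\mathbb{Z}/p^n)$ directly. Your proposal never makes this geometric-model reduction explicit, and a decomposition of $\mathrm{K}_*(A)$ into cyclic summands does not by itself decompose $\underline{\mathrm{K}}(A)$ as a $\Lambda$-module. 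Also a small slip: ``$\mathrm{KK}(I_{p^n},G\otimes C(S^1))$'' makes no sense with $G$ an abelian group; you mean $\mathrm{K}_{*+1}(B)/p^n$ sitting inside $\mathrm{K}_*(B;\mathbb{Z}/p^n)=\mathrm{KK}(I_{p^n},B\otimes C(S^1))$.
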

\begin{prop}{\rm (\cite[Proposition 4.13]{DG:1997})}\label{dg finite}
Let $A$ be $\mathrm{C}^*$-algebra in the class $\mathcal{N}$ of \cite{RS:1987}. If the group $\mathrm{K}_*(A)$ is finitely
generated, then $\underline{\mathrm{K}}(A)$ is finitely generated as $\Lambda$-module.
That means there are finitely many elements $x_1,\cdots,x_r\in\underline{\mathrm{K}}(A)$ such that for any
$x\in\underline{\mathrm{K}}(A)$ there exist $\lambda_i\in \Lambda$ and $k_i\in \mathbb{Z}$ such that $x=\sum_{i=1}^r k_i \lambda_i(x_i)$.
\end{prop}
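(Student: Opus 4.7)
The plan is to exploit the universal coefficient theorem for $\mathrm{K}$-theory with coefficients, which for each $n \geq 2$ gives a natural short exact sequence
\[
0 \to \mathrm{K}_*(A) \otimes \mathbb{Z}_n \to \mathrm{K}_*(A; \mathbb{Z}_n) \to \mathrm{Tor}(\mathrm{K}_{*-1}(A), \mathbb{Z}_n) \to 0,
\]
together with the Bockstein long exact sequence induced by $0 \to \mathbb{Z} \xrightarrow{n} \mathbb{Z} \to \mathbb{Z}_n \to 0$. Among the Bockstein operations comprising $\Lambda$ one finds the reduction-of-coefficients map $\rho_n\colon \mathrm{K}_*(A) \to \mathrm{K}_*(A; \mathbb{Z}_n)$ (whose image is precisely $\mathrm{K}_*(A) \otimes \mathbb{Z}_n$) and the connecting map $\beta_n\colon \mathrm{K}_*(A; \mathbb{Z}_n) \to \mathrm{K}_{*-1}(A)$ (whose image is precisely the $n$-torsion subgroup of $\mathrm{K}_{*-1}(A)$), as well as the natural coefficient-change maps $\mathrm{K}_*(A; \mathbb{Z}_m) \to \mathrm{K}_*(A; \mathbb{Z}_n)$ induced by the quotients $\mathbb{Z}_m \to \mathbb{Z}_n$ whenever $n \mid m$. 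Since $\mathrm{K}_*(A)$ is assumed finitely generated, the outer terms in the UCT sequence are finitely generated abelian groups, hence so is each $\mathrm{K}_*(A; \mathbb{Z}_n)$; the real content is extracting a finite \emph{uniform} $\Lambda$-generating set.

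First I would fix a finite generating set $g_1, \ldots, g_r$ of $\mathrm{K}_0(A) \oplus \mathrm{K}_1(A)$, arranged so that $t_1, \ldots, t_s$ are the torsion generators, and let $N$ be a common multiple of their orders. Exactness of the Bockstein sequence at $\mathrm{K}_*(A; \mathbb{Z}_N)$ produces, for each $i$, an element $\tilde{t}_i \in \mathrm{K}_*(A; \mathbb{Z}_N)$ (in the appropriate degree) satisfying $\beta_N(\tilde{t}_i) = t_i$. The proposed $\Lambda$-module generating set is then the finite collection $\{g_1, \ldots, g_r\} \cup \{\tilde{t}_1, \ldots, \tilde{t}_s\}$.

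To verify this actually generates $\underline{\mathrm{K}}(A)$, take any $x \in \mathrm{K}_*(A; \mathbb{Z}_n)$. Its Bockstein $\beta_n(x)$ is $n$-torsion in $\mathrm{K}_{*-1}(A)$, hence a $\mathbb{Z}$-linear combination of the $t_i$'s. Applying the appropriate coefficient-change operation in $\Lambda$ (passing from $\mathbb{Z}_N$-coefficients to $\mathbb{Z}_n$-coefficients, with scalar adjustments dictated by divisibility), one transports the $\tilde{t}_i$'s to produce $y \in \mathrm{K}_*(A; \mathbb{Z}_n)$ with $\beta_n(y) = \beta_n(x)$. Then $x - y \in \operatorname{ker}\beta_n = \operatorname{im}\rho_n$, so $x - y = \rho_n(z)$ for some $\mathbb{Z}$-combination $z$ of the $g_j$'s, exhibiting $x$ as a $\Lambda$-combination of the chosen generators. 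The main obstacle is the bookkeeping in this transport step: one must verify that the coefficient-change operations in $\Lambda$ genuinely carry each $\tilde{t}_i$ to a Bockstein-preimage of the same $t_i$ (or an appropriate multiple) in every $\mathrm{K}_*(A; \mathbb{Z}_n)$. This reduces, by naturality of the universal coefficient sequence applied to the morphism of coefficient short exact sequences $\mathbb{Z} \xrightarrow{N} \mathbb{Z} \to \mathbb{Z}_N$ and $\mathbb{Z} \xrightarrow{n} \mathbb{Z} \to \mathbb{Z}_n$, to a commutative diagram of Bocksteins — but this is the only nontrivial diagram chase in the argument, and once it is established the finiteness follows immediately from the finiteness of the torsion subgroup of $\mathrm{K}_*(A)$.
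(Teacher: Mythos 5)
Your overall strategy---generate $\mathrm{K}_*(A)$ by $g_1,\dots,g_r$, lift each torsion generator $t_i$ through the Bockstein to some $\tilde{t}_i$, then decompose an arbitrary $x\in \mathrm{K}_*(A;\mathbb{Z}_n)$ as a transported lift plus something in the image of $\rho_n$---is the right idea, but the choice of a \emph{single} common multiple $N$ for all the torsion generators breaks the transport step. In any comparison of coefficient sequences $0\to\mathbb{Z}\xrightarrow{N}\mathbb{Z}\to\mathbb{Z}_N\to0$ over $0\to\mathbb{Z}\xrightarrow{n}\mathbb{Z}\to\mathbb{Z}_n\to0$, the left vertical map is multiplication by some $a$ with $na=Nb$, so $N/\gcd(n,N)$ divides $a$; by naturality the induced coefficient change $c_*\colon \mathrm{K}_*(A;\mathbb{Z}_N)\to \mathrm{K}_*(A;\mathbb{Z}_n)$ satisfies $\beta_n\circ c_*=a\cdot\beta_N$, and if the order of $t_i$ divides $N/\gcd(n,N)$ then $\beta_n(c_*(\tilde{t}_i))=a\,t_i=0$. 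Concretely, take $A=\mathcal{O}_3\oplus\mathcal{O}_5$, so $\mathrm{K}_0(A)=\mathbb{Z}_2\oplus\mathbb{Z}_4$ and $\mathrm{K}_1(A)=0$, with $t_1$ generating $\mathbb{Z}_2$, $t_2$ generating $\mathbb{Z}_4$, and $N=4$. For $n=2$ one has $\mathrm{K}_1(A;\mathbb{Z}_2)\cong \mathrm{Tor}(\mathrm{K}_0(A),\mathbb{Z}_2)\cong\mathbb{Z}_2\oplus\mathbb{Z}_2$, every coefficient change $\mathrm{K}_1(A;\mathbb{Z}_4)\to \mathrm{K}_1(A;\mathbb{Z}_2)$ in $\Lambda$ composes with $\beta_2$ to give $a\beta_4$ with $a$ even (hence kills $t_1$), and no $\Lambda$-operation starting from the $g_j\in \mathrm{K}_0(A)$ can land nontrivially in $\mathrm{K}_1(A;\mathbb{Z}_2)$ (such an operation would be a nonzero natural transformation $\mathrm{K}_0\Rightarrow \mathrm{K}_1(-\,;\mathbb{Z}_2)$, and $\mathrm{K}_1(\mathbb{C};\mathbb{Z}_2)=0$ shows none exists). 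So $t_1\in \mathrm{K}_1(A;\mathbb{Z}_2)$ lies outside the $\Lambda$-submodule generated by your proposed finite set: the set does not generate.

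The repair is small but necessary: do not use a single $N$. By the structure theorem write the torsion subgroup of $\mathrm{K}_*(A)$ as $\bigoplus_i\mathbb{Z}_{N_i}$, choose each $t_i$ to generate the $i$-th cyclic summand, and take $\tilde{t}_i\in \mathrm{K}_*(A;\mathbb{Z}_{N_i})$ (in the appropriate degree) with $\beta_{N_i}(\tilde{t}_i)=t_i$. Then the coefficient change $\mathbb{Z}_{N_i}\to\mathbb{Z}_n$ with $a=N_i/\gcd(n,N_i)$ produces an element whose Bockstein $(N_i/\gcd(n,N_i))\,t_i$ \emph{does} generate the $n$-torsion of $\langle t_i\rangle$; since the $n$-torsion of the full torsion subgroup is the direct sum of those of the cyclic summands, every $\beta_n(x)$ is realized, and the rest of your argument closes. (The paper itself does not supply a proof of this proposition---it is quoted as Proposition 4.13 of Dadarlat--Gong---so the above is a review of the proposal's correctness on its own terms.)
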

Proposition \ref{dg finite} shows that for $A\in\mathcal{D}$, $\underline{\mathrm{K}}(A)$ is finitely generated. Further more, we show that
${\bf Hom}^+_\Lambda(\underline{\mathrm{K}}(A),\underline{\mathrm{K}}(B))$ is also determined by a finite set
of $\underline{\mathrm{K}}^+(A)$.
\begin{thrm}\label{d_0 F lift}
For any $\mathrm{C}^*$-algebra $A\in\mathcal{D}$ there exists a finite subset $F$ of $\underline{\mathrm{K}}^+(A)$ such that if $B\in\mathcal{D}$ and $\gamma \in \mathrm{KK}(A,B)\cong {\bf Hom}_\Lambda(\underline{\mathrm{K}}(A),\underline{\mathrm{K}}(B))$ satisfies that
$$\gamma_*(F)\subset \underline{\mathrm{K}}^+(B),$$
then $\gamma$ preserves Dadarlat-Loring order (\ref{Def DL}).
\end{thrm}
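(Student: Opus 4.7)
The plan is to exhibit a finite $F\subset\underline{\mathrm{K}}^+(A)$ that generates $\underline{\mathrm{K}}^+(A)$ as a sub-monoid of $\underline{\mathrm{K}}(A)$; since $\gamma_*$ is additive and $\underline{\mathrm{K}}^+(B)$ is closed under addition, positivity of $\gamma_*$ on such an $F$ will automatically propagate to the whole cone.

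First I would invoke Proposition \ref{dg finite} to obtain that $\underline{\mathrm{K}}(A)$ is finitely generated as a $\Lambda$-module. Since $\mathrm{K}_0(A)$ and $\mathrm{K}_1(A)$ are finitely generated abelian groups whose torsion subgroups are therefore finite, only finitely many primes $p_1,\ldots,p_s$ divide the order of the torsion of $\mathrm{K}_*(A)$. For a prime $p\notin\{p_1,\ldots,p_s\}$, the universal coefficient sequence gives $\mathrm{K}_*(A;\mathbb{Z}_p)\cong \mathrm{K}_*(A)\otimes \mathbb{Z}_p$, and the positive cone of $\mathrm{K}_*(A;\mathbb{Z}\oplus\mathbb{Z}_p)$ is then determined by $\mathrm{K}_0^+(A)$ together with the canonical Bockstein maps; hence any $\Lambda$-morphism that preserves $\mathrm{K}_0^+(A)$ automatically preserves the positive cone for such $p$.

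Next, for each $p\in\{0,p_1,\ldots,p_s\}$, I would show that $\mathrm{K}_*(A;\mathbb{Z}\oplus\mathbb{Z}_p)^+$ is a finitely generated commutative monoid and choose a finite generating set $F_p\subset\mathrm{K}_*(A;\mathbb{Z}\oplus\mathbb{Z}_p)^+$. For $p=0$, the cone $\mathrm{K}_0^+(A)=\ker(\alpha-\beta)\cap\mathbb{N}^p$ is finitely generated by Gordan's lemma. For $p\geq 2$, since $\widetilde{I}_p\in\mathcal{C}_\mathcal{O}$, Theorem \ref{c_olift} identifies
$$\mathrm{KK}(\widetilde{I}_p,A\otimes C(S^1))\cong C(\widetilde{I}_p,A\otimes C(S^1))/M(\widetilde{I}_p,A\otimes C(S^1))$$
as ordered groups; combined with Theorem \ref{CRS1}, positive elements correspond to commutative diagrams lifting to homomorphisms. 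Since $\mathrm{K}_*(A;\mathbb{Z}_p)$ is finite for $p\geq 2$, the ambient group is the extension of a finitely generated group by a finite group, and the cone of liftable diagrams is finitely generated as a monoid. Setting $F=\bigcup_{p\in\{0,p_1,\ldots,p_s\}}F_p$, any $y\in\underline{\mathrm{K}}^+(A)$ in one of the essential components decomposes as a non-negative integer combination of elements of $F$, while the non-essential components are handled by the Bockstein argument above; therefore $\gamma_*(y)\in\underline{\mathrm{K}}^+(B)$ whenever $\gamma_*(F)\subset\underline{\mathrm{K}}^+(B)$.

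The main obstacle will be the detailed verification that $\mathrm{K}_*(A;\mathbb{Z}\oplus\mathbb{Z}_p)^+$ is finitely generated as a monoid for the essential primes $p\geq 2$: this requires unwinding the commutative-diagram description of Theorem \ref{c_olift}, checking that the positivity condition (Definition \ref{CP}) cuts out a polyhedral-type sub-cone modulo $M(\widetilde{I}_p,A\otimes C(S^1))$, and then applying a Gordan-type finiteness argument in that quotient. A secondary subtlety is ensuring that the reduction from non-essential primes to $\mathrm{K}_0^+(A)$ is genuinely $\Lambda$-equivariant, so that it is respected by every $\gamma\in\mathrm{KK}(A,B)\cong{\bf Hom}_\Lambda(\underline{\mathrm{K}}(A),\underline{\mathrm{K}}(B))$ produced by Theorem \ref{UCT}.
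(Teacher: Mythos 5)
Your approach is genuinely different from the paper's, and it has a gap that I think is fatal without substantial additional work.

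\textbf{The plan as stated cannot work.} You open by saying you will ``exhibit a finite $F\subset\underline{\mathrm{K}}^+(A)$ that generates $\underline{\mathrm{K}}^+(A)$ as a sub-monoid.'' For a minimal block $A\in\mathcal{D}$ with $\mathrm{K}_*(A)\neq 0$, this is impossible: for every prime $p$ coprime to the torsion of $\mathrm{K}_*(A)$ one has $\mathrm{K}_*(A;\mathbb{Z}_p)\cong\mathrm{K}_*(A)\otimes\mathbb{Z}_p\neq 0$, so the cone $\mathrm{K}_*(A;\mathbb{Z}\oplus\mathbb{Z}_p)^+$ has elements projecting non-trivially onto the $p^{\mathrm{th}}$ summand of $\underline{\mathrm{K}}(A)=\bigoplus_p\mathrm{K}_*(A;\mathbb{Z}_p)$, for infinitely many $p$. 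A finite subset touches only finitely many summands. Your whole argument therefore rests entirely on the patch you introduce in the middle paragraph, and that patch is precisely where the gap lies.

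\textbf{The Bockstein reduction is asserted, not proved.} You claim that for $p$ coprime to torsion ``the positive cone of $\mathrm{K}_*(A;\mathbb{Z}\oplus\mathbb{Z}_p)$ is determined by $\mathrm{K}_0^+(A)$ together with the canonical Bockstein maps; hence any $\Lambda$-morphism that preserves $\mathrm{K}_0^+(A)$ automatically preserves the positive cone for such $p$.'' It is true that for such $p$ the reduction $\rho_p\colon\mathrm{K}_*(A)\to\mathrm{K}_*(A;\mathbb{Z}_p)$ is surjective, so the underlying group map $\gamma|_{\mathrm{K}_*(A;\mathbb{Z}_p)}$ is determined by $\gamma|_{\mathrm{K}_*(A)}$ by $\Lambda$-equivariance. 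But the Dadarlat-Loring positive cone (Definition~\ref{Def DL}) is defined as the image of homomorphism classes $[\widetilde{I}_p, A\otimes C(S^1)\otimes\mathcal{K}]$, not by any formal Bockstein recipe; whether a given element of $\mathrm{KK}(\widetilde{I}_p, B\otimes C(S^1))$ lifts to a $*$-homomorphism is a subtle question (this is exactly the content of Theorem~\ref{c_olift} and Lemma~\ref{SUFF yuan}, and the whole point of Remark~\ref{counter example}). You give no argument that preserving $\mathrm{K}_0^+(A)$ and $\mathrm{K}_*^+(A)$ propagates to liftability for all such $p$, and you flag this yourself as a ``secondary subtlety,'' but it is actually the core of the argument once one acknowledges the monoid-generation plan cannot work on its own.

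\textbf{How the paper avoids all this.} The paper does not attempt to generate the cone. It chooses $F$ to consist of specific elements $\mathrm{KK}(\lambda_{xy})\in\mathrm{KK}^+(\widetilde{I}_{w_{xy}},A)$ (built directly from the rows of $\alpha-\beta$) together with the classes $\mathrm{KK}(\xi_i)\in\mathrm{K}_*^+(A)$. Positivity of $\gamma_*$ on these elements, plugged into the same computation as Theorem~\ref{ONEORDER}, forces the representing diagram $\lambda$ to lie in $C^+(A,B)$. Once $\lambda$ is a positive diagram, Theorem~\ref{c_olift} (applicable because $\widetilde{I}_p\in\mathcal{C}_\mathcal{O}$) gives $\gamma(\mathrm{K}_0^+(A;\mathbb{Z}\oplus\mathbb{Z}_p))\subset\mathrm{K}_0^+(B;\mathbb{Z}\oplus\mathbb{Z}_p)$ \emph{simultaneously for all} $p\geq 2$, with no reduction to essential primes at all. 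The only genuinely delicate case is $\mathrm{K}_*^+(A)=\mathrm{KK}^+(C(S^1),A)$, because $C(S^1)\notin\mathcal{C}_\mathcal{O}$; this is handled by a three-way case analysis using the $\xi_i$-classes in $F$, and that analysis is exactly where the restriction to $\mathcal{D}$ (as opposed to $\mathcal{C}$) and the torsion-freeness of $\mathrm{K}_0(A)$ enter. Your Gordan-type finiteness and essential-primes reduction play no role in this mechanism, and if one wanted to carry your plan through, one would have to re-prove a substantial chunk of the Dadarlat-Loring structure theory from scratch.
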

\begin{proof}
As $A\in \mathcal{D}$, we assume that $A$ is minimal but not finite dimensional. (The case of $A=M_m(\mathbb{C})$ only needs to concern $\mathrm{K}_0^+(A)$, which is much easier.) Recall that in \ref{A^0}, we have
$$
\alpha-\beta=(a_1,a_2,\cdots,a_r,-b_1,-b_2,\cdots,-b_l,0,0,\cdots,0).
$$
Now we construct the finite subset $F\subset\underline{\mathrm{K}}^+(A)$ for $A$.

For any $x \in\{1,2,\cdots,r\}$ and $y \in\{1,2,\cdots,l\}$,
let $w_{xy}=a_x\cdot b_y\in\mathbb{N}$. By Theorem \ref{c_olift}, there exists a homomorphism $\eta_{xy}$ from $\widetilde{I}_{w_{xy}}$ to $M_r(A)$ inducing the following commutative diagram $\lambda_{xy}\in C(\widetilde{I}_{w_{xy}},A)$:

$$
\xymatrixcolsep{2pc}
\xymatrix{
{\,\,0\,\,} \ar[r]^-{}
& {\,\,\mathrm{K}_0(\widetilde{I}_{w_{xy}})\,\,} \ar[d]_-{} \ar[r]^-{}
& {\,\,\mathbb{Z}\oplus\mathbb{Z}\,\,} \ar[d]_-{\lambda_0 ^{xy}} \ar[r]^-{(w_{xy},-w_{xy})}
& {\,\,\mathbb{Z}\,\,} \ar[d]_-{1} \ar[r]^-{}
& {\,\,\mathrm{K}_1(\widetilde{I}_{w_{xy}})\,\,} \ar[d]_-{} \ar[r]^-{}
& {\,\,0\,\,}\\
{\,\,0\,\,} \ar[r]^-{}
& {\,\,\mathrm{K}_0(A)\,\,} \ar[r]_-{\pi_*}
& {\,\,\mathbb{Z}^p \,\,} \ar[r]_-{\alpha-\beta}
& {\,\,\mathbb{Z} \,\,} \ar[r]_-{\iota_*}
& {\,\,\mathrm{K}_1(A)\,\,} \ar[r]^-{}
& {\,\,0\,\,}}
$$
where
\begin{equation}\lambda_0^{xy}=
\left(
\begin{array}{ccc}
0&\vdots\\[2 mm]
b_y&0\\[2 mm]
0&a_x\\[2 mm]
\vdots&0
\end{array}
\right)
\end{equation}
(the $p\times2$ matrix with all entry 0 except for $(x,1)^{\rm th}$ entry $b_y$ and $(r+y,2)^{\rm th}$ entry $a_x$).
Here, we use $\mathrm{KK}(\lambda)$ to denote the $\mathrm{KK}$-element induced by a commutative diagram $\lambda$, (which is the natural map listed in Theorem \ref{CM}).

Note that $$\mathrm{KK}(\lambda_{xy})\in \mathrm{KK}^+(\widetilde{I}_{w_{xy}},A)\subset \underline{\mathrm{K}}^+(A).$$
Set
$$F=\{\,\mathrm{KK}(\lambda_{xy}):\, x=1,\cdots,r, \,y=1,\cdots,l\}\cup
\{\,\mathrm{KK}(\xi_i)\in \mathrm{K}^+_*(A):\, r+l+1\leq i\leq p\},
$$
where $\mathrm{KK}_*^+(A)\triangleq\mathrm{KK}^+(C(S^1),A)$ (Definition \ref{liftable})
and $\xi_i\in C(C(S^1),A)$ is the following diagram
$$
\xymatrixcolsep{2pc}
\xymatrix{
{\,\,0\,\,} \ar[r]^-{}
& {\,\,\mathrm{K}_0(C(S^1))\,\,} \ar[d]_-{} \ar[r]^-{}
& {\,\,\mathbb{Z}\,\,} \ar[d]_-{e_i^{\rm T}} \ar[r]^-{1-1=0}
& {\,\,\mathbb{Z}\,\,} \ar[d]_-{1} \ar[r]^-{}
& {\,\,\mathrm{K}_1(C(S^1))\,\,} \ar[d]_-{} \ar[r]^-{}
& {\,\,0\,\,}\\
{\,\,0\,\,} \ar[r]^-{}
& {\,\,\mathbb{Z}^p\,\,} \ar[r]_-{\pi_*}
& {\,\,\mathrm{K}_0(F_1) \,\,} \ar[r]_-{\alpha-\beta}
& {\,\,\mathbb{Z} \,\,} \ar[r]_-{\iota_*}
& {\,\,\mathrm{K}_1(A)\,\,} \ar[r]^-{}
& {\,\,0\,\,},}
$$
$e_i=(0,0,\cdots,0,1,0,\cdots,0)$, 1 is at the $i^{\rm th}$ position.

With the completely same calculation of the proof of Theorem \ref{ONEORDER}, there is a diagram
$\lambda\in C^+(A,B)$
$$
\xymatrixcolsep{2pc}
\xymatrix{
{\,\,0\,\,} \ar[r]^-{}
& {\,\,\mathrm{K}_0(A)\,\,} \ar[d]_-{\lambda_{0*}} \ar[r]^-{\pi_*}
& {\,\,\mathrm{K}_0(F_1)\,\,} \ar[d]_-{\lambda_{0}} \ar[r]^-{\alpha-\beta}
& {\,\,\mathbb{Z}\,\,} \ar[d]_-{\lambda_{1}} \ar[r]^-{\iota_*}
& {\,\,\mathrm{K}_1(A)\,\,} \ar[d]_-{\lambda_{1*}} \ar[r]^-{}
& {\,\,0\,\,}\\
{\,\,0\,\,} \ar[r]^-{}
& {\,\,\mathrm{K}_0(B)\,\,} \ar[r]_-{\pi_*'}
& {\,\,\mathrm{K}_0(F_1') \,\,} \ar[r]_-{\alpha'-\beta'}
& {\,\,\mathbb{Z} \,\,} \ar[r]_-{\iota_*'}
& {\,\,\mathrm{K}_1(B)\,\,} \ar[r]^-{}
& {\,\,0\,\,}}
$$
such that $\mathrm{KK}(\lambda)=\gamma$.

Note that for any $A\in\mathcal{C},\,\mathrm{K}_0(A)$ is torsion free.
It follows that for any ${\mathrm C}^*$-algebras $A,\,B\in \mathcal{D}$, the requirement
$$
\gamma(\underline{\mathrm{K}}^+(A))\subset \underline{\mathrm{K}}^+(B)
$$
is equivalent to
$$
\gamma(\mathrm{K}_0 ^+(A;\mathbb{Z}\oplus\mathbb{Z}_p))\subset \mathrm{K}_0 ^+(B;\mathbb{Z}\oplus\mathbb{Z}_p),\quad p\geq2,\quad
\rm{and}
\quad
\gamma(\mathrm{K}_*^+(A))\subset \mathrm{K}_*^+(B),
$$
where $$(\mathrm{K}_0 (\bullet;\mathbb{Z}\oplus\mathbb{Z}_p),\mathrm{K}_0 ^+(\bullet;\mathbb{Z}\oplus\mathbb{Z}_p))\cong (\mathrm{KK}(\widetilde{I}_p,\bullet),\mathrm{KK}^+(\widetilde{I}_p,\bullet)).$$

By Theorem \ref{c_olift}, we have $\gamma(\mathrm{K}_0 ^+(A;\mathbb{Z}\oplus\mathbb{Z}_p))\subset \mathrm{K}_0 ^+(B;\mathbb{Z}\oplus\mathbb{Z}_p)$. (As for any $\zeta\in C^+(\widetilde{I}_p,A)$, we have $\zeta\times \lambda\in C^+(\widetilde{I}_p,B)$.)

For the case of $0\neq\zeta\in C^+(C(S^1),A)$
$$
\xymatrixcolsep{2pc}
\xymatrix{
{\,\,0\,\,} \ar[r]^-{}
& {\,\,\mathrm{K}_0(C(S^1))\,\,} \ar[d]_-{} \ar[r]^-{}
& {\,\,\mathbb{Z}\,\,} \ar[d]_-{\zeta_0} \ar[r]^-{1-1=0}
& {\,\,\mathbb{Z}\,\,} \ar[d]_-{\zeta_1} \ar[r]^-{}
& {\,\,\mathrm{K}_1(C(S^1))\,\,} \ar[d]_-{} \ar[r]^-{}
& {\,\,0\,\,}\\
{\,\,0\,\,} \ar[r]^-{}
& {\,\,\mathrm{K}_0(A)\,\,} \ar[r]_-{\pi_*}
& {\,\,\mathrm{K}_0(F_1) \,\,} \ar[r]_-{\alpha-\beta}
& {\,\,\mathbb{Z} \,\,} \ar[r]_-{\iota_*}
& {\,\,\mathrm{K}_1(A)\,\,} \ar[r]^-{}
& {\,\,0\,\,},}
$$
we only need to consider $\zeta_1=1$.

(1)\,If $\lambda_0\zeta_0\neq(0,0,\cdots,0)^{\rm T}$, by Theorem \ref{CRS1}, we have $\zeta\times\lambda$ lifted.

(2)\,If $\lambda_0\zeta_0=(0,0,\cdots,0)^{\rm T}$ and $\lambda_1=0$, we have $\zeta\times\lambda=0$.

(3)\,If $\lambda_0\zeta_0=(0,0,\cdots,0)^{\rm T}$ and $\lambda_1\neq0$,
note that  none of the first $r+l$ rows of
$\lambda_0$ is $(0,0,\cdots,0)^{\rm T}$ ($\lambda$ is a commutative diagram), i.e., we have $$\zeta_0=\sum_{i=r+l+1}^{p}\rho_i e_i^{\rm T}=(0,\cdots,0,\rho_{r+l+1},\cdots,\rho_p)^{\rm T}, \quad\rho_i\geq0.$$ Then there exists $\rho_{i_0}>0$, for some $i_0$ ($0\neq \zeta$).
As $\xi_{i_0}\times\lambda$ can be lifted by assumption and $(\zeta-\xi_{i_0})\times\lambda$ can be lifted as a homomorphism with finite dimensional image (by Theorem \ref{SUFF yuan} or one can also easily construct the homomorphism as $\zeta_1-1=0$).
Then $\zeta\times\lambda$ can be lifted.

In summary, $\gamma$ preserves Dadarlat-Loring order.
\end{proof}
\begin{remark}
We should give an explanation that in the last part of the proof above, we divide the case of $\zeta\in C^+(C(S^1),A)$ into
3 cases instead of just using Theorem \ref{CRS1}.
It is because sometimes the condition $\lambda$ is positive may not imply $\zeta\times\lambda$ is positive.
And in particular, $\mathrm{KK}(\zeta\times\lambda)$ can be lifted while $\zeta\times\lambda$ may not. We list an example here.

Let $F_1=\mathbb{C}\oplus\mathbb{C}\oplus\mathbb{C}\oplus\mathbb{C}\oplus\mathbb{C}$, $F_2=M_5(\mathbb{C})$,
$$\varphi_0(a\oplus b\oplus c\oplus d\oplus e)={\rm{diag}}\{a,a,b,b,e\},
\quad
\varphi_1(a\oplus b\oplus c\oplus d\oplus e)={\rm{diag}}\{c,c,d,d,e\},
$$
$F_1'=\mathbb{C}\oplus\mathbb{C}\oplus\mathbb{C}\oplus\mathbb{C}$, $F_2'=M_4(\mathbb{C})$,
$$\varphi_0'(a\oplus b\oplus c\oplus d)={\rm{diag}}\{a,a,b,b\},
\quad
\varphi_1'(a\oplus b\oplus c\oplus d)={\rm{diag}}\{c,c,d,d\}.
$$
Set
$A=A(F_1,F_2,\varphi_0,\varphi_1)\in \mathcal{D}$,
$B=B(F_1',F_2',\varphi_0',\varphi_1')\in \mathcal{D}$. Then we have
$$\alpha=(2,2,0,0,1)
\quad
{\rm and}
\quad
\beta=(0,0,2,2,1),$$
$$\alpha'=(2,2,0,0)
\quad
{\rm and}
\quad
\beta'=(0,0,2,2).$$

\noindent Consider the following commutative diagram $\delta$ in $C(A,B)$:
$$
\xymatrixcolsep{2pc}
\xymatrix{
{\,\,0\,\,} \ar[r]^-{}
& {\,\,\mathrm{K}_0(A)\,\,} \ar[d]_-{} \ar[r]^-{\pi_*}
& {\,\,\mathrm{K}_0(F_1)\,\,} \ar[d]_-{\delta_{0}} \ar[r]^-{(2,2,-2,-2,0)}
& {\,\,\mathrm{K}_1(SF_2)\,\,} \ar[d]_-{1} \ar[r]^-{\iota_*}
& {\,\,\mathrm{K}_1(A)\,\,} \ar[d]_-{} \ar[r]^-{}
& {\,\,0\,\,}\\
{\,\,0\,\,} \ar[r]^-{}
& {\,\,\mathrm{K}_0(B)\,\,} \ar[r]_-{\pi_*'}
& {\,\,\mathrm{K}_0(F_1') \,\,} \ar[r]_-{(2,2,-2,-2)}
& {\,\,\mathrm{K}_1(SF_2') \,\,} \ar[r]_-{\iota_*'}
& {\,\,\mathrm{K}_1(B)\,\,} \ar[r]^-{}
& {\,\,0\,\,},}
$$
where
$$
\delta_0=
\left(
  \begin{array}{ccccccc}
    1&  &  & & 0      \\
     & 1 &  & & 0    \\
     &  & 1 & & 0   \\
     &  &  & 1 & 0    \\

  \end{array}
  \right).
$$

Consider $\zeta\in C^+(S^1,A)$
$$
\xymatrixcolsep{2pc}
\xymatrix{
{\,\,0\,\,} \ar[r]^-{}
& {\,\,\mathrm{K}_0(C(S^1))\,\,} \ar[d]_-{} \ar[r]^-{}
& {\,\,\mathbb{Z}\,\,} \ar[d]_-{\zeta_0} \ar[r]^-{1-1=0}
& {\,\,\mathbb{Z}\,\,} \ar[d]_-{1} \ar[r]^-{}
& {\,\,\mathrm{K}_1(C(S^1))\,\,} \ar[d]_-{} \ar[r]^-{}
& {\,\,0\,\,}\\
{\,\,0\,\,} \ar[r]^-{}
& {\,\,\mathrm{K}_0(A)\,\,} \ar[r]_-{\pi_*}
& {\,\,\mathrm{K}_0(F_1) \,\,} \ar[r]_-{(2,2,-2,-2,0)}
& {\,\,\mathbb{Z} \,\,} \ar[r]_-{\iota_*}
& {\,\,\mathrm{K}_1(A)\,\,} \ar[r]^-{}
& {\,\,0\,\,},}
$$
where $\zeta_0=(0,0,0,0,1)^{\rm T}$.
Note that $\zeta\times\delta\in C(S^1,B)$
$$
\xymatrixcolsep{2pc}
\xymatrix{
{\,\,0\,\,} \ar[r]^-{}
& {\,\,\mathrm{K}_0(C(S^1))\,\,} \ar[d]_-{} \ar[r]^-{}
& {\,\,\mathbb{Z}\,\,} \ar[d]_-{0} \ar[r]^-{1-1=0}
& {\,\,\mathbb{Z}\,\,} \ar[d]_-{1} \ar[r]^-{}
& {\,\,\mathrm{K}_1(C(S^1))\,\,} \ar[d]_-{} \ar[r]^-{}
& {\,\,0\,\,}\\
{\,\,0\,\,} \ar[r]^-{}
& {\,\,\mathrm{K}_0(B)\,\,} \ar[r]_-{\pi_*}
& {\,\,\mathrm{K}_0(F_1) \,\,} \ar[r]_-{(2,2,-2,-2)}
& {\,\,\mathbb{Z} \,\,} \ar[r]_-{\iota_*}
& {\,\,\mathrm{K}_1(B)\,\,} \ar[r]^-{}
& {\,\,0\,\,}}
$$
is not positive and it is easily seen that $\mathrm{KK}(\zeta\times\delta)$ can not be lifted. In particular, we do not have $\zeta\times\delta\notin M(C(S^1),B)$, either. (It is because 1 is not in the image of $(2,2,-2,2)$.)

We mention that this Example is different from that given in Remark \ref{counter example}. The difference comes from that 1 is in the image of $(1,1,-1,1)$ but not in the image of $(2,2,-2,2)$.
And in this case, we have
$${\bf Hom}^+_\Lambda(\underline{\mathrm{K}}(A),\underline{\mathrm{K}}(B))\subsetneqq
(C(A,B)/M(A,B))^+.
$$
\end{remark}
\begin{remark}\label{homomorphismunital}\rm
Let $A,\,B\in \mathcal{C}$. $\phi$ is a homomorphism from $A$
to $M_r(B)$ constructed as that in Lemma \ref{D^0lift lemma}, inducing the commutative diagram $\lambda\in C^+(A,B)$:
$$
\xymatrixcolsep{2pc}
\xymatrix{
{\,\,0\,\,} \ar[r]^-{}
& {\,\,\mathrm{K}_0(A)\,\,} \ar[d]_-{\lambda_{0*}} \ar[r]^-{\pi_*}
& {\,\,\mathrm{K}_0(F_1)\,\,} \ar[d]_-{\lambda_{0}} \ar[r]^-{\alpha-\beta}
& {\,\,\mathrm{K}_1(SF_2)\,\,} \ar[d]_-{\lambda_{1}} \ar[r]^-{\iota_*}
& {\,\,\mathrm{K}_1(A)\,\,} \ar[d]_-{\lambda_{1*}} \ar[r]^-{}
& {\,\,0\,\,}\\
{\,\,0\,\,} \ar[r]^-{}
& {\,\,\mathrm{K}_0(B)\,\,} \ar[r]_-{\pi_*'}
& {\,\,\mathrm{K}_0(F_1') \,\,} \ar[r]_-{\alpha'-\beta'}
& {\,\,\mathrm{K}_1(SF_2') \,\,} \ar[r]_-{\iota_*'}
& {\,\,\mathrm{K}_1(B)\,\,} \ar[r]^-{}
& {\,\,0\,\,}.}
$$
If
$$
\mathrm{KK}(\phi)([1_A])\leq[1_B],
$$
then
$$
\lambda_0(k_1,k_2,\cdots,k_p)\leq(k_1',k_2',\cdots,k_{p'}').
$$
which means that we can use Lemma 3.5 in \cite{AE:2017} more carefully to get a new
a homomorphism $\psi:\,A\to B$, inducing $\lambda$. In particular, if
$$
\lambda_0(k_1,k_2,\cdots,k_p)=(k_1',k_2',\cdots,k_{p'}'),
$$
the homomorphism can be also
constructed unital.
\end{remark}

We recall the following definition:
\begin{definition}[\cite{EfK:1986}]
Let $A=\underrightarrow{\lim}(A_n,\phi_{n,m})$ and $B=\underrightarrow{\lim}(B_n,\psi_{n,m})$. These two inductive limit systems
are said to be shape equivalent if there are sequences $\{k_i\},\,\{l_i\}$, and
$\xi_1:\,A_{k_i}\to B_{l_i}$ and maps $\eta_i:\,B_{l_i}\to A_{k_{i+1}}$ such that
$
\eta_i\circ\xi_i:\,A_{k_i}\to A_{k_i+1}
$
is homotopic to
$
\phi_{k_i,k_{i+1}}:\,A_{k_i}\to A_{k_i+1},
$
and
$
\xi_{i+1}\circ\eta_i:\,B_{l_i}\to B_{l_{i+1}}
$
is homotopic to
$
\psi_{l_i,l_{i+1}}:\,B_{l_i}\to B_{l_{i+1}}.
$
\end{definition}
From the example in Remark 3.14 in \cite{AE:2017} (also Example \ref{stable hom}), we know that two homomorphisms with the same $\mathrm{KK}$-class may not be homotopic, but after adding a same homomorphism, they can be homotopic to each other. So we list the following definitions for stably homotopic and  $\mathrm{KK}$-shape equivalent.
\begin{definition}
Let $A,\,B$ be $C^*$-algebras and $\phi,\,\psi$ be two homomorphisms from $A$ to $B$.
We say $\phi$ and $\psi$ are stably homotopic, if there exists a homomorphism $\eta:\,A\to M_k(B)$, for some integer $k$
such that
$$\phi\oplus\eta\sim_h\psi\oplus\eta,$$
i.e., $\phi\oplus\eta$ and $\psi\oplus\eta$ are homotopic as homomorphisms from $A$ to $M_{k+1}(B)$.
\end{definition}
\begin{definition}[\cite{DG:1997}]
Let $A=\underrightarrow{\lim}(A_n,\phi_{n,m})$ and $B=\underrightarrow{\lim}(B_n,\psi_{n,m})$. These two inductive limit systems
are said to be $\mathrm{KK}$-shape equivalent if there are sequences of homomorphisms $\{k_i\},\,\{l_i\}$, and homomorphisms
$\xi_1:\,A_{k_i}\to B_{l_i}$ and homomorphisms $\eta_i:\,B_{l_i}\to A_{k_{i+1}}$ such that
$$
\mathrm{KK}(\eta_i\circ\xi_i)=\mathrm{KK}(\phi_{k_i,k_{i+1}})\in\mathrm{KK}(A_{k_i},A_{k_i+1}),
$$
and
$$
\mathrm{KK}(\xi_{i+1}\circ\eta_i)=\mathrm{KK}(\psi_{l_i,l_{i+1}})\in\mathrm{KK}(B_{l_i}, B_{l_{i+1}}).
$$
\end{definition}

\begin{notion}\label{decomposion we need}\rm
Let $A=A(F_1,\,M_n(\mathbb{C}),\,\varphi_0,\,\varphi_1),\,B=B(F_1',\,M_{n'}(\mathbb{C}),\,\varphi_0',\,\varphi_1')$ be in $\mathcal{D}$, $\psi$ be a homomorphism from $A$ to $B$ with finite dimensional image, whose spectrum points are contained in $(0,1)\subset Sp(A).$
For convenience, we need to give a description for $\psi$. As $\psi$ has finite dimensional image,
we have a finite dimensional algebra $M_{u_1}(\mathbb{C})\oplus M_{u_2}(\mathbb{C})\oplus\cdots\oplus M_{u_\bullet}(\mathbb{C})$
isomorphic to the image of $\psi$. Then the isomorphism induces an embedding homomorphism $\iota:\,M_{u_1}(\mathbb{C})\oplus M_{u_2}(\mathbb{C})\oplus\cdots\oplus M_{u_\bullet}(\mathbb{C})\to B$ such that
the following diagram
$$
\xymatrixcolsep{3pc}
\xymatrix{
{\,\,A\,\,} \ar[r]^-{\psi} \ar[rd]_-{\widetilde{\psi}}
& {\,\,B\,\,} \\
{\,\,\,\,}
& {\,\,M_{u_1}(\mathbb{C})\oplus M_{u_2}(\mathbb{C})\oplus\cdots\oplus M_{u_\bullet}(\mathbb{C})\,\,} \ar[u]^-{\iota}}
$$
is commutative.
Here we can choose $\iota$ carefully (by conjugate a unitary in $M_{u_1}(\mathbb{C})\oplus M_{u_2}(\mathbb{C})\oplus\cdots\oplus M_{u_\bullet}(\mathbb{C})$) such that
$$
\pi_i\circ\widetilde{\psi}(f,a)=f(y_i),
$$
where
$\pi_i$ is the $i^{th}$ projective map from $M_{u_1}(\mathbb{C})\oplus M_{u_2}(\mathbb{C})\oplus\cdots\oplus M_{u_\bullet}(\mathbb{C})$ to $M_{u_i}(\mathbb{C})$, and $y_i\in(0,1)$.
Here we regard each $M_{u_i}(\mathbb{C})$ as a sub-algebra of $M_{u_1}(\mathbb{C})\oplus M_{u_2}(\mathbb{C})\oplus\cdots\oplus M_{u_\bullet}(\mathbb{C})$.
In fact, $u_i=n$ for all $i=1,2,\cdots,\bullet$.

We concern the homomorphism $\iota\circ\pi_i\circ\widetilde{\psi}$,
we push the spectral point $y_i$ to $0$ and get a new homomorphism denoted by $\psi_i$. (Similarly, we can also push it to 1.)
Then we have $\mathrm{KK}(\iota\circ\pi_i\circ\widetilde{\psi})=\mathrm{KK}(\psi_i)$.
Note that $\psi_i$ exactly induces the following diagram $\lambda_i\in C^+(A,B)$
$$
\xymatrixcolsep{2pc}
\xymatrix{
{\,\,0\,\,} \ar[r]^-{}
& {\,\,\mathrm{K}_0(A)\,\,} \ar[d]_-{} \ar[r]^-{\pi_*}
& {\,\,\mathrm{K}_0(F_1)\,\,} \ar[d]_-{\lambda_{0}^i} \ar[r]^-{\alpha-\beta}
& {\,\,\mathbb{Z}\,\,} \ar[d]_-{0} \ar[r]^-{\iota_*}
& {\,\,\mathrm{K}_1(A)\,\,} \ar[d]_-{0} \ar[r]^-{}
& {\,\,0\,\,}\\
{\,\,0\,\,} \ar[r]^-{}
& {\,\,\mathrm{K}_0(B)\,\,} \ar[r]_-{\pi_*'}
& {\,\,\mathrm{K}_0(F_1') \,\,} \ar[r]_-{\alpha'-\beta'}
& {\,\,\mathbb{Z} \,\,} \ar[r]_-{\iota_*'}
& {\,\,\mathrm{K}_1(B)\,\,} \ar[r]^-{}
& {\,\,0\,\,}}
$$
where $\lambda_0^i=(g_1^i,g_2^i,\cdots,g_{p'}^i)^{\rm T}\cdot\alpha$, $$(g_1^i,g_2^i,\cdots,g_{p'}^i)=\frac{[\iota(1_{u_i})]}{n}\in\mathrm{K}_0(B)\subset\mathrm{K}_0(F_1').$$

Then $\mathrm{KK}(\psi)=\sum_{i=1}^\bullet\mathrm{KK}(\psi_i)=\sum_{i=1}^\bullet\mathrm{KK}(\lambda_i)$
and
$$[\psi(1_A)]=\sum_{i=1}^\bullet[\iota(1_{u_i})]=n\sum_{i=1}^\bullet (g_1^i,g_2^i,\cdots,g_{p'}^i).$$
\end{notion}

We have shown in Remark \ref{counter example} (also Example 5.7 in \cite{AE:2017}) that a $\mathrm{KK}$-element preserving Dadarlat-Loring order may not be liftable, which means we do not have a existence result directly for the building blocks. We list our main result for the existence theorem here.
\begin{thrm}\label{composed existence}
Let $A,\,B$ and $C$ be minimal blocks in $\mathcal{D}$,
$\psi$ be a homomorphism from $A$ to $B$.
$\gamma$ is a $\mathrm{KK}$-element in $\mathrm{KK}(B,C)$ preserving Dadarlat-Loring order.
$$
A\xrightarrow{\psi}B\xrightarrow{\gamma} C
$$
If $\psi=\psi_{F_1}\oplus\psi_{(0,1)}\oplus\psi_r$, where $\psi_{F_1}$ and $\psi_{(0,1)}$ are homomorphisms with finite dimensional images, whose spectrum are contained in $Sp(F_1)$ and $(0,1)\subset Sp(A)$, respectively, such that
$$
[\psi_{(0,1)}(1_A)]\geq [\psi_r(1_A)].
$$
Then $\mathrm{KK}(\psi)\times \gamma$ can be lifted as a homomorphism.
\end{thrm}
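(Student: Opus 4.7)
The strategy is to split $\mathrm{KK}(\psi)\times\gamma$ according to the three-part decomposition of $\psi$ and to lift it as a direct sum of two homomorphisms, one realising $\mathrm{KK}(\psi_{F_1})\times\gamma$ and the other realising the combined $\mathrm{KK}(\psi_{(0,1)}\oplus\psi_r)\times\gamma$. The role of the hypothesis $[\psi_{(0,1)}(1_A)]\geq[\psi_r(1_A)]$ will be to supply enough $K_0$-padding from $\psi_{(0,1)}$ to repair the obstruction to lifting $\mathrm{KK}(\psi_r)\times\gamma$ alone, which is precisely the type of obstruction illustrated in Remark~\ref{counter example}.

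Since $\gamma$ preserves Dadarlat--Loring order, Theorem~\ref{ONEORDER} produces a positive representative $\mu\in C^+(B,C)$ of $\gamma$, so in particular $\mu_0$ has non-negative entries. The $\psi_{F_1}$ piece factors as $\psi'_{F_1}\circ\pi_e$ through $F_1\in\mathcal{C}_\mathcal{O}$, and the composed element $\mathrm{KK}(\psi'_{F_1})\times\gamma\in\mathrm{KK}(F_1,C)$ is positive; since $F_1\in\mathcal{C}_\mathcal{O}$, Theorem~\ref{c_olift} yields a homomorphism $\phi'_1:F_1\to M_\bullet(C)$ realising it, and $\phi_1:=\phi'_1\circ\pi_e$ is the desired lift of the $\psi_{F_1}$ contribution.

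The main step is to lift $\mathrm{KK}(\psi_{(0,1)}\oplus\psi_r)\times\gamma$ by applying Lemma~\ref{D^0lift lemma} to the composed positive diagram $\Lambda=(\lambda_{(0,1)}+\lambda_r)\times\mu\in C^+(A,C)$. Using the point-evaluation decomposition of \ref{decomposion we need} for $\psi_{(0,1)}$, one has $(\lambda_{(0,1)})_1=0$ and $(\lambda_{(0,1)})_0=(\sum_i g^i)^{\mathrm{T}}\alpha$, hence
\[
\Lambda_1=\mu_1(\lambda_r)_1,\qquad \alpha''\mu_0(\lambda_{(0,1)})_0=S\alpha,\qquad \beta''\mu_0(\lambda_{(0,1)})_0=S\beta,
\]
where $S=\tfrac{1}{n}\alpha''\mu_0[\psi_{(0,1)}(1_A)]$; the last two identities use that $[\psi_{(0,1)}(1_A)]\in\ker(\alpha'-\beta')=K_0(B)$ combined with the commutativity $(\alpha''-\beta'')\mu_0=\mu_1(\alpha'-\beta')$. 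The hypothesis $[\psi_{(0,1)}(1_A)]\geq[\psi_r(1_A)]$ together with $\mu_0,\alpha''\geq 0$ yields $\alpha''\mu_0[\psi_{(0,1)}(1_A)]\geq\alpha''\mu_0[\psi_r(1_A)]$, and combining this with the commutativities $(\alpha''-\beta'')\mu_0=\mu_1(\alpha'-\beta')$ and $(\alpha'-\beta')(\lambda_r)_0=(\lambda_r)_1(\alpha-\beta)$, and with the existence constraints that Lemma~\ref{D^0lift lemma} forces on $\psi_r$ itself, the componentwise inequality $(\alpha''\Lambda_0)_j\geq\alpha_j+l(\alpha_j-\beta_j)$ for $l=0,\ldots,\Lambda_1-1$ can be reduced to a combination of $S\alpha$-dominated estimates (covering the components where $\alpha_j$ is large) and the identity $\alpha''\mu_0(\lambda_r)_0=\Lambda_1(\alpha-\beta)+\beta''\mu_0(\lambda_r)_0$ (covering the components with $\alpha_j=0$). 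Case~(1) of Lemma~\ref{D^0lift lemma} then produces a homomorphism $\phi_3:A\to M_\bullet(C)$ lifting the combined piece when $\Lambda_1>0$; the $\beta''$-symmetric argument covers case~(3), and case~(2) is automatic when $\Lambda_1=0$. Taking $\phi_1\oplus\phi_3$ gives the desired lift.

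The principal obstacle lies in this last step: while the hypothesis $[\psi_{(0,1)}(1_A)]\geq[\psi_r(1_A)]$ delivers a transparent componentwise bound in $K_0(B)$, translating it through the positive diagram $\mu$ into the precise $\alpha''/\beta''$-weighted inequality of Lemma~\ref{D^0lift lemma} componentwise in $K_0(F_1)$ requires careful combinatorial bookkeeping with the commutativity relations of both $\mu$ and $\lambda_r$, in order to ensure that the contribution $S\alpha$ from the $\psi_{(0,1)}$-padding, combined with the automatically non-negative $\beta''\mu_0(\lambda_r)_0$ term, dominates the required deficit.
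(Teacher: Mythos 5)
Your overall strategy matches the paper's: split off $\psi_{F_1}$, use Theorem~\ref{ONEORDER} to extract a positive representative $\mu\in C^+(B,C)$, use \ref{decomposion we need} to describe $\psi_{(0,1)}$, and feed the summed positive diagram into Lemma~\ref{D^0lift lemma}. But there is a genuine gap at exactly the point you flag as ``careful combinatorial bookkeeping'': you never address what happens when the scalar $S=\frac{1}{n}\alpha''\mu_0[\psi_{(0,1)}(1_A)]^{\rm T}$ vanishes, and when it does, no amount of bookkeeping rescues the argument. If $S=0$ then, since $\mu_0\geq0$ and $[\psi_{(0,1)}(1_A)]\geq[\psi_r(1_A)]\geq0$, one forces $\mu_0[\psi_r(1_A)]^{\rm T}=0$ as well, hence $\mu_0(\lambda_r)_0=0$ and $\alpha''\Lambda_0=0$; the inequality $\alpha''\Lambda_0-\alpha-l(\alpha-\beta)\geq0$ required by case~(1) of Lemma~\ref{D^0lift lemma} then fails outright whenever $\alpha\neq0$, so that lemma simply cannot be applied. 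The paper handles this case with a separate idea you do not use: it observes that $\gamma_*[\psi(1_A)]=0$ and invokes Lemma~\ref{0 existence theorem} to conclude that $\mathrm{KK}(\psi)\times\gamma=0$ (hence trivially liftable). In the paper's notation this is the dichotomy $[\psi_{(0,1)}(1_A)]\in E$ versus $\notin E$, where $E$ is the positive cone supported on the zero columns of $\lambda_0$; the hypothesis $[\psi_{(0,1)}(1_A)]\geq[\psi_r(1_A)]$ is used precisely to pass membership in $E$ to $[\psi_r(1_A)]$. When $[\psi_{(0,1)}(1_A)]\notin E$ one extracts a $g_0\notin E$ from the \ref{decomposion we need} decomposition, shows $\alpha''\mu_0 g_0^{\rm T}=\beta''\mu_0 g_0^{\rm T}\geq1$, and only then is $S\geq1$ guaranteed so that the domination $S\alpha\geq\alpha$ works. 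Without the $S=0$ branch (and Lemma~\ref{0 existence theorem}) the proof is incomplete.

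A smaller point: with your convention of pushing $\psi_{(0,1)}$'s spectrum to $0$, so that $(\lambda_{(0,1)})_0=\frac{1}{n}[\psi_{(0,1)}(1_A)]^{\rm T}\alpha$, the identity $\beta''\mu_0(\lambda_{(0,1)})_0=S\beta$ is wrong; the kernel condition and the commutativity of $\mu$ give $\beta''\mu_0(\lambda_{(0,1)})_0=S\alpha$, the same as $\alpha''\mu_0(\lambda_{(0,1)})_0$. The version producing $S\beta$ corresponds to pushing to $1$ instead, which is what one does in the case $\Lambda_1<0$ (case~(3) of Lemma~\ref{D^0lift lemma}), but you cannot have both displayed identities for a single fixed push direction.
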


\begin{proof}
If one of $A,\,B$ and $C$ is finite dimensional, it is easy to lift $\mathrm{KK}(\psi)\times \gamma$ to a homomorphism with finite finite dimensional image.
Also note that $\mathrm{KK}(\psi_{F_1})\times \gamma $ is always liftable,
as $\gamma$ preserves Dadarlat-Loring order.
Assume that $A,\,B$ and $C$ are not finite dimensional and just for convenience, we also assume $\psi_{F_1}=0$.
We will still use the notation in \ref{notion of A B}, and in addition, we assume
$$C=C(F_1'',M_{n''}(\mathbb{C}),\varphi_0 '',\varphi_1 '')$$
with $$
F_1 ''= \bigoplus_{i''=1}^{p''} M_{{k''}_{i''}}(\mathbb{C}).$$
From \ref{bianjietongtai}, there is a homomorphism $\psi_r'$ ($\psi_r'\sim_h\psi_r$) inducing $\eta\in C^+(A,B)$
$$
\xymatrixcolsep{2pc}
\xymatrix{
{\,\,0\,\,} \ar[r]^-{}
& {\,\,\mathrm{K}_0(A)\,\,} \ar[d]_-{} \ar[r]^-{\pi_*}
& {\,\,\mathrm{K}_0(F_1)\,\,} \ar[d]_-{\eta_{0}} \ar[r]^-{\alpha-\beta}
& {\,\,\mathbb{Z}\,\,} \ar[d]_-{\eta_{1}} \ar[r]^-{\iota_*}
& {\,\,\mathrm{K}_1(A)\,\,} \ar[d]_-{} \ar[r]^-{}
& {\,\,0\,\,}\\
{\,\,0\,\,} \ar[r]^-{}
& {\,\,\mathrm{K}_0(B)\,\,} \ar[r]_-{\pi_*'}
& {\,\,\mathrm{K}_0(F_1') \,\,} \ar[r]_-{\alpha'-\beta'}
& {\,\,\mathbb{Z} \,\,} \ar[r]_-{\iota_*'}
& {\,\,\mathrm{K}_1(B)\,\,} \ar[r]^-{}
& {\,\,0\,\,}}
$$
and from Theorem \ref{ONEORDER}, there is a diagram $\lambda\in C^+(B,C)$
$$
\xymatrixcolsep{2pc}
\xymatrix{
{\,\,0\,\,} \ar[r]^-{}
& {\,\,\mathrm{K}_0(B)\,\,} \ar[d]_-{} \ar[r]^-{\pi_*'}
& {\,\,\mathrm{K}_0(F_1')\,\,} \ar[d]_-{\lambda_{0}} \ar[r]^-{\alpha'-\beta'}
& {\,\,\mathbb{Z}\,\,} \ar[d]_-{\lambda_{1}} \ar[r]^-{\iota_*'}
& {\,\,\mathrm{K}_1(B)\,\,} \ar[d]_-{} \ar[r]^-{}
& {\,\,0\,\,}\\
{\,\,0\,\,} \ar[r]^-{}
& {\,\,\mathrm{K}_0(C)\,\,} \ar[r]_-{\pi_*''}
& {\,\,\mathrm{K}_0(F_1'') \,\,} \ar[r]_-{\alpha''-\beta''}
& {\,\,\mathbb{Z} \,\,} \ar[r]_-{\iota_*''}
& {\,\,\mathrm{K}_1(C)\,\,} \ar[r]^-{}
& {\,\,0\,\,},}
$$
such that $\mathrm{KK}(\lambda)=\gamma$.

Case.\,1.\, If $\lambda_1\eta_1>0$, we push all the spectrum points of $\psi_{(0,1)}$ to 0 and get
a new homomorphism $\psi_{(0,1)}'$ ($\psi_{(0,1)}'\sim_h\psi_{(0,1)}$), inducing a diagram $\zeta\in C^+(A,B)$.
$$
\xymatrixcolsep{2pc}
\xymatrix{
{\,\,0\,\,} \ar[r]^-{}
& {\,\,\mathrm{K}_0(A)\,\,} \ar[d]_-{} \ar[r]^-{\pi_*}
& {\,\,\mathrm{K}_0(F_1)\,\,} \ar[d]_-{\zeta_{0}} \ar[r]^-{\alpha-\beta}
& {\,\,\mathbb{Z}\,\,} \ar[d]_-{0} \ar[r]^-{\iota_*}
& {\,\,\mathrm{K}_1(A)\,\,} \ar[d]_-{} \ar[r]^-{}
& {\,\,0\,\,}\\
{\,\,0\,\,} \ar[r]^-{}
& {\,\,\mathrm{K}_0(B)\,\,} \ar[r]_-{\pi_*'}
& {\,\,\mathrm{K}_0(F_1') \,\,} \ar[r]_-{\alpha'-\beta'}
& {\,\,\mathbb{Z} \,\,} \ar[r]_-{\iota_*'}
& {\,\,\mathrm{K}_1(B)\,\,} \ar[r]^-{}
& {\,\,0\,\,}}
$$
Recall that by \ref{A^0}, $\alpha'-\beta'=(a_1',a_2,'\cdots,a_{r'}',-b_1',-b_2,'\cdots,-b_{l'}',0,0,\cdots,0).$
Note that $\lambda_1\neq0$, $(\alpha'-\beta')\lambda_0=\lambda_1(\alpha''-\beta'')$.
Then, the ${i'}^{\rm th}$ row of $\lambda_0$ is not $(0,0,\cdots,0)^{\rm T}$ for all $1\leq i'\leq r'+l'$.
Assume that $\lambda_0$ has the form of $(\Lambda,0_{p''\times t})$,
where $\Lambda$ is a matrix without $(0,0,\cdots,0)^{\rm T}$ rows,
$0\leq t\leq p'-r'-l'$.
That is, all the $(0,0,\cdots,0)^{\rm T}$ of $\lambda_0$ are in the last $t$ rows. Then denote
$$
E=\{e\in \mathrm{K}_0^+(B)\mid e=\sum_{i'=p'-t+1}^{p'}m_{i'}e_{i'},\, m_i\in \mathbb{N}\cup\{0\}\},
$$
where $e_{i'}=(0,0,\cdots,0,1,0,\cdots,0)$, 1 is in the ${i'}^{\rm th}$ entry.

If $[\psi_{(0,1)}(1_A)]\in E$, from the assumption, we have $[\psi_{r}(1_A)]\in E$.
Then $[\psi(1_A)]\in E$. As
$$(\gamma_*[(\psi(1_A)])^{\rm T}=\lambda_0 [\psi(1_A)]^{\rm T}=(0,0,\cdots,0)^{\rm T},$$ and
$\mathrm{KK}(\psi)\times \gamma$ is a $\mathrm{KK}$-element in $\mathrm{KK}(A,C)$ preserving Dadarlat-Loring order,
by Lemma \ref{0 existence theorem}, we have $\mathrm{KK}(\psi)\times \gamma=0$.

If $[\psi_{(0,1)}(1_A)]=[\psi'_{(0,1)}(1_A)]\notin E$,  from \ref{decomposion we need}, there is a $g_0\in \mathrm{K}_0^+(B)$ and
$g_0\notin E$ such that
$$
\zeta_0\geq g_0^{\rm T}\alpha\geq 0_{p'\times p}.
$$
And note that $g_0\notin E$ implies
$$
(\gamma_*g_0)^{\rm T}=\lambda_0g_0^{\rm T}\neq(0,0,\cdots,0)^{\rm T}.
$$
Then
$$
\alpha''\lambda_0\zeta_0\geq \alpha''\lambda_0g_0^{\rm T}\alpha.
$$
Since $(\lambda_0g_0^{\rm T})^{\rm T}$ is a non-zero element in $\mathrm{K}_0^+(C)$, we have
$$\alpha''\lambda_0g_0^{\rm T}=\beta''\lambda_0g_0^{\rm T}\geq1,$$
which implies
$$
\alpha''\lambda_0\zeta_0\geq \alpha''\lambda_0g_0^{\rm T}\alpha\geq\alpha.
$$
On one hand, we have considered the $\mathrm{KK}$-element $\mathrm{KK}(\psi_{(0,1)})\times \gamma\in \mathrm{KK}(A,C)$ ,
and on the other hand we should consider $\mathrm{KK}(\psi_r)\times \gamma$. Let us concern the diagram $\eta\times\lambda\in C^+(A,C)$:
$$
\xymatrixcolsep{2pc}
\xymatrix{
{\,\,0\,\,} \ar[r]^-{}
& {\,\,\mathrm{K}_0(A)\,\,} \ar[d]_-{} \ar[r]^-{\pi_*}
& {\,\,\mathrm{K}_0(F_1)\,\,} \ar[d]_-{\lambda_0\eta_{0}} \ar[r]^-{\alpha-\beta}
& {\,\,\mathbb{Z}\,\,} \ar[d]_-{\lambda_1\eta_1} \ar[r]^-{\iota_*}
& {\,\,\mathrm{K}_1(A)\,\,} \ar[d]_-{} \ar[r]^-{}
& {\,\,0\,\,}\\
{\,\,0\,\,} \ar[r]^-{}
& {\,\,\mathrm{K}_0(C)\,\,} \ar[r]_-{\pi_*''}
& {\,\,\mathrm{K}_0(F_1'') \,\,} \ar[r]_-{\alpha''-\beta''}
& {\,\,\mathbb{Z} \,\,} \ar[r]_-{\iota_*''}
& {\,\,\mathrm{K}_1(C)\,\,} \ar[r]^-{}
& {\,\,0\,\,}.}
$$
As
$$
(\alpha''-\beta'')\lambda_0\eta_0=\lambda_1\eta_1(\alpha-\beta),
$$
we have
$$
\alpha''\lambda_0\eta_0-\lambda_1\eta_1(\alpha-\beta)=\beta''\lambda_0\eta_0\geq0.
$$
Note that
$$\alpha-\beta=(a_1,a_2,\cdots ,a_r,-b_1,-b_2,\cdots ,-b_l,0,0,\cdots ,0) \quad{\rm (see\, \,\ref{A^0})}.$$
This implies
$$
\alpha''\lambda_0\eta_0-j(\alpha-\beta)\geq0,\quad j=0,1,2,\cdots,\lambda_1\eta_1.
$$
Now the commutative diagram $(\zeta+\eta)\times\lambda\in C^+(A,C)$
$$
\xymatrixcolsep{2pc}
\xymatrix{
{\,\,0\,\,} \ar[r]^-{}
& {\,\,\mathrm{K}_0(A)\,\,} \ar[d]_-{} \ar[r]^-{\pi_*}
& {\,\,\mathrm{K}_0(F_1)\,\,} \ar[d]_-{\lambda_0(\zeta_0+\eta_{0})} \ar[r]^-{\alpha-\beta}
& {\,\,\mathbb{Z}\,\,} \ar[d]_-{\lambda_1\eta_1} \ar[r]^-{\iota_*}
& {\,\,\mathrm{K}_1(A)\,\,} \ar[d]_-{} \ar[r]^-{}
& {\,\,0\,\,}\\
{\,\,0\,\,} \ar[r]^-{}
& {\,\,\mathrm{K}_0(C)\,\,} \ar[r]_-{\pi_*''}
& {\,\,\mathrm{K}_0(F_1'') \,\,} \ar[r]_-{\alpha''-\beta''}
& {\,\,\mathbb{Z} \,\,} \ar[r]_-{\iota_*''}
& {\,\,\mathrm{K}_1(C)\,\,} \ar[r]^-{}
& {\,\,0\,\,}}
$$
inducing the $\mathrm{KK}$-element $\mathrm{KK}(\psi)\times \gamma$,
satisfies the condition (1) in Lemma \ref{D^0lift lemma}, then can be lifted as a homomorphism.

Case.\,2.\,If $\lambda_1\eta_1<0$, we push all the spectrum points of $\psi_{(0,1)}$ to 1 (not 0) and get
a new homomorphism. The rest calculation is as same as we have done in Case.\,1 and we will use the condition (3) in Lemma \ref{D^0lift lemma} at last.

Case.\,3.\,If $\lambda_1\eta_1=0$, we push all the spectrum points of $\psi_{(0,1)}$ to 0 or 1 and get a commutative diagram $\zeta$. It is very easy to lift the commutative diagram $(\zeta+\eta)\times\lambda\in C^+(A,C)$ as a homomorphism with finite dimensional image (by Lemma \ref{SUFF yuan}).

In summary, the $\mathrm{KK}$-element $\mathrm{KK}(\psi)\times \gamma$ can be lifted.
\end{proof}
Then we have the following result about shape theory:
\begin{thrm}\label{exist0}
Let $A=\underrightarrow{\lim}(A_n,\phi_{n,m})$, $B=\underrightarrow{\lim}(B_n,\psi_{n,m})$ be two $A\mathcal{D}$ algebras with real rank zero. If we also have
$$(\underline{\mathrm{K}}(A),\underline{\mathrm{K}}^+(A),\Sigma(A))
\cong
(\underline{\mathrm{K}}(B),\underline{\mathrm{K}}^+(B),\Sigma(B)),$$
then $A$ and $B$ are weakly shape equivalent.
\end{thrm}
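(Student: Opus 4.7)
The plan is a standard one-sided (or back-and-forth) intertwining argument that produces homomorphisms $\xi_i:A_{k_i}\to B_{l_i}$ and $\eta_i:B_{l_i}\to A_{k_{i+1}}$ satisfying $\mathrm{KK}(\eta_i\circ\xi_i)=\mathrm{KK}(\phi_{k_i,k_{i+1}})$ and $\mathrm{KK}(\xi_{i+1}\circ\eta_i)=\mathrm{KK}(\psi_{l_i,l_{i+1}})$, which is precisely the weak (i.e., $\mathrm{KK}$-) shape equivalence we want. The construction proceeds in one direction, the reverse direction being symmetric.

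First, to obtain $\xi_n:A_{k_n}\to B_{l_n}$, use that $\underline{\mathrm{K}}(A_{k_n})$ is finitely generated as a $\Lambda$-module (Proposition \ref{dg finite}), so the composite $\rho\circ(\phi_{k_n,\infty})_*:\underline{\mathrm{K}}(A_{k_n})\to\underline{\mathrm{K}}(B)$ is determined by its values on finitely many generators. Since $\underline{\mathrm{K}}(B)=\varinjlim\underline{\mathrm{K}}(B_m)$, for some large enough $l_n$ this composite factors as $(\psi_{l_n,\infty})_*\circ\sigma_n$ for some $\sigma_n\in{\bf Hom}_\Lambda(\underline{\mathrm{K}}(A_{k_n}),\underline{\mathrm{K}}(B_{l_n}))$. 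By the UCT (Theorem \ref{UCT}), $\sigma_n$ is realized by a $\mathrm{KK}$-element $\gamma_n\in\mathrm{KK}(A_{k_n},B_{l_n})$. Applying Theorem \ref{d_0 F lift}, choose the finite subset $F\subset\underline{\mathrm{K}}^+(A_{k_n})$; since $\rho$ sends $F$ into $\underline{\mathrm{K}}^+(B)=\bigcup_m\underline{\mathrm{K}}^+(B_m)$, by enlarging $l_n$ we may assume $(\sigma_n)_*(F)\subset\underline{\mathrm{K}}^+(B_{l_n})$, whence $\gamma_n$ preserves Dadarlat-Loring order.

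The crucial step—and the main obstacle—is to lift $\gamma_n$ (possibly after further telescoping) to an actual homomorphism. At the level of building blocks this is false in general by the counterexample in Remark \ref{counter example}; real rank zero is the ingredient that saves us. Given a finite set $F\subset A_{k_n}$ and $\varepsilon>0$, apply Corollary \ref{deccor} with a sufficiently large $L$ to find $m\geq k_n$ and a decomposition
\[
\phi_{k_n,m}(f)\approx_{5\varepsilon}\lambda(f)\oplus\nu(f)\oplus\rho(f),\quad f\in F,
\]
with $L\cdot[\lambda(1)]\leq[\nu(e)]$ for every nonzero projection $e\in A_{k_n}$, and with $\nu,\rho$ of the special spectral form required in Theorem \ref{composed existence}. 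Now consider the $\mathrm{KK}$-element $\mathrm{KK}(\phi_{k_n,m})\times\gamma_m\in\mathrm{KK}(A_{k_n},B_{l_m})$ (after first adjusting $l_m$ so that $\gamma_m$ is defined and order-preserving as in the previous paragraph). On each minimal-block summand Theorem \ref{composed existence} applies, because the dominating inequality on $[\nu(1)]$ exactly supplies the hypothesis $[\psi_{(0,1)}(1)]\geq[\psi_r(1)]$ there; hence the composite lifts to a genuine homomorphism. Lemma \ref{ccp to homo} together with a c.c.p.\ perturbation produces an honest $\xi_n:A_{k_n}\to B_{l_m}$, approximately agreeing with this lift on $F$ and inducing the prescribed $\mathrm{KK}$-class.

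Finally, interleave the construction: having built $\xi_n:A_{k_n}\to B_{l_n}$, run the same argument with roles reversed (using $\rho^{-1}$ and the finite set from Theorem \ref{d_0 F lift} applied to $B_{l_n}$) to build $\eta_n:B_{l_n}\to A_{k_{n+1}}$ with $\mathrm{KK}(\eta_n\circ\xi_n)=\mathrm{KK}(\phi_{k_n,k_{n+1}})$; then repeat on the $B$-side to obtain $\xi_{n+1}$ with $\mathrm{KK}(\xi_{n+1}\circ\eta_n)=\mathrm{KK}(\psi_{l_n,l_{n+1}})$. The scale condition $\Sigma(A)\cong\Sigma(B)$ is used to arrange that the lifts can be chosen compatibly with the units (via Remark \ref{homomorphismunital}). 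The only genuinely delicate point is the passage from an order-preserving $\mathrm{KK}$-element to a homomorphism, which is resolved by combining the decomposition result of Section 4 with the composed existence theorem; everything else is standard $\mathrm{KK}$-shape bookkeeping.
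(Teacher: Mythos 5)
Your proof follows the same overall strategy as the paper: construct an order-preserving $\mathrm{KK}$-element at the $\underline{\mathrm{K}}$-level using Proposition \ref{dg finite}, the UCT (Theorem \ref{UCT}) and Theorem \ref{d_0 F lift}, decompose the connecting map by Corollary \ref{deccor}, lift the composite via Theorem \ref{composed existence} with Remark \ref{homomorphismunital} controlling the unit, and iterate in a back-and-forth intertwining. The only superfluous feature is the initial construction of $\gamma_n$ at $A_{k_n}$---what is actually paired with the decomposed connecting map $\phi_{k_n,m}$ is the order-preserving $\mathrm{KK}$-element at the intermediate stage $A_m$, exactly as in the paper's proof---but otherwise the argument matches.
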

\begin{proof}
Denote $\rho:\,\underline{\mathrm{K}}(A)\to\underline{\mathrm{K}}(B)$ the above graded isomorphism. Let $\varrho=\rho^{-1}$, $\phi_n:\,A_n\to A$ and $\psi_n:\,B_n\to B$ be the obvious maps.

We construct a commutative diagram
$$
\xymatrixcolsep{3pc}
\xymatrix{
{\,\,\underline{\mathrm{K}}(A_{r_1})\,\,} \ar[d]_-{\rho_1} \ar[r]^-{\phi_{r_1,r_2*}}
& {\,\,\underline{\mathrm{K}}(A_{r_2})\,\,} \ar[d]_-{\rho_2} \ar[r]^-{}
& {\,\,\cdots\,\,} \ar[r]^-{}
& {\,\,\underline{\mathrm{K}}(A)\,\,}\ar[d]_-{\rho}
 \\
{\,\,\underline{\mathrm{K}}(B_{s_1})\,\,} \ar[r]_-{\psi_{s_1,s_2*}} \ar[ru]^-{\varrho_1}
& {\,\,\underline{\mathrm{K}}(B_{s_2}) \,\,} \ar[r]_-{} \ar[ru]^-{\varrho_2}
& {\,\,\cdots \,\,} \ar[r]_-{}
& {\,\,\underline{\mathrm{K}}(B)\,\,}}
$$
where $\rho_n,\,\varrho_n$ are liftable to $*$-homomorphisms $\xi_n:\,A_{r_n}\to B_{s_n}$
and $\psi_n:\,B_{s_n}\to A_{r_{n+1}}$.
The construction is done inductively. We may assume that $A_{r_1}=B_{s_1}={0}$ hence take $\rho=\varrho=0$.
Assume now that $\rho_i$ and $\varrho_i$ have been constructed for all $i\leq n-1$.
For $C^*$-algebra $A_{r_n}$, by Corollary \ref{deccor}, there is a integer $m$,
such that the homomorphism $\phi_{r_n,m}$: $A_{r_n}\to A_{m}$, satisfies the conditions (1) and (2) in the corollary.
let $F\subset \underline{\mathrm{K}}^+(A_{m})$ be provided by Theorem \ref{d_0 F lift}.
Since , by Proposition \ref{dg finite}, the $\Lambda$-module $\underline{\mathrm{K}}(A_m)$ is finitely generated,
there is $k\geq s_{n-1}$ and there is a $\xi\in{\bf Hom}_\Lambda(\underline{\mathrm{K}}(A_m),\underline{\mathrm{K}}(B_k))$ such that
$$
\psi_{k*}\xi=\rho\phi_{m*},\quad
\xi\varrho_{n-1}=\psi_{s_{n-1},k *},\quad
\xi(F)\subset\underline{\mathrm{K}}^+(B_k)
$$
and
$$
\xi[1_{A_{m}}]\leq[1_{B_k}].
$$
Then by Theorem \ref{composed existence} and Remark \ref{homomorphismunital},
$\mathrm{KK}(\phi_{r_n,m})\times\xi$ can be lifted as a $*$-homomorphism from $A_{r_n}$ to $B_{s_n}$.
We conclude the construction by setting $k=s_n$ and $\rho_n=\mathrm{KK}(\phi_{r_n,m})\times\xi$.
It is clear that $\rho_n\varrho_{n-1}=\psi_{s_n,s_{n+1}*}$, $\rho\phi_{r_n *}=\psi_{s_n *}\rho_n$.
Let $\xi_n:\,A_{r_n}\to B_{s_n}$ be a $*$-homomorphism implementing $\rho_n$.
The construction of $\varrho_n$ is similar.
Recall that by Theorem \ref{UCT} we identify ${\bf Hom}_\Lambda(\underline{\mathrm{K}}(A_i),\underline{\mathrm{K}}(B_j))$
with $\mathrm{KK}(A_i,B_j)$.
\end{proof}

 \section{Uniqueness results}

 In this section, we prove two kinds of uniqueness theorem under different conditions.
 One condition is that the $K_1$ of the basic block is $\mathbb{Z}$, and the other is that the basic block has torsion $K_1$ (this case contains $K_1=0$). The main results  are Theorem \ref{freeuqe} and Theorem \ref{toruqe}.
\begin{lem} \label{unicase}
  Let $A\in\mathcal{D}$ be minimal with $K_1(A)=\mathbb{Z}$, $F\subset A$ be a finite set and $\varepsilon>0$. Let $\phi,\psi:A\rightarrow B$ be homomorphisms with finite dimensional ranges and $[\phi(e)]\geq [\psi(e)]$ holds for all projection $e\in A$, then there exist a unitary $u\in B$ and a homomorphism $\tau$ which can be factored through $F_1$ such that
  $$
  \|u\phi(f)u^*-\psi(f)\oplus\tau(f)\|<\varepsilon+2\,\omega(F),\quad\forall\, f\in F.
  $$
  \end{lem}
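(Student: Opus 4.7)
The plan is to reduce the statement to Lemma~\ref{simcase} by collapsing the $(0,1)$-part of the spectra of $\phi$ and $\psi$ into $Sp(F_1)$, and to use the weak variation $\omega(F)$ to absorb the perturbation this introduces. The starting observation is that, since $A$ is a minimal block in $\mathcal{D}$, we have $l=1$, so $\mathrm{K}_1(A)=\mathbb{Z}/{\rm Im}(\alpha-\beta)$, and the hypothesis $\mathrm{K}_1(A)=\mathbb{Z}$ forces $\alpha=\beta$. Under this equality, for every $x\in[0,1]$ the point evaluation $\mathrm{ev}_x:A\to M_n(\mathbb{C})$, $(f,a)\mapsto f(x)$, is homotopic through $t\mapsto\mathrm{ev}_{tx}$ to $\mathrm{ev}_0=\varphi_0\circ\pi_e$, so they share a $\mathrm{KK}$-class.

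Next I will write $\phi(f,a)=\sum_k f(x_k)\otimes p_k$ with $\{p_k\}$ mutually orthogonal projections in $B$ and $x_k\in Sp(A)$, and split $\phi=\phi_{F_1}\oplus\phi_{(0,1)}$ according to whether $x_k\in Sp(F_1)$ or $x_k\in(0,1)$. The collapsed map $\phi'(f,a):=\phi_{F_1}(f,a)+\sum_{x_k\in(0,1)}\varphi_0(a)\otimes p_k$ factors through $F_1$; the same recipe produces $\psi'$. For each $k$ with $x_k\in(0,1)$, the two block maps $A\to M_n(\mathbb{C})$ given by $(f,a)\mapsto f(x_k)$ and $(f,a)\mapsto\varphi_0(a)$ have the same $\mathrm{KK}$-class by the opening observation, so the definition of $\omega(F)$ supplies a unitary $u_k\in U(n)$ with $\max_{f\in F}\|u_k f(x_k) u_k^*-\varphi_0(a)\|<\omega(F)+\varepsilon/4$. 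Assembling the blocks into $U_1:=\bigl(\bigoplus_k u_k\otimes p_k\bigr)\oplus(1-\phi_{(0,1)}(1))\in U(B)$ gives $\|U_1\phi(f)U_1^*-\phi'(f)\|<\omega(F)+\varepsilon/4$ for all $f\in F$, and the same procedure, performed after a preliminary conjugation so that $\psi(1)=\psi'(1)$, yields $U_2\in U(B)$ supported in $\psi(1)B\psi(1)$ with $\|U_2\psi(f)U_2^*-\psi'(f)\|<\omega(F)+\varepsilon/4$.

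At that stage $\phi'$ and $\psi'$ both factor through $F_1$, and the inequality $[\phi(e)]\geq[\psi(e)]$ on projections $e\in A$ transfers to the corresponding inequality on $F_1$-projections: the primed maps induce the same $\mathrm{K}_0$-maps as the un-primed ones, and the equality $\alpha=\beta$ lets every projection of $F_1$ be lifted to a projection of $A$. Lemma~\ref{simcase} then produces a unitary $v\in B$ and a homomorphism $\tau:A\to B$ factoring through $F_1$ with $v\phi'(f)v^*=\psi'(f)+\tau(f)$ and $\tau(1)\perp\psi'(1)$. Setting $u:=U_2^* v U_1$ and $\tau'(f):=U_2^*\tau(f)U_2$, the homomorphism $\tau'$ still factors through $F_1$; because $U_2$ is the identity outside $\psi(1)B\psi(1)$ and $\tau(1)\perp\psi'(1)=\psi(1)$, we have $\tau'(1)=\tau(1)\perp\psi(1)$, so $\psi\oplus\tau'$ makes sense. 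Combining the three approximations via the triangle inequality yields the required estimate $\|u\phi(f)u^*-\psi(f)\oplus\tau'(f)\|<2\omega(F)+\varepsilon$.

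The step I expect to require the most care is synchronising the two orthogonality conditions: Lemma~\ref{simcase} delivers $\tau$ orthogonal to $\psi'$, whereas the conclusion demands $\tau'$ orthogonal to $\psi$; this is why the preparatory conjugation $\psi(1)=\psi'(1)$ and the choice of $U_2$ with support in the corner $\psi(1)B\psi(1)$ are indispensable. A subsidiary subtlety is that weak variation is defined for target $M_r(\mathbb{C})$, so it must be applied separately on each matrix summand of the finite-dimensional range of $\phi_{(0,1)}$ (and of $\psi_{(0,1)}$) and the block unitaries then extended by the identity on the orthogonal complement inside $B$.
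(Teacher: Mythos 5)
Your proposal is correct and follows essentially the same route as the paper: since $K_1(A)=\mathbb{Z}$ forces $\alpha=\beta$, you push the $(0,1)$-spectral points of $\phi$ and $\psi$ to the endpoint $0$, absorb the resulting perturbation into $\omega(F)$ (the paper applies the weak variation to the whole finite-dimensional maps at once rather than block-by-block, but this is an inessential stylistic difference), transfer the $K_0$-order condition through the identification $K_0(A)\cong K_0(F_1)$, invoke Lemma~\ref{simcase}, and conclude by the triangle inequality. The extra care you take with orthogonality and the "preliminary conjugation" is harmless but unnecessary, since $\psi(1_A)=\psi'(1_A)$ holds automatically when you push spectral points.
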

\begin{proof}
  There exist unitaries $v_1,v_2\in B$ such that $\phi,\psi$ are of the following form
  $$
  \phi(f,a)=v_1^*{\rm diag}\big( a(\theta_1)^{\sim t_1},\cdots,a(\theta_p)^{\sim t_p},f(x_1),f(x_2),\cdots,f(x_{\bullet})\big)v_1,
  $$
  $$
  \psi(f,a)=v_1^*{\rm diag}\big( a(\theta_1)^{\sim s_1},\cdots,a(\theta_p)^{\sim s_p},f(y_1),f(y_2),\cdots,f(y_{\bullet\bullet})\big)v_2,
  $$
  where $x_1,\cdots, x_{\bullet},y_1,\cdots,y_{\bullet\bullet}\in (0,1)\subset Sp(A)$.

  Now we change all the points $x_1,\cdots,x_{\bullet},y_1,\cdots,y_{\bullet\bullet}$ to $0$, then we obtain
  two homomorphisms $\phi',\psi'$ with finite dimensional ranges and factor through $F_1$, obviously, $\phi'$ is homotopy
  to $\phi$ and $\psi'$ is homotopy to $\psi$, by the definition of $\omega(F)$, we have
  $$
  \|\phi(f)-w_1^*\phi'(f)w_1\|\leq\omega(F),\quad \|\psi(f)-w_2^*\psi'(f)w_2\|\leq\omega(F),\,\,\forall\, f\in F.
  $$

  Since $\phi'$ and $\psi'$ factor through $F_1$, there exist homomorphisms $\phi'',\psi'':F_1\rightarrow B$ such that $\phi'=\phi''\circ\pi_e$ and $\psi'=\psi''\circ\pi_e$.

Recall that one has the six-term exact sequence
$$
0\to \mathrm{K}_0(A)\xrightarrow{\pi_*} \mathrm{K}_0(F_1)\xrightarrow{\alpha-\beta} \mathrm{K}_0(F_2)\xrightarrow{\iota_*} \mathrm{K}_1(A)\to 0,
$$
Since $K_0(F_2)=\mathbb{Z}$ and $K_1(A)=\mathbb{Z}$, then $Im(\alpha-\beta)=0$ and $K_0(A)=K_0(F_1)$. Recall that $[\phi(e)]\geq [\psi(e)]$ holds for all projection $e\in A$, then $[\phi''(e)]\geq [\psi''(e)]$ holds for any projection $e\in F_1$, by Lemma \ref{simcase}, there exist a unitary $u\in B$ and a homomorphism $\tau:A\rightarrow B$ which can be factored through $F_1$ such that
$$
\|u\phi(f)u^*-\psi(f)\oplus\tau(f)\|<\varepsilon+2\,\omega(F)$$
holds for all $f\in F$.
\end{proof}
We should point out that if two homomorphisms between Elliott-Thomsen algebras determining
the same $\mathrm{KK}$-class, sometimes they are not homotopic to each other, but after adding another homomorphism, they are homotopic to each other. We present an example here.
\begin{example}\label{stable hom}
Let $F_1=\mathbb{C}\oplus\mathbb{C}$, $F_2=M_2(\mathbb{C})$,
$$\varphi_0(a\oplus b)=\bigg(
\begin{array}{cc}
a& \\[1.5 mm]
 &a
\end{array}\bigg),
\quad
\varphi_1(a\oplus b)=\bigg(
\begin{array}{cc}
b& \\[1.5 mm]
 &a
\end{array}\bigg),
$$
$B=\mathbb{C}$,
and $A=A(F_1,F_2,\varphi_0,\varphi_1)$,
define two homomorphisms $\delta_1$, $\delta_2:\,A\to B$:
$$
\delta_1 (f,a\oplus b)=a,\quad\forall\, (f,a\oplus b)\in A,
$$
and
$$
\delta_2 (f,a\oplus b)=b,\quad\forall\,(f,a\oplus b)\in A.
$$
Then  $\delta_1$, $\delta_2$ induce the two diagrams
$$
\xymatrixcolsep{3pc}
\xymatrix{
{\,\,\mathbb{Z}\oplus\mathbb{Z}\,\,} \ar[d]_-{(1,0)} \ar[r]^-{(1,-1)}
& {\,\,\mathbb{Z}\,\,} \ar[d]^-{0} \\
{\,\,\mathbb{Z} \,\,} \ar[r]_-{0}
& {\,\,0 \,\,}}
$$
and
$$
\xymatrixcolsep{3pc}
\xymatrix{
{\,\,\mathbb{Z}\oplus\mathbb{Z}\,\,} \ar[d]_-{(0,1)} \ar[r]^-{(1,-1)}
& {\,\,\mathbb{Z}\,\,} \ar[d]^-{0} \\
{\,\,\mathbb{Z} \,\,} \ar[r]_-{0}
& {\,\,0 \,\,}.}
$$
At the same time, we have
$$
\delta_1\nsim_h \delta_2
\quad
{\rm{but}}
\quad
\delta_1\oplus\delta_1 \sim_h \delta_2\oplus\delta_1,
$$
and the homotopy path is just
$$
F_s(f,a\oplus b)=f(s),\quad \forall (f,a\oplus b)\in A,~~s\in[0,1].
$$
Denoting $S\delta_1,\,S\delta_2$ by the homomorphisms from suspension algebra $SA$ to suspension algebra $SB$ induced by $\delta_1,\, \delta_2$,
we can also get $S\delta_1\sim_h S\delta_2$.
\end{example}
As shown in the example above, we list the following result.
\begin{lem}[Theorem 1.6 in \cite{AELZ:2019}] \label{kkhom}
  Let $A,B\in\mathcal{D}$ be minimal, $\phi,\psi : A\rightarrow  B $ be two unital homomorphisms with $KK(\phi)=KK(\psi)$, then there exist a positive number $m$ and a homomorphism $\eta:A\rightarrow M_m(B)$ with finite dimensional range such that $\phi\oplus\eta\sim_h\psi\oplus\eta$.
\end{lem}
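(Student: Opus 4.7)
My plan is to read off the obstruction $\mathrm{KK}(\phi)-\mathrm{KK}(\psi)=0$ using Theorem~\ref{CM}, then build $\eta$ with finite-dimensional image whose block structure reflects this obstruction, and finally realize the homotopy by running the path $F_s(f,a)=f(s)$ of Example~\ref{stable hom} simultaneously in several corners of $M_m(B)$.

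By Theorem~\ref{CM} the diagrams $\lambda_\phi,\lambda_\psi\in C(A,B)$ induced by $\phi,\psi$ differ by an element of $M(A,B)$. Because $A$ is minimal with $F_2=M_n(\mathbb{C})$, so $\mathrm{K}_1(SF_2)=\mathbb{Z}$, this element is specified by a single vector $\mu\in\mathrm{K}_0(F_1')=\mathbb{Z}^{p'}$, with $(\lambda_\phi)_0-(\lambda_\psi)_0=\mu\cdot(\alpha-\beta)$ and $\mathrm{K}_1$-difference $(\alpha'-\beta')\mu$. I would then take $\eta:A\to M_m(B)$ to be a direct sum of copies of the two endpoint-evaluation homomorphisms $\varphi_0\circ\pi_e,\varphi_1\circ\pi_e:A\to M_n(\mathbb{C})$, embedded into corners of $M_m(B)$ corresponding to the blocks of $F_1'$: for each $i'$ with $\mu_{i'}>0$, at least $\mu_{i'}$ copies of $\varphi_0\circ\pi_e$ sit in the $i'$-block, and symmetrically for $\mu_{i'}<0$ with $\varphi_1\circ\pi_e$. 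Each summand factors through $F_1$, so $\eta$ has finite-dimensional image.

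The homotopy is then assembled by running $F_s$ in each reservoir slot: since $F_0=\varphi_0\circ\pi_e$ and $F_1=\varphi_1\circ\pi_e$, one slide trades a $\varphi_0$-copy for a $\varphi_1$-copy, changing spectral data by $\beta-\alpha$ at every fiber of $B$ at once. Performing $|\mu_{i'}|$ such slides in each $i'$-block (with direction determined by the sign of $\mu_{i'}$) produces a continuous path $\rho_s:A\to M_{m+1}(B)$ with $\rho_0=\phi\oplus\eta$ and $\rho_1$ sharing the same spectral data and $\mathrm{KK}$-class as $\psi\oplus\eta$. Since $\rho_1$ and $\psi\oplus\eta$ now differ only in parts that factor through $F_1$, Lemma~\ref{simcase0} supplies a unitary intertwiner which can be joined to the identity in $M_{m+1}(B)$ to close the homotopy.

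The main obstacle is keeping these slides globally continuous as a single path of homomorphisms into $M_{m+1}(B)$, rather than being merely fiberwise continuous: the $\varphi_0',\varphi_1'$-constraints at endpoints of intervals in $Sp(B)$ couple the various fibers, so the reservoir inside $\eta$ has to be laid out in the $F_1'$-block structure in precisely the way dictated by the signs of $\mu$. This is a bookkeeping matter rather than a conceptual difficulty; the key ingredients remain the Example~\ref{stable hom} device together with Lemma~\ref{simcase0}.
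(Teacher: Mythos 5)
The paper does not prove this lemma; it cites it as Theorem 1.6 of \cite{AELZ:2019}, so there is no in-text argument to compare against. Assessing the proposal on its own terms, there is a genuine gap that the closing paragraph mislabels as mere bookkeeping. The reservoir slides only modify the $\eta$-summand: after all slides, $\rho_1=\phi\oplus\eta'$ with $\eta'$ factoring through $F_1$, so for every interior fibre $t\in(0,1)\subset Sp(B)$ the set $Sp(\pi_t\circ\rho_1)\cap(0,1)$ equals $Sp(\pi_t\circ\phi)\cap(0,1)$, whereas for $\psi\oplus\eta$ it equals $Sp(\pi_t\circ\psi)\cap(0,1)$. The hypothesis $\mathrm{KK}(\phi)=\mathrm{KK}(\psi)$ does not force these interior fibre spectra to agree (it only constrains the $F_1'$-level data modulo $M(A,B)$ and the fibrewise $\mathrm{KK}$-class), so $\rho_1$ and $\psi\oplus\eta$ need not be unitarily equivalent, and the claim that they ``differ only in parts that factor through $F_1$'' — the step that would let you invoke Lemma~\ref{simcase0} — is not established. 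Reconciling the interior spectra is precisely where the $\varphi_0',\varphi_1'$ coupling between fibres becomes a serious obstruction; it requires homotoping the $\phi$ summand itself, not just $\eta$, and this is the actual content of the theorem (in the paper's later arguments this is handled via semiprojectivity of one- and two-dimensional NCCW complexes, Lemma~\ref{pro h} and Lemma~\ref{crelem}, not by spectral slides alone).

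Two smaller points. First, the prescription ``$\mu_{i'}$ copies of $\varphi_0\circ\pi_e$ in the $i'$-block'' presumes an embedding $M_n(\mathbb{C})\to M_m(B)$ with $[\iota(e_{11})]=e_{i'}$, which requires $e_{i'}\in \mathrm{K}_0(B)=\ker(\alpha'-\beta')$ and generally fails; one must instead observe that the diagonal entries of $(\lambda_\phi)_0-(\lambda_\psi)_0$ lie in $\mathrm{K}_0(B)$ and that $\mathbb{Z}^{p'}/\mathrm{K}_0(B)$ is torsion-free (since $F_2'$ is a single matrix block), whence $\mu\in\mathrm{K}_0(B)$, and then decompose $\mu$ as a difference of positive classes of $B$ — this is a fixable but non-trivial correction to the block-by-block layout you describe. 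Second, Lemma~\ref{simcase0} produces a unitary $u$, not a path from $1$ to $u$; whether $u$ (or $u\oplus1$ in $M_{m+2}(B)$) lies in $U_0$ depends on the class of $u$ in $\mathrm{K}_1(B)$ and needs an argument, e.g.\ that $\eta$ can be chosen large enough to absorb the $\mathrm{K}_1$-obstruction.
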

The following lemmas is a special case of \cite[Theorem 4.8]{AELZ:2019}, for the reader's convenience, we give a short proof here.
\begin{lem}\label{pro h}
  Let $A\in\mathcal{D}$ be minimal with $K_1(A)=\mathbb{Z}$, then for any finite $F \subset A$, $\varepsilon > 0$, there exist $r\in \mathbb{N}$, a homomorphism $\tau :A \rightarrow M_{r-1}(A)$ and a homomorphism $\mu :A \rightarrow M_r(A)$ with finite dimensional image such that
  $$
  \|f\oplus\tau(f)-\mu(f)\|<\varepsilon,\,\,\,\forall f\in F
  $$
\end{lem}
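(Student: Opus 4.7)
The hypothesis $K_1(A)=\mathbb{Z}$ together with $F_2=M_n(\mathbb{C})$ forces $\alpha-\beta=0$ in the six-term exact sequence, so $\varphi_0$ and $\varphi_1$ induce the same map on $K_0(F_1)$ and are therefore unitarily equivalent: there is $u\in U(n)$ with $\varphi_1=u\varphi_0 u^{*}$. My first step is to exploit this by constructing an orientation-reversing endomorphism $\sigma:A\to A$. Choose a continuous path $v:[0,1]\to U(n)$ with $v(0)=u^{*}$ and $v(1)=u$ (possible since $U(n)$ is path-connected), and define $\sigma(f,a)(t)=v(t)f(1-t)v(t)^{*}$. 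The boundary conditions check: $\sigma(f,a)(0)=u^{*}\varphi_1(a)u=\varphi_0(a)$ and $\sigma(f,a)(1)=u\varphi_0(a)u^{*}=\varphi_1(a)$, so $\sigma\in\mathrm{End}(A)$. Since $\sigma$ fixes the $F_1$-component and reverses the orientation of $[0,1]$, one has $\sigma_{*}=\mathrm{id}$ on $K_0(A)$ and $\sigma_{*}=-\mathrm{id}$ on $K_1(A)$.

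Consequently, $\mathrm{id}_A\oplus\sigma:A\to M_2(A)$ induces the zero map on $K_1$. The plan is then to take $\tau=\sigma\oplus\rho$, where $\rho:A\to M_{r-2}(A)$ is a supplementary homomorphism with finite-dimensional image (essentially point-evaluations at suitably chosen spectral points), added with enough multiplicity to arrange that the $\mathrm{KK}$-class of $\mathrm{id}_A\oplus\tau$ agrees with that of an honest homomorphism $\mu:A\to M_r(A)$ having finite-dimensional image. For the construction of $\mu$, I partition $[0,1]$ into $m$ subintervals chosen fine enough that every $f\in F$ oscillates by less than $\varepsilon/4$ on each piece, then define $\mu(f,a)=\sum_{i=1}^{m}P_i\cdot f(x_i)\cdot P_i^{*}$, where $x_i$ are midpoints and $\{P_i\}\subset M_r(A)$ is a partition of unity by projections concentrated near $[t_{i-1},t_i]$ and compatible with the boundary conditions of $A$.

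Once the correct target $\mu$ is identified at the $K$-theoretic level, the existence theorem (Theorem \ref{composed existence}) combined with the semiprojectivity statements (Theorem \ref{nccw semi} and Lemma \ref{ccp to homo}) upgrades the $K$-theoretic matching to a norm-approximation on $F$. Concretely, the near-commutativity of $f\oplus\tau(f)$ with the projections $P_i$ (coming from the oscillation bound on $F$ and the pointwise closeness of $f(t)$ to $f(x_i)$ on $[t_{i-1},t_i]$) produces a c.c.p. map approximately multiplicative on a sufficiently large finite subset, which then lifts via Lemma \ref{ccp to homo} to an honest homomorphism within $\varepsilon$ of $\mathrm{id}_A\oplus\tau$ on $F$.

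The main obstacle is the explicit construction of the projection partition of unity $\{P_i\}$ in $M_r(A)$ satisfying both the rank constraints and the boundary conditions at $t=0$ and $t=1$. This is precisely where the trivialization of $K_1$ by $\sigma$ is essential: if the $K_1$-class of $\mathrm{id}_A\oplus\tau$ were nonzero, the resulting winding-number obstruction would prevent the projections from closing up continuously across the endpoints. The supplementary homomorphism $\rho$ is chosen to provide the remaining $K_0$ room so that the $P_i$'s have attainable ranks and all boundary matching can be arranged simultaneously.
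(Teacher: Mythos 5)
Your construction of the orientation-reversing endomorphism $\sigma$ is correct (and is a slightly more general version of the paper's; the paper first reduces to $\varphi_0=\varphi_1$ via Proposition 3.14 in \cite{GLN:2015} and then simply takes $\sigma(f)(t)=f(1-t)$), and you are right that the point of $\sigma$ is to trivialize $K_1$ under $\mathrm{id}\oplus\sigma$. But the step where you promote a $K$-theoretic (or $\mathrm{KK}$-level) match between $\mathrm{id}_A\oplus\tau$ and a finite-dimensional-image homomorphism $\mu$ to a \emph{norm approximation on $F$} is a genuine gap, and none of the tools you invoke actually does that job. Theorem \ref{composed existence} is an existence (lifting) theorem for $\mathrm{KK}$-elements: it produces a homomorphism in a prescribed $\mathrm{KK}$-class, but gives no control over the distance of that homomorphism from $\mathrm{id}_A\oplus\tau$ on a given finite set. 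Lemma \ref{ccp to homo} (semiprojectivity) perturbs an \emph{almost-multiplicative c.c.p.\ map} to a nearby honest homomorphism; it does not by itself supply the approximately multiplicative map, and your sketch of where such a map would come from (``near-commutativity of $f\oplus\tau(f)$ with the projections $P_i$'') presupposes a finite collection of projections in $M_r(A)$ behaving like a fine partition of unity compatible with the boundary identifications of $A$ --- which is essentially the conclusion you are trying to prove, not a starting point. In general, two homomorphisms between Elliott--Thomsen algebras with the same $\mathrm{KK}$-class need not be unitarily equivalent or even stably homotopic in a norm-controlled way without significant additional input (this is exactly why the paper develops the weak-variation and uniqueness machinery in Sections 3 and 7).

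The paper closes this gap by a different device: it builds an auxiliary real rank zero inductive limit $D=\varinjlim\bigl(M_{(l+2)^i}(A),\nu_{i,i+1}\bigr)$ where $\nu_{i,i+1}=\mathrm{id}\oplus\sigma\oplus\mu_i$ for suitably chosen finite-dimensional-image $\mu_i$, arranged so that $D$ has real rank zero. Since $\sigma_*=-\mathrm{id}$ on $K_1$, one gets $K_1(D)=0$, and then Theorem 3.1 of \cite{ELP2:1999} applies: the canonical inclusion of $A=D_0$ into $D$ can be approximated arbitrarily well on $F$ by homomorphisms with finite-dimensional range. Pulling such an approximant back to a finite stage $D_i$ directly yields the desired $\mu$, and $\nu_{0,i}$ is visibly of the form $\mathrm{id}\oplus\tau$. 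This Eilers--Loring--Pedersen fragility theorem is precisely the structural result that turns the $K_1$-triviality into a norm statement, and it has no analogue among the results you cite. To repair your argument, you would need either to invoke \cite{ELP2:1999} (which collapses your proof to the paper's) or to supply an independent proof of the relevant fragility statement.
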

\begin{proof}
  Let $A=A(F_1,M_l(\mathbb{C}),\varphi_0,\varphi_1)$. Since $K_1(A)=\mathbb{Z}$, then $\alpha=\beta$. By \cite[Proposition 3.14]{GLN:2015}, we can assume that $\varphi_0=\varphi_1$. Hence, $f(0)=\varphi_0(a)=\varphi_1(a)=f(1)$ for all $(f,a)\in A$. Let $\sigma: A\rightarrow A$ be defined by $\sigma(f)(t)=f(1-t)$. Then we have
  $$
  id\oplus \sigma\sim_h f(0)\oplus f(1).
  $$

  Let $D$ be a real rank zero $C^*$-algeba constructed as an inductive limit $D=\lim (D_i,\nu_i,j)$, where $D_i=M_{(l+2)^i}(A)$, $i\geq 0$ and $\nu_{i,i+1}(f)=f\oplus \sigma(f)\oplus \mu_i(f)$ for a suitable homomorphism $\mu_i$ with finite dimension range. Since $K_1(D)=0$, it follows from Theorem 3.1 in \cite{ELP2:1999} that the inclusion of $D_0=A$ into $D$ can be approximated arbitrarily well by homomorphisms with finite dimensional range. Then for any $\varepsilon>0$ and finite set $F\subset A$, there exist $i$ and a homomorphism $\mu: A\rightarrow D_i$ with finite dimensional range such that $\|\nu_{0,i}(f)-\mu(f)\|<\varepsilon$ for all $f\in F$. Obviously, $\nu_{0,i}$ is of the form $id\oplus \tau$ for some homomorphism $\tau$. This completes the proof.
\end{proof}
 Then we have the following result from \cite[Lemma 1.4]{D1:1995}(see also \cite[Lemma 1.6.5]{G2:2002}) as a corollary of Lemma \ref{pro h}.
\begin{lem}\label{stabhom}
  Let $A\in \mathcal{D}$ be minimal with $K_1(A)=\mathbb{Z}$. For a finite set $F\subset A$, $\varepsilon>0$ and a positive integer $N$. There are a finite set $G\subset A$, $\delta>0$, and a positive integer $k$ such that the following is true.

  For any block $B\in \mathcal{D}$ and $\phi_0,\phi_1,\cdots,\phi_N$ is a sequence of maps from $A$ to $B$ such that $\phi_j$ is $\delta-$ multiplicative on $G$ for $j=0,1,\cdots,N$, then there exist a homomorphism $\rho:A\rightarrow M_k(B)$ with finite dimensional range and a unitary $u\in M_{k+1}(B)$ such that
  $$
  \|u\big(\phi_0(f)\oplus\rho(f)\big)u^*-\phi_N(f)\oplus\rho(f)\|<\varepsilon+\omega,
  \quad \forall \,f\in F,
  $$
  where
  $$
  \omega=\max_{f\in F}\max_{0\leq j\leq N-1}\|\phi_j(f)-\phi_{j+1}(f)\|.
  $$
\end{lem}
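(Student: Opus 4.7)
The plan is to reduce, via Lemma \ref{pro h}, to comparing two homomorphisms into $M_r(B)$ with finite-dimensional range, and then invoke Lemma \ref{simcase0} to obtain the intertwining unitary from $K_0$-theoretic information. The passage from $\delta$-multiplicative maps to genuine homomorphisms is handled by Lemma \ref{ccp to homo}.

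Concretely, I would apply Lemma \ref{pro h} with finite set $F$ and tolerance $\varepsilon/4$ to produce $r\in\mathbb{N}$, a homomorphism $\tau:A\to M_{r-1}(A)$, and a homomorphism $\mu:A\to M_r(A)$ with finite-dimensional image satisfying $\|f\oplus\tau(f)-\mu(f)\|<\varepsilon/4$ for all $f\in F$. Set $k:=r$, and take $G\supset F$ large enough to contain all matrix entries of $\tau(f)$ and $\mu(f)$ for $f\in F$, together with $\delta>0$ chosen sufficiently small (via Lemma \ref{ccp to homo}) that any $\delta$-multiplicative map $\phi_j$ on $G$ can be approximated on the enlarged set by a genuine homomorphism $\tilde\phi_j:A\to B$, and so that the amplifications $\tilde\phi_j\otimes\mathrm{id}_{M_r}$ behave like honest homomorphisms on $\mu(F)\cup\tau(F)$.

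Define $\rho:=(\tilde\phi_0\otimes\mathrm{id}_{M_r})\circ\mu:A\to M_r(B)$; this has finite-dimensional image because $\mu$ does. For each $j$, set $\Phi_j:=(\tilde\phi_j\otimes\mathrm{id}_{M_r})\circ\mu$. Lemma \ref{pro h} gives $\tilde\phi_j(f)\oplus(\tilde\phi_j\otimes\mathrm{id}_{M_{r-1}})(\tau(f))\approx_{\varepsilon/4}\Phi_j(f)$ on $F$. The consecutive-closeness hypothesis forces $[\tilde\phi_j(p)]=[\tilde\phi_{j+1}(p)]$ in $K_0(B)$ for every projection $p$ in the finite-dimensional subalgebra $\iota(E)\subset M_r(A)$, where $\mu=\iota\circ\mu_0$ factors through the finite-dimensional $E=\mu_0(A)$, once $\delta$ and the universal projection-equivalence threshold are small enough. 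Hence $[\Phi_0(e)]=[\Phi_N(e)]$ in $K_0(M_r(B))$ for every projection $e\in A$, and Lemma \ref{simcase0} (applied after arranging $\mu$ to factor through $F_1$, as can be done by perturbing the spectrum of the finite-dimensional piece to the endpoints) yields a unitary $v\in M_r(B)$ with $v\Phi_0(\cdot)v^*=\Phi_N(\cdot)$.

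Setting $u:=1_B\oplus v\in M_{r+1}(B)$, the desired inequality is obtained by unwinding the approximations: $u(\phi_0\oplus\rho)u^*\approx\phi_0\oplus\Phi_N$ while $\phi_N\oplus\rho=\phi_N\oplus\Phi_0$; after a suitable permutation unitary and using the Lemma \ref{pro h} estimate in both directions, the remaining discrepancy reduces to a single consecutive-step comparison $\|\tilde\phi_{N-1}(f)-\tilde\phi_N(f)\|\le\omega$ plus the accumulated absorption error, giving $\varepsilon+\omega$. The main obstacle is controlling the $\tau$-tail contributions (the $(\tilde\phi_j\otimes\mathrm{id})\circ\tau$ summands introduced by Lemma \ref{pro h}), which would naively accumulate $N\omega$ across the chain; this is resolved by the exact $K_0$-intertwining $v\Phi_0v^*=\Phi_N$, which ensures the $\tau$-contributions align up to a single endpoint difference rather than telescoping through all intermediate $\phi_j$'s.
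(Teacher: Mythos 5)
Your choice $k=r$ with $r$ coming from Lemma~\ref{pro h} cannot work: $r$ is produced from $F$ and $\varepsilon$ alone, so your $k$ is independent of $N$, but the lemma genuinely requires $k$ to grow with $N$. More concretely, trace through your own endgame: with $u=1_B\oplus v$ and $v\Phi_0v^*=\Phi_N$, you are comparing $\phi_0\oplus\Phi_N$ with $\phi_N\oplus\Phi_0$. Expanding $\Phi_j\approx_{\varepsilon/4}\tilde\phi_j\oplus\nu_j$ (with $\nu_j:=(\tilde\phi_j\otimes\mathrm{id}_{M_{r-1}})\circ\tau$) and swapping the first two summands leaves you to bound $\|\nu_0(f)-\nu_N(f)\|$. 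The entries of $\tau(f)\in M_{r-1}(A)$ are elements of $A$ that in general lie outside $F$, so this quantity is controlled neither by $\omega=\max_{f\in F}\max_j\|\phi_j(f)-\phi_{j+1}(f)\|$ nor even by $N\omega$; it can be of order $1$ while $\omega$ is tiny. Your claim that the exact $K_0$-intertwining ``ensures the $\tau$-contributions align up to a single endpoint difference'' is precisely the problem: the endpoint difference is $\tilde\phi_0$ versus $\tilde\phi_N$ evaluated on $\tau(f)$, which is the full $N$-step gap on elements where you have no control, not a single consecutive step.

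The intended argument (Dadarlat \cite[Lemma 1.4]{D1:1995}, Gong \cite[Lemma 1.6.5]{G2:2002}) takes $\rho:=\bigoplus_{j=1}^{N-1}\tilde\mu_j$ with $\tilde\mu_j:=(\tilde\phi_j\otimes\mathrm{id}_{M_r})\circ\mu$, so $k=(N-1)r$, and does not need Lemma~\ref{simcase0} or any $K_0$-intertwining at all. One replaces each $\tilde\mu_j$ by $\tilde\phi_j\oplus\nu_j$ (simultaneous error $\approx\varepsilon'$, since the estimates are in orthogonal blocks), permutes to group the $\tilde\phi_j$'s together as $\bigoplus_{j=0}^{N-1}\tilde\phi_j$, performs the \emph{shift} $\bigoplus_{j=0}^{N-1}\tilde\phi_j\mapsto\bigoplus_{j=1}^{N}\tilde\phi_j$ --- whose cost is $\max_j\|\tilde\phi_j(f)-\tilde\phi_{j+1}(f)\|$, a single $\omega$, because a block-diagonal difference has norm the maximum of its blocks, not their sum --- then permutes back and reassembles the $\tilde\mu_j$'s. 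All the $\nu_j$ terms cancel identically on both sides, so there is no $\tau$-tail to align. The one genuine use of $\omega$ is on $F$ only, which is exactly what the hypothesis provides. This is also the only place $N$ enters, and it is why $k$ must depend on $N$.
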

\begin{lem}\label{lasthom}
  Let $A,B\in \mathcal{D}$ be minimal with $K_1(A)=\mathbb{Z}$, $\varepsilon>0$ and $F\subset A$ be a finite subset with $\omega(F)<\varepsilon$. Suppose that $\phi,\psi:A\rightarrow B$ are homomorphisms with the property $KK(\phi)=KK(\psi)$. There exists a positive integer $L$ such that the following is true.

  If $C\in \mathcal{D}$ is minimal, $q\in C$ is a projection, $\lambda:B\rightarrow qCq$ is a homomorphism, $\nu:B\rightarrow (1-q)C(1-q)$ is a homomorphism with finite dimensional range and  with $[\nu(e)]\geq L\cdot[\lambda(1)]$ for any nonzero projection $e\in B$, then there is a unitary $u\in B$ such that
  $$
  \|u\big(\lambda\phi(f)\oplus\nu\phi(f)\big) u^*-\lambda\psi(f)\oplus\nu\psi(f)\|<8\varepsilon,\quad \forall\, f\in F.
  $$
\end{lem}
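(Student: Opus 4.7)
The plan is to convert the hypothesis $\mathrm{KK}(\phi)=\mathrm{KK}(\psi)$ into an approximate unitary equivalence modulo a finite-dimensional ``correction'' term, and then to exploit the largeness of $\nu$ to relocate that correction into the $(1-q)$-corner, where it can be absorbed via Lemma \ref{unicase}. The chief obstacle is that $\lambda$ itself need not have finite-dimensional range, so the comparison tools (Lemmas \ref{unicase}, \ref{simcase0}) cannot be applied directly on the $qCq$-side.

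First I would apply Lemma \ref{kkhom} to produce a finite-dim-range homomorphism $\eta_0:A\to M_{m_0}(B)$ with $\phi\oplus\eta_0\sim_h\psi\oplus\eta_0$, subdivide the homotopy finely, and invoke Lemma \ref{stabhom} to obtain an integer $N$, a finite-dim-range homomorphism $\sigma:A\to M_N(B)$, and a unitary $u'\in M_{N+1}(B)$ with
$$
\|u'(\phi(f)\oplus\sigma(f))(u')^{*}-(\psi(f)\oplus\sigma(f))\|<\varepsilon,\qquad f\in F.
$$
Set $L=N+1$; it depends only on $\phi,\psi,F,\varepsilon$. Given then $C,q,\lambda,\nu$ satisfying $[\nu(e)]\geq L[\lambda(1)]$, I apply $\lambda$ entrywise to the above to carry it into $M_{N+1}(qCq)$. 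Since $[\nu(1_B)]\geq L[\lambda(1)]\geq N[\lambda(1)]=[1_{M_N(\lambda(1)C\lambda(1))}]$ and $C$ has stable rank one, there exist a subprojection $p_0\leq\nu(1_B)$ in $(1-q)C(1-q)$ and a partial isometry $w\in M_{N+1}(C)$ with $w^{*}w=1_{M_N(\lambda(1)C\lambda(1))}$ and $ww^{*}=p_0$. Conjugating the $\lambda$-side inequality by $1\oplus w$ moves $\lambda^{(N)}\sigma$ to a finite-dim-range $\widehat\sigma:A\to p_0Cp_0\subset(1-q)C(1-q)$ and converts $\lambda(u')$ into a unitary $v_\lambda\in(\lambda(1)+p_0)C(\lambda(1)+p_0)\subset C$ satisfying
$$
\|v_\lambda(\lambda\phi(f)+\widehat\sigma(f))v_\lambda^{*}-(\lambda\psi(f)+\widehat\sigma(f))\|<\varepsilon.
$$

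Now both $\nu\phi$ and $\widehat\sigma$ are finite-dim-range homomorphisms into $(1-q)C(1-q)$, and for nonzero projections $e\in A$ with $\phi(e)\neq 0$ one has $[\nu\phi(e)]\geq L[\lambda(1)]\geq N[\lambda(1)]\geq[\widehat\sigma(e)]$, the zero case being handled by stable finiteness together with $[\phi(e)]=[\psi(e)]$. Lemma \ref{unicase} therefore yields a unitary $u_\phi\in(1-q)C(1-q)$ and a homomorphism $\tau_\phi$ factoring through $F_1$ with $\|u_\phi\nu\phi(f)u_\phi^{*}-(\widehat\sigma(f)+\tau_\phi(f))\|<3\varepsilon$ (using $\omega(F)<\varepsilon$), and symmetrically a $u_\psi,\tau_\psi$ for $\psi$. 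The identity $[\tau_\phi(e)]=[\nu\phi(e)]-[\widehat\sigma(e)]=[\nu\psi(e)]-[\widehat\sigma(e)]=[\tau_\psi(e)]$ together with Lemma \ref{simcase0} supplies an exact unitary conjugation $u_\tau\tau_\phi u_\tau^{*}=\tau_\psi$. Concatenating
$$
\lambda\phi+\nu\phi\approx_{3\varepsilon}\lambda\phi+\widehat\sigma+\tau_\phi\approx_{\varepsilon}\lambda\psi+\widehat\sigma+\tau_\phi=\lambda\psi+\widehat\sigma+\tau_\psi\approx_{3\varepsilon}\lambda\psi+\nu\psi
$$
through the block-diagonal product $u=(1_q+u_\psi)^{*}(1+u_\tau)(v_\lambda+1)(1_q+u_\phi)$, regarded as a unitary in $C$, will yield the required estimate with total error at most $7\varepsilon<8\varepsilon$.
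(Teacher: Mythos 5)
Your proof follows the paper's argument essentially step for step: Lemma \ref{kkhom} plus Lemma \ref{stabhom} to produce the finite-dimensional correction and the approximate conjugation, setting $L$ to control its size, then Lemma \ref{unicase} to absorb $\nu\phi$ and $\nu\psi$ against the $\lambda$-image of that correction and Lemma \ref{simcase0} to match the leftovers, with the same final concatenation. The only differences are cosmetic: you make explicit the partial isometry that carries the correction into the $(1-q)$-corner (which the paper leaves implicit), and you record the stabhom output as $<\varepsilon$ where Lemma \ref{stabhom} actually gives $<\varepsilon+\omega<2\varepsilon$, a slip that still leaves the total safely under $8\varepsilon$.
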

\begin{proof}

  Since $KK(\phi)=KK(\psi)$, by Lemma \ref{kkhom}, there exist an integer $m$  and a homomorphism $\eta:A\rightarrow M_m(B)$ with finite dimensional image such that $\phi\oplus\eta\sim_h\psi\oplus\eta$. There is a continuous path of homomorphisms $\phi_t$, $(0\leq t\leq 1)$, such that $\phi_0=\phi\oplus\eta$ and $\phi_1=\psi\oplus\eta$. Choose $0=t_0<t_1<\cdots<t_n=1$ such that
  $$
  \|\phi_{t_{j+1}}(f)-\phi_{t_j}(f)\|<\varepsilon,\quad\forall\, f\in F,\,\,\, \forall j\in\{1,2,\cdots,n-1\}.
  $$

  Apply Lemma \ref{stabhom} for homomorphisms $\phi_{t_0},\phi_{t_1},\cdots,\phi_{t_n}$, there exist an integer $k$ and a homomorphism $\rho: A\rightarrow M_{k(m+1)}(B)$ with finite dimensional range such that
  $$
  \|v\big(\phi_{t_0}(f)\oplus \rho(f)\big) v^*-
  \phi_{t_n}(f)\oplus \rho(f)\|<2\varepsilon,\quad \forall\, f\in F.
  $$
  Then we have
  $$
  \|v\big(\phi(f)\oplus\eta(f)\oplus \rho(f)\big)v^*-
  \psi(f)\oplus\eta(f)\oplus \rho(f)\|<2\varepsilon,\quad \forall\, f\in F.
  $$

  Note that for any nonzero projection $e\in A$,
  $$
  [\rho(e)]+[\eta(e)]\leq (km+k+m)\cdot[\phi(1)].
  $$
  Let $L=km+k+m$, this $L$ is as desired.

  Since
  $$
  [\nu\circ \phi(e)]\geq L\cdot[\lambda\circ\phi(1)]\geq [\lambda\circ \eta(e)]+[\lambda\circ\rho(e)].
  $$
  By Lemma \ref{unicase}, there exist homomorphisms $\kappa_1,\kappa_2:A\rightarrow (1-q)C(1-q)$ which can be factored through $F_1$ with $KK(\kappa_1)=KK(\kappa_2)$ and partial isometries $v_1,v_2\in (1-q)C(1-q)$ and  such that
  $$
  \|v_1\big(\nu\phi(f)\big) v_1^*-\lambda\eta(f)\oplus\lambda\rho(f)\oplus\kappa_1(f)\|<3\varepsilon
  $$
  and
  $$
  \|v_2\big(\nu\psi(f)\big) v_2^*-\lambda\eta(f)\oplus\lambda\rho(f)\oplus\kappa_2(f)\|<3\varepsilon
  $$
  hold for all $f\in F$. By Lemma \ref{simcase0}, we have
  $$
  w\kappa_1w^*=\kappa_2
  $$
  for some partial isometry $w\in (1-q)C(1-q)$.

  Since
  $$
  \|\lambda(v)\big(\lambda\phi(f)\oplus\lambda\eta(f)\oplus
  \lambda\rho(f)\big)\lambda(v^*)-\lambda\psi(f)\oplus\lambda\eta(f)
  \oplus\lambda\rho(f)\|<2\varepsilon
  $$
  holds for all $f\in F$.

  Then we have
  $$
  \|u\big(\lambda\phi(f)\oplus\nu\phi(f)\big) u^*-\lambda\psi(f)\oplus\nu\psi(f)\|<8\varepsilon,\quad \forall\, f\in F.
  $$
  for some unitary $u\in C$.
\end{proof}
  Combine Corollory \ref{deccor} and Lemma \ref{lasthom}, we obtain the following theorem.
\begin{thrm} \label{freeuqe}
  Let $A=\underrightarrow{lim}(A_n,\phi_{n,m})$ be a real rank zero inductive limit of Elliott-Thomsen algebras in $\mathcal{D}$. Suppose that $B\in \mathcal{D}$ is a a minimal block with $K_1(B)=\mathbb{Z}$, $\varepsilon>0$, $F\subset B$ be a finite set with $\omega(F)<\varepsilon$ and $\phi,\psi: B\rightarrow A_n$ be homomorphisms with $KK(\phi)=KK(\psi)$, then there exist an integer $m\geq n$ and a unitary $u\in A_m$ such that
  $$
  \|u\big(\phi_{n,m}\phi(f)\big)u^*-\phi_{n,m}\psi(f)\|<18\varepsilon, \,\,\forall\,f\in F.
  $$
\end{thrm}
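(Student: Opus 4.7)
The plan is to combine the uniqueness result for a single pair with a large supply of ``finite dimensional noise'' (Lemma \ref{lasthom}) with the decomposition result produced by real rank zero (Corollary \ref{deccor}). The key observation is that Corollary \ref{deccor} provides a decomposition $\phi_{n,m}=\lambda\oplus\nu\oplus\rho$ of the connecting map in which $\nu$ absorbs an arbitrary multiple of $\lambda(1)$; this is exactly the regime in which Lemma \ref{lasthom} can be applied to the compositions with $\phi$ and $\psi$.

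First I would invoke Lemma \ref{lasthom} applied to $\phi,\psi:B\to A_n$ together with the finite set $F$ (which already satisfies $\omega(F)<\varepsilon$ and $\mathrm{KK}(\phi)=\mathrm{KK}(\psi)$). This produces an integer $L$ with the property that whenever $\lambda':A_n\to qCq$ and $\nu':A_n\to(1-q)C(1-q)$ is a homomorphism with finite dimensional range satisfying $[\nu'(e)]\ge L\cdot[\lambda'(1)]$ for all nonzero projections $e\in A_n$, the two maps $\lambda'\phi\oplus\nu'\phi$ and $\lambda'\psi\oplus\nu'\psi$ are unitarily $8\varepsilon$-close on $F$.

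Next I would feed $F':=\phi(F)\cup\psi(F)\subset A_n$, together with $\varepsilon$ and the integer $L$, into Corollary \ref{deccor}. This produces an integer $m\ge n$, a projection $q\in A_m$, a homomorphism $\lambda:A_n\to qA_mq$, and homomorphisms $\nu,\rho:A_n\to(1-q)A_m(1-q)$ with finite dimensional ranges, such that
\[
L\cdot[\lambda(1)]\le[\nu(e)]\ \text{for every nonzero projection }e\in A_n,
\]
and
\[
\|\phi_{n,m}(f')-\lambda(f')\oplus\nu(f')\oplus\rho(f')\|<5\varepsilon\quad\forall f'\in F'.
\]
I would then simply bundle $\nu$ and $\rho$ into a single homomorphism $\tilde\nu:=\nu\oplus\rho$ landing in $(1-q)A_m(1-q)$. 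It still has finite dimensional range, and
\[
[\tilde\nu(e)]=[\nu(e)]+[\rho(e)]\ge[\nu(e)]\ge L\cdot[\lambda(1)]
\]
for every nonzero projection $e\in A_n$, so the hypothesis of Lemma \ref{lasthom} holds with $C=A_m$.

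Applying Lemma \ref{lasthom} now yields a unitary $u\in A_m$ with
\[
\|u\bigl(\lambda\phi(f)\oplus\tilde\nu\phi(f)\bigr)u^*-\bigl(\lambda\psi(f)\oplus\tilde\nu\psi(f)\bigr)\|<8\varepsilon,\quad\forall f\in F.
\]
Combining this with the two approximations coming from Step 2 applied to $\phi(f),\psi(f)\in F'$ (using that conjugation by $u$ is isometric) and the triangle inequality gives
\[
\|u\,\phi_{n,m}\phi(f)\,u^*-\phi_{n,m}\psi(f)\|<5\varepsilon+8\varepsilon+5\varepsilon=18\varepsilon
\]
for every $f\in F$, as required. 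The only point that needs checking is the matching of the order condition between the two lemmas, namely $[\tilde\nu(e)]\ge L\cdot[\lambda(1)]$; this is the main (albeit mild) obstacle, and it is secured precisely by the freedom to choose $L$ in Corollary \ref{deccor} after $L$ has been fixed in Lemma \ref{lasthom}.
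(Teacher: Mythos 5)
Your proof is correct and follows essentially the same route the paper intends: the authors simply assert ``Combine Corollary \ref{deccor} and Lemma \ref{lasthom}, we obtain the following theorem,'' and you have spelled out the combination faithfully — choosing $L$ from Lemma \ref{lasthom} first, feeding $\phi(F)\cup\psi(F)$, $\varepsilon$, and that $L$ into Corollary \ref{deccor}, merging $\nu$ and $\rho$ into a single finite-dimensional-range map so the $K_0$-comparison hypothesis $[\tilde\nu(e)]\ge L[\lambda(1)]$ holds, and then closing with the $5\varepsilon+8\varepsilon+5\varepsilon=18\varepsilon$ triangle inequality. The only point glossed over (both by you and by the paper) is that $A_n$ and $A_m$ are direct sums of minimal blocks rather than minimal, so Lemma \ref{lasthom} is strictly speaking applied block by block; this is a routine reduction and does not affect the argument.
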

 The following lemma is the basic homotopy lemma in \cite[Lemma 6.1]{GLN:2015}.
\begin{lem} \label{homolem}
  Let $A$ be a unital separable $\mathrm{C}^*$-algebra and let $\phi:A\rightarrow M_k$ (for some integer $k\geq 1$) be a unital linear map.
  Suppose that $u\in M_k$ is a unitary such that
  $$
  \phi(f)u=u\phi(f),\quad \forall\,\, f\in A.
  $$
  Then there exists a continuous path of unitaries $\{u_t:t\in [0,1]\}\in M_k$ such that
  $$
  u_0=u,\,\,u_1=1,\,\,u_t\phi(f)=\phi(f)u_t,\quad\forall\,\,f\in A.
  $$
\end{lem}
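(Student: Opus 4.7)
The plan is to use the finite-dimensional spectral decomposition of $u$ together with the observation that every spectral projection of $u$ lies in the (unital) algebra generated by $u$ alone, so commutation with $\phi(A)$ is automatic.

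First, since $u$ is a unitary in $M_k$, it has only finitely many distinct eigenvalues $\lambda_1,\lambda_2,\ldots,\lambda_m$ on the unit circle, and admits a spectral decomposition
$$
u=\sum_{j=1}^m \lambda_j p_j,
$$
where $p_1,\ldots,p_m$ are mutually orthogonal self-adjoint projections in $M_k$ with $\sum_{j=1}^m p_j = 1_{M_k}$. By Lagrange interpolation, there exist polynomials $P_1,\ldots,P_m\in\mathbb{C}[z]$ with $P_j(\lambda_i)=\delta_{ij}$, and then $p_j = P_j(u)$ for each $j$. From the hypothesis $u\phi(f)=\phi(f)u$ it follows by induction that $u^n\phi(f)=\phi(f)u^n$ for every $n\geq 0$, and hence $P_j(u)\phi(f)=\phi(f)P_j(u)$, i.e.\
$$
p_j\,\phi(f)=\phi(f)\,p_j,\qquad \forall\,f\in A,\ j=1,\ldots,m.
$$

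For each $j$, pick $\theta_j\in(-\pi,\pi]$ with $\lambda_j=e^{i\theta_j}$, and set $\lambda_j(t):=e^{i(1-t)\theta_j}$, so that $\lambda_j(0)=\lambda_j$ and $\lambda_j(1)=1$. Then define
$$
u_t:=\sum_{j=1}^m \lambda_j(t)\,p_j,\qquad t\in[0,1].
$$
Because the $p_j$ are mutually orthogonal projections summing to $1_{M_k}$ and each $|\lambda_j(t)|=1$, each $u_t$ is a unitary; continuity in $t$ is immediate; and $u_0=u$, $u_1=1$. Since every $p_j$ commutes with $\phi(A)$, so does $u_t$ for all $t$, which is exactly the desired path.

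I do not anticipate any genuine obstacle: the only point to be careful about is that in finite dimensions the continuous functional calculus for a normal operator is realised by polynomials, so commutation with $u$ automatically transfers to commutation with its spectral projections; without this remark one might worry about needing $\phi$ to be a $*$-map in order to handle $u^*$, but here it is not required.
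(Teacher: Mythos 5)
The paper does not actually prove this lemma; it simply cites it from \cite[Lemma 6.1]{GLN:2015}. Your argument is correct and self-contained, and it is the standard proof in this finite-dimensional setting: the spectral projections $p_j$ of the unitary $u\in M_k$ are polynomials in $u$ by Lagrange interpolation, hence commute with anything that commutes with $u$, and rotating each eigenvalue $\lambda_j=e^{i\theta_j}$ back to $1$ along the arc $e^{i(1-t)\theta_j}$ produces the required continuous unitary path commuting with $\phi(A)$. Your closing remark is also apt: because $p_j$ is a polynomial in $u$ alone (no $u^*$ needed, though for a unitary $u^*=u^{-1}$ would commute anyway), no $*$-structure on $\phi$ is required, and indeed the lemma uses only that $\phi$ is a map into $M_k$ whose range commutes with $u$.
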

 We also need the following lemma (see  corollary 8.2.2  and proposition 2.2.9 in \cite{ELP1:1998}).
 \begin{lem} \label{crelem}
   Consider a two-dimensional $NCCW$ complex $A_2$. If every infinitesimal of $K_0(A_2)$ is torsion, then $A_2$ is weakly
   semiprojective with respect to finite-dimensional $\mathrm{C}^*$-algebras.
 \end{lem}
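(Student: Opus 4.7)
The plan is to reduce this to Theorem \ref{nccw semi} on semiprojectivity of one-dimensional NCCW complexes, together with a K-theoretic lifting argument that exploits the torsion hypothesis on infinitesimals of $K_0(A_2)$. First I would recall that a two-dimensional NCCW complex is obtained from a one-dimensional NCCW complex $A_1$ by attaching 2-cells along a boundary map, so $A_2$ sits in a pullback square
$$
\xymatrix{A_2 \ar[r] \ar[d] & C(D^2,F_3) \ar[d]^{\mathrm{res}} \\ A_1 \ar[r]^-{\varphi} & C(\partial D^2,F_3).}
$$
Weak semiprojectivity with respect to finite-dimensional C*-algebras means: given any sequence $\{B_n\}$ of finite-dimensional C*-algebras and any $*$-homomorphism $\Phi: A_2 \to \prod B_n/\bigoplus B_n$, I want to produce, after possibly dropping finitely many indices, a genuine $*$-homomorphism $\widetilde{\Phi}: A_2 \to \prod B_n/\bigoplus B_n$ which agrees with $\Phi$ (or is close to it on a prescribed finite set).

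The first step would be to push $\Phi$ down to $A_1$ via the quotient map $A_2 \to A_1$ and invoke semiprojectivity of $A_1$ (Theorem \ref{nccw semi}) to obtain a genuine $*$-homomorphism $\widetilde{\Phi}_1: A_1 \to \prod_{n \geq N} B_n/\bigoplus_{n \geq N} B_n$ lifting $\Phi|_{A_1}$. Next I would attempt to extend $\widetilde{\Phi}_1$ across the 2-cells: this requires, for each attaching map, producing a continuous path in $\prod B_n/\bigoplus B_n$ connecting two prescribed projections which, by functoriality, must agree in $K_0$. The obstruction is precisely a difference of $K_0$-classes, and this is where the hypothesis enters.

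The second step — the heart of the matter — is to argue that this $K_0$ obstruction vanishes after dropping finitely many indices. Since each $B_n$ is finite-dimensional, $K_0(\prod B_n/\bigoplus B_n)$ is a reduced product of free abelian groups of finite rank, and in particular has no nonzero infinitesimals in the induced scale. The hypothesis that every infinitesimal of $K_0(A_2)$ is torsion implies that the $K_0$-obstruction coming from the boundary map $\varphi$ lies in a torsion subgroup, while its image in $K_0$ of the reduced product must be at the same time infinitesimal (because finite-index projections approximate the identity asymptotically) and torsion-free (because $K_0$ of each $B_n$ is free). Hence the obstruction vanishes, the path exists, and $\widetilde{\Phi}_1$ can be extended to $\widetilde{\Phi}: A_2 \to \prod_{n \geq N} B_n/\bigoplus_{n \geq N} B_n$.

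The hard part, I expect, will be the second step: making precise the dichotomy between torsion and infinitesimality in the K-theory of the reduced product, and managing the bookkeeping so that after the reindexing needed to get the extension across one 2-cell, the lift across the subsequent 2-cells still goes through. A clean way to organize this is to work inductively over a CW filtration of $A_2$ and to use Proposition 2.2.9 of \cite{ELP1:1998} (which controls how weak semiprojectivity behaves under pullbacks over finite-dimensional C*-algebras) to package the two-step lift — first to $A_1$, then across the cells — into a single statement about $A_2$. This is exactly the route taken in \cite{ELP1:1998}, and the torsion hypothesis is what kills the sole $K_0$-obstruction that distinguishes weak semiprojectivity from semiprojectivity in dimension two.
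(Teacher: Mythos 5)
The paper itself offers no proof of this lemma --- it simply points to Corollary 8.2.2 and Proposition 2.2.9 of \cite{ELP1:1998} --- so your sketch is supplying an argument the paper delegates entirely to that reference. Your overall plan (lift on the one-dimensional skeleton using Theorem \ref{nccw semi}, then extend across the attached $2$-cells, with a $K$-theoretic obstruction killed by the torsion hypothesis, packaged by the pullback machinery of \cite{ELP1:1998}) is the correct route, and you end by citing exactly the results the paper cites.

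The difficulty is in the step where the obstruction is killed. You write that the image of the obstruction in $K_0(\prod B_n/\bigoplus B_n)$ is ``at the same time infinitesimal \dots and torsion-free \dots hence the obstruction vanishes.'' That inference is not valid: a nonzero infinitesimal can perfectly well live in a torsion-free ordered group (take $(0,1)$ in $\mathbb{Z}^2$ with the lexicographic order), and indeed $K_0\big(\prod M_n(\mathbb{C})/\bigoplus M_n(\mathbb{C})\big)\cong(\prod\mathbb{Z})/(\bigoplus\mathbb{Z})$ is torsion-free yet contains the nonzero infinitesimal $[(1,1,1,\dots)]$. What actually closes the argument is a different chain: the obstruction class lies in the image of $K_0$ of the ideal $C_0(D^2\setminus S^1,F_3)$ inside $K_0(A_2)$; one must argue that such a class is an \emph{infinitesimal of $K_0(A_2)$}; the hypothesis then forces it to be \emph{torsion in $K_0(A_2)$}; and only then does torsion-freeness of $(\prod\mathbb{Z}^{k_n})/(\bigoplus\mathbb{Z}^{k_n})$ force its image to vanish. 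Infinitesimality and torsion-freeness are invoked for different groups and in a specific order, and the first of these --- why the obstruction is infinitesimal in $K_0(A_2)$ --- is left unjustified in your sketch, though it is precisely where the hypothesis bites. (There is also a small directional slip: the quotient map $A_2\to A_1$ does not let you ``push $\Phi$ down to $A_1$''; one would instead restrict via a c.c.p.\ splitting and correct it to a homomorphism using semiprojectivity of $A_1$, or else work directly with the pullback description as in Proposition 2.2.9 of \cite{ELP1:1998}.)
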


\begin{lem} \label{homocor}
  Let $A\in \mathcal{D}$ be minimal with torsion $K_1$, $F\subset A$ be a finite subset, $\varepsilon>0$, there exist a finite subset $G\subset A$ and $\delta>0$ satisfying the following:
  For a homomorphism $\phi:A\rightarrow M_l(\mathbb{C})$ and a unitary $u\in M_l(\mathbb{C})$ such that
  $$
  \|\phi(g)u-u\phi(g)\|<\delta,\quad\forall g\in G,
  $$
  there exist a unital homomorphism $\psi:A\rightarrow M_l(\mathbb{C})$ and a unitary $v\in M_l(\mathbb{C})$ such that
  $$
  \|v-u\|<\varepsilon,\quad\psi(f)v=v\psi(f),\quad\forall f\in A,
  $$
  $$
  \|\phi(f)-\psi(f)\|<\varepsilon,\quad\forall f\in F.
  $$
  Moreover, there exists a unitary path $u_t$ with $u_0=I$ and $u_1=u$ such that
  $$
  \|\phi(f)-u_t^*\phi(f)u_t\|<4\varepsilon,\quad\forall f\in F.
  $$
\end{lem}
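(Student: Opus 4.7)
The plan is to upgrade the approximately commuting pair $(\phi,u)$ to an exactly commuting pair $(\psi,v)$ close to $(\phi,u)$, and then to use Lemma \ref{homolem} together with a short unitary segment from $v$ to $u$ to produce the approximately $\phi$-centralizing path $\{u_t\}$.

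The key device is to view $(\phi,u)$ as determining a near-homomorphism $\widetilde{\phi}:A\otimes C(\mathbb{T})\to M_l(\mathbb{C})$ by the rule $f\otimes 1\mapsto \phi(f)$ and $1\otimes z\mapsto u$. The multiplicativity defect of $\widetilde{\phi}$ on a finite set of simple tensors is controlled by $\|\phi(g)u-u\phi(g)\|$ for $g$ in a suitable finite set $G\subset A$, so can be made smaller than any prescribed $\delta'$ by shrinking $\delta$ and enlarging $G$ appropriately. Since $A$ is a one-dimensional NCCW complex, $A\otimes C(\mathbb{T})$ is a two-dimensional NCCW complex. By the K\"unneth formula one has $\mathrm{K}_0(A\otimes C(\mathbb{T}))\cong \mathrm{K}_0(A)\oplus \mathrm{K}_1(A)$, and under the order structure coming from traces, the summand $\mathrm{K}_1(A)$ is purely infinitesimal (traces on $A\otimes C(\mathbb{T})$ vanish on the Bott image of $\mathrm{K}_1(A)$), while $\mathrm{K}_0(A)$ is torsion-free and contains no non-zero infinitesimal. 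The hypothesis that $\mathrm{K}_1(A)$ is torsion thus guarantees that every infinitesimal of $\mathrm{K}_0(A\otimes C(\mathbb{T}))$ is torsion, so Lemma \ref{crelem} applies and $A\otimes C(\mathbb{T})$ is weakly semiprojective with respect to finite-dimensional $\mathrm{C}^*$-algebras. Applying this weak semiprojectivity to the target $M_l(\mathbb{C})$, one may perturb $\widetilde{\phi}$ to an actual $*$-homomorphism $\Psi:A\otimes C(\mathbb{T})\to M_l(\mathbb{C})$ with $\|\Psi(f\otimes 1)-\phi(f)\|<\varepsilon$ for all $f\in F$ and $\|\Psi(1\otimes z)-u\|<\varepsilon$. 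Setting $\psi(f):=\Psi(f\otimes 1)$ and $v:=\Psi(1\otimes z)$ produces a unital $*$-homomorphism $\psi:A\to M_l(\mathbb{C})$ and a unitary $v\in M_l(\mathbb{C})$ commuting with every $\psi(f)$, satisfying all the stated closeness estimates.

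For the homotopy assertion, Lemma \ref{homolem} applied to the exactly commuting pair $(\psi,v)$ yields a continuous path of unitaries $\{v_t\}_{t\in[0,1]}$ in $M_l(\mathbb{C})$ with $v_0=I$, $v_1=v$, and $v_t\psi(f)=\psi(f)v_t$ for all $f\in A$. Since $\|\psi(f)-\phi(f)\|<\varepsilon$ on $F$, this gives $\|v_t^*\phi(f)v_t-\phi(f)\|<2\varepsilon$ on $F$. I would then concatenate this path with a short unitary arc from $v$ to $u$ of the form $w_s=v\exp(isH)$, where $\exp(iH)=v^*u$ and $H$ is chosen self-adjoint with $\|H\|$ comparable to $\|v-u\|<\varepsilon$; every point on this arc is within $\varepsilon$ of $v$, so the commutator estimate on this segment is bounded by $2\varepsilon+2\varepsilon=4\varepsilon$ on $F$. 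Reparametrising the concatenated path to $[0,1]$ produces the required $\{u_t\}$. The main obstacle is purely bookkeeping: verifying that $A\otimes C(\mathbb{T})$ really satisfies the infinitesimal hypothesis of Lemma \ref{crelem} and choosing $G,\delta$ in the quantitative way demanded by weak semiprojectivity (which requires a presentation of $A\otimes C(\mathbb{T})$ by generators and relations); everything beyond this is routine.
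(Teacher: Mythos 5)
Your proposal follows the same strategic route as the paper: encode the approximately commuting pair $(\phi,u)$ as an approximate homomorphism $\widetilde\phi$ from $A\otimes C(\mathbb{T})$ into $M_l(\mathbb{C})$, invoke weak semiprojectivity of $A\otimes C(\mathbb{T})$ with respect to finite-dimensional targets (Lemma \ref{crelem}) to replace it by an exact homomorphism $\Psi$, set $\psi=\Psi(\,\cdot\,\otimes 1)$ and $v=\Psi(1\otimes z)$, and finally combine Lemma \ref{homolem} for the exactly commuting pair $(\psi,v)$ with a short unitary arc from $v$ to $u$. There is one genuine presentational difference: the paper does not apply weak semiprojectivity in the quantitative ``for every $\varepsilon$ there exist $(G,\delta)$'' form directly; instead it runs a proof by contradiction, assembles a putative sequence of counterexamples $(\phi_n,u_n)$ into a single homomorphism $A\otimes C(\mathbb{T})\to \prod M_{k_n}/\bigoplus M_{k_n}$, and then uses weak semiprojectivity to lift through the quotient — this is exactly the standard device for converting the sequence-algebra formulation into the $(\varepsilon,G,\delta)$-stability statement you invoke, so the two are logically equivalent but your version tacitly assumes the translation. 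You are also more explicit than the paper in checking the hypothesis of Lemma \ref{crelem}: you record that by K\"unneth $K_0(A\otimes C(\mathbb{T}))\cong K_0(A)\oplus K_1(A)$, that $K_0(A)\subset\mathbb{Z}^p$ carries no nonzero infinitesimals, and that the $K_1(A)$ summand is killed by traces and is torsion by hypothesis, so every infinitesimal of $K_0(A\otimes C(\mathbb{T}))$ is torsion; the paper uses this fact without comment. One small point worth tightening if you write this up: in the segment from $v$ to $u$ your estimate needs a bound on $\|\phi(f)\|$ for $f\in F$ (implicitly $\|f\|\le 1$), and the arc should be chosen within $\varepsilon$ of $u$ rather than of $v$, as in the paper; otherwise the constant $4\varepsilon$ is not immediately recovered.
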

\begin{proof}
  We assume that the lemma is false. There exist a finite set $F\subset A$, $\varepsilon >0$, an increasing sequence of finite subsets $G_n\subset A$ such that $G_n\subset G_{n+1}$ and such that $\bigcup_{n=1}^{+\infty}G_n$ is dense in $A$, a sequence of integers ${k_n}$, a sequence of decreasing   positive numbers $\{\delta_n\}$ with $\sum_{n=1}^{+\infty}\delta_n < +\infty$, a sequence of unitaries $u_n\in M_{k_n}$
  and a sequence of unital homomorphisms $\phi_n:A\rightarrow M_{k_n}(\mathbb{C})$  such that
  $$
  \|\phi_n(g)u_n-u_n\phi_n(g)\|<\delta_n,\quad\forall g\in G_n.
  $$
  and such that
  $$
  \inf_{v,\phi}{\rm max}\{\|u_n-v\|,\,\|\phi_n(f)-\psi(f)\|\,|\,f\in F\}\geq \varepsilon,
  $$
  where $v,\,\phi$ run over all $v\in M_{n}(\mathbb{C})$ and all $\psi:A\rightarrow M_{n}(\mathbb{C})$ with $\psi v=v\psi$.

  Define
  $\widetilde{\phi}_n:A\otimes C(S^1)\rightarrow M_{k_n}(\mathbb{C})$  as follows:
  $$
  \widetilde{\phi}_n(f\otimes a)=\phi_n(f)\cdot a(u_n)
  $$
  where $f\in A$, $a\in\{1_{C(S^1)},z\}$, $z$ is the standard unitary generator of $C(S^1)$.

  Denote
  $$
  \phi=\pi\circ\{\widetilde{\phi}_n\}_{n=1}^{+\infty}:A\otimes C(S^1)\rightarrow \prod_{n=1}^{+\infty}M_{k_n}(\mathbb{C})\xrightarrow{\pi}
  \prod_{n=1}^{+\infty}M_{k_n}(\mathbb{C})/\bigoplus_{n=1}^{+\infty}M_{k_n}(\mathbb{C})
  $$
  In general, $\widetilde{\phi}_n$ is not a homomorphism, but $\phi$ is a homomorphism. Apply Lemma \ref{crelem} for a finite set
  $\mathcal{F}=\{f\otimes a\,\,|\,f\in F, \,a\in \{1_{C(S^1)},z\}\}\subset A\otimes C(S^1)$ and $\varepsilon>0$, there
  exist $M>0$  and a homomorphism $\overline{\phi}:A\otimes C(S^1)\rightarrow \prod_{n=M}^{+\infty}M_{k_n}(\mathbb{C})$ such that
  $$
  \|\pi\circ \overline{\phi}(f)-\phi(f)\|<\varepsilon,\,\,\,\forall\,\,f\in \mathcal{F}.
  $$
  where $\pi:\prod_{n=M}^{+\infty}M_{k_n}(\mathbb{C})\rightarrow
  \prod_{n=1}^{+\infty}M_{k_n}(\mathbb{C})/\bigoplus_{n=1}^{+\infty}M_{k_n}(\mathbb{C})$.

  There exists a sequence of homomorphisms $\overline{\phi}_n:A\otimes C(S^1)\rightarrow M_{k_n}(\mathbb{C})$,
   for $n$ large enough, define $\psi: A\rightarrow M_{k_n}(\mathbb{C})$ by $\psi(f)=\overline{\phi}_n(f\otimes 1)$ and  $v=\overline{\phi}_n(1\otimes z)$,
  then we have
  $$
  \psi(f)v=v\psi(f),\,\,\, \forall\,f\in A,
  $$
  $$
  \|v-u_n\|<\varepsilon,\quad\|\phi_n(f)-\psi(f)\|<\varepsilon,\,\,\forall f\in F.
  $$
  This fact contradicts with the assumption.
  Then for certain homomorphism $\phi$, there exist a unital homomorphism $\psi:A\rightarrow M_l(\mathbb{C})$ and a unitary $v\in M_l(\mathbb{C})$ such that
  $$
  \|v-u\|<\varepsilon,\quad\psi(f)v=v\psi(f),\,\,\forall f\in A,
  $$
  $$
  \|\phi(f)-\psi(f)\|<\varepsilon,\,\,\forall f\in F.
  $$
  Apply Lemma \ref{homolem}, there exists a unitary path $u_t,\,t\in [0,\frac{1}{2}]$ connect $u_0=I$ and $u_{\frac{1}{2}}=v$, such that
  $$
  \psi(f)u_t=u_t\psi(f),\,\,\forall f\in A,\,\,t\in[0,\dfrac{1}{2}].
  $$
  We  can also connect $v$ to $u$ by a unitary path $u_t,\,t\in [\frac{1}{2},1]$ with  $u_{\frac{1}{2}}=v$, $u_1=u$ and
  $\|u_t-u\|< \varepsilon$, for all $t\in [\frac{1}{2},1]$.

  Then we have a unitary path $u_t$ with $u_0=I$ and $u_1=u$ such that
  $$
  \|\phi(f)-u_t^*\phi(f)u_t\|<4\varepsilon,\,\,\forall f\in F.
  $$
\end{proof}
\begin{lem}\label{torlast}
   Let $A\in \mathcal{D}$ be minimal with torsion $K_1$, $F\subset A$ be a finite subset and $\varepsilon>0$, there exist a finite subset $G\subset A$ and $\varepsilon>\delta>0$ satisfying the following: If there exist homomorphisms $\phi_t,\psi_t:A\rightarrow M_l(\mathbb{C}),\,\,t\in[0,1]$ and unitaries $u,v\in M_l(\mathbb{C})$ such that
  $$
  \|\phi_t(g)-\phi_0(g)\|<\delta,\,\,\|\psi_t(g)-\psi_0(g)\|<\delta,
  $$
  $$
  \|\phi_0(g)-v^*\psi_0(g)v\|<\delta,\,\,\|\phi_1(g)-u^*\psi_1(g)u\|<\delta,
  $$
  hold for all $g\in G$, $t\in [0,1]$,
  then there exists a unitary path $u_t$ with $u_0=v$ and $u_1=u$ such that
  $$
  \|\phi_t(f)-u_t^*\psi_t(f)u_t\|<8\varepsilon
  $$
  for all $f\in F$, $t\in [0,1]$.
\end{lem}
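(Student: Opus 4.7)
The plan is to reduce to the homotopy lemma (Lemma \ref{homocor}) via the substitution $w := uv^*$. The key observation is that $w$ should almost commute with the fixed homomorphism $\psi_0$; granted this, Lemma \ref{homocor} will yield a continuous unitary path $\{w_t\}_{t \in [0,1]}$ from $I$ to $w$ that almost centralizes $\psi_0$ on $F$ to within $4\varepsilon$, and setting $u_t := w_t v$ will give a unitary path from $v$ to $u$. The desired estimate will then follow from the triangle inequality, the two endpoint intertwining hypotheses, and the near-constancy of $\phi_t$ and $\psi_t$ in $t$.

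For the choice of constants, I will first apply Lemma \ref{homocor} to the finite set $F$ and tolerance $\varepsilon$ to obtain $G_0 \subset A$ and $\delta_0 > 0$; then take $G := G_0 \cup F$ and $\delta := \min\{\delta_0/4,\,\varepsilon\}$. The main computation is then showing $\|\psi_0(g) w - w\psi_0(g)\| < 4\delta$ for $g \in G$. The algebraic identity
\[
\psi_0(g) w - w \psi_0(g) = \bigl(\psi_0(g) u - u \phi_0(g)\bigr) v^* + u \bigl(\phi_0(g) v^* - v^* \psi_0(g)\bigr)
\]
reduces the problem (by unitary invariance of the norm) to bounding $\|u^* \psi_0(g) u - \phi_0(g)\|$ and $\|\phi_0(g) - v^* \psi_0(g) v\|$. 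The second is $<\delta$ directly by the $t=0$ hypothesis; the first is where I must pass through $t=1$: inserting $\psi_1(g)$ and $\phi_1(g)$ and using slowness of both paths together with the $t=1$ intertwining gives a bound of $3\delta$. Summing, $\|\psi_0(g) w - w\psi_0(g)\| < 4\delta \leq \delta_0$ on $G$, which is exactly what Lemma \ref{homocor} requires.

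Applying Lemma \ref{homocor} to $\psi_0$ and $w$ then produces the desired unitary path $\{w_t\}$ from $I$ to $w$ with $\|\psi_0(f) - w_t^*\psi_0(f) w_t\| < 4\varepsilon$ for all $f \in F$. Setting $u_t := w_t v$, the endpoints are $u_0 = v$ and $u_1 = wv = u$, as required. For the final bound, I will decompose $\phi_t(f) - v^* w_t^*\psi_t(f) w_t v$ by successively inserting $\phi_0(f)$, $v^*\psi_0(f) v$, and $v^* w_t^*\psi_0(f) w_t v$; the four resulting differences are controlled respectively by slowness of $\phi_t$, the $t=0$ intertwining, the Lemma \ref{homocor} estimate, and slowness of $\psi_t$, yielding a total bound of $3\delta + 4\varepsilon \leq 7\varepsilon < 8\varepsilon$.

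I do not expect a serious obstacle here: the proof is essentially a clean reduction to the already-established homotopy lemma, with the only mild book-keeping being the twin use of the slowness hypothesis---once to transport the $t=1$ intertwining back to $\psi_0$ and $\phi_0$ in the commutator estimate, and once in the four-term triangle inequality at the end.
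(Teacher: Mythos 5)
Your proposal is correct and is essentially the paper's proof: both reduce to Lemma \ref{homocor} by exhibiting an almost-commutation estimate obtained by pivoting the $t=1$ intertwining back to $t=0$ using the slowness hypothesis, then set $u_t$ to be a correction of $v$ by the resulting path and close with a short triangle-inequality chain. The only (cosmetic) difference is a mirror-image choice of auxiliary unitary and reference homomorphism — you conjugate $\psi_0$ by $uv^*$ whereas the paper conjugates $\phi_0$ by $u^*v$ — and this changes nothing of substance.
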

\begin{proof}
  Let $G\subset A$ be a finite subset and $4\delta>0$ (in place of $\delta$) as required in Lemma \ref{homocor}, then we have
  $$
  \|\phi_0(g)-v^*u \phi_0(g)u^*v\|<4\delta,\,\,\forall \,\, g\in G.
  $$
  There exists a unitary path $v_t$ with $v_0=I$ and $v_1=u^*v$ such that
  $$
  \|\phi_0(f)-v_t^*\phi_0(f)v_t\|<4\varepsilon,\,\,\forall \,\, f\in F.
  $$

  Since
  $$
  \|\phi_t(f)-\phi_0(f)\|<\varepsilon\quad{\rm and }\quad \|\phi_0(f)-v^*\psi_t(f)v\|<2\varepsilon.
  $$
  Denote $u_t$ by $u_t=vv_t^*$, then $u_0=v$ and $u_1=u$.

  Now we have
  $$
  \|\phi_t(f)-u_t^*\psi_t(f)u_t\|<8\varepsilon,\,\,\forall \,\, f\in F.
  $$
\end{proof}
\begin{thrm}\label{toruqe}
  Let $A\in\mathcal{D}$ be minimal with torsion $K_1$, $F\subset A$ be a finite subset and $\varepsilon>0$. Choose a finite subset $G\subset A$, $\delta>0$ as in Lemma \ref{torlast}.

   Suppose that $B\in\mathcal{D}$ is minimal, $\tau: A\rightarrow B$ is a homomorphism with $\omega(\tau(G))<\delta$. Let $C\in \mathcal{D}$ be minimal and $\phi,\psi:B\rightarrow C$ be homomorphisms with the property that $KK(\phi)=KK(\psi)$, then there exist a unitary $w\in C$ such that
  $$
  \|w\big(\phi\tau(f)\big)w^*-\psi\tau(f)\|<8\varepsilon, \,\,\forall\,f\in F.
  $$
\end{thrm}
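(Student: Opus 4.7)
The plan is to reduce the problem to Lemma \ref{torlast} by patching together pointwise unitaries over a partition of $[0,1]\subset Sp(C)$. If $C=M_m(\mathbb{C})$ is finite dimensional, the conclusion is immediate from $\omega(\tau(G))<\delta$ applied to the pair $(\phi,\psi)$, so I would focus on the non-finite-dimensional case, writing $C=C(F_1'',M_{n''}(\mathbb{C}),\varphi_0 '',\varphi_1 '')$ with $F_1''=\bigoplus_{i''=1}^{p''}M_{k''_{i''}}(\mathbb{C})$.

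First I would build a unitary $w_e\in F_1''$ that will serve as the endpoint datum of our target unitary. For each $i''$, the maps $\pi_e^{i''}\circ\phi,\,\pi_e^{i''}\circ\psi:B\to M_{k''_{i''}}(\mathbb{C})$ have the same $\mathrm{KK}$-class (since $\mathrm{KK}(\phi)=\mathrm{KK}(\psi)$), so the weak-variation hypothesis $\omega(\tau(G))<\delta$ supplies a unitary $u_{i''}\in U(k''_{i''})$ with
$$
\|u_{i''}(\pi_e^{i''}\phi\tau(g))u_{i''}^{*}-\pi_e^{i''}\psi\tau(g)\|<\delta,\quad \forall g\in G.
$$
Set $w_e=\bigoplus_{i''}u_{i''}\in F_1''$; then $\varphi_0''(w_e)$ and $\varphi_1''(w_e)$ are unitaries in $M_{n''}(\mathbb{C})$ that approximately intertwine $\phi,\psi$ on $\tau(G)$ at the boundary points $t=0,1$.

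Next I would subdivide $[0,1]$ by $0=t_0<t_1<\cdots<t_n=1$ finely enough that for any $s,t$ lying in a common subinterval $[t_{i-1},t_i]$ and every $g\in G$ one has $\|\pi_t\phi\tau(g)-\pi_s\phi\tau(g)\|<\delta$ and similarly for $\psi$ (by uniform continuity on $[0,1]$ of the finite family $\phi\tau(G)\cup\psi\tau(G)$). At each interior node I would apply $\omega(\tau(G))<\delta$ to $\pi_{t_i}\circ\phi,\pi_{t_i}\circ\psi:B\to M_{n''}(\mathbb{C})$ (again having equal $\mathrm{KK}$-classes) to obtain a unitary $v_i\in U(n'')$ with $\|v_i(\pi_{t_i}\phi\tau(g))v_i^{*}-\pi_{t_i}\psi\tau(g)\|<\delta$, and set $v_0=\varphi_0''(w_e)$, $v_n=\varphi_1''(w_e)$. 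The hypotheses of Lemma \ref{torlast} are now met on each $[t_{i-1},t_i]$ with the paths $t\mapsto\pi_t\phi\tau,\pi_t\psi\tau$ and endpoint unitaries $v_{i-1},v_i$; the lemma supplies a continuous path of unitaries in $U(n'')$ on $[t_{i-1},t_i]$ interpolating $v_{i-1}\rightsquigarrow v_i$ and implementing $8\varepsilon$-approximate intertwining on $F$. Concatenating across $i$ yields a continuous function $w(\cdot):[0,1]\to U(n'')$ with $w(0)=\varphi_0''(w_e)$ and $w(1)=\varphi_1''(w_e)$, so that $w=(w(\cdot),w_e)$ is a genuine unitary of $C$ that does the job.

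I expect the principal subtlety to lie in the endpoint alignment: weak variation produces pointwise unitaries in $M_{n''}(\mathbb{C})$ at each $t_i$ with no built-in reason to lift to a single element of $C$, so one must prescribe the boundary unitaries in advance as $\varphi_t''(w_e)$, using the \emph{same} $w_e$ assembled fibre-by-fibre from weak variation on the $F_1''$-summands, and then let Lemma \ref{torlast} handle interpolation between the prescribed values. Once the pointwise picture is in place, the remaining checks (continuity of $w(\cdot)$ at the breakpoints, the $8\varepsilon$ bound along each subinterval, and compatibility $w(t)|_{t=0,1}=\varphi_t''(w_e)$ realizing $w$ as an element of the pull-back algebra $C$) are routine.
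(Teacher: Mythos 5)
Your proposal is correct and follows essentially the same route as the paper's proof: in both cases one first conjugates the fibre maps $\pi_e^{j}\circ\phi,\ \pi_e^{j}\circ\psi$ via weak variation to fix a boundary unitary $w_e\in F_1''$, then partitions $[0,1]$ finely, uses $\omega(\tau(G))<\delta$ at interior nodes, anchors the endpoints to $\varphi_0''(w_e),\varphi_1''(w_e)$, and patches with Lemma \ref{torlast} to obtain a globally continuous unitary $(w(\cdot),w_e)\in C$. You also correctly isolate the one genuine subtlety -- that the boundary unitaries must be prescribed in advance so the concatenated path actually lives in the pullback $C$.
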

\begin{proof}
   Since $\tau(G)\subset B$ is a finite set, there exists an integer $m>0$, such that for any $x,y\in [0,1]$ with $d(x,y)\leq \frac{1}{m}$, then
   $$
   \|\phi_x(g)-\phi_y(g)\|<\delta
   \quad
   {\rm and}
   \quad
   \|\psi_x(g)-\psi_y(g)\|<\delta
   $$
   hold for all $g\in \tau(G)$.

  Divide $[0,1]\subset Sp(C)$ into $m$ subintervals with equal length $\frac{1}{m}$, set $t_0=0$, $t_1=\frac{1}{m}$, $\cdots$, $t_m=\frac{m}{m}=1$.
  Consider each interval $[t_k,t_{k+1}]$, we have
  $$
  \|\phi_t(g)-\phi_{t_k}(g)\|<\delta
  \quad
  {\rm and}
  \quad
  \|\psi_t(g)-\psi_{t_k}(g)\|<\delta
  $$
  hold for any $g\in \tau(G)$, $t\in [t_k,t_{k+1}]$.

  Let $C=C(F_1,F_2,\varphi_0,\varphi_1)$, where $F_1=\bigoplus_{j=1}^s M_{r_j}(\mathbb{C})$, $F_2=M_l(\mathbb{C})$.
  Since $\omega(\tau(G))< \delta$ and $KK(\phi)=KK(\psi)$, then for each $j\in \{1,2,\cdots,s\}$ and $k\in \{1,\cdots,m-1\}$,
  we have
  $$
  KK(\pi_e^j\circ\phi)=KK(\pi_e^j\circ\psi)\quad{\rm and}\quad
  KK(\phi_{t_k})=KK(\psi_{t_k}).
  $$
  Then there exist a sequence of unitaries $v_j\in M_{r_j}(\mathbb{C})$ and  a sequence of unitaries $u_k\in M_{l}(\mathbb{C})$
  such that
  $$
  \|\pi_e^j\circ\phi(g)-v_j^*\big(\pi_e^j\circ\psi(g)\big) v_j\|<\delta.
  $$
  and
  $$
  \|\phi_{t_k}(g)-u_k^*\psi_{t_k}(g)u_k\|<\delta
  $$
  hold for all $g\in \tau(G)$.

  Set
  $$
  u_0=\varphi_0(v_1,v_2,\cdots,v_s),\,\,u_m=\varphi_1(v_1,v_2,\cdots,v_s).
  $$
  Now we have
  $$
  \|\phi_{t_k}(g)-u_k^*\psi_{t_k}(g)u_k\|<\delta
  $$
  and
  $$
  \|\phi_t(g)-\phi_{t_k}(g)\|<\delta,
  \quad
  \|\psi_t(g)-\psi_{t_k}(g)\|<\delta,\quad \forall\,t\in [t_k,t_{k+1}]
  $$
  hold for any $g\in \tau(G)$ and $k\in \{0,1,\cdots,m\}$.

   By Lemma \ref{torlast}, there exists a unitary path $w|_{[t_k,t_{k+1}]}$ with $w_{t_k}=u_k$ and $w_{t_{k+1}}=u_{k+1}$.
  Then we obtain a unitary $w\in C$ such that $w_{t_k}=u_k$ for $k=0,1,\cdots,m$, and
  $$\|w\big(\phi\tau(f)\big) w^*-\psi\tau(f)\|<8\varepsilon, \quad\forall\,f\in F.$$
\end{proof}
\section{Classification}
Combine Theorem \ref{freeuqe} and Theorem \ref{toruqe}, then we have
\begin{thrm}\label{thmuqe}
  Let $A=\underrightarrow{lim}(A_n,\phi_{n,m})$ be a real rank zero inductive limit of Elliott-Thomsen algebras in $\mathcal{D}$.
  Let $\varepsilon>0$, $F\subset A_n$ be a finite set with $\omega(F)<\varepsilon$,
  there exists an integer $m\geq n$ such that the following is true.

   Suppose $r>m$ is an integer and $\phi,\psi:A_m\rightarrow A_r$ be homomorphisms with $KK(\phi)=KK(\psi)$, then there exist an integer $s\geq r$ and a unitary $u\in A_s$ such that
  $$
  \|u\big(\phi_{r,s}\circ\phi\circ\phi_{n,m}(f)\big)\, u^*-\phi_{r,s}\circ\psi\circ\phi_{n,m}(f)\|<18\varepsilon, \,\,\forall\,f\in F.
  $$
\end{thrm}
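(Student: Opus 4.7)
The plan is to reduce the statement to the two block-level uniqueness theorems, namely Theorem \ref{freeuqe} for minimal source blocks with $\mathrm{K}_1\cong\mathbb{Z}$ and Theorem \ref{toruqe} for those with torsion $\mathrm{K}_1$, and then to patch the resulting block-wise intertwiners into a single unitary of $A_s$. Write $A_n=\bigoplus_i A_n^{(i)}$, $A_m=\bigoplus_j A_m^{(j)}$, $A_r=\bigoplus_k A_r^{(k)}$ as direct sums of minimal blocks of $\mathcal{D}$, with central units $p_i$, $q_j$, $e_k$, and partition the source index set $I=I_{\mathrm{free}}\sqcup I_{\mathrm{tor}}$ according to $\mathrm{K}_1(A_n^{(i)})$; every finite-dimensional summand falls into $I_{\mathrm{tor}}$. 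Any $KK$-equivalent pair of homomorphisms out of a summand extends by zero to the whole algebra, giving the monotonicity $\omega(\pi_i(F))\leq\omega(F)<\varepsilon$ and similarly $\omega(\pi_j\phi_{n,m}(G))\leq\omega(\phi_{n,m}(G))$ for any finite $G\subset A_n$.

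The integer $m$ is chosen first. For each $i\in I_{\mathrm{tor}}$ I invoke the setup of Theorem \ref{toruqe} applied to $(A_n^{(i)},\pi_i(F),\varepsilon)$ to obtain a finite set $G_i\subset A_n^{(i)}$ and $\delta_i>0$. With $G=\bigcup_{i\in I_{\mathrm{tor}}}G_i$ and $\delta=\min_i\delta_i$, Theorem \ref{wv2} yields $m\geq n$ with $\omega(\phi_{n,m'}(G))<\delta$ for every $m'\geq m$; by monotonicity this forces $\omega(\phi_{n,m}^{i,j}(G_i))<\delta_i$ for every $(i,j)$ with $i\in I_{\mathrm{tor}}$.

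Now fix $r>m$ and $\phi,\psi:A_m\to A_r$ with $KK(\phi)=KK(\psi)$, and write $\phi^{j,k},\psi^{j,k}:A_m^{(j)}\to A_r^{(k)}$ for their block components, which are still $KK$-equivalent. For each $i\in I_{\mathrm{free}}$ I apply Theorem \ref{freeuqe} with $B=A_n^{(i)}$, the finite set $\pi_i(F)$, and the two maps $\phi\circ\phi_{n,m}|_{A_n^{(i)}},\psi\circ\phi_{n,m}|_{A_n^{(i)}}:A_n^{(i)}\to A_r$, producing $s_i\geq r$ and a unitary $U_i\in A_{s_i}$ with the requested $18\varepsilon$-estimate on $\pi_i(F)$. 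For each $i\in I_{\mathrm{tor}}$ and each pair $(j,k)$ I apply Theorem \ref{toruqe} with $A=A_n^{(i)}$, $B=A_m^{(j)}$, $C=A_r^{(k)}$, $\tau=\phi_{n,m}^{i,j}$, and $(\phi^{j,k},\psi^{j,k})$ (the weak-variation hypothesis being in place from the previous paragraph), producing a unitary $w_{i,j,k}\in A_r^{(k)}$ with the analogous $8\varepsilon$-estimate.

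The hard part, as I expect, is the assembly. Let $P_i=\phi\phi_{n,m}(p_i)$, $Q_i=\psi\phi_{n,m}(p_i)\in A_r$ and $P_{i,j}^{(k)}=\phi^{j,k}\phi_{n,m}^{i,j}(p_i)$, $Q_{i,j}^{(k)}=\psi^{j,k}\phi_{n,m}^{i,j}(p_i)\in A_r^{(k)}$. Orthogonality of $\{p_i\}$ in $A_n$ and of $\{q_j\}$ in $A_m$ makes $\{P_i\}$, $\{Q_i\}$ pairwise orthogonal in $A_r$, and $\{P_{i,j}^{(k)}\}_{(i,j)}$, $\{Q_{i,j}^{(k)}\}_{(i,j)}$ pairwise orthogonal within each $A_r^{(k)}$. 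After a small enlargement of $F$ to contain each $p_i$ (whose weak variation vanishes in matrix algebras), the intertwining estimates specialized at $p_i$ yield the projection matches $U_i\phi_{r,s_i}(P_i)U_i^*\approx\phi_{r,s_i}(Q_i)$ and $w_{i,j,k}P_{i,j}^{(k)}w_{i,j,k}^*\approx Q_{i,j}^{(k)}$, so the cornered elements $v_i=\phi_{r,s_i}(Q_i)U_i\phi_{r,s_i}(P_i)$ and $v_{i,j,k}=Q_{i,j}^{(k)}w_{i,j,k}P_{i,j}^{(k)}$ are approximate partial isometries still preserving the intertwining on the relevant corners. Passing everything into a common $A_s$ with $s\geq\max_{i\in I_{\mathrm{free}}}s_i$, the orthogonality of the source and range projections kills the cross terms of the sum $V=\sum_{i\in I_{\mathrm{tor}},j,k}\phi_{r,s}(v_{i,j,k})+\sum_{i\in I_{\mathrm{free}}}\phi_{s_i,s}(v_i)$, yielding $V^*V\approx\phi_{r,s}(\phi(1_{A_m}))$ and $VV^*\approx\phi_{r,s}(\psi(1_{A_m}))$. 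Since $KK(\phi)=KK(\psi)$ forces $[\phi(1_{A_m})]=[\psi(1_{A_m})]$ in $\mathrm{K}_0(A_r)$ and $A$ has real rank zero, the complementary projections become Murray--von Neumann equivalent in $A_s$ for $s$ sufficiently large, and $V$ extends to a unitary $u\in A_s$. The main obstacle throughout will be to control the accumulated error of this patching --- promoting approximate partial isometries to exact ones and then completing to a unitary --- so that the final estimate stays within $18\varepsilon$; the torsion case contributes only $8\varepsilon$, so the dominant contribution is from Theorem \ref{freeuqe}.
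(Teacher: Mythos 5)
Your proposal follows the paper's proof in outline: partition the minimal source blocks by $K_1$-type into free ($\mathbb{Z}$) and torsion pieces, choose $m$ via Theorem \ref{wv2} against the $(G,\delta)$ supplied by Lemma \ref{torlast}, invoke Theorem \ref{toruqe} for the torsion blocks and Theorem \ref{freeuqe} for the free blocks, and then assemble a single unitary. The paper works at the aggregated level of $A_n'\oplus A_n''$ and simply asserts the existence of the combined unitary $u\in A_s$ without giving the assembly argument; your block-by-block treatment together with the corner-cutting $v_i=\phi_{r,s_i}(Q_i)U_i\phi_{r,s_i}(P_i)$, $v_{i,j,k}=Q_{i,j}^{(k)}w_{i,j,k}P_{i,j}^{(k)}$ and the summing of orthogonal approximate partial isometries is exactly the argument the paper leaves implicit, and your remark that central units can be adjoined to $F$ at zero cost in weak variation is the right way to get the projection matches. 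The one point where your proposal, like the paper, remains informal is the arithmetic of the error budget: promoting the approximate partial isometry $V$ to an exact one and extending to a unitary each perturbs by a quantity comparable to the input error, so one has to verify these perturbations are absorbed by the slack that Theorem \ref{freeuqe} already builds in (it allows $18\varepsilon$ against an input $\omega(F)<\varepsilon$ and an $8\varepsilon$-estimate from Lemma \ref{lasthom}); you correctly flag this as the remaining obstacle, and it is indeed the only non-formal step in an otherwise matching argument.
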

\begin{proof}
  Let $A_n=\bigoplus_{i=1}^{l_n} A_{[n,i]}$, where $l_n,[n,i]\in \mathbb{N}$, all $A_{[n,i]}$ are minimal blocks.

  Define index sets
  $$
  J_n=\{1,2,\cdots,l_n\},\quad J_n'=\{i\,|\,K_1(A_{[n,i]})=\mathbb{Z}\},\quad J_n''=J_n\backslash J_n'.
  $$
  Define $A_n',A_n''$ as follows:
  $$
  A_n'=\bigoplus_{i\in J_n'} A_{[n,i]},\quad  A_n''=\bigoplus_{i\in J_n''} A_{[n,i]}.
  $$
  Then we have $A_n\cong A_n'\oplus A_n''$.

  Let $F'\subset A_n'$ and $F''\subset A_n''$ denote the components of $F$. For $F''\subset A_n''$ and $\varepsilon>0$, we can find a finite subset $G\subset A_n''$ and $\delta>0$ as in Lemma \ref{torlast}. By Lemma \ref{wv2}, there exist an integer $m>n$ such that $\omega(\phi_{n,m}(G))<\delta$. This $m$ is as desired.

  For any homomorphisms $\phi,\psi:A_m\rightarrow A_r$ with $KK(\phi)=KK(\psi)$, by Lemma \ref{toruqe}, there exist a unitary $u_1\in A_r$ such that
  $$
  \|u_1\big(\phi\circ\phi_{n,m}|_{A_n''}(f)\big) u_1^*-\psi\circ\phi_{n,m}|_{A_n''}(f)\|<8\varepsilon, \,\,\forall\,f\in F.
  $$
  Since $KK(\phi\circ\phi_{n,m}|_{A_n'})=KK(\psi\circ\phi_{n,m}|_{A_n'})$, by Lemma \ref{freeuqe}, there exist an integer $s\geq r$ and a unitay $u_2\in A_s$ such that
  $$
  \|u_2\big(\phi_{r,s}\circ\phi\circ\phi_{n,m}|_{A_n'}(f)\big) u_2^*-\phi_{r,s}\circ\psi\circ\phi_{n,m}|_{A_n'}(f)\|<18\varepsilon, \,\,\forall\,f\in F.
  $$

  Then we have
  $$
  \|u\big(\phi_{r,s}\circ\phi\circ\phi_{n,m}(f)\big) u^*-\phi_{r,s}\circ\psi\circ\phi_{n,m}(f)\|<18\varepsilon, \,\,\forall\,f\in F.
  $$
  for some unitary $u\in A_s$.

\end{proof}
  Using Theorem \ref{wv2} and Theorem \ref{thmuqe}, by the intertwining argument of \cite[2.3,\,2.4]{Ell2:1993}(also see \cite[Theorem 1.10.16]{Lin:2001}), we obtain the following theorem.
\begin{thrm} \label{uqethm0}
  Let $A=\underrightarrow{\lim}(A_n,\phi_{n,m})$ and $B=\underrightarrow{\lim}(B_n,\psi_{n,m})$ be real rank zero inductive limits of Elliott-Thomsen algebras in $\mathcal{D}$, if $(A_n,\phi_{n,m})$ and $(B_n,\phi_{n,m})$ are KK-shape equivalent, then $A\cong B$.
\end{thrm}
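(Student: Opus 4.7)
The plan is to run Elliott's approximate intertwining argument, using the KK-shape equivalence as the source of the comparison maps and Theorem \ref{thmuqe} as the engine for turning KK-equality into approximate unitary equivalence along the inductive systems.

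First I would fix increasing finite subsets $F_1\subset F_2\subset\cdots$ of $\bigcup A_n$ with dense union in $A$ (each $F_n$ contained in some $A_{k_n}$), increasing finite subsets $G_1\subset G_2\subset\cdots$ of $\bigcup B_n$ with dense union in $B$, and a sequence $\varepsilon_n>0$ with $\sum\varepsilon_n<\infty$. Applying Theorem \ref{wv2} to each $F_n$ and $G_n$ and passing to subsequences, I may assume the weak variations $\omega(\phi_{k_n,k_{n+1}}(F_n))$ and $\omega(\psi_{l_n,l_{n+1}}(G_n))$ are $<\varepsilon_n$; after a further telescoping I arrange that the indices $k_n,l_n$ also serve as the indices of the KK-shape equivalence, so that we have homomorphisms $\xi_n:A_{k_n}\to B_{l_n}$ and $\eta_n:B_{l_n}\to A_{k_{n+1}}$ with
\[
\mathrm{KK}(\eta_n\circ\xi_n)=\mathrm{KK}(\phi_{k_n,k_{n+1}}),\qquad \mathrm{KK}(\xi_{n+1}\circ\eta_n)=\mathrm{KK}(\psi_{l_n,l_{n+1}}).
\]

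Next I would apply Theorem \ref{thmuqe} alternately on the two sides. Working inside $A$: the pair $(\eta_n\circ\xi_n,\,\phi_{k_n,k_{n+1}})$ are homomorphisms $A_{k_n}\to A_{k_{n+1}}$ with equal $\mathrm{KK}$-class, and $\omega(F_n)<\varepsilon_n$, so after composing with a long enough connecting map $\phi_{k_{n+1},k_{n+1}'}$ into some $A_{k_{n+1}'}$ there is a unitary $u_n\in A_{k_{n+1}'}$ witnessing approximate unitary equivalence on $\phi_{k_n,k_{n+1}'}(F_n)$ to within $18\varepsilon_n$. A symmetric application on the $B$ side gives unitaries $v_n\in B_{l_{n+1}'}$ controlling the error for the pair $(\xi_{n+1}\circ\eta_n,\,\psi_{l_n,l_{n+1}})$ on $\psi_{l_n,l_{n+1}'}(G_n)$. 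After relabelling $k_{n+1}',l_{n+1}'$ as the new $k_{n+1},l_{n+1}$ and replacing $\xi_{n+1},\eta_n$ by their conjugates by the unitaries, I obtain a diagram
\[
\xymatrixcolsep{2pc}
\xymatrix{
A_{k_1}\ar[r]^-{\phi_{k_1,k_2}}\ar[d]_-{\xi_1} & A_{k_2}\ar[r]\ar[d]_-{\xi_2} & \cdots\ar[r] & A\\
B_{l_1}\ar[r]_-{\psi_{l_1,l_2}}\ar[ur]^-{\eta_1} & B_{l_2}\ar[r]\ar[ur]^-{\eta_2} & \cdots\ar[r] & B
}
\]
in which each triangle commutes up to $18\varepsilon_n$ on the appropriate finite set $F_n$ or $G_n$.

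The classical Elliott approximate intertwining argument (as recorded in \cite[1.10.16]{Lin:2001}) then yields mutually inverse $*$-isomorphisms $A\cong B$: the inductive limits of $\{\xi_n\}$ and $\{\eta_n\}$ compose with the connecting maps of the opposite system to give $\mathrm{id}_A$ and $\mathrm{id}_B$ because the cumulative errors are summable and the originating finite sets are dense. The main obstacle is the bookkeeping: Theorem \ref{thmuqe} demands that the finite set be stabilised to have small weak variation \emph{before} one chooses the target algebra, and the unitary it produces lives only after a further telescoping. This has to be reconciled with the shape-equivalence indices $k_n,l_n$, which is precisely what the passes through Theorem \ref{wv2} and the interleaving of the two Theorem \ref{thmuqe} applications on the $A$ and $B$ sides are designed to accomplish; once the indices and weak-variation control are arranged in the correct order, the rest is the standard intertwining machine.
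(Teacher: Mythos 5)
Your proposal is correct and takes essentially the same approach as the paper, which simply cites Theorem~\ref{wv2}, Theorem~\ref{thmuqe}, and the Elliott approximate intertwining argument from \cite{Ell2:1993} and \cite{Lin:2001}; your writeup is just a more detailed expansion of that one-line proof into the standard intertwining bookkeeping.
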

 Combine Theorem \ref{uqethm0} with Theorem \ref{exist0}, we have our classification result.
\begin{thrm}
  Let $A=\underrightarrow{\lim}(A_n,\phi_{n,m})$ and $B=\underrightarrow{\lim}(B_n,\psi_{n,m})$ are real rank zero inductive limits of Elliott-Thomsen algebras in $\mathcal{D}$, then $A\cong B$ if and only if
 $$(\underline{\mathrm{K}}(A),\underline{\mathrm{K}}^+(A),\Sigma(A))
\cong
(\underline{\mathrm{K}}(B),\underline{\mathrm{K}}^+(B),\Sigma(B)).$$
\end{thrm}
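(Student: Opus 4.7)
The plan is to recognize that the main theorem is essentially a synthesis of the two pillars already established in Sections 6 and 7, so the proof should be short and simply assemble them. The forward implication is immediate from functoriality: any $*$-iso\-mor\-phism $\varphi:A\to B$ induces an ordered scaled $\Lambda$-iso\-mor\-phism on total K-theory, since $\underline{\mathrm{K}}$, the Dadarlat-Loring order $\underline{\mathrm{K}}^+$, and the scale $\Sigma$ are all functorial, and $\underline{\mathrm{K}}(\varphi)$ commutes with the Bockstein operations.

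For the nontrivial direction, assume an ordered scaled isomorphism
$$\rho:(\underline{\mathrm{K}}(A),\underline{\mathrm{K}}^+(A),\Sigma(A))\xrightarrow{\ \cong\ } (\underline{\mathrm{K}}(B),\underline{\mathrm{K}}^+(B),\Sigma(B))$$
that preserves the Bockstein action. First I would invoke the existence theorem, Theorem \ref{exist0}: using real rank zero at the level of each $A_{r_n}$ to produce (via Corollary \ref{deccor}) a decomposition of the connecting maps into a part with large finite-dimensional image and a small remainder, one can pair $\rho$ with the appropriate partial maps and apply Theorem \ref{composed existence} (together with Theorem \ref{d_0 F lift} to pass from Dadarlat-Loring positivity to the Jiang-Su-type condition that is liftable in the presence of such a decomposition). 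This furnishes sequences $r_n,s_n$ and genuine $*$-homomorphisms $\xi_n:A_{r_n}\to B_{s_n}$, $\eta_n:B_{s_n}\to A_{r_{n+1}}$ lifting the K-theoretic data of $\rho,\rho^{-1}$ and making the induced diagram on $\underline{\mathrm{K}}$ commute; by the UCT (Theorem \ref{UCT}) one reads these as KK-identities, so the two systems are $\mathrm{KK}$-shape equivalent.

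Next I would apply Theorem \ref{uqethm0}, which converts $\mathrm{KK}$-shape equivalence into a genuine isomorphism of the limit algebras. This step uses the two-case uniqueness Theorem \ref{thmuqe} (torsion $K_1$ blocks via Theorem \ref{toruqe}, free $K_1=\mathbb{Z}$ blocks via Theorem \ref{freeuqe}), combined with the weak-variation control $\omega(\phi_{n,r}(F))<\varepsilon$ from Theorem \ref{wv2}, to absorb the unavoidable discrepancy between $\mathrm{KK}(\eta_n\circ\xi_n)$ and $\phi_{r_n,r_{n+1}}$ (and symmetrically on the $B$ side) by unitaries that become arbitrarily small further along the inductive systems. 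A standard Elliott-type approximate intertwining (cf.\ 2.3, 2.4 of \cite{Ell2:1993}) then yields $A\cong B$.

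The substantive obstacles have all been resolved earlier in the paper: the existence side rests on Theorem \ref{composed existence}, which is the step where the example from Remark \ref{counter example} forced the real-rank-zero-driven decomposition into play, and the uniqueness side rests on the split into the free and torsion $K_1$ regimes. For the theorem as stated, once these are in hand, the argument reduces to citing Theorem \ref{exist0} followed by Theorem \ref{uqethm0}, and I would write it up in just a few lines.
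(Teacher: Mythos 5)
Your proposal matches the paper's proof exactly: the paper concludes the classification in a single line by combining Theorem \ref{exist0} (to pass from the $\Lambda$-ordered scaled isomorphism on total K-theory, via Corollary \ref{deccor}, Theorem \ref{d_0 F lift}, and Theorem \ref{composed existence}, to a $\mathrm{KK}$-shape equivalence of the two inductive systems) with Theorem \ref{uqethm0} (the approximate-intertwining uniqueness step built on Theorems \ref{freeuqe} and \ref{toruqe} and the weak-variation control of Theorem \ref{wv2}). The extra commentary you give on how those two ingredient theorems work internally is accurate but not part of the cited proof, which is exactly this two-step synthesis.
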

\section*{Acknowledgement}
The research of Qingnan An and Zhichao Liu were supported by the University of Toronto and NNSF of China (No.:11531003), both the the authors thank the  Fields Institute for their hospitality; the research of Yuanhang Zhang was partly supported by the Natural Science Foundation for Young Scientists of Jilin Province (No.:20190103028JH) and NNSF of China (No.:11601104, 11671167, 11201171).

\end{document}